\title[On symplectic fillings of spinal open book decompositions~II]{On 
symplectic fillings of spinal open book decompositions~II:\\
Holomorphic curves and classification}
\author{Samuel Lisi}
\address{University of Mississippi\\
Department of Mathematics\\
USA }
\email{stlisi@olemiss.edu}
\author{Jeremy Van Horn-Morris}
\address{Department of Mathematical Sciences \\
The University of Arkansas\\
USA}
\email{jvhm@uark.edu}
\author{Chris Wendl}
\address{Institut f\"ur Mathematik \\
Humboldt-Universit\"at zu Berlin \\
Germany}
\email{wendl@math.hu-berlin.de}
\thanks{S.L.~was partially supported by NSF award \# 1929176, 
a SEC travel grant,
by a UM CLASRG,
F.~Bourgeois's ERC Starting Grant StG-239781-ContactMath, and by V.~Colin's ERC Grant geodycon. 
C.W.~was partially supported during this project by a 
Humboldt Foundation Postdoctoral Fellowship, a Royal Society University
Research Fellowship, and EPSRC grant EP/K011588/1.
}
\subjclass[2010]{Primary 32Q65; Secondary 57R17}
\begin{document}

\begin{abstract}

In this second paper of a two-part series, we prove that whenever
a contact $3$-manifold admits a uniform spinal open book decomposition
with planar pages, its (weak, strong and/or exact) symplectic and
Stein fillings can be classified up to deformation equivalence in
terms of diffeomorphism classes of Lefschetz fibrations.  This
extends previous results of the third author \cite{Wendl:fillable}
to a much wider class of contact manifolds, which we illustrate
here by classifying the strong and Stein fillings of all oriented
circle bundles with non-tangential $S^1$-invariant contact structures.
Further results include new vanishing criteria for the ECH contact
invariant and algebraic torsion in SFT, classification of fillings
for certain non-orientable circle bundles, and a general ``symplectic
quasiflexibility'' result about deformation classes of Stein
structures in real dimension four.

\end{abstract}

\maketitle

\tableofcontents

\section{Introduction}
\label{sec:intro}

This paper is the sequel to \cite{LisiVanhornWendl1}, which introduced the 
notion of a spinal open book decomposition 
$$
\boldsymbol{\pi} := \Big(\pi\spine : M\spine \to \Sigma,
\pi\paper : M\paper \to S^1, \{m_T\}_{T \subset \p M}\Big)
$$
of a $3$-manifold~$M$, a structure
that arises naturally whenever $M$ is the boundary of the total space of a
bordered Lefschetz fibration $\Pi : E \to \Sigma$
over a compact oriented surface $\Sigma$ with boundary.
A spinal open book splits $M$ into two (not
necessarily connected) pieces
$M\spine \cup M\paper$, the \defin{spine} and \defin{paper} respectively.
In the Lefschetz case, when $M = \p E$, 
$M\spine$ is the \emph{horizontal} boundary (the boundaries of all the fibers) and 
$M\paper$ is the \emph{vertical} boundary (the union of all fibers over the boundary of the base).
This means in particular that the ``corner''
$\p M\spine = M\spine \cap M\paper = \p M\paper$ is a disjoint union of
$2$-tori, and the two pieces are endowed with smooth fibrations
$\pi\spine : M\spine \to \Sigma$ and $\pi\paper : M\paper \to S^1$,
where the connected components of the fibers $\pi\paper^{-1}(*) \subset M\paper$
are surfaces with boundary called \defin{pages}, the fibers
of $\pi\spine$ are~$S^1$, and the connected components of its base are
surfaces with boundary known as \defin{vertebrae}.

Just as the Lefschetz fibration $\Pi$ naturally
determines a symplectic structure $\omega$ on $E$ up to deformation, 
$\boldsymbol{\pi}$ determines a contact structure $\xi$ on $M$ up to isotopy
such that the relationship ``$\p \Pi \cong \boldsymbol{\pi}$'' between the two 
decompositions makes $(E,\omega)$ a symplectic filling of~$(M,\xi)$.
One of our main goals in the present paper is to invert this relationship and prove
a far-reaching generalization of the main result of
\cite{Wendl:fillable}: for a particular class of spinal open books 
$\boldsymbol{\pi}$ on a closed contact
$3$-manifold $(M,\xi)$, the deformation classes of (weak, strong, exact
or Stein/Weinstein) symplectic fillings 
of $(M,\xi)$ are in one-to-one correspondence
with the diffeomorphism classes of Lefschetz fibrations 
filling~$\boldsymbol{\pi}$.  In addition to providing a powerful new tool
for classifying symplectic fillings, this result reveals the existence of
a special class of Stein surfaces, which are \emph{quasiflexible} in the
sense that their Stein homotopy types are determined by the (not necessarily
exact) deformation classes of their symplectic structures.

\subsubsection*{Main ideas and difficulties}

While the proofs in this paper tend to involve a lot of moving parts that
take many pages to pin down, the underlying ideas are easy to summarize.
In the background are two fundamental geometric phenomena that are quite
well known:
\begin{enumerate}
\item In the tradition of Thurston-Winkelnkemper \cites{Thurston:symplectic,ThurstonWinkelnkemper}
and Gompf \cites{GompfStipsicz,Gompf:hyperpencils,Gompf:locallyHolomorphic},
spinal open books and Lefschetz fibrations uniquely determine contact and symplectic
structures respectively up to deformation;
\item In the tradition of Gromov and McDuff \cites{Gromov,McDuff:rationalRuled},
certain types of symplectic submanifolds give rise to foliations by
$J$-holomorphic curves that determine the global structure of a symplectic
manifold.
\end{enumerate}
In our setting, the symplectic submanifolds feeding into McDuff's technique
are the pages of a planar spinal open book on the boundary of a
symplectic filling, and the resulting $J$-holomorphic foliation produces
(in favorable cases) a classification of the possible fillings.  This idea has appeared before
in \cites{Wendl:fillable,NiederkruegerWendl,Wendl:openbook2}, whose main results
are all special cases of the results of the present paper.  However, the level
of generality considered here introduces several new difficulties requiring
novel solutions, which have contributed substantially to the length 
of this paper.  

One difficulty is that due to the variety of topologies possible in a spinal
open book, the moduli spaces of holomorphic curves arising from their pages
do not consist exclusively of $0$- and $2$-dimensional families of embedded
curves.  They generally also include $1$-dimensional ``walls'' that must be
crossed, as well as multiply covered curves for which transversality is of
course a thorny issue.
Considerable effort is required in the compactness arguments of
\S\ref{sec:uniqueness} and \S\ref{sec:compactnessProofs} to either rule out
the appearance of such multiple covers or show that when they do arise
(which sometimes they must), the necessary transversality results hold anyway.
It is a minor miracle that the results work out as nicely as one would hope,
and we interpret this as convincing evidence for the naturality of our
approach to the filling problem.

A second difficulty concerns the precise type of symplectic fillings that one
is attempting to classify: unlike all previous papers on this problem
(see \S\ref{sec:context} below),
our approach produces a unified framework in which to classify the full
spectrum of weak, strong, exact and Weinstein/Stein fillings, each up to the corresponding notion of
deformation equivalence.  The inclusion of both weak and 
Weinstein deformation equivalence
in this list is one of the most novel details of the present work,
and it requires a quite intricate construction (carried out in
\S\ref{sec:model} and \S\ref{sec:holOpenBook}) of geometric data on 
the symplectization of a contact manifold supported by a spinal open book.

\subsubsection*{Outline of the paper}

The remainder of \S\ref{sec:intro} consists of a quick review of the salient
definitions and of statements of the main theorems to be proved in later
sections. In particular, the general results on
classification of fillings are stated in 
\S\ref{sec:classification}--\ref{sec:Stein}, followed in
\S\ref{sec:invariantsSketch} by results on computations of contact invariants.
The latter computations parallel the non-fillability results already proved in
\cite{LisiVanhornWendl1}.  Section~\ref{subsec:circleBundles} then gives a
sample application, explaining how the main results imply a classification of
the fillings of all partitioned contact $S^1$-bundles over oriented surfaces,
and \S\ref{sec:exotic} discusses some slightly subtler examples for which
our main classification theorems do not apply but the techniques of this
paper still have something to say.  Since we will need to make extensive use of
$J$-holomorphic curves, \S\ref{sec:SHSgeneral} gives a 
review of some of the essential technical results---its contents are
mostly standard, but it also includes (in \S\ref{sec:automatic}) some useful 
lemmas about coherent orientations that may not have appeared in
writing before.  Section~\ref{sec:model} continues the development
(begun in \cite{LisiVanhornWendl1}*{\S 3}) of a precise model for the
half-symplectization of a contact manifold supported by a spinal open book,
including an explicit foliation by $J$-holomorphic curves with specific 
properties that are needed for the 
proofs of the main results.  The analytical properties of this holomorphic
foliation are then studied in \S\ref{sec:holOpenBook}, including existence 
and uniqueness results that are needed for the computations of algebraic
contact invariants carried out in \S\ref{sec:invariants}.  
Finally, \S\ref{sec:impressivePart} carries out
the holomorphic curve arguments needed to complete the proof that
planar spinal open books can be extended to Lefschetz fibrations on
fillings.

\subsubsection*{Acknowledgments}

This project has taken several years to come to fruition, and we are
grateful to many people for valuable conversations along the way,
including Denis Auroux, \.{I}nan\c{c} Baykur, Michael Hutchings, 
Tom Mark, Patrick Massot, Richard Siefring and Otto van Koert.
We would also like to thank the American Institute of Mathematics for bringing
the three of us together at key junctures in this project.

\subsection{Some remarks on the context}
\label{sec:context}

Spinal open books provide a unifying perspective on many of the known
classification and obstruction results for symplectic fillings in dimension 
four.  The first such results were those of Gromov \cite{Gromov} and
Eliashberg \cite{Eliashberg:diskFilling}, which classified the fillings
of $S^3$ and established overtwistedness as a filling obstruction.
A short time later, McDuff \cite{McDuff:rationalRuled} classified the fillings
of the universally tight lens spaces $L(p,1)$ up to diffeomorphism, a
result that was later improved by Hind \cite{Hind:Lens} to a classification
up to Stein deformation equivalence.  
In \cites{Giroux:plusOuMoins,Eliashberg:fillableTorus},
Giroux and Eliashberg found the first examples of contact $3$-manifolds that 
are weakly but not strongly fillable, which are now understood more generally
in terms of Giroux torsion
\cites{Gay:GirouxTorsion,GhigginiHondaVanhorn,GhigginiHonda:twisted,NiederkruegerWendl}.  Further
generalizations of this filling obstruction were introduced by the third
author \cite{Wendl:openbook2} and Latschev \cite{LatschevWendl}.
Classification results for weak fillings have mostly been limited to 
cases where the contact manifold is a rational homology sphere (so that weak
fillings can be deformed to strong fillings)---the major exception is the
planar case, for which \cite{NiederkruegerWendl} showed that weak fillings
are always deformable to strong fillings without the need for any topological
assumption.  The techniques of the present paper can be used to provide new 
and in many cases conceptually simpler proofs of all of the results just 
mentioned, and we suspect that this is the case for most other previous
filling results obtained via holomorphic curve methods, 
e.g.~\cites{OhtaOno:simpleSingularities,Lisca:fillingsLens,Starkston:Seifert}.
In fact, proofs via spinal open books usually lead to strengthened versions 
of these results, e.g.~they always apply to (a subclass of) weak symplectic fillings
with potentially nontrivial cohomology at the boundary, in addition to strong and Stein
fillings.  Moreover, where many of the previous results have achieved
classification of fillings up to diffeomorphism, ours also determine
the symplectic and Stein deformation classes of fillings---in particular,
the Lefschetz fibration approach provides the first systematic technique 
beyond the isolated results in \cites{Eliashberg:diskFilling,CieliebakEliashberg,
Hind:RP3,Hind:Lens} for classifying Stein fillings up to Stein
deformation equivalence.

We are also able to recover certain
results that were not previously accessible via holomorphic curves, 
including some of the vanishing results for the Ozsv\'ath-Szab\'o contact 
invariant\footnote{Results
about the Ozsv\'ath-Szab\'o contact invariant follow from our results on the
ECH contact invariant via the isomorphism \cites{ColinGhigginiHonda:1,ColinGhigginiHonda:2,ColinGhigginiHonda:3}
between Heegaard Floer homology and embedded contact homology.} due to
Honda-Kazez-Matic \cite{HondaKazezMatic} and Massot \cite{Massot:vanishing}.
We will see some examples in \S\ref{subsec:circleBundles} of classification
problems that are easily solved using spinal open books but were not previously
accessible via any known techniques.

There are still some important results in this subject about which
our techniques probably have nothing to say.  Prominent examples include
Lisca's filling obstruction in terms of positive scalar
curvature \cite{Lisca:curvature}, Ghiggini's examples of strongly but
not exactly fillable contact manifolds \cite{Ghiggini:strongNotStein},
and the results of Li-Mak-Yasui \cite{LiMakYasui:CYcaps} and
Sivek with the second author \cite{SivekVHM:unitCotangent} on exact and
Stein fillings of unit cotangent bundles over higher genus surfaces.
These results all rely in some form on gauge theory, and they seem to
represent fundamentally different phenomena from those that are studied
in this paper.

\subsection{Classification of fillings}
\label{sec:classification}

We will assume in the following that the main definitions 
from \cite{LisiVanhornWendl1}*{\S 1.1} concerning types of symplectic
fillings, symplectic and Stein deformation equivalence, spinal open books, 
bordered Lefschetz fibrations, and the contact
and symplectic structures supported by them are already understood.
If $\Pi : E \to \Sigma$ is a bordered Lefschetz fibration and
$\boldsymbol{\pi}$ is the induced spinal open book at $\p E$, we will
continue to indicate this relationship via the notation
$$
\p \Pi \cong \boldsymbol{\pi}.
$$
Let us recall briefly what this means: breaking up $\p E$ into its horizontal
and vertical boundary $\p_h E \cup \p_v E$, the paper
$\pi\paper : M\paper \to S^1$ is defined from the fibration
$\Pi|_{\p_v E} : \p_v E \to \p \Sigma$ by identifying each connected component
of $\p\Sigma$ with $S^1$ and allowing the fibers of $\pi\paper$ to be
disjoint unions of fibers of~$\Pi$, while the spine $\pi\spine : M\spine \to
\widetilde{\Sigma}$ is defined by factoring $\Pi|_{\p_h E} : \p_h E \to \Sigma$
through a suitable covering map $\widetilde{\Sigma} \to \Sigma$ 
to make the fibers of $\pi\spine$ connected.
We repeat the following slightly technical definitions, since
they will play a substantial role in this paper.

\begin{defn}
A $3$-dimensional 
spinal open book will be called
\defin{partially planar} if its interior contains a page of genus zero.
A compact contact
$3$-manifold $(M,\xi)$, possibly with boundary, will be called a
\defin{partially planar domain} if $\xi$ is supported by a partially
planar spinal open book.  We then refer to any interior connected component of
the paper containing planar pages as a \defin{planar piece}.
\end{defn}

\begin{defn}
\label{defn:multiplicity}
Given a spinal open book $\boldsymbol{\pi}$ with paper
$\pi\paper : M\paper \to S^1$ and spine $\pi\spine : M\spine \to \Sigma$,
we define 
the \defin{multiplicity} of $\pi\paper$ at a boundary component
$\gamma \subset \p\Sigma$ as the degree
$$
m_\gamma \in \NN
$$
of the map
\begin{equation}
\label{eqn:multiplicity}
\gamma \to S^1 : \phi \mapsto \pi\paper(\pi\spine^{-1}(\phi)).
\end{equation}
\end{defn}
Recall that the map \eqref{eqn:multiplicity} is well defined due to the
condition that boundary components of fibers of $\pi\paper$ are always
fibers of~$\pi\spine$.  In general, this map is a finite cover, and 
$m_\gamma$ can also be understood as the number of distinct boundary components
of a page that lie in the torus~$\pi\spine^{-1}(\gamma)$.

\begin{remark}
\label{remark:rationalOB}
In this language, 
an ordinary open book can be defined as any spinal open book such that
the base $\Sigma$ of the spine is a finite disjoint union of disks and
$m_\gamma=1$ for every component $\gamma \subset \p\Sigma$.  Spinal open books
that satisfy the first condition but not the second are examples of
\emph{rational} open books in the sense of \cite{BakerEtnyreVanhorn}.

\end{remark}

\begin{defn}
\label{defn:simple}
A spinal open book $\boldsymbol{\pi}$ on a $3$-manifold 
$M$ will be called \defin{symmetric} if 
\begin{enumerate}[label=(\roman{enumi})]
\item $\p M = \emptyset$;
\item All pages are diffeomorphic;
\item For each of the vertebrae $\Sigma_1,\ldots,\Sigma_r \subset \Sigma$,
there are corresponding numbers $k_1,\ldots,k_r \in \NN$ such that every
page has exactly $k_i$ boundary components in $\pi\spine^{-1}(\p\Sigma_i)$ for
$i=1,\ldots,r$.
\end{enumerate}
We shall say that $\boldsymbol{\pi}$ is \defin{uniform} if, in addition
to the above conditions, there exists a fixed compact
oriented surface $\Sigma_0$ whose boundary components correspond bijectively
with the connected components of $M\paper$ such that for each $i=1,\ldots,r$ 
there exists a $k_i$-fold branched cover 
$$
\Sigma_i \to \Sigma_0
$$
for which the restriction to each connected boundary component $\gamma \subset \p\Sigma_i$
is an $m_\gamma$-fold cover of the component of $\p\Sigma_0$ corresponding
to the component of~$M\paper$ touching $\pi\spine^{-1}(\gamma)$,
where $m_\gamma$ denotes the multiplicity of $\pi\paper$ at~$\gamma$
(see Definition~\ref{defn:multiplicity}).

Finally, $\boldsymbol{\pi}$ is \defin{Lefschetz-amenable} if it is uniform
and all branched covers satisfying the above conditions have no branch
points.
\end{defn}

The symmetry condition played a large role in \cite{LisiVanhornWendl1} via
its presence in the definition of \emph{planar torsion}: essentially,
the non-symmetric spinal open books with a planar page are those that can
be shown to obstruct symplectic filling by an argument using spine
removal surgery and holomorphic spheres.

The significance of the uniformity condition is that every spinal open book
that arises as the boundary of a bordered Lefschetz fibration 
$\Pi : E \to \Sigma$ clearly
satisfies it; in fact, in this case the required branched covers 
$\Sigma_i \to \Sigma_0$ are honest covering maps, defined as mentioned above
by factoring $\Pi|_{\p_h E} : \p_h E \to \Sigma$ so that the fibers of the
spine become connected.  It is not true that spinal open books with this
property must always be Lefschetz-amenable, but there are many interesting
cases (e.g.~the oriented circle bundles in \S\ref{subsec:circleBundles})
where amenability is either obvious or can be checked using the Riemann-Hurwitz
formula, and Theorem~\ref{thm:classification} below then classifies 
all fillings in
terms of Lefschetz fibrations.  In \S\ref{sec:exotic}, we will
also discuss some interesting examples that are not Lefschetz-amenable,
and say what we can about the implications; for a more systematic study
of the non-amenable case, see the separate paper by
Min-Roy-Wang \cite{MinRoyWang:exotic}.

For a given closed contact $3$-manifold $(M,\xi)$, define the sets

\begin{equation*}
\begin{split}
\strong(M,\xi) &= \{ \text{strong fillings of $(M,\xi)$} \} \Big/ \sim, \\
\Liouville(M,\xi) &= \{ \text{exact fillings of $(M,\xi)$} \} \Big/ \sim, \\
\Stein(M,\xi) &= \{ \text{Stein fillings of $(M,\xi)$} \} \Big/ \sim,
\end{split}
\end{equation*}
where the equivalence relation is defined via
strong, Liouville or Stein deformation equivalence respectively.
Since minimality is preserved under symplectic deformation, 
we can also define the subset
$$
\strong\minimal(M,\xi) = \{ [(W,\omega)] \in \strong(M,\xi) \ |\ 
\text{$(W,\omega)$ is minimal} \},
$$
and observe that since every Stein filling is exact and every exact filling
is minimal, there are canonical maps
\begin{equation}
\label{eqn:SteinMinimal}
\Stein(M,\xi) \to \Liouville(M,\xi) \to \strong\minimal(M,\xi).
\end{equation}

Likewise, for a spinal open book $\boldsymbol{\pi}$ we define
$$
\Ll(\boldsymbol{\pi}) = \{ \text{bordered Lefschetz fibrations $\Pi : E
\to \Sigma$ with $\p\Pi \cong \boldsymbol{\pi}$} \} \Big/ \sim,
$$
where $\Pi : E \to \Sigma$ and $\Pi' : E' \to \Sigma'$ are considered 
equivalent if there exist orientation preserving diffeomorphisms 
$\varphi : \Sigma \to \Sigma'$ and $\Phi : E \to E'$, the latter restricting
to diffeomorphisms $\p_h E \to \p_h E'$ and $\p_v E \to \p_v E'$, such that
$\Pi' \circ \Phi = \varphi \circ \Pi$.  We also define the subset
$$
\Ll_A(\boldsymbol{\pi}) = \{ [\Pi] \in \Ll(\boldsymbol{\pi}) \ |\ 
\text{$\Pi$ is allowable} \},
$$
where we recall that $\Pi$ is called \defin{allowable} if
all the irreducible components of its fibers have
nonempty boundary.  Whenever $(M,\xi)$ is supported by a uniform
spinal open book $\boldsymbol{\pi}$, the results of \cite{LisiVanhornWendl1}*{\S 3}
yield canonical maps
\begin{equation}
\label{eqn:canonical}
\begin{split}
\Ll({\boldsymbol{\pi}}) &\to \strong(M,\xi), \\
\Ll_A({\boldsymbol{\pi}}) &\to \Stein(M,\xi).
\end{split}
\end{equation}
In \S\ref{sec:impressivePart},
we will use holomorphic curve technology to prove that the above maps can
sometimes be inverted:

\begin{thm}
\label{thm:classification}
Suppose $(M,\xi)$ is a closed contact $3$-manifold that is strongly
fillable and contains a compact domain $M_0 \subset M$, possibly with
boundary, on which $\xi$ is supported by a partially planar spinal open book 
$\boldsymbol{\pi}$.
Then $M = M_0$ and $\boldsymbol{\pi}$ is uniform.  Moreover, if
$\boldsymbol{\pi}$ is also Lefschetz-amenable, then the canonical 
maps of \eqref{eqn:SteinMinimal} and \eqref{eqn:canonical} are all bijections.
\end{thm}

In addition to a classification result, the above implies many
non-fillability results, since most spinal open books are not uniform:

\begin{cor}
\label{cor:nonuniform}
If $(M,\xi)$ is a closed contact $3$-manifold containing a partially
planar domain that is not uniform, then $(M,\xi)$ is not strongly fillable.
\qed
\end{cor}
\begin{remark}
As shown in \cite{LisiVanhornWendl1}, partially planar domains never admit
non-separating contact embeddings into closed symplectic $4$-manifolds,
thus the manifolds in Corollary~\ref{cor:nonuniform} can never appear at all
as contact-type hypersurfaces in closed symplectic manifolds.
\end{remark}

Even in the uniform case, it may happen that a given spinal
open book cannot be the boundary of a Lefschetz fibration because of 
restrictions imposed on its monodromy.  This phenomenon is familiar in the
case of ordinary open books and has been exploited in
\cites{PlamenevskayaVanHorn,Plamenevskaya:surgeries,Wand:planar,
KalotiLi:Stein,Kaloti:Stein}.  We will not consider the factorization
problem in much detail here, but will examine the simplest nontrivial case
in \S\ref{subsec:circleBundles}, namely when the pages are annuli, so
that their mapping class group has a single free generator.

\begin{remark}
Theorem~\ref{thm:classification} does not give a classification of fillings
for planar spinal open books that are uniform but not Lefschetz-amenable.
Under suitable conditions on the monodromy, one can construct a bordered
Lefschetz fibration filling a spinal open book of this type whenever there is
a choice of surface $\Sigma_0$ such that the vertebrae admit
\emph{unbranched} $k_i$-fold covers $\Sigma_i \to \Sigma_0$, but there may 
be additional fillings not obtained from this construction,
corresponding to additional branched covers.
Our proof of Theorem~\ref{thm:classification} will in fact produce on any such
filling a singular foliation by $J$-holomorphic curves which deforms
smoothly under symplectic deformations, but in
general it will have singularities that cannot be understood purely in terms
of Lefschetz fibrations, including a phenomenon that we refer to as
\emph{exotic fibers}.
See \S\ref{sec:exotic} for more discussion and some examples.
\end{remark}

\subsection{Weak fillings deform to strong fillings}
\label{sec:weak}

Under appropriate cohomological conditions, 
Theorem~\ref{thm:classification} 
can also be extended to a classification of weak fillings.  We recall first
the following definition from \cite{LisiVanhornWendl1}.

\begin{defn}
\label{defn:OmegaSeparating}
Suppose $(M,\xi)$ is a closed contact $3$-manifold and~$\Omega$ is a
closed $2$-form on~$M$.  A partially planar domain $M_0$ embedded 
in $(M,\xi)$ is called \defin{$\Omega$-separating} if it has a planar
piece $M_0^P \subset {\mathring M}_0$ such that $\Omega$ is exact on
every spinal component touching~$M_0^P$.
It is called \defin{fully separating} 
if this is true for all closed $2$-forms~$\Omega$ on~$M$.
\end{defn}

The condition here depends only on the cohomology class
$[\Omega] \in H^2_\dR(M)$ and is vacuous if $\Omega$ is exact.
Recall that every weak
filling $(W,\omega)$ for which $\omega$ is exact near the boundary can
be deformed to a strong filling,
cf.~\cite{Eliashberg:contactProperties}*{Proposition~3.1}.
In the special case of disk-like vertebrae, \emph{any} closed $2$-form
is exact on the spine, thus the following generalizes the  
theorem of Niederkr\"uger and the third author
\cite{NiederkruegerWendl} that weak fillings of planar contact manifolds
can (after blowing down) always be deformed to Stein fillings.

\begin{thm}
\label{thm:weak}
Suppose $(M,\xi)$ is a closed contact $3$-manifold, $\Omega$ is a closed
$2$-form on~$M$ and $(M,\xi)$ contains an $\Omega$-separating partially
planar domain.  Then every weak filling $(W,\omega)$ of $(M,\xi)$ for
which $[\omega|_{TM}] = [\Omega] \in H^2_\dR(M)$ is weakly symplectically
deformation equivalent to a strong filling of $(M,\xi)$.  In particular,
if the domain is fully separating then this is true for all weak fillings.
\end{thm}

One general application of this result concerns \emph{rational open books},
which were defined on contact $3$-manifolds in \cite{BakerEtnyreVanhorn}: 
like an open book, it gives a fibration
$M \setminus B \to S^1$ in the complement of some oriented link $B \subset M$,
but unlike an ordinary open book, the closures of the pages may be multiply 
covered at their boundaries.
We say that a closed contact $3$-manifold $(M,\xi)$ is \defin{rationally
planar} if it is supported by a rational open book with pages of genus zero.
The following extends one of the main results of \cite{NiederkruegerWendl}
from planar to rationally planar contact manifolds.

\begin{cor}
\label{cor:rationallyPlanar}
If $(M,\xi)$ is a rationally planar contact $3$-manifold, then all weak
symplectic fillings of $(M,\xi)$ are symplectically deformation equivalent
to strong fillings.
\end{cor}
\begin{proof}
We note first that using methods from \cite{Vanhorn:thesis},
any rational open book can be 
modified---without changing the
contact structure or the page genus---to one with the property that
every boundary component of the closure of a page covers the respective
binding component once (though there still may be multiple boundary components
covering the same component of the binding).  One can see this by presenting
a neighborhood of any $k$-fold covered binding component as
$\DD \times S^1$ with contact form $d\phi + \rho^2 \, d\theta$ in coordinates
$\rho e^{i\theta} \in \DD \subset \CC$ and $\phi \in S^1 = \RR/\ZZ$,
such that the pages in this region are parametrized by the punctured disks
$$
\DD \setminus \{0\} \hookrightarrow \DD \times S^1 : z \mapsto (z,\arg z^k + \phi)
$$
for different choices of constants $\phi \in S^1$.  (Note that here it is
possible for different choices of $\phi \in S^1$ to produce distinct
subsets of the same page.)  One can then choose any function
$f : \DD \to \CC$ that is $C^\infty$-close to $z \mapsto z^k$ and matches
it precisely for $\rho \ge 1/2$ but has exactly $k$ simple and positive zeroes,
and replace the pages above with
$$
\DD \setminus f^{-1}(0) \hookrightarrow \DD \times S^1 : z \mapsto (z , \arg f(z) + \phi).
$$
Each zero of $f$ now gives rise to a new page boundary component that
covers the corresponding component of $f^{-1}(0) \times S^1 \subset \DD \times S^1$ exactly once, and these modified
pages are also transverse to the Reeb vector field for the contact form
$d\phi + \rho^2 \, d\theta$.

With this modification in place, the resulting rational open book is still planar
but can also be interpreted as a spinal open book 
(see Remark~\ref{remark:rationalOB}).  The result then follows from
Theorem~\ref{thm:weak} since the spine is a union of solid tori, on which
all closed $2$-forms are exact.
\end{proof}

\begin{remark}
It is not known whether there exist rationally planar contact manifolds which
are not planar, though Corollary~\ref{cor:rationallyPlanar} may be interpreted
as providing some evidence against this.
\end{remark}

\subsection{Symplectic deformation implies Stein deformation}
\label{sec:Stein}

Another result closely related to Theorem~\ref{thm:classification} concerns the
question of to what extent the \emph{symplectic} geometry of a Stein
manifold determines its \emph{Stein} geometry.  The following two
questions express this more precisely.

\begin{question}
\label{question:1}
Do there exist two Stein domains that are symplectic deformation equivalent
but not Stein deformation equivalent?
\end{question}
\begin{question}
\label{question:2}
Is there a natural class of Stein domains with the property that
any two domains in the class are
Stein deformation equivalent if and only if they are
symplectically deformation equivalent?
\end{question}

The first question is completely open.  For the second, it is known
in higher dimensions that two \emph{flexible} Stein structures on a given 
manifold will always
be Stein homotopic whenever they can be related by a symplectic deformation;
this follows from an $h$-principle for flexible Weinstein structures,
\cite{CieliebakEliashberg}*{Chapter~14}, and it suffices in this case to
know that their underlying almost complex structures are homotopic.
We will show however that in real dimension four, there exists a larger class of Stein 
structures answering Question~\ref{question:2} than what
might be suggested by known flexibility results (e.g.~for the subcritical case).
In the following statement,
we say that a Stein domain $(W,J)$ is \defin{supported by} a certain
Lefschetz fibration $\Pi : E \to \Sigma$ if $\Pi$ admits a supported
almost Stein structure that is (after smoothing corners) almost Stein
deformation equivalent to $(W,J)$.\footnote{For a brief review of
almost Stein structures, see Definition~\ref{defn:almostStein}.}

\begin{thm}
\label{thm:SteinDeformation}
Suppose $(W,J_0)$ is a Stein domain of real dimension~$4$, supported by 
a bordered Lefschetz fibration $\Pi \colon E \to \Sigma$ with fibers of genus~$0$.
Suppose $J_1$ is another Stein structure on~$W$, and denote by
$\omega_0$ and $\omega_1$ the symplectic structures induced by choices
of plurisubharmonic functions for $J_0$ and $J_1$ respectively.  Then
$J_0$ and $J_1$ are Stein homotopic if and only if $\omega_0$ and $\omega_1$
are homotopic through symplectic structures convex at the boundary.

Moreover, if $\Sigma = \DD^2$, then $J_0$ and $J_1$ are Stein homotopic if
and only if there exist smooth homotopies $\{\omega_\tau\}_{\tau \in [0,1]}$
of symplectic forms on $W$ and contact structures
$\{\xi_\tau\}_{\tau \in [0,1]}$ on $\p W$ such that
$(W,\omega_\tau)$ is a weak filling of $(\p W,\xi_\tau)$ for all
$\tau \in [0,1]$.
\end{thm}

This result would be a corollary of Theorems~\ref{thm:classification} 
and~\ref{thm:weak} if one
could always assume that the spinal open book induced on the boundary of a
bordered Lefschetz fibration is Lefschetz-amenable, but the latter is false
in general (see Example~\ref{ex:bizarre} for a counterexample).  
We will prove the theorem in \S\ref{sec:quasi} by combining the
holomorphic curve arguments behind Theorems~\ref{thm:classification} 
and~\ref{thm:weak} with the
criterion established in \cite{LisiVanhornWendl1}*{\S 2.4} for the canonical
Stein structure supported by a Lefschetz fibration.

\begin{example}
By the main result of \cite{Wendl:fillable},
Theorem~\ref{thm:SteinDeformation} applies to all Stein fillings of
planar contact $3$-manifolds, which includes all subcritical fillings,
but also many critical examples such as the unit disk bundle in~$T^*S^2$.
A further class of non-subcritical examples comes from products 
$\Sigma_0 \times \Sigma_1$ of two Riemann surfaces with boundary
such that at least one of them has genus zero but neither is a disk;
this includes e.g.~the unit disk bundle in $T^*\TT^2$, which (after rounding corners)
is a product of two annuli.
\end{example}

\begin{remark}
\label{remark:probably}
Theorem~\ref{thm:SteinDeformation} probably also holds under the slightly
more general hypothesis that the contact boundary of $(W,J_0)$ is supported
by a planar spinal open book---the latter need not be the boundary of a
Lefschetz fibration since it might not be Lefschetz-amenable.  Proving
the theorem in this generality would require a better geometric
understanding of the so-called \emph{exotic} fibers that are possible
in non-amenable cases (cf.~\S\ref{sec:exotic}).
\end{remark}

\begin{remark}
If one wants to find examples of Stein surfaces that are symplectically but not
Stein deformation equivalent, then Theorem~\ref{thm:SteinDeformation} and Remark~\ref{remark:probably}
suggest searching among Stein surfaces $(W,J)$ whose contact boundaries $(M,\xi)$ do not admit supporting
spinal open books with planar pages.  The main results of this paper and
\cite{LisiVanhornWendl1} provide several mechanisms for recognizing
contact $3$-manifolds with the latter property, e.g.~by \cite{LisiVanhornWendl1}*{Corollary~1.30},
$(M,\xi)$ cannot contain a partially planar domain if it arises as a component
of a strong symplectic filling with disconnected boundary.
Popular examples include the unit disk bundles in $T^*\Sigma$ for
$\Sigma$ any oriented surface with genus at least two; the resulting
unit cotangent bundle is one component of an exact filling with disconnected boundary
that was famously constructed by McDuff \cite{McDuff:boundaries}.
\end{remark}

\subsection{Filling obstructions and contact invariants}
\label{sec:invariantsSketch}

Many special cases of the non-fillability statement in Corollary~\ref{cor:nonuniform}
follow already from the results on planar torsion in \cite{LisiVanhornWendl1},
but they can also be derived from computations of contact
invariants in embedded contact homology 
(cf.~\cites{Hutchings:ICM,Wendl:openbook2})
or symplectic field theory (cf.~\cites{SFT,LatschevWendl}).
The main invariant we have in mind is the \emph{order of algebraic
torsion}, as defined in \cite{LatschevWendl}.  This is a nonnegative
(or possibly infinite) integer extracted from the full symplectic field
theory algebra of a contact manifold; it equals zero if and only if the 
manifold is \emph{algebraically overtwisted} in the sense of
\cite{BourgeoisNiederkrueger:algebraically}, while positive values can be 
interpreted as measuring the manifold's ``degree of tightness''.
The following result, which provides the main motivation behind the 
terminology ``planar $k$-torsion,'' is a generalization 
of \cite{LatschevWendl}*{Theorem~6}.\footnote{See \cite{LisiVanhornWendl1}*{\S 1.3}
for the main definitions concerning planar torsion domains.}

\begin{thm}
\label{thm:algTorsion}
If $(M,\xi)$ has $\Omega$-separating planar $k$-torsion for some $k \ge 0$,
then it also has $\Omega$-twisted algebraic $k$-torsion.
\end{thm}

Since our contact manifolds $(M,\xi)$ in this paper are always
$3$-dimensional, we can also consider the closely related 
filling obstruction furnished by the \emph{ECH contact 
invariant}, i.e.~the distinguished class in the embedded contact homology
of $(M,\xi)$, defined by Hutchings (see e.g.~\cite{Hutchings:ICM}).
The next theorem is a direct generalization of the vanishing results proved
in \cite{Wendl:openbook2}:

\begin{thm}
\label{thm:ECHvanish}
If $(M,\xi)$ has $\Omega$-separating planar $k$-torsion for any $k \ge 0$,
then its ECH contact invariant with twisted coefficients in
$\ZZ[H_2(M) / \ker\Omega]$ vanishes.
\end{thm}

There is also an algebraic counterpart for the
theorem from \cite{LisiVanhornWendl1} that partially planar domains
obstruct semifillings with disconnected boundary: it involves the
so-called \emph{$U$-map} in ECH, which is defined by counting
index~$2$ holomorphic curves through a generic point in the symplectization.
This result generalizes the ECH version of a
planarity obstruction first established by Ozsv\'ath-Stipsicz-Szab\'o 
\cite{OzsvathSzaboStipsicz:planar} in Heegaard Floer homology and
extended to ECH in \cite{Wendl:openbook2}:

\begin{thm}
\label{thm:Umap}
If $(M,\xi)$ contains an $\Omega$-separating partially planar domain,
then for all $k \in \NN$, the contact invariant in
ECH with twisted coefficients in $\ZZ[H_2(M) / \ker\Omega]$ is
in the image of~$U^k$.
\end{thm}

See \S\ref{sec:S1bundleVanishing} for some sample applications of
these theorems, where they are used in particular to prove new vanishing 
results for contact invariants on 
circle bundles.

\subsection{Fillings of circle and torus bundles}
\label{subsec:circleBundles}

In \cite{LisiVanhornWendl1}*{\S 1.4}, we exhibited a large class of
$S^1$-invariant contact structures on circle bundles which are supported
by spinal open books with annular pages.  We now extend the
non-fillability results from that paper to a more comprehensive classification
of fillings.

Assume throughout this section that
$$
\pi : M \to B
$$
is a smooth fiber bundle
whose fibers are diffeomorphic to $S^1$ and whose 
total space is a closed, connected and
oriented $3$-manifold, while the base $B$ is a closed connected surface
that need not necessarily be orientable.  Reducing the structure group of
the bundle to $\Ortho(2)$ then defines the notion of \defin{$S^1$-invariant}
contact structures $\xi$ on~$M$, each of which determines a multicurve
$\Gamma \subset B$ by the condition that fibers over $\Gamma$ are tangent
to~$\xi$.  We say in this case that $\xi$ is \defin{partitioned by}~$\Gamma$,
and it follows that $B \setminus \Gamma$ must be orientable and
$\Gamma$ satisfies a further technical condition (it ``inverts orientations'').
Conversely, for any multicurve $\Gamma \subset B$ satisfying these two
conditions, there is a unique isotopy class of $S^1$-invariant contact
structures partitioned by~$\Gamma$.  We shall denote contact manifolds
of this type always by
$$
(M,\xi_\Gamma).
$$
The existence and uniqueness of $\xi_\Gamma$ is a famous result of Lutz
in the case where $B$ is orientable \cite{Lutz:77}, and in the general
case it was deduced in \cite{LisiVanhornWendl1} from the existence and
uniqueness of contact structures supported by spinal open books.
In particular, $(M,\xi_\Gamma)$ is supported by a spinal open book whose
paper is a tubular neighborhood of $\pi^{-1}(\Gamma)$, with annular pages,
while the vertebrae correspond to the connected components of
$B \setminus \Gamma$.

\begin{remark} \label{rem:seifert_fibered}
    In the case when $B$ is non-orientable, the total space is 
    nevertheless oriented, and there is still a well-defined Euler number.
    As it turns out, over a given base $B$, 
    these bundles are characterized by this Euler number.
    Indeed, this can be seen by viewing such fiber bundles $\pi \colon M \to B$ 
    as Seifert fibered spaces with no exceptional fibers. 
    For more details, see the discussion in \cite{Scott:geometries_3_manifolds}*{page 434}.
\end{remark}


\subsubsection{Classification of fillings}
\label{sec:S1bundleClassification}

Whenever $\pi : M \to B$
corresponds to a spinal open book that is Lefschetz-amenable,
Theorem~\ref{thm:classification} classifies the strong fillings of 
$(M,\xi_\Gamma)$ as bordered Lefschetz fibrations with annulus fibers.
The amenability condition is trivial to verify when $B$ is orientable.

\begin{thm}
\label{thm:circleBundles}
Suppose $\xi_\Gamma$ is an $S^1$-invariant contact structure on a circle
bundle $\pi : M \to B$, partitioned by a nonempty multicurve $\Gamma$,
where $B$ is orientable.  Then $(M,\xi_\Gamma)$ is strongly fillable if
and only if $B \setminus \Gamma$ has two connected components, both of them diffeomorphic
to a single surface~$\Sigma$, and the Euler number $e(\pi)$ of the bundle 
satisfies
\[ e(\pi) \ge 0. \]
Moreover, the Stein, Liouville and minimal strong
fillings of $(M,\xi_\Gamma)$ are all unique up to deformation
equivalence and can be characterized via supporting allowable Lefschetz
fibrations over $\Sigma$ with fiber $[-1,1] \times S^1$, which restrict to
trivial fibrations on the horizontal boundary and have 
$e(\pi)$ singular fibers.
\end{thm}
\begin{proof}
When $B$ is orientable, $\Gamma$ necessarily divides $B$ into two (each
possibly disconnected) components $B_+$ and~$B_-$, thus determining similar
labels $M\spine^\pm$ for corresponding components of the spine~$M\spine$.
Every page of $\pi\paper : M\paper \to S^1$ thus has one boundary component
touching $M\spine^+$ and the other touching $M\spine^-$, so symmetry of
$\boldsymbol{\pi}$ implies that $M\spine$ must have exactly two connected 
components, each touching one boundary component of every page.
This implies that each boundary component of the spine has multiplicity~$1$
in the sense of Definition~\ref{defn:multiplicity}.
If $\boldsymbol{\pi}$ is also uniform, then the vertebrae of the two spinal
components must also be diffeomorphic, and the Lefschetz-amenability
condition is trivially satisfied.
It follows that $B \setminus \Gamma$ has exactly two components and they are 
diffeomorphic to a fixed surface~$\Sigma$, and minimal fillings of $(M,\xi_\Gamma)$
correspond to Lefschetz fibrations over $\Sigma$ with annulus fibers.

Observe now that any two allowable Lefschetz fibrations over $\Sigma$ with annulus
fibers and with the same number of critical points are symplectic
deformation equivalent. 
Let $\Pi \colon E \to \Sigma$ be such an allowable Lefschetz fibration. 
Fix a
basepoint $z_0 \in \Sigma$ and choose an orientation-preserving identification 
of $\Pi^{-1}(z_0)$ with $[-1,1] \times S^1$. 
Trivialize the two components of $\p E \to \Sigma$ consistently with this.
After choosing a collection of paths in $\Sigma$ that connect $z_0$ with 
the points in $\p\Sigma$ that correspond to $\pi\paper^{-1}(1)$, we obtain a
well-defined monodromy map $[-1,1] \times S^1 \to [-1,1] \times S^1$ for each
boundary component of $\p \Sigma$. 
Notice that by changing the trivialization of $\p E \to \Sigma$, we may change
these monodromies, but their composition remains invariant, and will be
isotopic to a $k$-fold Dehn twist where $k$ is the number of singular fibers.
In particular, by a suitable choice of trivialization of $\p E \to
\Sigma$, we arrange for the monodromy about each boundary component of
$\p \Sigma$ to be trivial, except for one, where we have a $k$-fold Dehn
twist. A computation verifies that $\p E$ is then a circle bundle over the
doubled surface $\Sigma \cup_{\p \Sigma} (-\Sigma)$ with Euler class given by $k$.

\end{proof}

\begin{remark}
\label{remark:prequant}
It is possible for the partitioning multicurve 
$\Gamma \subset B$ of an $S^1$-invariant contact structure to be empty
when $B$ is orientable:
this means that $(M,\xi_\Gamma)$ is a prequantization bundle with its canonical
contact structure.  In this case Theorem~\ref{thm:circleBundles} does not 
apply, and in fact, the problem of classifying strong fillings of 
prequantization bundles is not generally tractable: e.g.~whenever $B$
has genus $g \ge 2$, there exists a prequantization bundle $(M,\xi_\Gamma)$
over $B$ admitting exact semifillings with disconnected boundary
(see \cite{McDuff:boundaries}), from which one can construct an unmanageable
multitude of topologically unrelated fillings of $(M,\xi_\Gamma)$ by attaching 
concave fillings from \cite{EtnyreHonda:cobordisms} to the other boundary 
component.
\end{remark}

\begin{thm}
\label{thm:circleBundles2}
Suppose $\xi_\Gamma$ is an $S^1$-invariant contact structure on a circle
bundle $\pi \colon M \to B$, partitioned by a nonempty multicurve~$\Gamma$,
where $B$ is not orientable and $\Gamma$ has $k \ge 0$ connected components
that are not co-orientable.  

If $(M,\xi_\Gamma)$ is strongly fillable, then
$B \setminus \Gamma$ is connected and
$$
k \le 2(g+1),
$$
where $g$ is the genus of $B \setminus \Gamma$.  

Assuming additionally that $B \setminus \Gamma$
is connected and $k = 2(g+1)$, 
$(M,\xi_\Gamma)$ is strongly fillable
if and only if
its Euler number (see Remark \ref{rem:seifert_fibered}) is non-negative.
In that case, its Stein,
Liouville and minimal strong fillings are unique up to deformation equivalence.
\end{thm}
\begin{proof}
Assume $B$ is non-orientable and $\Gamma$ consists of $k$ components with
nontrivial normal bundle and $\ell$ components with trivial normal bundle.
If $\boldsymbol{\pi}$ is symmetric, then the spine can have at most two
connected components, and it has exactly two only if every page has its two
boundary components touching different spinal components, which means
$k=0$ and the $\ell$ components of $\Gamma$ divide $B$ into two connected
components $B_+$ and $B_-$.  But since $\Gamma$ inverts orientations,
this would imply that $B$ is
orientable and thus contradicts our assumptions.  We conclude that
$B \setminus \Gamma$ is connected and has the homotopy type of a
compact oriented surface $\Sigma$ with some genus $g \ge 0$ and $k + 2\ell$ 
boundary components.  The multiplicity of $\pi\paper$ is $2$ at the
$k$ boundary components of $M\spine$ corresponding to curves that are not
co-orientable, and $1$ at its other $2\ell$ boundary components.
Uniformity of $\boldsymbol{\pi}$ then means that there exists a double
branched cover of $\Sigma$ over some surface $\Sigma_0$ of arbitrary
genus $h \ge 0$ with $k+\ell$ boundary components.  
By the Riemann-Hurwitz formula, the algebraic count of branch points is
$$
-\chi(\Sigma) + 2\chi(\Sigma_0) = - (2 - k - 2\ell - 2g) + 2(2 - k - \ell - 2h) =
2 - k + 2 g - 4 h \ge 0,
$$
hence the required branched cover is possible for any genus $h \ge 0$ satisfying
$$
4 h \le 2(g+1) - k.
$$
If equality is achieved, then the resulting branched cover has no branch
points.
In particular, this will always be the case if $k = 2(g+1)$, so the case of $k =
2(g+1)$ is Lefschetz-amenable. 

By Theorem \ref{thm:classification}, it follows that, if $k=2(g+1)$,  any
filling of 
$(M,\xi_\Gamma)$ is (up to deformation) realized as a Lefschetz fibration $\Pi \colon E
    \to \Sigma_0$ with annular fibers where $\Sigma_0$ has genus zero and $k+\ell$ boundary components.
    Furthermore, $\p \Pi$ gives the spinal open book decompositions described
    by $B$, $\Gamma$.

Notice now that $k = 2(g+1)$ is even. We may thus decompose $\Sigma_0$ into a
collection of pairs of pants, of annuli and of disks with the property that
each subsurface has an even number of boundary components among the $k$ 
boundary components of $\Sigma_0$ that correspond to the non-co-orientable
curves in $\Sigma = B \setminus \Gamma$, and all Lefschetz critical values are
contained in the disks. From this, the restrictions of the fibration to the pairs
of pants and annuli are smooth symplectic fibrations with annulus fibers.
Furthermore, if the base is the annulus, they will either be a trivial fibration
or the fattened mapping torus of the ``flip'' (map of the annulus by $(r,
\theta) \mapsto (-r, -\theta)$). 
If the base is a pair-of-pants, the fibration will be one of these two models
with a fiber deleted. 

Now, choose a framing of the spinal open book decomposition $\boldsymbol{\pi}$,
i.e.~a trivialization of the circle bundle $\pi\spine \colon M\spine \to
\Sigma$.
This then allows us to define the monodromy of each component of the paper. Notice
that the ``flip'' map and a Dehn twist commute (up to homotopy). From this, we
observe that a change in framing has no effect on the composition of all the
monodromies. A computation now shows this composition must have the number of
Dehn twists given by the Euler number of $\pi \colon M \to B$. 

Extending the framing of the spinal open book to the Lefschetz fibration $\Pi$, 
we obtain that the net monodromy around the vertical boundary is some number of
Dehn twists, given by precisely the number of critical fibers. 

\end{proof}

\subsubsection{Vanishing results for contact invariants}
\label{sec:S1bundleVanishing}

In \cite{LisiVanhornWendl1} we gave a characterization of which partioned 
$S^1$-invariant contact circle bundles have planar $1$-torsion.
Combining that result with Theorems~\ref{thm:algTorsion}
and~\ref{thm:ECHvanish} gives the following statement, generalizing a result
for trivial circle bundles that was proved in \cite{LatschevWendl}:

\begin{cor}
\label{cor:S1planarTorsion}
Suppose $\xi_\Gamma$ is an $S^1$-invariant contact structure on a circle
bundle $\pi : M \to B$, partitioned by a nonempty multicurve $\Gamma$, and
that either of the following holds:
\begin{enumerate}[label=(\roman{enumi})]
\item $B \setminus \Gamma$ has at least three connected components;
\item $B \setminus \Gamma$ is disconnected and $B$ is non-orientable.
\end{enumerate}
Then $(M,\xi_\Gamma)$ has (untwisted) algebraic $1$-torsion and vanishing
(untwisted) ECH contact invariant.
\qed
\end{cor}

When the bundle is trivial, we can use some input from Seiberg-Witten
theory to obtain a stronger result for the ECH contact invariant:

\begin{thm}
\label{thm:S1productVanishing}
Suppose $\pi : M \to B$ is a trivial circle bundle and $\xi_\Gamma$ is an
$S^1$-invariant contact structure partitioned by a multicurve
$\Gamma \subset B$.  Then $(M,\xi_\Gamma)$ has nonzero (untwisted)
ECH contact invariant if and only if $\Gamma$ divides $B$ into exactly
two connected components that are diffeomorphic to each other.
\end{thm}
\begin{proof}
When $B = \Sigma_+ \cup_\Gamma \Sigma_-$ for a connected surface
$\Sigma_+ \cong \Sigma_- \cong \Sigma$, the
ECH contact invariant of $(B \times S^1,\xi_\Gamma)$ is nonzero
because it has a strong filling, namely the trivial annulus fibration
over~$\Sigma$.  Excluding the cases covered by Corollary~\ref{cor:S1planarTorsion},
it then remains to prove that the ECH contact invariant vanishes whenever
$B \setminus \Gamma$ has two connected components
$\Sigma_+$ and $\Sigma_-$ with differing genus.

This follows from \cite{Wendl:openbook2} if either component has genus zero,
but if both have positive genus, then we must instead appeal to
Seiberg-Witten theory.  Denote the contact
invariant by $[\boldsymbol{\emptyset}]$; it is an element of 
$\ECH_*(B \times S^1,\xi_\Gamma, 0)$, the embedded contact homology
of $(B \times S^1, \xi_\Gamma)$ generated by
orbit sets with total homology class $0 \in H_1(B \times S^1)$.
By Theorem~\ref{thm:Umap}, there exists for every $k \in \NN$ an element
$\boldsymbol{\gamma}_k \in \ECH_*(B \times S^1,\xi_\Gamma, 0)$ such that $U^k \boldsymbol{\gamma}_k =
[\boldsymbol{\emptyset}]$.  Now observe that if $\Sigma_+ \ncong \Sigma_-$,
then $c_1(\xi_\Gamma) \in H^2(B \times S^1)$ is not torsion;
indeed, 
$$
c_1(\xi_\Gamma) = \left( \chi(\Sigma_+) - \chi(\Sigma_-) \right) 
\PD\left[ \{*\} \times S^1 \right].
$$ 
By the work of Taubes \cite{Taubes:ECH=SWF1}, 
$\ECH_*(B \times S^1,\xi_\Gamma , 0)$
is isomorphic to a certain version of the monopole Floer homology of 
Kronheimer and Mrowka \cite{KronheimerMrowka:SWF} for the $\Spinc$-structure 
determined by the homotopy class of~$\xi_\Gamma$.
The first Chern class of this $\Spinc$-structure
is precisely $c_1(\xi_\Gamma)$ and is thus not torsion, so by results
of Kronheimer and Mrowka \cite{KronheimerMrowka:SWF}, the monopole Floer 
homology is finitely generated.  
Observe now that if $[\boldsymbol{\emptyset}] \ne 0$, then
$\ECH_*(B \times S^1,\xi_\Gamma , 0)$ cannot be finitely generated, as the
generators $\boldsymbol{\gamma}_1,\boldsymbol{\gamma}_2,\ldots$ will be 
linearly independent, so this is a contradiction.
\end{proof}

\begin{remark}
Since the proof of Theorem~\ref{thm:S1productVanishing} relies on gauge
theory in addition to holomorphic curves,
we do not know whether $(B \times S^1,\xi_\Gamma)$ has a finite order
of algebraic torsion when $\Gamma$ divides $B$ into two connected components
with differing positive genus, and
there is no apparent reason to believe that it should.  It would interesting
to resolve this question, as it is not known thus far whether the filling 
obstructions furnished by SFT and the ECH contact invariant in dimension
three are independent.
\end{remark}

\subsubsection{Parabolic torus bundles}
\label{sec:parabolic}

A specific subclass of the contact circle bundles covered by the results above
can also be described as torus bundles with parabolic monodromy.  
All such bundles can be presented in the form
$$
T_\pm(k) := (\RR \times \TT^2) \Big/ (\rho,z) \sim (\rho+1, \pm A_k z)
$$
for some $k \in \ZZ$, where $A_k = \begin{pmatrix} 1 & 0 \\ k & 1 \end{pmatrix}$.
We will denote the coordinates on $\TT^2 = S^1 \times S^1$ by $(\phi,\theta)$.

Given an integer $m \ge 0$, we define a \emph{rotational} contact structure
$\zeta_m$ whose lift to $\RR \times \TT^2$ can be written as
$$
\zeta_m = \ker\left[ f(\rho)\, d\theta + g(\rho)\, d\phi \right]
$$
for some path $(f,g) : \RR \to \RR^2 \setminus \{0\}$ that rotates about the
origin by an angle of greater than $2\pi m$ but at most $2\pi (m+1)$
as $\rho$ varies along a closed unit interval in~$\RR$.  Also, for
$m \in \NN$ we define
$$
\eta_m = \ker\big( |k+1| \cos(2\pi m \phi) \, d\rho + \sin(2\pi m \phi)\, d\theta
- k \rho \sin(2\pi m \phi) \, d\phi \big).
$$
By results of Giroux \cites{Giroux:infiniteTendues,Giroux:bifurcations},
every universally tight contact structure on $T_\pm(k)$ is diffeomorphic to
at least one of these models. 

Defining a circle bundle $\pi : T_+(k) \to \TT^2 = (\RR \times S^1) / \ZZ : 
[(\rho,\phi,\theta)] \mapsto [(\rho,\phi)]$, all of the contact structures 
$\zeta_m$ and $\eta_m$ are $S^1$-invariant and partitioned by multicurves
$\Gamma \subset \TT^2$, where:
\begin{itemize}
\item For $\zeta_m$ with $k \ge 0$, $\Gamma$ consists of $2(m+1)$ curves of
the form $\{\rho = \text{const}\}$;
\item For $\zeta_m$ with $k < 0$, $\Gamma$ consists of $2m$ curves of the
form $\{\rho = \text{const}\}$;
\item For $\eta_m$, $\Gamma$ consists of $2m$ curves of the form
$\{\phi = \text{const}\}$.
\end{itemize}
The Euler number of the bundle $\pi \colon T_+(k) \to \TT^2$ is $k$.

Note that for each $m \in \NN$, \cite{Giroux:infiniteTendues}*{Th\'eor\`eme~6} proves
that $(T_+(k),\eta_m)$ and $(T_+(k),\zeta_{m-1})$ are contactomorphic when 
$k \ge 0$, while $(T_+(k),\eta_m)$ and $(T_+(k),\zeta_m)$ are contactomorphic 
when $k < 0$.  Thus the following corollary of Theorem~\ref{thm:circleBundles}
covers all universally tight contact structures on $T_+(k)$ with the exception 
of $\zeta_0$ for $k < 0$:

\begin{cor}
Given $k \in \ZZ$ and $m \in \NN$, $(T_+(k),\eta_m)$ is strongly fillable
if and only if $m=1$ and 
$k \ge 0$, 
and its
strong fillings are all Lefschetz fibrations over the annulus
with annular fibers and monodromy maps that fix the boundary.
\qed
\end{cor}

Similarly, both families of contact structures on $T_-(k)$ are $S^1$-invariant for the
non-orientable circle bundle $\pi : T_-(k) \to \KK^2 : [(\rho,\phi,\theta)]
\mapsto [(\rho,\phi)]$ over the Klein bottle
$$
\KK^2 = (\RR \times S^1) \Big/ (\rho,\phi) \sim (\rho+1,-\phi).
$$
The multicurves $\Gamma \subset \KK^2$ can now be described as follows:
\begin{itemize}
\item For $\zeta_m$, $\Gamma$ consists of $2m+1$ curves of the form
$\{\rho = \text{const}\}$ with trivial normal bundle;
\item For $\eta_m$, $\Gamma$ includes the two curves $\{ \phi=0 \}$ and
$\{ \phi=1/2 \}$ with nontrivial normal bundles and $m-1$ additional curves
of the form $\{ \rho=\text{const} \}$ with trivial normal bundles.
\end{itemize}
According to \cite{Giroux:infiniteTendues}*{Th\'eor\`eme~6}, all contact structures
in this list on each individual manifold $T_-(k)$ are pairwise non-diffeomorphic.
For $m \ge 1$, $(T_-(k),\zeta_m)$ has positive Giroux torsion and is thus
known to be not fillable.  For $(T_-(k),\zeta_0)$, $\KK^2 \setminus \Gamma$ is
homotopy equivalent to an annulus and the condition $k \le 2(g+1)$ in
Theorem~\ref{thm:circleBundles2} is
satisfied but with strict inequality, so the Lefschetz-amenability condition
fails and we cannot classify fillings (but see \S\ref{sec:exotic} for
more on this example).  It is also not hard to check that $(T_-(k),\eta_m)$
has positive Giroux torsion for every $m \ge 3$. 
We do not know if it has Giroux torsion for $m=2$.
Nevertheless, Corollary~\ref{cor:S1planarTorsion} implies
that $(T_-(k),\eta_2)$ does have planar $1$-torsion, and is thus
non-fillable. 
Finally, for
$\eta_1$ we can apply Theorem~\ref{thm:circleBundles2} to deduce
uniqueness of fillings. Notice that the Euler number of $\pi \colon T_-(k) \to
K^2$ is $-k$. This yields:
\begin{cor}
Given $k \in \ZZ$ and $m \in \NN$, $(T_-(k),\eta_m)$ is strongly fillable
if and only if $m=1$ and $k \le 0$, and its
strong fillings are all Lefschetz fibrations over the annulus with
annular fibers and monodromy maps that interchange boundary components.
\qed
\end{cor}

\begin{example}
\label{ex:eta1}
The unique Stein filling of $(T_-(0),\eta_1)$ is presentable as the smooth
annulus fibration over the annulus $[-1,1] \times S^1$ such that the 
monodromy around $\{*\} \times S^1$ is $[-1,1] \times S^1 \to
[-1,1] \times S^1 : (s,t) \mapsto (-s,-t)$ (i.e.~the ``flip'' map
    appearing in the proof of Theorem~\ref{thm:circleBundles2}).
\end{example}

\subsection{The non-amenable case and exotic fibers}
\label{sec:exotic}

The most important part of Theorem~\ref{thm:classification} does not hold 
for spinal open books that are not Lefschetz-amenable, but our arguments
will still provide something that can be used to achieve a
classification of fillings in the general case.  
The following is a summary 
of some more technical results proved in \S\ref{sec:impressivePart};
for this discussion we permit ourselves the luxury of a slightly imprecise 
statement since we do not intend to prove anything with it.

\begin{remark}
We elected not to attempt any full classification of fillings for non-amenable
cases in this paper, but since its original appearance in preprint form,
such classification results have in fact been achieved 
in a followup paper by Min, Roy and Wang \cite{MinRoyWang:exotic},
based in part on the results stated below.
\end{remark}

\begin{prop}
\label{prop:generalFoliation}
Suppose $(W,\omega)$ is a weak symplectic filling of a contact $3$-manifold
$(M,\xi)$ supported by a partially planar spinal open book $\boldsymbol{\pi}$ 
such that $\omega$ is exact on the spine~$M\spine$.
Then $(W,\omega)$ admits a symplectic completion $\widehat{W}$ with a
compatible almost complex structure $J$ and a smooth surjective map
$$
\Pi : \widehat{W} \to \mM,
$$
where $\mM$ is an oriented surface with cylindrical ends that are in
bijective correspondence to the connected components of~$M\paper$,
and every fiber
$\Pi^{-1}(*)$ is a (possibly nodal) $J$-holomorphic curve with cylindrical
ends asymptotic to closed Reeb orbits in~$(M,\xi)$.  More precisely,
$\mM$ admits a partition
$$
\mM = \mM_\reg \cup \mM_\sing \cup \mM_\exot,
$$
where $\mM_\sing$ and $\mM_\exot$ are each finite sets, and
\begin{itemize}
\item Fibers in $\Pi^{-1}(\mM_\reg)$ are embedded $J$-holomorphic curves
asymptotic to simply covered Reeb orbits;
\item Fibers in $\Pi^{-1}(\mM_\sing)$ are nodal $J$-holomorphic curves
asymptotic to simply covered Reeb orbits,
each formed as the union of two embedded curves that intersect each other
exactly once, transversely;
\item Fibers in $\Pi^{-1}(\mM_\exot)$ are embedded $J$-holomorphic curves
with one end asymptotic to a doubly covered Reeb orbit, and all other ends
asymptotic to simply covered orbits.
\end{itemize}
For each vertebra $\Sigma_i$, there is also a properly embedded 
$J$-holomorphic curve $S_i \subset \widehat{W}$ such that
$$
\Pi|_{S_i} : S_i \to \mM
$$
is a proper branched cover with simple branch points and is
$m_\gamma$-to-$1$ on the cylindrical end corresponding to each boundary 
component of $\gamma \subset \p\Sigma_i$, where $m_\gamma \in \NN$ is the
corresponding multiplicity (see Definition~\ref{defn:multiplicity}).
Moreover, $\Pi|_{S_i}$ is an honest covering map (i.e.~without branch points)
if and only if $\mM_\exot = \emptyset$.
Finally, all of this data deforms smoothly under generic deformations of
$J$ compatible with deformations of the symplectic structure.
\end{prop}

The distinguishing feature of the Lefschetz-amenable case is that the
set $\mM_\exot$ is guaranteed to be empty, in which case we will show in \S\ref{sec:Lefschetz}
that $\Pi : \widehat{W} \to \mM$ gives rise to a Lefschetz fibration
filling~$\boldsymbol{\pi}$, with singular fibers corresponding to the
finite set~$\mM_\sing$.  When this condition fails and 
$\Pi|_{S_i} : S_i \to \mM$ has branch points, the proposition yields a
more general type of decomposition of the filling, including the so-called
\defin{exotic fibers} $\Pi^{-1}(u)$ for $u \in \mM_\exot$.  These are
singular in the sense that they have different topology from the nearby 
regular fibers, but their singularities occur ``at infinity'' and resemble 
the multiple fibers of a Seifert fibration on a $3$-manifold.  
A more precise topological description of exotic fibers is
given in \cite{MinRoyWang:exotic}, where it is used to prove
classification results for fillings without the Lefschetz-amenability
assumption.  

We now give three examples where one can see that exotic
fibers must appear.

\begin{example}
\label{ex:zeta0}
The parabolic torus bundles $(T_-(k),\zeta_0)$ discussed in
\S\ref{sec:parabolic} can be presented as $S^1$-invariant contact structures
on circle bundles over the Klein bottle~$\KK^2$, partitioned along a single
co-orientable curve $\Gamma \subset \KK^2$ such that $\KK^2 \setminus \Gamma$
is a cylinder.  It follows that $(T_-(k),\zeta_0)$ is supported by a
spinal open book $\boldsymbol{\pi}$ with one spine component fibering 
over the annulus, and one family of annular pages whose two boundary 
components meet the spine at separate boundary components, each with 
multiplicity~$1$.  The uniformity condition is satisfied because there 
exists a double branched cover of $[-1,1] \times S^1$ over the disk whose 
restriction to each boundary component has degree~$1$, but since every such 
branched cover has (algebraically) two branch points, $\boldsymbol{\pi}$ is not
Lefschetz-amenable.  Proposition~\ref{prop:generalFoliation} now endows the
completion $\widehat{W}$ of any filling of $(T_-(k),\zeta_0)$ with a
$J$-holomorphic foliation that includes exotic fibers.
\end{example}

\begin{remark}
\label{remark:resolve}
Note that while fillings of $(T_-(k),\zeta_0)$ cannot be presented as
Lefschetz fibrations filling~$\boldsymbol{\pi}$, they do sometimes exist:
e.g.~$(T_-(0),\zeta_0)$ can be presented as a quotient of the standard
contact~$\TT^3$ by a free contact $\ZZ_2$-action that extends over the filling
$T^*\TT^2$ of $\TT^3$ as a symplectic $\ZZ_2$-action with four fixed points
on the zero-section.  The resulting symplectic orbifold has four singular 
points with neighborhoods bounded by the standard contact~$\RR P^3$, so the
singularities can be resolved by replacing these neighborhoods with 
neighborhoods of the zero-section in $T^*S^2$.  If we choose a 
$\ZZ_2$-invariant plurisubharmonic function on $T^*\TT^2$ with local minima
at the four fixed points, then this desingularization results in a Stein
filling $W$ of $(T_-(0),\zeta_0)$.  Note that $H_2(W) \ne 0$, whereas
the unique Stein filling of $(T_-(0),\eta_1)$ that we saw in Example~\ref{ex:eta1}
has trivial second homology, so this furnishes a new proof of
Giroux's theorem \cite{Giroux:infiniteTendues}
that $\zeta_0$ and $\eta_1$ are non-isomorphic contact structures on~$T_-(0)$.
\end{remark}

\begin{example}
\label{ex:S1S2/Z2}
The standard contact structure $\xi\std$ on $S^1 \times S^2$ can be written
in the form $\ker \left[ f(\theta)\, dt + g(\theta)\, d\phi \right]$ where $t \in S^1 =
\RR / \ZZ$ is the standard coodinate, $(\theta,\phi)$ are spherical polar
coordinates on~$S^2$, and $(f,g) : [0,\pi] \to \RR^2$ traces a path that
winds counterclockwise from the positive to the negative $x$-axis.
Choosing $f$ and $g$ to be odd and even functions respectively, we can define
the quotient
$$
(M,\xi) = (S^1 \times S^2,\xi\std) \Big/ (t,\theta,\phi) \sim (-t,\pi-\theta,\phi+\pi),
$$
which is a non-orientable circle bundle over~$\RR P^2$ with orientable total space.
The open book $(S^1 \times S^2) \setminus \{\theta = 0,\pi\} \to S^1 :
(t,\theta,\phi) \mapsto \phi$ then projects to a \emph{rational} open book on $M$
supporting~$\xi$, with one binding component and annular pages such that
the monodromy is an involution exchanging boundary components.  This can also be
interpreted as a spinal open book~$\boldsymbol{\pi}$, where the spine is a 
single solid torus and the paper is a single $S^1$-family of annuli touching 
it with multiplicity~$2$; in fact, this is the same construction that arises
naturally if we view $(M,\xi)$ as a circle bundle.
Since the only vertebra is a disk, uniformity 
demands a branched double cover of $\DD^2$ over itself,
and such a cover will always have one branch point, so $\boldsymbol{\pi}$
is not Lefschetz-amenable.  Any completed filling of $(M,\xi)$ will then carry a
foliation whose generic leaves are $J$-holomorphic cylinders, but that also 
includes exotic fibers in the form of
$J$-holomorphic planes asymptotic to a doubly covered Reeb orbit.
\end{example}

\begin{example}
\label{ex:bizarre}
We now exhibit a planar spinal open book that is not Lefschetz-amenable for
which some but not all fillings can be described as Lefschetz fibrations.

Let $\Sigma_g$ denote the compact connected and oriented surface with
genus~$g$, and denote by $\Sigma_{g,m}$ the compact surface with boundary
obtained by punching $m$ holes in~$\Sigma_g$.  The surface $\Sigma_{2,2}$
admits two double branched covers
$$
\Sigma_{2,2} \stackrel{\varphi_1}{\longrightarrow} \Sigma_{1,2}, \qquad
\Sigma_{2,2} \stackrel{\varphi_0}{\longrightarrow} \Sigma_{0,2},
$$
where both are $2$-to-$1$ maps on each boundary component, and the
Riemann-Hurwitz formula implies that $\varphi_1$ is unbranched, while
$\varphi_0$ has four simple branch points.  The resulting deck transformations
define a pair of orientation-preserving involutions
$$
\psi_1, \psi_0 : \Sigma_{2,2} \to \Sigma_{2,2},
$$
which we can assume are symplectic for suitable choices of area forms
on~$\Sigma_{2,2}$.  Now consider a Weinstein domain defined via the trivial 
annulus fibration $\widetilde{E} = \Sigma_{2,2} \times \Sigma_{0,2}$;
using the natural correspondence between annular
spinal open books and circle bundles, we can view the contact boundary
$(\widetilde{M},\widetilde{\xi})$ of $\widetilde{E}$ 
as a trivial circle bundle over the symmetric
double $\Sigma_5$ formed by gluing together two copies of $\Sigma_{2,2}$
along an orientation-reversing map of their boundaries, and $\widetilde{\xi}$ is
an $S^1$-invariant contact structure partitioned by $\p\Sigma_{2,2} \subset
\Sigma_5$.  The contact manifold we're actually interested in is a
$\ZZ_2$-quotient of this: define the Weinstein domain
$$
E = (\Sigma_{2,2} \times \Sigma_{0,2}) \Big/ (z,w) \sim (\psi_1(z),\sigma(w)),
$$
where $\sigma$ is the involution $(s,t) \mapsto (-s,-t)$ on
$\Sigma_{0,2} = [-1,1] \times S^1$.  This is obviously a symplectic manifold
(for suitable choices of area forms on $\Sigma_{2,2}$ and $\Sigma_{0,2}$) since
the involution $\psi_1 \times \sigma$ is symplectic and without fixed points, 
and one can see
its Weinstein structure in terms of the natural annulus fibration over
$\Sigma_{2,2} / \ZZ_2 = \Sigma_{1,2}$ that it inherits from the trivial
annulus fibration on~$\widetilde{E}$.  The induced spinal open book 
$\boldsymbol{\pi}$ on the
boundary $(M,\xi)$ of $E$ has two paper components with monodromy exchanging
the boundary components of the annulus, and these are attached to separate
boundary components of a single spine component of the form 
$S^1 \times \Sigma_{2,2}$.  Viewing $(M,\xi)$ as an $S^1$-invariant circle
bundle, it fibers over the union of
$\Sigma_{2,2}$ with two M\"obius bands, i.e.~$\Sigma_2 \# 2 \RR P^2$,
with $\xi$ partitioned by a multicurve $\Gamma \subset \Sigma_2 \# 2\RR P^2$
with two components, both not co-orientable, and
$(\Sigma_2 \# 2 \RR P^2) \setminus \Gamma$ is thus a genus $2$ surface with
two cylindrical ends.  As a consequence,
the condition $k \le 2(g+1)$ in Theorem~\ref{thm:circleBundles2} is satisfied, 
but with strict inequality, so
$\boldsymbol{\pi}$ is not Lefschetz-amenable.  

In the context of
Proposition~\ref{prop:generalFoliation}, this means that there are multiple
possibilities for an unknown filling $W$ of $(M,\xi)$: it may indeed admit
a Lefschetz fibration over $\Sigma_{1,2}$ since there exist unbranched
double covers $\Sigma_{2,2} \to \Sigma_{1,2}$, and the filling $E$ described
above is an example of this.  But the moduli space $\mM$ in the proposition
could also have the topology of $\Sigma_{0,2}$, with the branch points of
$\varphi_0 : \Sigma_{2,2} \to \Sigma_{0,2}$ giving rise to exotic fibers.
To see that this also \emph{must} sometimes happen, notice that we can define an
alternative filling of $(M,\xi)$ by starting from the symplectic orbifold
$$
\widehat{E}' := (\Sigma_{2,2} \times \Sigma_{0,2}) \Big/ (z,w) \sim
(\psi_0(z),\sigma(w)),
$$
as the spinal open book on $\p\widetilde{E}$ induced by the trivial
fibration also descends to $\boldsymbol{\pi}$ on 
$\p\widehat{E}' = \p \widetilde{E} / \ZZ_2$.  
The singularities of $\widehat{E}'$ at fixed points of $\psi_0 \times \sigma$
(two for each branch point of~$\varphi_0$) can be resolved by replacing 
neighborhoods with copies of $T^*S^2$ 
(cf.~Remark~\ref{remark:resolve}).  Choosing a $\ZZ_2$-invariant 
plurisubharmonic function on $\Sigma_{2,2} \times \Sigma_{0,2}$ with local
minima at the fixed points, one produces in this way a new Stein filling
$E'$ of $(M,\xi)$, in which the eight orbifold singularities of $\widehat{E}'$
have been replaced by Lagrangian spheres.  

We now notice that the contact manifolds $\p E$ and $\p E'$ are both
circle bundles over the same non-orientable base, with invariant contact
structures, partitioned by the same multicurve. Furthermore, by 
constructing a section of $\p\widetilde{E} \to \Sigma_5$ that is
$\ZZ_2$-equivariant for either of the two $\ZZ_2$-actions, we deduce that these
two are the same bundle. 
By construction, $E'$ has eight Lagrangian spheres, and we claim that
$E$ has none, thus proving that $E$ and $E'$ are non-diffeomorphic Stein
fillings of $(M,\xi)$.  Indeed,
the map $\widetilde E \to E$ is an honest 2-to-1 covering map, so the
    preimage of any Lagrangian sphere in $E$ would be a pair of Lagrangian
    spheres in $\widetilde E$, in particular, having square $-2$. 
But all classes in $H_2(\widetilde E)$ have self-intersection $0$ by the
K\"unneth formula (or alternatively: none of them are represented by
spheres, since $\pi_2(\widetilde E)$ is trivial).
\end{example}

\section{Generalities on punctured holomorphic curves}
\label{sec:SHSgeneral}

The contents of this section are mostly standard, but a quick review
seems worthwhile in order to fix terminology and notation in preparation
for later holomorphic curve arguments.

\subsection{Stable Hamiltonian structures and symplectization ends}
\label{sec:stable}

Stable Hamiltonian structures
(or ``SHS'' for short) were first introduced in a dynamical context in
\cite{HoferZehnder} and 
reappeared in \cite{SFTcompactness} as the natural setting for the theory
of punctured holomorphic curves.  For our purposes, they provide a convenient
generalization of the notion of the \emph{symplectization} of a contact
manifold.  The particular SHS that arise in this paper can be thought of as
degenerate limits of certain contact forms in which explicit constructions 
of holomorphic curves become much easier.  For a more comprehensive
discussion of the topology of stable Hamiltonian structures,
see \cite{CieliebakVolkov}.

Given an oriented $(2n-1)$-dimensional manifold~$M$, a pair 
$$
\hH = (\Omega,\Lambda)
$$
consisting of a smooth $2$-form $\Omega$ and $1$-form $\Lambda$ is called a
\defin{stable Hamiltonian structure} if
\begin{enumerate}[label=(\roman{enumi})]
\item $\Lambda \wedge \Omega^{n-1} > 0$, 
\item $d\Omega = 0$,
\item $\ker\Omega \subset \ker d\Lambda$.
\end{enumerate}
Such a pair gives rise to two important objects: a co-oriented
hyperplane distribution $\Xi := \ker\Lambda$, and a positively transverse
vector field~$R_\hH$ determined by the conditions
$$
\Omega(R_\hH,\cdot) \equiv 0 \quad\text{ and }\quad
\Lambda(R_\hH) \equiv 1.
$$
By analogy with contact forms, we will refer to $R_\hH$ as the
\defin{Reeb vector field} of~$\hH$.  It reduces to the usual
contact notion of the Reeb vector field for $\Lambda$ whenever the latter
happens also to be a contact form; SHS with this property will be said to
be of \defin{contact type}.  Note that this definition does not require
$\Omega$ to exact, though $(d\Lambda,\Lambda)$ is always an example of
an SHS when $\Lambda$ is contact.
If $\dim M = 3$, we will say that $\hH = (\Omega,\Lambda)$ is of
\defin{confoliation type} whenever
$$
\Lambda \wedge d\Lambda \ge 0,
$$
which is equivalent to the condition $d\Lambda|_{\Xi} \ge 0$ and means that
$\Xi \subset TM$ is a \emph{confoliation} in the 
sense of \cite{EliashbergThurston}.

Stable Hamiltonian structures arise naturally in the context of
\defin{stable hypersurfaces} as defined in \cite{HoferZehnder}.
Given a symplectic manifold $(W,\omega)$, a compact hypersurface $M \subset W$
is called \defin{stable} if there exists a vector field $Z$ on a neighborhood
of $M$ in $W$ that is everywhere transverse to $M$ and determines a
$1$-parameter family of hypersurfaces with isomorphic characteristic line
fields: more precisely, this means that if $\Phi_Z^t$ denotes the flow
of~$Z$, then the real line bundle
$$
\ker \left( (\Phi_Z^t)^*\omega|_{TM} \right) \subset TM
$$
is independent of $t$ near $t=0$.  In this case we call $Z$ a
\defin{stabilizing vector field} for $M$, and the pair $(\Omega,\Lambda)$
defined by
$$
\Omega := \omega|_{TM}, \qquad \Lambda := \iota_Z \omega|_{TM}
$$
is an SHS on~$M$.  One can use the Moser deformation trick to show that
a neighborhood of $M$ in $(W,\omega)$ is then symplectomorphic to
a collar of the form
\begin{equation}
\label{eqn:SHScollarGeneral}
\big( (-\delta,\delta) \times M , d(t\Lambda) + \Omega \big)
\end{equation}
for sufficiently small $\delta > 0$, where $t$ denotes the coordinate
on~$(-\delta,\delta)$ and the symplectomorphism identifies $\{0\} \times M$ 
with $M \subset W$.  Conversely, $d(t\Lambda) + \Omega$ is 
symplectic on $(-\delta,\delta) \times M$ whenever
$(\Omega,\Lambda)$ is an SHS and $\delta > 0$ is sufficiently small.
The following variant of \eqref{eqn:SHScollarGeneral} is less commonly seen
in the literature but will be convenient for our purposes: defining the
alternative coordinate $r := \log(t+1)$ on the first factor and adjusting
the value of $\delta > 0$ accordingly, \eqref{eqn:SHScollarGeneral} becomes
\begin{equation}
\label{eqn:SHScollarGeneral2}
\big( (-\delta,\delta) \times M , d\left( (e^r - 1)\Lambda \right) + \Omega \big) .
\end{equation}
As an important special case, $Z$ is always stabilizing if it is a
\defin{Liouville} vector field transverse to~$M$, 
i.e.~$\Lie_Z \omega = \omega$.  In this case $\lambda := \iota_Z\omega$
satisfies $d\lambda = \omega$ and restricts to $M$ as a contact form
$\alpha := \lambda|_{TM}$, hence the resulting stable Hamiltonian structure
is $(d\alpha,\alpha)$ and the symplectic structure
in \eqref{eqn:SHScollarGeneral2} takes the form
$d(e^r \alpha)$, one of the standard formulas for the symplectization
$\RR \times M$ of the contact manifold $(M,\Xi = \ker\alpha)$.

By analogy with the contact case, one can define the \defin{symplectization
of $(M,\hH)$} for any stable Hamiltonian structure
$\hH = (\Omega,\Lambda)$ by choosing suitable
diffeomorphisms of \eqref{eqn:SHScollarGeneral2} with $\RR \times M$:
equivalently, this means we consider $\RR \times M$ with the family of
symplectic forms $\omega_\varphi$ defined by
\begin{equation}
\label{eqn:SHSsymplectization}
\omega_\varphi := 
d\Big( \big( e^{\varphi(r)} - 1 \big) \Lambda \Big) + \Omega,
\end{equation}
where $\varphi$ is chosen arbitrarily from the set
\begin{equation}
\label{eqn:tT}
\tT := \left\{ \varphi \in C^\infty(\RR,(-\delta,\delta))\ \big|\ 
\varphi' > 0 \right\}.
\end{equation}
More generally, suppose $(W,\omega)$ is a compact $2n$-dimensional 
symplectic manifold
with stable boundary $\p W = -M_- \coprod M_+$, equipped with a stabilizing
vector field $Z$ that points inward at $M_-$ and outward at~$M_+$.
Denote the induced SHS on $M_\pm$ by $\hH_\pm = (\Omega_\pm,\Lambda_\pm)$;
note that the orientation conventions here are chosen such that
$\Lambda_\pm \wedge \Omega_\pm^{n-1} > 0$ on~$M_\pm$.  We can now
identify neighborhoods of $M_\pm$ in $(W,\omega)$ symplectically with
collars of the form
\begin{equation*}
\begin{split}
& \left( [0,\delta) \times M_+ , d\big( (e^r - 1)\Lambda_+ \big) + \Omega_+ \right), \\
& \left( (-\delta,0] \times M_- , d\big( (e^r - 1)\Lambda_- \big) + \Omega_- \right).
\end{split}
\end{equation*}
Modifying \eqref{eqn:tT} to
\begin{equation}
\label{eqn:tTcob}
\tT := \left\{ \varphi \in C^\infty(\RR,(-\delta,\delta))\ \big|\ 
\varphi' > 0 \text{ and $\varphi(r) = r$ for $r$ near~$0$} \right\},
\end{equation}
we can use any $\varphi \in \tT$ to define a \defin{symplectic completion}
$(\widehat{W},\omega_\varphi)$ of $(W,\omega)$ by
$$
\widehat{W} := \big( (-\infty,0] \times M_- \big) \cup_{M_-} W
\cup_{M_+} \big( [0,\infty) \times M_+ \big),
$$
where the above collar neighborhoods are used to glue the pieces together
smoothly and the symplectic form is defined by
$$
\omega_\varphi := \begin{cases}
d\left( (e^{\varphi(r)} - 1)\Lambda_- \right) + \Omega_- & 
\text{ on $(-\infty,0] \times M_-$},\\
\omega & \text{ on $W$},\\
d\left( (e^{\varphi(r)} - 1)\Lambda_+ \right) + \Omega_+ & 
\text{ on $[0,\infty) \times M_+$}.
\end{cases}
$$

\subsection{Finite energy holomorphic curves}
\label{sec:energy}

Given a stable Hamiltonian structure 
$\hH = (\Omega,\Lambda)$ with induced hyperplane field $\Xi = \ker\Lambda$ and
Reeb vector field $R_\hH$, we denote by $\jJ(\hH)$ the space of 
$\RR$-invariant almost complex structures on the symplectization
$\RR \times M$ that are \defin{compatible} with $\hH$,
meaning that for $J \in \jJ(\hH)$,
\begin{enumerate}[label=(\roman{enumi})]
\item $J \p_r = R_\hH$, where $\p_r$ denotes the unit vector in the
$\RR$-direction;
\item $J(\Xi) = \Xi$ and $\Omega(\cdot,J\cdot)$
defines a bundle metric on~$\Xi$.
\end{enumerate}
In the special case $(\Omega,\Lambda) = (d\alpha,\alpha)$ with $\alpha$
a contact form, this reproduces the standard definition for almost
complex structures compatible with contact forms, and we shall 
in this case abbreviate
$$
\jJ(\alpha) := \jJ(\hH), \quad \text{ where } \quad
\hH := (d\alpha,\alpha).
$$
The following trivial observation will be helpful because it permits
the use of a slightly nonstandard stable Hamiltonian structure (in particular
with $\Omega$ non-exact) for computing holomorphic curve
invariants that are usually defined in terms of contact forms.
\begin{prop}
\label{prop:sameJ}
Suppose $\dim M = 3$, $\alpha$ is a contact form, and 
$\Omega$ is any closed $2$-form for which $\hH \coloneqq (\Omega,\alpha)$ is a 
stable Hamiltonian structure. 
Then $\jJ(\hH) = \jJ(\alpha)$.
\end{prop}
\begin{proof}
Since $\alpha$ is contact, the Reeb vector field $R_\hH$ is the same as the
contact Reeb vector field for~$\alpha$.  The only difference between the 
conditions defining $\jJ(\hH)$ and $\jJ(\alpha)$ is thus that
$J \colon \Xi \to \Xi$ must be compatible with 
$\Omega|_{\Xi}$ in the first case and compatible with 
$d\alpha|_{\Xi}$ in the second case. Since $\Xi$
is complex $1$-dimensional and $\Omega|_\Xi$ and $d\alpha|_\Xi$ induce the
same orientation, these conditions are identical.
\end{proof}

Any given $J \in \jJ(\hH)$ is tamed by all of the symplectic
forms $\omega_\varphi$ in \eqref{eqn:SHSsymplectization} on the symplectization $\RR \times M$
if the constant $\delta > 0$ in \eqref{eqn:tT} is chosen sufficiently small;
in the case $\dim M = 3$, which will be our primary interest, $J$ is also
$\omega_\varphi$-compatible for all $\varphi \in \tT$.
Given a Riemann surface
$(S,j)$ and $J$-holomorphic curve $u : (S,j) \to (\RR\times M,J)$,
we therefore define the \defin{energy} of~$u$ by
\begin{equation}
\label{eqn:energy1}
E(u) := \sup_{\varphi \in \tT} \int_S u^*\omega_\varphi.
\end{equation}
The same formula can be used to define the energy of a
$J$-holomorphic curve $u : (S,j) \to (\widehat{W},J)$, where 
$\widehat{W}$ denotes the completion of a cobordism $(W,\omega)$ with
stable boundary $-M_- \coprod M_+$ as in \S\ref{sec:stable}, and $J$
is chosen from the space
$$
\jJ(\omega,\hH_+,\hH_-)
$$
consisting of almost complex structures $J$ on $\widehat{W}$ such that
$J|_W$ is compatible with $\omega$ and
\begin{equation*}
\begin{split}
J_+ &:= J|_{[0,\infty) \times M_+} \in \jJ(\hH_+), \\
J_- &:= J|_{(-\infty,0] \times M_-} \in \jJ(\hH_-).
\end{split}
\end{equation*}
Any $J \in \jJ(\omega,\hH_+,\hH_-)$ is 
$\omega_\varphi$-tame
on
$\widehat{W}$ for every $\varphi \in \tT$, hence the energy
\eqref{eqn:energy1} is always nonnegative, and is positive unless the
curve is constant.

\begin{remark}
The notion of energy described here is slightly
different from the one defined in \cite{SFTcompactness}, but is equivalent
to it in the sense that uniform bounds on either imply uniform bounds on
the other.
\end{remark}

We will always take the domain of our holomorphic curves to be punctured 
Riemann surfaces $\dot{S} = S \setminus \Gamma$, i.e.~$(S,j)$ is
a closed Riemann surface and $\Gamma \subset S$ is a finite ordered set.
The surface $\dot{S}$ will also be assumed to be connected unless otherwise specified.
When this needs to be emphasized, we will call a curve 
$u : \dot{S} \to \widehat{W}$ \defin{connected} whenever its domain is
connected; if $\dot{S}$ is disconnected, then the \defin{connected components}
of $u$ are defined to be its restriction to the connected components
of~$\dot{S}$.
A punctured $J$-holomorphic curve 
$u : \dot{S} \to \widehat{W}$
with positive finite energy is either positively or
negatively asymptotic to periodic orbits of~$R_{\hH_+}$ or $R_{\hH_-}$
respectively at each of its nonremovable
punctures; in short, finite energy punctured
$J$-holomorphic curves are \defin{asymptotically cylindrical},
cf.~\cite{SFTcompactness}.  

\begin{remark}
The terms ``finite energy'' and
``asymptotically cylindrical'' are often used as synonyms when describing
$J$-holomorphic curves, and we shall generally consider these conditions to be
implied whenever we refer to ``punctured'' holomorphic curves.
The underlying presumption, unless stated otherwise, is always that the domain 
is the complement of a finite (sometimes empty) set of points in a closed
Riemann surface, and that all the punctures are non-removable.
\end{remark}

We consider two holomorphic curves equivalent if they are
related to each other by biholomorphic maps of their domains that
take punctures to punctures with the ordering of punctures preserved.
The resulting equivalence classes are
called \defin{unparametrized} $J$-holomorphic curves.  We will often
abuse notation and use a parametrized map $u : \dot{S} \to \widehat{W}$
to refer to the \emph{unparametrized} curve that it
represents.  When speaking of moduli spaces, we will always mean a
space of unparametrized $J$-holomorphic curves that are asymptotically 
cylindrical, with a topology such that a sequence is considered to converge 
if and only if one can find parametrizations with a fixed punctured domain 
$\dot{S} = S \setminus \Gamma$
such that the complex structures on $S$ converge in $C^\infty$
while the maps $\dot{S} \to \widehat{W}$ converge in $C^\infty$ on 
compact subsets and in
$C^0$ up to the cylindrical ends (measured via any choice of
translation-invariant metric on the ends).  For a given $J$, the 
corresponding moduli will typically be denoted by
$$
\mM(J).
$$

In the $\RR$-invariant case
$J \in \jJ(\hH)$, an important example of a finite
energy holomorphic curve is the \defin{trivial cylinder}
$$
u : \RR\times S^1 \to \RR\times M : (s,t) \mapsto (Ts,x(Tt))
$$
over any orbit $x : \RR \to M$ with $x(T) = x(0)$ for $T > 0$; this curve can
be parametrized as a punctured sphere with one positive and one negative
puncture, both approaching the same orbit.
We shall sometimes abbreviate the unparametrized curve
represented by the trivial cylinder described above as
$$
\RR\times \gamma,
$$
where~$\gamma : S^1 \to M : t \mapsto x(Tt)$ specifies the periodic orbit 
in question, which may in general be multiply covered.

If the asymptotic
orbits of a finite energy $J$-holomorphic curve $u$
are all nondegenerate or Morse-Bott, then the moduli space $\mM(J)$
near~$u$ can be described as the 
zero set of a Fredholm section whose index corresponds to the 
\defin{virtual dimension}
of the moduli space near~$u$.  We will call this virtual dimension the
\defin{index} of~$u$ and denote it by $\ind(u) \in \ZZ$.  By a punctured
version of the Riemann-Roch theorem (see \cite{Schwarz}), the index of a curve 
$u : \dot{S} \to \widehat{W}$ can be written as
\begin{equation}
\label{eqn:index}
\ind(u) = (n-3) \chi(\dot{S}) + 2 c_1^\Phi(u^*T\widehat{W}) +
\sum_{z \in \Gamma^+} \muCZ^\Phi(\gamma_z) - 
\sum_{z \in \Gamma^-} \muCZ^\Phi(\gamma_z),
\end{equation}
where $\dim_\RR \widehat{W} = 2n$, $\Gamma = \Gamma^+ \coprod \Gamma^-$ are
the positive and negative punctures with asymptotic orbits 
$\{\gamma_z\}_{z \in \Gamma}$, $\Phi$ is an arbitrary choice of
complex trivializations for the bundles $\Xi_\pm = \ker \Lambda_\pm$ along
these orbits, $\muCZ^\Phi(\gamma_z) \in \ZZ$ are the Conley-Zehnder
indices relative to these trivializations, and $c_1^\Phi(u^*T\widehat{W})$
is the relative first Chern number of $u^*T\widehat{W} \to \dot{S}$ with
respect to the asymptotic trivialization determined up to homotopy
by~$\Phi$.  The curve~$u$ is said to be \defin{Fredholm regular} if it 
represents a transverse intersection of the aforementioned Fredholm section 
with the zero section: in this case a neighborhood of $u$ in $\mM(J)$ is a 
smooth  orbifold (or manifold if~$u$ has no automorphisms) of dimension $\ind(u)$.
For further discussion of Fredholm regularity, 
see for example \cite{Wendl:automatic}.

Every asymptotically cylindrical holomorphic 
curve is either \defin{simple} (and thus
\defin{somewhere injective}) or \defin{multiply covered}, where the latter
means that it factors as the composition of another $J$-holomorphic curve
with a branched cover of closed Riemann surfaces with degree at least two.
By various standard transversality results (see for example
\cites{McDuffSalamon:Jhol,Dragnev,Wendl:SFT}),
the relevant spaces of compatible almost complex structures admit
comeager subsets for which all simple curves are Fredholm regular.
We will generally say that $J$ is \defin{generic} whenever it belongs to
the comeager subset for which the relevant transversality result of this
type holds.

It is sometimes useful to observe that
if $\dim M = 3$ and $J \in \jJ(\hH)$ where $\hH = (\Omega,\Lambda)$ 
is a confoliation-type SHS,
then every $J$-holomorphic curve $u : \dot{S} \to \RR \times M$ satisfies
$u^*d\Lambda \ge 0$.  Since the period of any closed orbit of $R_\hH$ 
parametrized by a loop $\gamma : S^1 \to M$ is given by
$\int_{S^1} \gamma^*\Lambda$, the following is an immediate consequence
of Stokes' theorem:
\begin{prop}
\label{prop:posPuncture}
Suppose $\dim M = 3$, $\hH = (\Omega,\Lambda)$ is a confoliation-type
stable Hamiltonian structure,
$J \in \jJ(\hH)$ and $u : \dot{S} \to \RR\times M$ is a nonconstant
finite energy $J$-holomorphic curve with positive and/or negative punctures
$\Gamma = \Gamma^+ \cup \Gamma^-$ asymptotic to the periodic orbits
$\{\gamma_z\}_{z \subset \Gamma}$.  Then $\#\Gamma^+ \ge 1$, and the periods
$T(\gamma_z) > 0$ of the orbits $\gamma_z$ satisfy
$$
\sum_{z \in \Gamma^+} T(\gamma_z) - \sum_{z \in \Gamma^-} T(\gamma_z) \ge 0.
$$
\qed
\end{prop}

Given $J \in \jJ(\omega,\hH_+,\hH_-)$ with the closed orbits of $R_{\hH_+}$
and $R_{\hH_-}$ assumed nondegenerate or Morse-Bott,
moduli spaces of punctured $J$-holomorphic curves in $(\widehat{W},J)$ with
uniform energy bounds satisfy a compactness theorem described in
\cite{SFTcompactness}.  The compactified moduli space
$$
\overline{\mM}(J)
$$
consist of so-called
(stable) \defin{holomorphic buildings}, which generalize the ``broken'' 
holomorphic curves familiar from Floer homology.
For our purposes,\footnote{In our description of holomorphic buildings we
ignore certain technical details such as \emph{decorations}, which
play no role in our arguments; these details are explained fully in
\cite{SFTcompactness}.}
the objects in this compactification can be described as follows.
A \defin{nodal $J$-holomorphic curve} in $\widehat{W}$, also sometimes 
called a \emph{holomorphic building of height~$1$}, is an equivalence class of 
tuples
$$
(S,j,\Gamma,u,\Delta)
$$
where $(S,j)$ is a closed but not necessarily connected Riemann surface,
$\Gamma \subset S$ is a finite ordered set defining the punctured
surface $\dot{S} := S \setminus \Gamma$, $u : (\dot{S},j) \to (\widehat{W},J)$
is an asymptotically cylindrical $J$-holomorphic curve and $\Delta$ is a
finite unordered set of unordered pairs $\{ z_+,z_- \}$ of distinct points
in $\dot{S}$ such that $u(z_+) = u(z_-)$.  Each pair $\{z_+,z_-\} \in \Delta$
is called a \defin{node}, and we sometimes also refer to the individual
points $z_\pm \in \dot{S}$ as \defin{nodal points}.  Two nodal curves are equivalent
if they are related by a biholomorphic identification of their domains that
preserves all the structure (including ordering of the punctures and pairing
of the nodal points).  Note that the asymptotically cylindrical behavior
of $u$ automatically partitions $\Gamma$ into sets of positive and negative
punctures $\Gamma = \Gamma^+ \coprod \Gamma^-$.  Nodal curves in the
symplectizations $\RR \times M_\pm$ can be defined in the same way, with the
additional feature that $\RR$-translations act on the space of nodal curves,
so that one can also strengthen the equivalence relation and consider
$\RR$-equivalence classes of nodal curves.

A \defin{holomorphic building} in $(\widehat{W},J)$ can now be regarded as
a finite ordered list of nodal curves $u = 
(u_1,\ldots,u_N)$ for some $N \in \NN$, which are called
\defin{levels} of $u$, and have the following properties and
additional data:
\begin{itemize}
\item Exactly one of the levels $u_M$ for some $M \in \{1,\ldots,N\}$
is a nodal curve in $\widehat{W}$; this is called the \defin{main level}
of the building.  It is also allowed to be \emph{empty}, meaning its
domain is the empty set.
\item Every level $u_\ell$ for $\ell \ne M$ is a nonempty 
$\RR$-equivalence class of nodal curves in one of the symplectizations
$\RR \times M_\pm$; $M_-$ for $\ell < M$ and $M_+$ for $\ell > M$.  These
are called \defin{lower} and \defin{upper levels} respectively.
\item For each $\ell \in \{1,\ldots,N-1\}$, $u$ is endowed with the
additional data of a bijection from the
positive punctures of $u_\ell$ to the negative punctures of
$u_{\ell+1}$ such that the asymptotic orbits of punctures that
correspond under this bijection are identical.  We will refer to corresponding
pairs of punctures as \defin{breaking punctures} and their asymptotic orbits
as \defin{breaking orbits}.
\end{itemize}
The positive and negative punctures of the building
$u = (u_1,\ldots,u_N)$ are defined as the
positive punctures of $u_N$ and the negative punctures of
$u_1$ respectively, and the \defin{connected components} of
$u$ are the connected components of its constituent levels.
One can define from $u$ a topological surface $\widehat{S}$
obtained from the disjoint union of the domains of all the levels by
performing connected sums along all the paired-up nodal points forming
nodes and all the corresponding breaking punctures.  The building is
then said to be \defin{connected} if and only if $\widehat{S}$ is connected,
and its \defin{arithmetic genus} is the genus of~$\widehat{S}$.
This punctured surface is diffeomorphic to the domain of any sequence of
smooth curves that converge to the building in the SFT-topology;
in particular, any such sequence admits a sequence of parametrizations
$u_k : \dot{S} \to \widehat{W}$ that can be transformed into a
$C^0$-convergent sequence of continuous maps $\dot{S} \to W$ by
projecting cylindrical ends and upper/lower levels to $M_\pm$ and gluing
the components of the limiting building together along nodes and breaking 
orbits.
The buildings that form $\overline{\mM}(J)$ are also always assumed
to be \defin{stable}, which means that none of the upper or lower levels is a 
disjoint union of trivial cylinders, and any connected component with
genus zero on which the map is constant (a so-called \defin{ghost bubble})
has at least three nodal points.  This condition guarantees that limits
in the SFT-topology are unique.
We shall generally describe a connected component of a
holomorphic building as \defin{nontrivial} if it is nonconstant and is
not a trivial cylinder.

For moduli spaces of curves in a
symplectization $(\RR \times M,J)$ with $J \in \jJ(\hH)$, 
the distinction between lower/main/upper 
levels is meaningless: instead, one compactifies $\mM(J) / \RR$
to obtain a space $\overline{\mM}(J)$ of buildings with
at least one and at most finitely many levels, all of them consisting
of $\RR$-equivalence classes of (possibly disconnected and nodal)
unparametrized curves in $\RR \times M$.

Within the space of holomorphic buildings, we shall sometimes make a 
distinction between \defin{nontrivial buildings} and \defin{smooth curves}:
the latter means buildings that have only one level and no nodes, hence
they are also elements of~$\mM(J)$, whereas by ``nontrivial buildings'' 
we mean everything in $\overline{\mM}(J) \setminus \mM(J)$.

Since the index of a holomorphic curve depends only on its
asymptotic ends and relative homology class, the \defin{index of a building}
can be defined formally by a natural generalization of \eqref{eqn:index}
replacing $\dot{S}$ with $\widehat{S}$, and in this way
the index extends to a continuous $\ZZ$-valued
function on~$\overline{\mM}(J)$.

For the purposes of the next
statement, observe that given any building $u$, deleting the nodes from
all levels changes $u$ into a disjoint union of some unique collection of
(not necessarily stable) connected holomorphic buildings
$u_1,\ldots,u_m$, each endowed with the extra structure of a finite set of
points in their domains (the former nodal points).
We shall in this case call $u_1,\ldots,u_m$ the \defin{maximal non-nodal 
subbuildings} of~$u$ (see Figure~\ref{fig:maxNonnodal}).  The relation in the following 
proposition is an immediate consequence of \eqref{eqn:index} via
the observation that if $\dot{S}$ is a surface obtained from a collection of 
surfaces $\dot{S}_1,\ldots,\dot{S}_m$ by performing connected sums at a set of
$N$ distinct pairs of distinct points $\{z^+_j,z^-_j\} \subset
\dot{S}_1 \coprod \ldots \coprod \dot{S}_m$ for $j=1,\ldots,N$, then
$\chi(\dot{S}) = \sum_{i=1}^m \chi(\dot{S}_i) - 2N$.

\begin{figure}
\begin{postscript}
\psfrag{What}{$\widehat{W}$}
\psfrag{RtimesM+}{$\RR \times M_+$}
\psfrag{RtimesM-}{$\RR \times M_-$}
\includegraphics[scale=0.8]{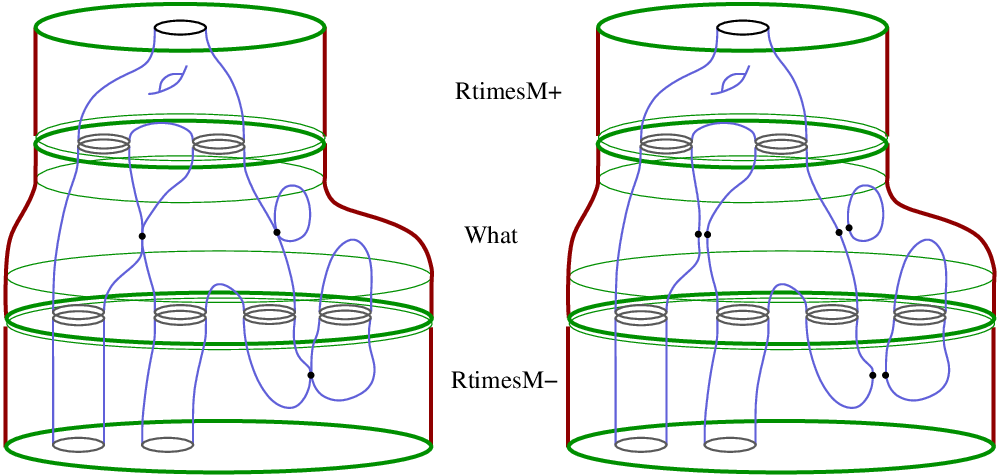}
\end{postscript}
\caption{\label{fig:maxNonnodal}
The picture at the left shows a holomorphic building with arithmetic genus two,
which is broken up at the right into three maximal non-nodal subbuildings,
one with arithmetic genus~$1$ and two with arithmetic genus~$0$.}
\end{figure}

\begin{prop}
\label{prop:indexNodes}
For any holomorphic building $u$ in $\widehat{W}$ with $N \ge 0$ nodes
and $N_i \ge 0$ nodal points on each of its maximal non-nodal subbuildings
$u_i$ for $i=1,\ldots,m$,
$$
\ind(u) = \sum_{i=1}^m \left[ \ind(u_i) - (n-3) N_i \right] = 
\sum_{i=1}^m \ind(u_i) - 2 (n-3) N,
$$
where $\dim \widehat{W} = 2n$.  \qed
\end{prop}

Let us specialize the above result to dimension four and consider the role
played by constant components.  These have no punctures but must have nodal
points; setting $n=2$, \eqref{eqn:index} implies that a constant component
$u_i$ with domain $S$ of genus~$g$ has index $-\chi(S) = 2g-2$, which is 
nonnegative except in the case of ghost bubbles.  Stability requires however 
that the Euler characteristic of $S$ should always become negative after
removing nodal points, thus
$$
\ind(u_i) + N_i = - \chi(S) + N_i > 0.
$$
This gives rise to the following corollary of
Proposition~\ref{prop:indexNodes}:

\begin{prop}
\label{prop:indexNodes4}
Assume $\dim_\RR \widehat{W} = 4$ and $u$ is a holomorphic building 
with $m$ nonconstant maximal non-nodal subbuildings $u_1,\ldots,u_m$,
each with $N_i \ge 0$ nodal points.  Then
$$
\ind(u) \ge \sum_{i=1}^m \left[ \ind(u_i) + N_i \right],
$$
with equality if and only if $u$ has no constant components.
In particular, if $u$ has arithmetic genus~$0$ and has at least one node, then
$$
\ind(u) \ge 2 + \sum_{i=1}^m \ind(u_i)
$$
with equality if and only if there is exactly one node and no ghost bubbles.
\end{prop}
\begin{proof}
The second statement follows because in the case of arithmetic genus zero,
every ghost bubble has at least three nodal points and this implies the
existence of at least three nodal points on nonconstant components as
well; any other scenario would lead to positive arithmetic genus.
\end{proof}

\subsection{Intersection theory}
\label{sec:Siefring}

In this section we summarize some useful facts from the intersection theory
of asymptotically cylindrical holomorphic curves, due to Siefring
\cite{Siefring:intersection}.  A more elementary introduction to this theory
can also be found in \cite{Wendl:Durham}. See also the summary given in
\cite{FishSiefring:connect}*{Section 3.3}.

Assume as in \S\ref{sec:stable} that $\widehat{W}$ is the completion of a
symplectic cobordism $(W,\omega)$ with stable boundary $\p W = - M_- \coprod
M_+$ carrying stable Hamiltonian structures $\hH_\pm = (\Omega_\pm,\Lambda_\pm)$, 
and $J \in \jJ(\omega,\hH_+,\hH_-)$.
Siefring's intersection theory associates
to any pair of asymptotically cylindrical (but not necessarily $J$-holomorphic)
maps $u$ and~$v$ into $\widehat{W}$ with nondegenerate asymptotic orbits 
an intersection number
$$
u * v \in \ZZ,
$$ 
which depends only on the asymptotic orbits of the two maps and their
relative homology classes.  It is nonnegative whenever $u$ and~$v$ are 
$J$-holomorphic
curves with non-identical images, and strictly positive whenever these have
nonempty intersection.  It also extends in a continuous way
to the \emph{compactified} moduli space of holomorphic curves as defined
in \cite{SFTcompactness}: one can define $u * v$ for two holomorphic
buildings, and it is additive across levels (with extra nonnegative
\defin{breaking contributions} for common breaking orbits between two levels)
and invariant under homotopies through the compactified moduli space,
including ``infinite $\RR$-translations'' which shove levels up or
down or insert or delete trivial cylinders.  When $u$ and $v$ are
holomorphic curves with non-identical images, $u * v$
counts their actual intersections (with multiplicity when they are 
non-transverse), in addition to a nonnegative count of 
\defin{asymptotic contributions},
i.e.~``hidden'' intersections that can emerge from infinity under perturbations.
The latter can be expressed in terms of asymptotic
winding numbers: fixing a choice of complex trivialization $\Phi$ for each of 
the bundles $\Xi_\pm = \ker \Lambda_\pm$ along closed Reeb orbits,
every nondegenerate Reeb orbit~$\gamma$ has certain
\defin{extremal winding numbers}
$$
\alpha_-^\Phi(\gamma) \le \alpha_+^\Phi(\gamma) \in \ZZ
$$
such that by the asymptotic formula of \cite{HWZ:props1},
the asymptotic winding of any holomorphic curve approaching~$\gamma$
at a positive end is bounded from above by $\alpha_-^\Phi(\gamma)$, and at
a negative end it is bounded from below by~$\alpha_+^\Phi(\gamma)$.  These
are the winding numbers relative to $\Phi$ of the so-called (positive and
negative) \defin{extremal eigenfunctions} that appear in asymptotic formulas,
and they are related to the Conley-Zehnder index by the formulas
\begin{equation}
\label{eqn:CZwinding}
\begin{split}
\muCZ^\Phi(\gamma) &= 2\alpha_-^\Phi(\gamma) + p(\gamma) =
2\alpha_+^\Phi(\gamma) - p(\gamma), \\
p(\gamma) &= \alpha_+^\Phi(\gamma) - \alpha_-^\Phi(\gamma) \in \{0,1\},
\end{split}
\end{equation}
proved in \cite{HWZ:props2}.  The general definition of $u * v$
expresses it in terms of the \defin{relative intersection number}
$$
u \bullet_\Phi v \in \ZZ,
$$
which is homotopy invariant but depends on the choice of 
trivialization~$\Phi$ whenever $u$ and $v$ have asymptotic orbits in
common: $u \bullet_\Phi v$ is the algebraic count of intersections between
$u$ and a generic small perturbation of $v$ that pushes it in the
direction determined by $\Phi$ at infinity.  Notice that this notion is
also well defined when $u=v$ and it extends in a natural way to the case
where $u$ and $v$ are holomorphic buildings, simply by adding relative
intersection numbers across levels.  The following is then a
direct consequence of the definition in \cite{Siefring:intersection} and
will suffice for computing $u*v$ in our applications:

\begin{lemma}
\label{lemma:u*v}
Suppose $u$ and $v$ are holomorphic buildings with only positive punctures, 
and that for each asymptotic orbit $\gamma$ of~$u$ or $v$, there exists a 
trivialization $\Phi$ along the underlying simple orbit covered by 
$\gamma$ such that
in the induced trivialization along $\gamma$, $\alpha_-^\Phi(\gamma) = 0$.
Then $u * v = u \bullet_\Phi v$.  \qed
\end{lemma}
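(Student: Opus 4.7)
The plan is to unpack Siefring's definition from \cite{Siefring:intersection} and observe that the hypothesis forces every asymptotic correction term to vanish. Recall that for asymptotically cylindrical maps with non-degenerate asymptotics, the Siefring intersection number decomposes as
$$u * v = u \bullet_\Phi v + \iota_\infty^\Phi(u,v),$$
where $\iota_\infty^\Phi(u,v) \ge 0$ is an asymptotic correction defined as a sum over pairs of punctures of $u$ and $v$ whose asymptotic orbits cover a common simple Reeb orbit (and which is zero for all other pairs). My strategy is simply to show that under our hypothesis every such summand vanishes, whereupon the identity is immediate.

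For a pair of positive ends asymptotic to covers $\gamma^p$ and $\gamma^q$ of a common simple orbit $\gamma_0$, the explicit formula from \cite{Siefring:intersection} (see also \cite{Wendl:Durham} and \cite{FishSiefring:connect}*{Section 3.3}) expresses the asymptotic contribution as a specific non-negative $\ZZ$-linear combination of the extremal winding numbers $\alpha_-^\Phi(\gamma^p)$ and $\alpha_-^\Phi(\gamma^q)$, computed with respect to a trivialization $\Phi$ of $\gamma_0$ and the trivializations it induces on the covers. The key algebraic fact (and the one mildly technical step I would verify carefully) is that this combination vanishes identically when both $\alpha_-^\Phi(\gamma^p) = 0$ and $\alpha_-^\Phi(\gamma^q) = 0$, which is precisely what the hypothesis delivers once one picks, for each simple orbit underlying an asymptotic of $u$ or $v$, the distinguished trivialization promised in the statement.

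Since $u$ and $v$ are assumed to have only positive punctures, every pair of ends contributing to $\iota_\infty^\Phi(u,v)$ is a positive-positive pair, so the previous step kills each summand and we conclude $\iota_\infty^\Phi(u,v) = 0$, giving $u * v = u \bullet_\Phi v$. The remaining bookkeeping is to treat the case where $u$ and $v$ are genuine holomorphic \emph{buildings} rather than smooth curves: both sides of the identity are defined by summing across levels and adding breaking contributions along matched breaking orbits, but these breaking contributions are computed from the same extremal winding data in $\Phi$ and so also vanish by the hypothesis — here one may rewrite the winding quantity on the negative-end side of a break in terms of $\alpha_-^\Phi$ of the same orbit using $\alpha_+^\Phi - \alpha_-^\Phi \in \{0,1\}$ from \eqref{eqn:CZwinding}, but no further input is required. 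I expect this building-versus-curve bookkeeping to be the only non-automatic part of the write-up.
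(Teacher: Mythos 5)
Your argument for the single-level case is essentially right and matches what the paper has in mind; the lemma is asserted there to be a direct consequence of Siefring's definitions and no proof is supplied. A small inaccuracy: the correction at a pair of ends over a common simple orbit is not a $\ZZ$-linear combination of the $\alpha_-^\Phi$'s but a min/max-type quantity, and $u\bullet_\Phi v - u*v$ is not a priori nonnegative (its sign depends on the trivialization). This does no harm, since the term vanishes whenever both windings vanish.

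The building bookkeeping is where there is a real gap. You claim that both $u*v$ and $u\bullet_\Phi v$ are defined for buildings by summing across levels and adding breaking contributions, and that these breaking contributions vanish by the hypothesis. Neither clause is correct. The definition of $u\bullet_\Phi v$ for buildings, as recalled in \S\ref{sec:Siefring}, adds relative intersection numbers across levels with no breaking term. More importantly, the winding hypothesis constrains only the $\alpha_-^\Phi$'s at the \emph{asymptotic} orbits of $u$ and $v$, i.e.~the orbits at external punctures, and says nothing about the breaking orbits; there is therefore no reason for the breaking contributions to $u*v$ to vanish, and in general they do not. (They instead cancel the level-wise asymptotic corrections at the breaking orbits, automatically and irrespective of the winding hypothesis.) The clean fix is to recall that both $u*v$ and $u\bullet_\Phi v$ depend only on relative homology classes and asymptotic data at external punctures, so the building case reduces at once to the single-level case by replacing each building with a smooth asymptotically cylindrical map having the same data.
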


The usual adjunction formula
for the closed case can now be generalized to somewhere injective punctured
holomorphic curves in the form
\begin{equation}
\label{eqn:adjunctionEq}
u * u = 2\left[ \delta(u) + \delta_\infty(u) \right] + c_N(u) + 
\left[ \bar{\sigma}(u) - \#\Gamma \right].
\end{equation}
Here $\delta(u)$ is the (nonnegative) algebraic count of double points and 
critical points, and $\delta_\infty(u)$ is an (also nonnegative) asymptotic 
contribution such that $\delta(u) + \delta_\infty(u)$ is homotopy invariant
and counts the double points of a generic perturbation of~$u$.  The
\defin{normal Chern number} $c_N(u) \in \ZZ$ is another homotopy invariant
quantity which, in the immersed case, equals the relative first Chern number
of the normal bundle of~$u$ with respect to trivializations determined by
the extremal eigenfunctions at the asymptotic orbits.  We denote by
$\Gamma$ the set of punctures of~$u$, and $\bar{\sigma}(u)$ denotes the
\defin{spectral covering number}, which is the sum over all $z \in \Gamma$
of the covering multiplicities of the relevant extremal eigenfunctions.
In many applications one does not need to compute $\bar{\sigma}(u)$, as it
is at least immediate from the definition that $\sigma(u) \ge \#\Gamma$,
hence \eqref{eqn:adjunctionEq} gives rise to an inequality
\begin{equation}
\label{eqn:adjunction}
u * u \ge 2\left[ \delta(u) + \delta_\infty(u) \right] + c_N(u).
\end{equation}
When more precise information is needed, the following 
will suffice for our purposes:
\begin{lemma}
\label{lemma:spectralCovering}
Suppose $u$ is a somewhere injective holomorphic curve with only positive
punctures which satisfy the hypothesis of Lemma~\ref{lemma:u*v}.  Then
$\bar{\sigma}(u)$ is the sum of the covering multiplicities of the
asymptotic orbits of~$u$.  In particular, $\bar{\sigma}(u) = \#\Gamma$
whenever all asymptotic orbits are simply covered.
\qed
\end{lemma}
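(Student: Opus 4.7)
The plan is to verify the identity puncture by puncture. By definition, $\bar\sigma(u) = \sum_{z \in \Gamma} \bar\sigma(z)$, so it suffices to show that $\bar\sigma(z)$ equals the covering multiplicity $k_z$ of $\gamma_z = \gamma_0^{k_z}$ for each $z \in \Gamma$, where $\gamma_0$ denotes the underlying simple orbit. The ``in particular'' clause is then immediate: if all asymptotic orbits are simply covered, $k_z = 1$ forces $\bar\sigma(z) = 1$, and summing yields $\bar\sigma(u) = \#\Gamma$ without any use of the hypothesis.

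For the general case, fix a puncture $z$ and a trivialization $\Phi$ of $\xi$ along $\gamma_0$ supplied by the hypothesis of Lemma~\ref{lemma:u*v}, so that $\alpha_-^\Phi(\gamma_z) = 0$ in the induced trivialization along $\gamma_z$. By the asymptotic formula of \cite{HWZ:props1}, the extremal eigenfunction $e_z$ of the asymptotic operator $\mathbf A_{\gamma_z}$ is a nowhere-vanishing section of $\xi|_{\gamma_z}$ with winding zero in the lifted trivialization. The deck group $\ZZ_{k_z}$ of the cover $\gamma_z \to \gamma_0$ acts on each eigenspace of $\mathbf A_{\gamma_z}$, and its generator $\tau$ acts on $e_z$ (assumed generic enough that the extremal eigenspace is one-dimensional) by a scalar $c \in S^1$ with $c^{k_z} = 1$. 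A direct computation in the lifted trivialization, using that $\tau$ translates the loop parameter by $1/k_z$ and acts trivially on fibers, yields $c = \exp(2\pi i\, \alpha_-^\Phi(\gamma_z)/k_z) = 1$, so $e_z$ is invariant under the full deck group and descends to an eigenfunction on $\gamma_0$. Since the covering multiplicity of $e_z$ in the sense of \cite{Siefring:intersection} is the degree of the covering $\gamma_z \to \gamma_0^m$ to the smallest iterate $\gamma_0^m$ through which $e_z$ factors, and we have just shown $m = 1$, we conclude $\bar\sigma(z) = k_z$.

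The main obstacle is the simplicity of the extremal eigenspace, which is needed to collapse the deck action to a single scalar; for nondegenerate orbits this is a generic condition, and otherwise one decomposes the extremal eigenspace into its isotypic components under $\ZZ_{k_z}$ and applies the same argument to each component. Summing the identity $\bar\sigma(z) = k_z$ across all punctures $z \in \Gamma$ then yields $\bar\sigma(u) = \sum_{z \in \Gamma} k_z$, the claimed total of covering multiplicities of the asymptotic orbits.
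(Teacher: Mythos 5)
Your argument is correct in substance, and since the paper supplies no proof of this lemma (it is marked with \qed, signaling it is viewed as a direct consequence of the definitions in \cite{Siefring:intersection}), your hands-on verification is a reasonable expansion rather than a divergence from the paper's route. The core of the matter is the general identity $\mathrm{cov}(e) = \gcd(k_z,\,\mathrm{wind}^\Phi(e))$ for any eigenfunction $e$ of $\mathbf A_{\gamma_0^{k_z}}$, where the winding is measured in the trivialization lifted from the underlying simple orbit $\gamma_0$; setting $\mathrm{wind}^\Phi(e) = \alpha_-^\Phi(\gamma_z) = 0$ gives $\mathrm{cov}(e) = k_z$ at every puncture, and summing proves the lemma.

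One point where you undersell your own argument: the caveat about the extremal eigenspace being one-dimensional is unnecessary, and the fallback via isotypic decomposition is not quite the right fix. Even when the extremal eigenspace is two-dimensional, the deck generator $\tau$ is forced to act on it by a \emph{single} scalar, for the following reason. By \cite{HWZ:props2}, all eigenfunctions of $\mathbf A_{\gamma_z}$ belonging to a fixed eigenvalue have the same winding number. If $\tau$ did not act by a scalar, one could split the eigenspace into two $\tau$-eigenlines (after complexifying) with distinct characters $\zeta_1 \ne \zeta_2$ satisfying $\zeta_i^{k_z} = 1$; but the computation you already carried out shows that the deck character determines the winding mod $k_z$, so distinct characters would force distinct windings, contradicting HWZ. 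Thus $\tau$ always acts by the scalar $\exp(2\pi i\,\alpha_-^\Phi(\gamma_z)/k_z)$ on the entire extremal eigenspace, which equals~$1$ under your hypothesis, and every eigenfunction in that space descends to~$\gamma_0$. This also guarantees that $\bar\sigma(z)$ is well-defined independently of which leading eigenfunction one uses, which is worth knowing since the choice is a priori noncanonical in the two-dimensional case.
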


The easiest way to compute $c_N(u)$ is usually via the Fredholm index,
as it satisfies
\begin{equation}
\label{eqn:2cN}
2 c_N(u) = \ind(u) - 2 + 2g + \#\Gamma_0,
\end{equation}
where $g \ge 0$ is the genus of the domain of $u$ and $\#\Gamma_0 \ge 0$ 
denotes the number of punctures at which the
Conley-Zehnder indices of the asymptotic orbits are even.  In the 
$\RR$-invariant case $\widehat{W} = \RR \times M$ with $J \in \jJ(\hH)$
for a fixed stable Hamiltonian structure $\hH = (\Omega,\Lambda)$,
the normal Chern number also appears in the important relation
\begin{equation}
\label{eqn:windpi}
0 \le \windpi(u) + \asympdef(u) = c_N(u),
\end{equation}
which was originally proved by Hofer-Wysocki-Zehnder \cite{HWZ:props2} and
applies to any curve $u$ that is not a cover of a trivial cylinder.
Here $\windpi(u) \ge 0$ is an integer which algebraically counts the 
non-immersed points of the projection of~$u$ to~$M$, and $\asympdef(u) \ge 0$ 
is an integer measuring the difference of the asymptotic winding at each end
from the relevant extremal value $\alpha_\pm^\Phi(\gamma)$.  We refer to
\cite{Wendl:compactnessRinvt}*{\S 4} for a fuller discussion of this
relation using the same notation used here (only some of which appeared
in \cite{HWZ:props2}).

\begin{remark}
The theory defined in \cite{Siefring:intersection} applies to any moduli
spaces of asymptotically cylindrical holomorphic curves with fixed
asymptotic orbits satisfying a nondegeneracy or Morse-Bott condition.
One can also define a more general theory allowing moduli spaces whose
asymptotic orbits move freely in Morse-Bott families---the main results
of this theory are outlined in \cite{Wendl:automatic}*{\S 4}, but we will
not need this here.
\end{remark}

In many applications, a special role is played by somewhere injective curves 
whose intersection-theoretic properties force them not only to be embedded but 
also to avoid intersecting their neighbors in the moduli space.
In particular, a $J$-holomorphic curve $u : \dot{S} \to \widehat{W}$
is called \defin{nicely embedded} if it is somewhere injective and satisfies
$$
\delta(u) = \delta_\infty(u) = 0 \quad \text{ and } \quad
u * u \le 0.
$$
This is a slight generalization of a definition that first appeared
(with an extra ``stability'' condition) in \cite{Wendl:automatic}.
It simplifies slightly in the $\RR$-invariant case since every
curve $u$ can then be perturbed via $\RR$-translation to a nearby curve,
which will be different unless $u$ is a cover of a trivial cylinder,
hence $u * u \ge 0$ always holds in such cases.  The nicely embedded
condition can thus be reduced in the $\RR$-invariant case to
$$
u * u = 0 \qquad \text{ or } \qquad \text{$u$ is a trivial cylinder},
$$
as \eqref{eqn:windpi} implies in this case that $c_N(u) \ge 0$,
so $\delta(u) = \delta_\infty(u) = 0$ then follows from the adjunction
inequality \eqref{eqn:adjunction}.  An additional consequence of
\eqref{eqn:windpi} in the $\RR$-invariant case is that nicely embedded
curves other than trivial cylinders satisfy $\windpi(u) = \asympdef(u)=0$,
and the homotopy invariance of $u*u=0$ implies that they never intersect
their own $\RR$-translations, hence their projections
to the $3$-manifold
$$
\dot{S} \stackrel{u}{\longrightarrow} \RR \times M 
\stackrel{\pr}{\longrightarrow} M
$$
are embedded.  Nicely embedded curves in the symplectization 
also satisfy a strong compactness
theorem proved in \cite{Wendl:compactnessRinvt}, which we will make use
of to prove uniqueness  in \S\ref{sec:uniqueness}.  


The following lemma can be
applied in the case $u=v$ to characterize nicely embedded curves:

\begin{lemma}
\label{lemma:nicelyEmbedded}
Assume $J \in \jJ(\hH)$ for a stable Hamiltonian structure $\hH$ on~$M$.
For any pair of (possibly identical) connected finite energy
$J$-holomorphic curves $u$ and~$v$ in $\RR\times M$ which are not covers of
trivial cylinders and have nondegenerate asymptotic orbits, 
we have $u * v = 0$ whenever the following conditions hold:
\begin{enumerate}
\item There is no simply covered orbit $\gamma$ with odd Conley-Zehnder
index such that covers of~$\gamma$ appear at negative ends of~$u$ and
positive ends of~$v$.
\item For every negative asymptotic orbit~$\gamma$ of~$u$, 
$v * (\RR\times\gamma) = 0$.
\item For every positive asymptotic orbit~$\gamma$ of~$v$, 
$u * (\RR\times\gamma) = 0$.
\end{enumerate}
\end{lemma}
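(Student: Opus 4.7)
The plan is to compute $u * v$ by combining Siefring's explicit formula for the intersection number of asymptotically cylindrical curves in the symplectization with the $\RR$-invariance of $J$. The starting point is the decomposition
\[ u * v = u \bullet_\Phi v + \Omega^\Phi(u,v), \]
where $\Phi$ is a choice of trivialization of $\Xi$ along every simple Reeb orbit appearing in the asymptotic data of $u$ or $v$, and $\Omega^\Phi(u,v)$ is a sum of nonnegative asymptotic contributions at each pair of punctures of $u$ and $v$ whose asymptotic orbits cover a common simple orbit. Each such contribution is controlled by the asymptotic winding numbers of $u$ and $v$ at the matched ends, measured relative to $\Phi$ and the extremal winding numbers $\alpha_\pm^\Phi$ of \eqref{eqn:CZwinding}.

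The next step is to translate hypotheses~(2) and~(3) into winding-number extremality statements. Applying the same decomposition to the intersection with a trivial cylinder, the condition $v * (\RR\times\gamma) = 0$ for a negative orbit $\gamma$ of~$u$ forces every end of $v$ approaching a cover of $\gamma$ to attain the extremal asymptotic winding in the direction compatible with its sign: $\alpha_-^\Phi$ at positive ends and $\alpha_+^\Phi$ at negative ends. This uses \eqref{eqn:windpi} together with the fact that $\windpi \ge 0$ and $\asympdef \ge 0$ both enter nontrivially when the winding fails to be extremal. The same reasoning applied to hypothesis~(3) forces the analogous extremality on the ends of $u$ approaching positive orbits of $v$.

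With these winding constraints in hand, I would show termwise that $\Omega^\Phi(u,v) = 0$. For any same-sign matched pair contributing to $\Omega^\Phi(u,v)$, at least one of the two ends involved lies on an orbit appearing in the list controlled by (2) or (3), because such a pair consists of two positive ends (one of which is a positive end of $v$) or two negative ends (one of which is a negative end of $u$). The extremality of the asymptotic winding at that end then forces the nonnegative asymptotic contribution to collapse to zero. To finish, I would use $\RR$-invariance to write $u \bullet_\Phi v = u \bullet_\Phi v_\tau$ for $v_\tau := v + \tau$ and push $\tau$ to $-\infty$: in this limit, any nonvanishing contribution to the relative intersection number must come from the overlap of negative ends of $u$ with positive ends of $v_\tau$ at a common simple orbit~$\gamma$. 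The winding extremality from (2) and (3) makes the corresponding winding difference equal to $p(\gamma) = \alpha_+^\Phi(\gamma) - \alpha_-^\Phi(\gamma) \in \{0,1\}$, and hypothesis~(1) rules out the odd-$\muCZ$ (i.e.~$p(\gamma) = 1$) case, leaving the contribution equal to zero.

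The main obstacle in this argument is the careful sign and winding bookkeeping: one must verify that the extremality consequences of $v * (\RR\times\gamma) = 0$ and $u * (\RR\times\gamma) = 0$ are precisely the ones needed to annihilate each type of matched-end contribution (same-sign at positive ends, same-sign at negative ends, and the opposite-sign ``breaking'' pair that is the content of hypothesis~(1)), and that the identification of $u \bullet_\Phi v$ with a winding-difference count in the overlap region under $\RR$-translation is justified in the Morse--Bott and multiply covered cases that may appear here. The last point is where the explicit asymptotic formulas of Hofer--Wysocki--Zehnder enter, in the form already summarized in \S\ref{sec:Siefring}.
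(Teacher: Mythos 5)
Your high-level strategy---push $v$ to $-\infty$ by $\RR$-translation and argue the residual contributions all vanish---is the right intuition and points in the same direction as the paper, but the paper's proof cites rather than re-derives the key result: Siefring's ``infinite $\RR$-translation'' (Lemma~5.7 / Theorem~5.8 of \cite{Siefring:intersection}), which states directly that $u * v = u^+ * v^-$, where $u^+$ is the two-level building with $u$ on top and the trivial cylinders over its negative orbits below, and $v^-$ has $v$ on the bottom with the trivial cylinders over its positive orbits above. By additivity of the pairing across levels, $u^+ * v^-$ is a sum of three kinds of nonnegative terms: the top-level intersection $\sum_{\gamma \in \Gamma^+(v)} u * (\RR\times\gamma)$, the bottom-level intersection $\sum_{\gamma \in \Gamma^-(u)} v * (\RR\times\gamma)$, and the breaking contributions at common breaking orbits (which are necessarily negative for $u$ and positive for $v$). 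The first two vanish by hypotheses~(3) and~(2), and the breaking term at an orbit~$\gamma$ is zero for even $\muCZ(\gamma)$ (since $p(\gamma)=0$) while hypothesis~(1) excludes the odd case. The crucial point is that \emph{no direct asymptotic contribution between an end of $u$ and an end of $v$ at a common orbit is ever analyzed}---everything is filtered through trivial cylinders or the purely combinatorial breaking term.

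Your sketch has two genuine gaps in precisely the place the paper's argument avoids. First, the formula $u*v = u\bullet_\Phi v + \Omega^\Phi(u,v)$ with ``$\Omega^\Phi$ a sum of nonnegative asymptotic contributions'' conflates the trivialization-dependence correction $u*v - u\bullet_\Phi v$ (a combinatorial quantity built from $\alpha_\pm^\Phi$, neither sign-definite nor $\Phi$-independent) with the asymptotic intersection index $\iota_\infty(u,v)$ (trivialization-independent, nonnegative, and equal to $u*v$ minus the count of \emph{actual} intersections). Second, and more substantively, the claim that extremality of \emph{one} end of a same-sign matched pair ``forces the nonnegative asymptotic contribution to collapse to zero'' is not justified as stated: that contribution is governed by the winding of the \emph{relative} asymptotic eigenfunction of the two ends, and while at a simply covered orbit one can invoke the Hofer--Wysocki--Zehnder spectral monotonicity to argue that extremal winding at one end implies extremal relative winding (extremal winding means slowest admissible decay, so the extremal end dominates the difference), this argument breaks down for multiply covered orbits, where $\alpha_-^\Phi(\gamma^k) \neq k\,\alpha_-^\Phi(\gamma)$ in general. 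These are exactly the cases the paper sidesteps by passing to the building $u^+ * v^-$ and reducing to intersections with trivial cylinders.
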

\begin{proof}
    By the ``infinite $\RR$-translation'' used in
    \cite{Siefring:intersection}*{Lemma 5.7, Theorem 5.8}, we have $u * v = u^+ * v^-$,
where $u^+$ and $v^-$ are $2$-level holomorphic buildings defined as follows:
\begin{itemize}
\item $u^+$ has $u$ on the top level and the trivial cylinders over its
negative asymptotic orbits on the bottom level,
\item $v^-$ has $v$ on the bottom level and the trivial cylinders over
its positive asymptotic orbits on the top level.
\end{itemize}
To compute $u^+ * v^-$, we sum the respective intersection numbers of
corresponding levels, together with breaking contributions, all of
which are nonnegative.  The top level thus contributes
$u * (\RR\times \gamma)$ for every positive asymptotic orbit~$\gamma$
of~$v$, and the bottom level similarly contributes
$v * (\RR\times \gamma)$ for every negative asymptotic orbit~$\gamma$
of~$u$.  The breaking contributions come from
orbits which occur as breaking orbits of both buildings,
but these contributions are zero for orbits with even Conley-Zehnder
index since the the eigenvectors for the largest
negative eigenvalue and for the smallest positive eigenvalue of the
corresponding asymptotic operator have the same winding
(cf.~\cite{Wendl:Durham}*{Appendix~C.5}).
\end{proof}

Intersection numbers between trivial cylinders and their own covers are
usually tricky to deal with, as they need not be nonnegative in general,
but they are easy at least in the following special case:

\begin{lemma}
\label{lemma:evenOrbits}
Suppose $\gamma$ is a simply covered nondegenerate periodic orbit of
$R_\hH$ in $M$ with even
Conley-Zehnder index, and $u$ and $v$ denote any $J$-holomorphic 
covers of the trivial cylinder $\RR\times \gamma$.  Then
$u * v = 0$.
\end{lemma}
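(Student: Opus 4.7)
The plan is to reduce the computation of $u*v$ via the infinite $\RR$-translation trick used in the proof of Lemma~\ref{lemma:nicelyEmbedded}, and then to exploit the symmetry of extremal winding numbers that follows from the even Conley--Zehnder hypothesis. Since $\gamma$ is nondegenerate with even Conley--Zehnder index, it must be positive hyperbolic, so every iterate $\gamma^k$ also has even Conley--Zehnder index and $p(\gamma^k) = 0$. Fixing a trivialization $\Phi$ of $\Xi$ along $\gamma$ and using the induced trivialization along each iterate, this yields $\alpha_+^{\Phi}(\gamma^k) = \alpha_-^{\Phi}(\gamma^k)$ for every $k \ge 1$, the structural identity powering everything below.

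Next I would write $u*v = u^+ * v^-$, where $u^+$ places $u$ on top with the trivial cylinders over $u$'s negative asymptotics below, and $v^-$ places trivial cylinders over $v$'s positive asymptotics above~$v$. All breaking orbits that arise are covers of~$\gamma$ and hence have $p = 0$, so the breaking contributions vanish exactly as in the argument at the end of the proof of Lemma~\ref{lemma:nicelyEmbedded}. What is left is a sum of terms of the form $w * (\RR \times \gamma^m)$, where $w$ is a $J$-holomorphic cover of $\RR \times \gamma$ and $m \ge 1$. Using Siefring's formula $w * (\RR \times \gamma^m) = w \bullet_\Phi (\RR \times \gamma^m) + (\text{asymptotic correction})$, I would attack each piece separately. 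For the relative intersection number, perturb $\RR \times \gamma^m$ by a nowhere-vanishing section of its normal bundle that winds $\alpha_\pm^{\Phi}(\gamma^m)$ times at each end---possible precisely because these two winding numbers agree. The perturbed cylinder is then disjoint from $\RR \times \gamma$, hence from the image of~$w$, so this term vanishes. The asymptotic corrections are built from differences of the form $\alpha_+^{\Phi} - \alpha_-^{\Phi}$ evaluated at the shared orbits, and these vanish for the same structural reason.

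The hard part will be the final bookkeeping: carefully matching Siefring's asymptotic contributions against the positive and negative ends of~$w$ and confirming that the identity $p(\gamma^k) = 0$ really does force them to cancel in pairs rather than merely being individually small. Everything else---positive hyperbolicity of $\gamma$, the existence of the nowhere-vanishing normal section with the required winding, and the vanishing of breaking contributions---is essentially mechanical given the framework already in place in \S\ref{sec:Siefring}.
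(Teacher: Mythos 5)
Your key observation---that every iterate $\gamma^k$ has even Conley--Zehnder index, so $p(\gamma^k) = \alpha_+^\Phi(\gamma^k) - \alpha_-^\Phi(\gamma^k) = 0$ for all $k$---is exactly the one the paper uses; it does all the real work. After that, the paper simply appeals to Siefring's definition of $u*v$ (Equation~(2.3) of \cite{Siefring:intersection}): with $p \equiv 0$, one can rotate the trivialization $\Phi$ so that $\alpha_\pm^\Phi(\gamma^k) = 0$ for every $k$, and then every correction term in the formula vanishes, while $u \bullet_\Phi v = 0$ because the constant-in-$\Phi$ section of the normal bundle of $\RR\times\gamma$ is nowhere zero and pushes any cover of the trivial cylinder off it. No homotopy, level splitting or $\RR$-translation is required.

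Your Step~2 is circular: the infinite $\RR$-translation replaces $u*v$ with breaking contributions (which you correctly identify as zero) plus a sum of terms $w * (\RR\times\gamma^m)$, but $\RR\times\gamma^m$ is itself a cover of the trivial cylinder over $\gamma$, so each surviving term is just another instance of the very lemma you are trying to prove. You have only traded the general case for the subcase in which one of the two covers is unbranched, and Step~3 must do all the actual work anyway---so the $\RR$-translation accomplishes nothing. (Note also that Lemma~\ref{lemma:nicelyEmbedded} explicitly excludes covers of trivial cylinders from its hypotheses, precisely because the $\RR$-translation argument degenerates there.) Step~3 in addition misstates what $w \bullet_\Phi (\RR\times\gamma^m)$ means: a $\Phi$-directed perturbation winds $0$ relative to $\Phi$, not $\alpha_\pm^\Phi(\gamma^m)$; the two agree only after normalizing $\Phi$ so that $\alpha_\pm^\Phi(\gamma) = 0$, which you never do. And the ``bookkeeping'' you defer to the end---checking that the asymptotic contributions ``cancel in pairs''---is a misdiagnosis of the structure: once $\Phi$ is normalized, the correction terms in Siefring's formula vanish individually, not in pairs, and there is nothing left to balance.
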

\begin{proof}
Note that all covers~$\gamma^m$ of~$\gamma$  
also have even Conley-Zehnder index, 
hence \eqref{eqn:CZwinding} gives 
$\alpha_+^\Phi(\gamma^m) - \alpha_-^\Phi(\gamma^m) = 0$.
The result now follows 
directly from the definition of $u*v$ 
\cite{Siefring:intersection}*{Equation (2.3)}.
\end{proof}

\subsection{Automatic transversality and coherent orientations}
\label{sec:automatic}

We continue under the assumption that $(W,\omega)$ is a compact symplectic
cobordism with stable boundary components $\p W = -M_- \coprod M_+$ carrying
stable Hamiltonian structures $\hH_\pm = (\Omega_\pm,\Lambda_\pm)$, and
$\widehat{W}$ denotes the completion obtained by attaching cylindrical ends.
The following special case of a transversality criterion from 
\cite{Wendl:automatic} is often useful because it requires neither genericity 
nor somewhere injectivity.

\begin{prop}
\label{prop:automatic}
Assume $J \in \jJ(\omega,\hH_+,\hH_-)$, $\dim_\RR W = 4$, and
$u : \dot{S} \to \widehat{W}$ is an
immersed finite-energy $J$-holomorphic curve asymptotic to nondegenerate
Reeb orbits and satisfying
$$
\ind(u) > c_N(u).
$$
Then $u$ is Fredholm regular.   \qed
\end{prop}

The phenomenon underlying this transversality criterion also has useful
consequences for orientations of moduli spaces, in particular for spaces
of dimension~$0$, where orienting the moduli space simply means associating
a sign to each element.  We shall use the coherent orientations framework
described by Bourgeois and Mohnke \cite{BourgeoisMohnke}, based on earlier
work of Floer and Hofer \cite{FloerHofer:orientations}.
Notice that by \eqref{eqn:2cN}, a curve $u$ with
index~$0$ can satisfy $\ind(u) > c_N(u)$ only if $c_N(u) = -1$, in which
case $u$ must have genus~$0$ and all its asymptotic orbits have odd
Conley-Zehnder index.  The following result will play a key role in
\S\ref{sec:impressivePart} for
proving stability of $J$-holomorphic foliations under homotopies.

\begin{prop}
\label{prop:orientIndex0}
In the $4$-dimensional setting of Proposition~\ref{prop:automatic}, suppose 
$u_0$ and $u_1$ are two immersed $J$-holomorphic curves with the same 
number of punctures and identical sets of positive and/or negative 
asymptotic orbits, and also satisfying
$$
\ind(u_i) = 0, \qquad c_N(u_i) = -1, \qquad \text{ for $i=0,1$}.
$$
Then any choice of coherent orientations provided by \cite{BourgeoisMohnke}
assigns to $u_0$ and~$u_1$ the same sign.
\end{prop}

Let us state a corresponding result for the
$\RR$-invariant setting $(\RR \times M,J)$
with $J \in \jJ(\hH)$ before discussing the proofs of both.  
Since we usually want to consider $\mM(J) / \RR$
rather than $\mM(J)$, the important rigid objects in this space are 
represented by curves of index~$1$ that are not covers of trivial cylinders.  
The relation \eqref{eqn:windpi} implies that such curves can
satisfy $\ind(u) > c_N(u)$ only if $c_N(u) = 0$, in which case
\eqref{eqn:2cN} implies that the genus is zero and exactly one asymptotic
orbit has even Conley-Zehnder index.
Regular index~$1$ curves in $(\RR \times M,J)$ come in $1$-dimensional
moduli spaces of curves related to each other by $\RR$-translation, and the 
$\RR$-action thus induces a
tautological orientation on these spaces.  If a global orientation of 
$\mM(J)$ is given, one then associates a positive sign to $[u] \in \mM(J) / \RR$
if the given orientation matches the tautological one induced by the
$\RR$-action, and a negative sign otherwise.

We must briefly recall some specifics about asymptotic eigenfunctions.
If $u : \dot{S} \to \widehat{W}$
has a positive/negative puncture $z \in \Gamma^\pm$ asymptotic to a 
nondegenerate
orbit $\gamma$, then the asymptotic formula of \cite{HWZ:props1} and later
refinements in \cites{Mora,Siefring:asymptotics} describe
the approach of $u$ to $\gamma$ in terms of eigenfunctions of the 
\defin{asymptotic operator}
$$
\mathbf{A}_\gamma : \Gamma(\gamma^*\Xi_\pm) \to \Gamma(\gamma^*\Xi_\pm),
$$
a symmetric first-order differential operator that depends only
on~$\gamma$, and whose eigenfunctions were mentioned already 
in \S\ref{sec:Siefring}.  Parametrizing the trivial
cylinder over $\gamma$ as $u_\gamma : \RR \times S^1 \to \RR \times M_\pm :
(s,t) \mapsto (Ts,\gamma(t))$ and choosing a translation-invariant metric
to define the exponential map on $\RR \times M_\pm$, one can find 
coordinates $(s,t) \in \RR_+ \times S^1$ for the cylindrical end approaching
$z \in \Gamma^\pm$, and a section $h_z$ of $u_\gamma^*\Xi_\pm$ such that
$$
u(s,t) = \exp_{u_\gamma(s,t)} h_z(s,t) \quad \text{ for $s$ close to $\pm\infty$},
$$
where
\begin{equation}
\label{eqn:asympFormula}
h_z(s,t) = e^{\lambda s} \left( e_z(t) + r_z(s,t) \right).
\end{equation}
Here $\lambda \in \RR$ is an eigenvalue of $\mathbf{A}_\gamma$ with
$\pm \lambda < 0$, $e_z \in \Gamma(\gamma^*\Xi_\pm)$ is a nontrivial section
belonging to the corresponding eigenspace, and $r_z \in \Gamma(u_\gamma^*\Xi_\pm)$
is a remainder term satisfying
$|r_z(s,t)| \to 0$ uniformly as $s \to \pm\infty$.  Let
$$
V_\gamma^\pm \subset \Gamma(\gamma^*\Xi_\pm)
$$
denote the eigenspace of $\mathbf{A}_\gamma$ with the largest negative 
eigenvalue if $z \in \Gamma^+$ or the smallest positive eigenvalue if 
$z \in \Gamma^-$.  We then use \eqref{eqn:asympFormula} to define the 
\defin{leading asymptotic eigenfunction} $\ev^\infty_z(u) \in V_\gamma^\pm$
of $u$ at $z$ by
$$
\ev^\infty_z(u) := \begin{cases}
e_z & \text{ if $e_z \in V_\gamma^\pm$},\\
0 & \text{ otherwise}.
\end{cases}
$$
The case $\ev^\infty_z(u) = 0$ occurs if and only if the exponential decay
rate in \eqref{eqn:asympFormula} is faster than the slowest rate allowed
by the spectrum of~$\mathbf{A}_\gamma$.  As implied by the notation,
$\ev^\infty_z$ can be thought of as an \defin{asymptotic evaluation map}
from the moduli space to a finite-dimensional space of eigenfunctions, and
we will treat is as such in \S\ref{sec:impressivePart},
cf.~Lemma~\ref{lemma:genAsymp}.

The following result can now be summarized by saying that for a pair of immersed
and automatically regular index~$1$ curves with the same asymptotic orbits
in the symplectization of a $3$-manifold,
their signs will match if and only if they each approach their unique
even orbit ``from the same side''.

\begin{prop}
\label{prop:orientIndex1}
Assume $M$ is a $3$-manifold with stable Hamiltonian structure 
$\hH = (\Omega,\Lambda)$, $J \in \jJ(\hH)$, and $u_0$ and $u_1$ are two
immersed $J$-holomorphic curves that are not covers of trivial cylinders,
have the same number of punctures and identical sets of positive and/or
negative asymptotic orbits, and satisfy
$$
\ind(u_i) = 1, \qquad c_N(u_i) = 0, \qquad \text{ for $i=0,1$}.
$$
Let $e_i \in V_\gamma^\pm$ for $i=0,1$ denote the leading asymptotic
eigenfunction of $u_i$ at its unique puncture asymptotic to an orbit $\gamma$
with even Conley-Zehnder index.  Then
$$
e_1 = \kappa e_0 \quad \text{ for some $\kappa \in \RR \setminus \{0\}$},
$$
and for any choice of coherent orientations provided by
\cite{BourgeoisMohnke}, the signs assigned to $[u_0]$ and $[u_1]$ as
elements of $\mM(J) / \RR$ match if and only if $\kappa > 0$.
\end{prop}

Both propositions will be proved by similar arguments.  To prepare for this,
we need to recall a few details from \cite{BourgeoisMohnke} and
\cite{Wendl:automatic}; we shall use notation consistent with the latter
reference.

To any finite-energy $J$-holomorphic curve $u : \dot{S} \to \widehat{W}$ with 
punctures $\Gamma = \Gamma^+ \cup \Gamma^-$ positively/negatively asymptotic to 
Reeb orbits $\{\gamma_z\}_{z \in \Gamma}$, one can associate a Fredholm
operator
$$
\mathbf{D}_u : W^{1,p,\delta}\big(u^*T\widehat{W}\big) \oplus V_\Gamma 
\to L^{p,\delta}\big(\overline{\Hom}_\CC(T\dot{S},u^*T\widehat{W})\big),
$$
called the \defin{linearized Cauchy-Riemann operator} at~$u$.  Here $p > 2$,
and $W^{1,p,\delta}(u^*T\widehat{W})$ denotes the Banach space of
Sobolev class $W^{1,p}$ sections $\eta$ of $u^*T\widehat{W}$ satisfying the 
exponential decay condtion 
$e^{\delta s} \eta \in W^{1,p}([0,\infty) \times S^1)$
in holomorphic cylindrical coordinates $(s,t) \in [0,\infty) \times S^1$
near each puncture, where $\delta > 0$ is a small constant.  The space
$V_\Gamma \subset \Gamma(u^*T\widehat{W})$ is of dimension $2\#\Gamma$ and
consists of smooth sections that are constant near infinity in a suitable
choice of trivialization.  Since $\mathbf{D}_u$ is Fredholm, it has a
determinant line
$$
\det(\mathbf{D}_u) = \Lambda^{\max} \ker \mathbf{D}_u \otimes
(\Lambda^{\max} \coker \mathbf{D}_u)^*.
$$
The asymptotic form of $\mathbf{D}_u$ near each
puncture $z \in \Gamma^\pm$ is determined by the 
asymptotic operator $\mathbf{A}_{\gamma_z}$.
The procedure of \cite{BourgeoisMohnke} for defining orientations is then
to orient the determinant line bundles over the topological spaces of
isomorphism classes of Cauchy-Riemann type Fredholm operators of the above
form with fixed asymptotic operators at the punctures.  This is done so as
to make the orientations compatible with certain linear gluing operations,
so that the resulting orientations are called \emph{coherent}.
They give rise to orientations for spaces of $J$-holomorphic curves in the
following way.  If $u : (\dot{S},j) \to (\widehat{W},J)$ is Fredholm regular, 
then the implicit function theorem
gives the moduli space $\mM(J)$ of unparametrized $J$-holomorphic curves the
structure of a smooth orbifold of dimension $\ind(u)$ near~$u$, with its
tangent space at $u$ idenified with
\begin{equation}
\label{eqn:tangentModuli}
T_u \mM(J) = \ker D\dbar_J(j,u) \big/ \aut(\dot{S},j).
\end{equation}
Here $D\dbar_J(j,u)$ denotes the linearization of the \defin{nonlinear
Cauchy-Riemann operator}
$$
\dbar : \tT \times \bB \to \eE : (j,u) \mapsto Tu + J(u) \circ Tu \circ j,
$$
where $\bB$ is a Banach manifold of $W^{1,p}$-smooth maps $\dot{S} \to \widehat{W}$
whose tangent space at $u$ is the domain of $\mathbf{D}_u$,
$\tT$ is a smooth family of complex structures on $\dot{S}$ parametrizing
a neighborhood of $[j]$ in Teichm\"uller space, and $\eE \to \tT \times \bB$
is a smooth Banach space bundle whose fiber over $(j,u)$ is the
target space of~$\mathbf{D}_u$.  The space $\aut(\dot{S},j)$ is the Lie
algebra of the automorphism group of $(\dot{S},j)$, which embeds into
$\ker D\dbar(j,u)$ via the map taking vector fields $X$ on $\dot{S}$ to
sections $Tu(X)$ of $u^*T\widehat{W}$.  Since $\aut(\dot{S},j)$ is naturally
a complex vector space, any orientation of $\ker D\dbar_J(j,u)$ gives rise
to an orientation of $T_u \mM(J)$, thus it suffices to orient the determinant
line of $D\dbar(j,u)$, which is equivalent to orienting its kernel since
$D\dbar(j,u)$ is assumed surjective in the Fredholm regular case.  This
operator takes the form
$$
\mathbf{L}_{j,u} := D\dbar(j,u) 
: T_j \tT \oplus T_u \bB \to \eE_{(j,u)} : (y,\eta) \mapsto
\mathbf{D}_u \eta + J(u) \circ Tu \circ y.
$$
Any continuous family of $J$-holomorphic curves gives rise to a continuous
family of Fredholm operators of this form, all of which can be retracted
through Fredholm operators to the corresponding 
linearized Cauchy-Riemann operators via the homotopy
\begin{equation}
\label{eqn:detRetraction}
\mathbf{L}^s_{j,u}(y,\eta) := 
\mathbf{D}_u \eta + s \, J(u) \circ Tu \circ y, \qquad s \in [0,1].
\end{equation}
Taking $s=0$, we have $\ker \mathbf{L}^0_{j,u} = T_j\tT \oplus \ker \mathbf{D}_u$
and $\coker \mathbf{L}^0_{j,u} = \coker \mathbf{D}_u$.  Since Teichm\"uller
space is also naturally complex, $T_j\tT$ has a canonical orientation, so
that the orientation of $\det(\mathbf{D}_u)$ defined in \cite{BourgeoisMohnke}
induces an orientation of $\det(\mathbf{L}_{j,u}^0)$, and we use the
homotopy $\{ \mathbf{L}_{j,u}^s \}_{s \in [0,1]}$ to define from this an
orientation of $\det(\mathbf{L}_{j,u}^1)$, therefore orienting
$T_u \mM(J)$.

Recall now from \cite{Wendl:automatic} that if $u : \dot{S} \to \widehat{W}$
is immersed and $N_u \to \dot{S}$ denotes its normal bundle, the natural 
complex bundle splitting $u^*T\widehat{W} = T\dot{S} \oplus N_u$ decomposes 
$\mathbf{D}_u$ in block form as
\begin{equation}
\label{eqn:Dudecomp}
\mathbf{D}_u = \begin{pmatrix}
\mathbf{D}_u^T & \mathbf{D}_u^{NT} \\
0 & \mathbf{D}_u^N
\end{pmatrix},
\end{equation}
where $\mathbf{D}_u^T$ and $\mathbf{D}_u^N$ are real-linear Cauchy-Riemann 
type operators on $T\dot{S}$ and $N_u$ respectively, and the latter is
called the \defin{normal Cauchy-Riemann operator} of~$u$.  We can extend
the splitting $u^*T\widehat{W} = T\dot{S} \oplus N_u$ over the circle
compactification of the end near each puncture $z \in \Gamma^\pm$
so that $N_u$ at the end is identified with $\gamma_z^*\Xi_\pm$ and
$T\dot{S}$ is identified with the trivial complex subbundle generated by $\RR$ 
and the Reeb vector field.  The space $V_\Gamma$ can then be chosen as a
space of sections of $T\dot{S}$, and $\mathbf{D}_u^T$ identified with the
\emph{standard} Cauchy-Riemann operator on $\dot{S}$,
$$
\mathbf{D}_{(\dot{S},j)} : W^{1,p,\delta}(T\dot{S}) \oplus V_\Gamma \to
L^{p,\delta}(\overline{\End}_\CC(T\dot{S})),
$$
i.e.~the linearization at the identity of the nonlinear Cauchy-Riemann
operator for asymptotically cylindrical holomorphic maps 
$(\dot{S},j) \to (\dot{S},j)$.  The cokernel of this operator has a natural
identification with the tangent space to Teichm\"uller space, so one can
always assume that the map
\begin{equation}
\label{eqn:TeichmuellerTransversality}
\begin{split}
T_j \tT \oplus \left( W^{1,p,\delta}(T\dot{S}) \oplus V_\Gamma \right) &\to
L^{p,\delta}(\overline{\End}_\CC(T\dot{S})) \\
(y,X) &\mapsto jy + \mathbf{D}_{(\dot{S},j)} X
\end{split}
\end{equation}
is surjective.  Writing a section of $u^*T\widehat{W} = T\dot{S} \oplus N_u$
as $(X,\eta)$, this gives a decomposition of 
$\mathbf{L}_{j,u} = D\dbar_J(j,u)$ as
\begin{equation}
\label{eqn:Ludecomp}
\mathbf{L}_{j,u}(y,X,\eta) = \left( jy + \mathbf{D}_{(\dot{S},j)} X +
\mathbf{D}_u^{NT} \eta , \mathbf{D}_u^N \eta \right),
\end{equation}
showing that $\mathbf{L}_{j,u}$ is surjective if and only if
$\mathbf{D}_u^N$ is surjective.  Note also that $\ker \mathbf{D}_{(\dot{S},j)}$
is naturally isomorphic to $\aut(\dot{S},j)$, so injecting the latter into
$\ker \mathbf{L}_{j,u}$ as the subspace $\{0\} \oplus 
\ker \mathbf{D}_{(\dot{S},j)} \oplus \{0\}$ and using \eqref{eqn:tangentModuli},
we obtain from this expression a natural isomorphism
\begin{equation}
\label{eqn:tangentModuliNormal}
T_u \mM(J) = \ker \mathbf{L}_{j,u} \big/ \ker \mathbf{D}_{(\dot{S},j)}
\to \ker \mathbf{D}_u^N : [(y,X,\eta)] \mapsto \eta.
\end{equation}

\begin{proof}[Proof of Proposition~\ref{prop:orientIndex0}]
Assume $u_0$ and $u_1$ are as stated in the proposition.  Since
\eqref{eqn:2cN} implies that both have genus zero, their domains are
diffeomorphic, and $\ind(u_0) = \ind(u_1) = 0$ implies that the complex
line bundles $N_{u_0}$ and $N_{u_1}$ also admit
a bundle isomorphism that is asymptotic to their canonical identification
at the ends.  Let us assume first for simplicity that $u_0$ and $u_1$ have 
isomorphic conformal structures on their domains, so we can represent them 
by the same complex structure $j$ on $\dot{S}$ and fix a single slice
$\tT$ parametrizing the Teichm\"uller space near~$[j]$.  Then after identifying
both $N_{u_0}$ and $N_{u_1}$ with some fixed complex line bundle
$E \to \dot{S}$, we can assume $\mathbf{D}_0^N := \mathbf{D}_{u_0}^N$ and
$\mathbf{D}_1^N := \mathbf{D}_{u_1}^N$ are Cauchy-Riemann type operators
on the same bundle over the same domain
$$
\mathbf{D}_i^N : W^{1,p,\delta}(E) \to L^{p,\delta}(\overline{\Hom}_\CC(
T\dot{S},E)), \qquad  \text{ $i = 0,1$},
$$
and these are related to $\mathbf{D}_i := \mathbf{D}_{u_i}$ and
$\mathbf{L}_i := \mathbf{L}_{j,u_i}$ for $i=0,1$ as in 
\eqref{eqn:Dudecomp} and \eqref{eqn:Ludecomp} respectively.
Now since the space of Cauchy-Riemann type operators with fixed asymptotic
orbits is affine, we can choose a homotopy $\{\mathbf{D}_\tau^N\}_{\tau \in [0,1]}$
from $\mathbf{D}_0^N$ to $\mathbf{D}_1^N$, which induces homotopies 
$\{\mathbf{D}_\tau\}$ from
$\mathbf{D}_0$ to $\mathbf{D}_1$ and $\{\mathbf{L}_\tau\}$ from
$\mathbf{L}_0$ to~$\mathbf{L}_1$.  By \cite{Wendl:automatic}*{Proposition~2.2},
the operators $\mathbf{D}_\tau^N$ are always surjective, hence so 
are~$\mathbf{L}_\tau$.  Since $\ind(u_0) = \ind(u_1) = 0$, the kernel
of $\mathbf{L}_\tau$ is therefore identical to the subspace
$\aut(\dot{S},j)$ for every $\tau \in [0,1]$.

Now suppose a choice of coherent orientations as constructed in
\cite{BourgeoisMohnke} is given.  This assigns a continuously varying
orientation to the determinant of $\mathbf{D}_\tau$ for each $\tau \in [0,1]$.
In order to determine whether $u_0$ and $u_1$ have the same sign, one must 
consider the determinant line bundle over a $1$-parameter family of Fredholm 
operators from $\mathbf{L}_0$ to $\mathbf{L}_1$ constructed in three parts:
\begin{enumerate}
\item Retract $\mathbf{L}_0$ to $0 + \mathbf{D}_0$ as in 
\eqref{eqn:detRetraction};
\item Follow the homotopy $\mathbf{D}_\tau$ 
from $\mathbf{D}_0$ to $\mathbf{D}_1$;
\item Unretract from $0 + \mathbf{D}_1$ to $\mathbf{L}_1$, again as in
\eqref{eqn:detRetraction}.
\end{enumerate}
The retractions for $i=0,1$ transfer the given orientations of
$\det(\mathbf{D}_i)$ to orientations of $\det{\mathbf{L}_i} =
\Lambda^{\max} \ker \mathbf{L}_i$, and the sign of $u_i$ depends on whether
the latter orientations match the canonical orientation of $\aut(\dot{S},j)$
as a complex vector space.  (Note that if $\aut(\dot{S},j)$ is trivial,
then it means the $\mathbf{L}_i$ are isomorphisms, so that $\det(\mathbf{L}_i)$ 
is tautologically equal to $\RR$ and the sign of $u_i$ depends on whether the
induced orientation of $\RR$ matches the tautological one.)  Since the
orientations of $\det(\mathbf{D}_\tau)$ must be continuous in~$\tau$,
following the three-part homotopy from $\mathbf{L}_0$ to $\mathbf{L}_1$
therefore determines the relationship between the signs of $u_0$ and~$u_1$.
We notice
however that the retraction from $\mathbf{L}_\tau$ to $0 + \mathbf{D}_\tau$
can also be performed for every $\tau \in [0,1]$, hence the three-part
homotopy can be deformed with fixed endpoints to 
$\{\mathbf{L}_\tau\}_{\tau \in [0,1]}$.  The latter is a homotopy through
surjective operators, and for any continuous family of orientations of
$\ker \mathbf{L}_\tau$, either all or none of them match the orientation of
$\aut(\dot{S},j)$.  This proves that the signs of $u_0$ and $u_1$ are
equal as claimed.

If $u_0$ and $u_1$ have inequivalent conformal structures $j_0$ and $j_1$ on 
their domains, then the above argument must be supplemented by an
initial step choosing a continuous deformation of Cauchy-Riemann type
operators to accompany a deformation from $j_0$ to $j_1$ in the space of
complex structures and a simultaneous deformation of the corresponding
Teichm\"uller slices.  This can always be done since the space of complex
structures on $\dot{S}$ compatible with its orientation is contractible.
The key point is that \cite{Wendl:automatic}*{Prop.~2.2} always guarantees
surjectivity for the restriction of the Cauchy-Riemann type operators to
a line bundle isomorphic to~$N_{u_0}$ with the same asymptotic operators.
\end{proof}

\begin{proof}[Proof of Proposition~\ref{prop:orientIndex1}]
Most steps are the same as for Prop.~\ref{prop:orientIndex0}, so let us
merely clarify the differences.  The normal operators
$\mathbf{D}_\tau^N$ now have index~$1$ and have $1$-dimensional kernels
since \cite{Wendl:automatic}*{Prop.~2.2} again implies that they are always
surjective.
The normal Chern number $c_N(u_0) = c_N(u_1) = 0$ can in this case
be interpreted as the relative first Chern number of $E \to \dot{S}$
with respect to the asymptotic trivializations that determine the
extremal winding for holomorphic sections, hence the nontrivial elements
$\eta \in \ker \mathbf{D}_\tau^N$ are guaranteed to be nowhere zero and
to have extremal winding at every end (cf.~\cite{Wendl:automatic}*{\S 2.2}).
Let $z \in \Gamma^\pm$ denote the unique puncture for both $u_0$ and $u_1$
at which the asymptotic orbit $\gamma$ has even Conley-Zehnder index.
The extremal eigenspace $V_\gamma^\pm$ is then $1$-dimensional as a
consequence of \eqref{eqn:CZwinding} since by \cite{HWZ:props2},
exactly two eigenvalues of $\mathbf{A}_\gamma$ counting multiplicity
have eigenfunctions with any given winding.
The extremal winding condition thus implies that any nontrivial
$\eta \in \ker \mathbf{D}_\tau^N$ has a nontrivial asymptotic eigenfunction
in $V_\gamma^\pm$ at the puncture~$z$.  A continuous family of such
sections for $\tau \in [0,1]$ therefore determines a continuous path
in $V_\gamma^\pm \setminus \{0\}$.

Since $0 \le \asympdef(u_i) \le c_N(u_i) = 0$ for $i=0,1$ by
\eqref{eqn:windpi}, the leading asymptotic
eigenfunctions $e_i \in V_\gamma^\pm$ of $u_i$ at~$z$ are also
nonzero.  Let $\eta_i \in \ker \mathbf{D}_i^N$ for $i=0,1$ denote the
canonical generators that are identified via
\eqref{eqn:tangentModuliNormal} with the infinitesimal generator of the
$\RR$-translation action on~$u_i$.  The asymptotic formula 
\eqref{eqn:asympFormula} implies that these have asymptotic eigenfunctions at 
$z$ of the form $\kappa_i e_i \in V_\gamma^\pm$ for some constants
$\kappa_i > 0$.  Hence there exists a continuous family 
$\{\eta_\tau\}_{\tau \in [0,1]}$ of nontrivial
generators of $\ker \mathbf{D}_\tau^N$ connecting $\eta_0$ to $\eta_1$
if and only if $e_0$ and $e_1$ are positive multiples of one another.

With this understood, the signs of $[u_i] \in \mM(J) / \RR$ for
$i=0,1$ are determined as follows.  The retractions as in 
\eqref{eqn:detRetraction} from $\mathbf{L}_i$ to $0 + \mathbf{D}_i$
transfer the given orientation of $\det(\mathbf{D}_i)$ to an orientation
of $\det(\mathbf{L}_i) = \Lambda^{\max} \ker \mathbf{L}_i$, hence
orienting $\ker\mathbf{L}_i$.  Dividing the latter by the canonically
oriented subspace $\aut(\dot{S},j)$ and using \eqref{eqn:tangentModuliNormal}
then induces an orientation of the $1$-dimensional space
$\ker \mathbf{D}_i^N$, for which $\eta_i$ is either positive or negative,
so this is the sign of~$[u_i]$.  To relate these signs to each other, 
one follows the
orientations along the ``three-part homotopy'' of Fredholm operators 
described in the proof of Prop.~\ref{prop:orientIndex0}, which is again
homotopic to a path $\mathbf{L}_\tau$ consisting of surjective operators
with kernels isomorphic to $\aut(\dot{S},j) \oplus \ker \mathbf{D}_\tau^N$.
One therefore obtains the same sign for $[u_0]$ and $[u_1]$ if and only if
there exists a path $\{\eta_\tau\}$ of generators of $\ker \mathbf{D}_\tau^N$
as described in the previous paragraph, which reduces to the question of
whether $e_0$ and $e_1$ lie in the same component of $V_\gamma^\pm 
\setminus \{0\}$.
\end{proof}

\section{A symplectic model of a cylindrical end}
\label{sec:model}

Throughout this section, assume $(M,\xi)$ is a
closed connected contact $3$-manifold on which $\xi$ is 
supported by a spinal open book
$$
\boldsymbol{\pi} := \Big(\pi\spine : M\spine \to \Sigma,
\pi\paper : M\paper \to S^1 \Big).
$$
The purpose of this section is to construct a symplectic and almost
complex model of the half-symplectization $[0,\infty) \times M$ 
of $(M,\xi)$, designed such that given any hypothetical symplectic filling 
$(W,\omega)$ of $(M,\xi)$, we can define a symplectic completion of 
$(W,\omega)$ that contains an abundance of pseudoholomorphic curves.
The construction is an extension of the model collar neighborhood 
described in \cite{LisiVanhornWendl1}*{\S 4}, which views
$(M,\xi)$ as a smoothing of the boundary
(with corners) of a noncompact $4$-manifold
$E$ whose boundary has two smooth faces 
$$
\p E = \p_v E \cup \p_h E,
$$
interpreted as the vertical and horizontal boundaries respectively of a
(locally defined) symplectic fibration.  Here it is not necessary to
assume $(M,\xi)$ admits a symplectic filling, as we can instead
identify it with the contact-type boundary of a collar neighborhood
$(-1,0] \times M$ in its own symplectization.
By attaching cylindrical ends to
the fibers of the aforementioned fibration and also extending it over cylindrical ends
attached to the base, one obtains the \emph{double completion}
$\widehat{E}$, which contains $E$ as a bounded subdomain.  We will 
endow $\widehat{E}$ with a Liouville structure $\lambda$
and compatible almost complex structure~$J_+$ having the following properties:
\begin{enumerate}
\item $J_+$ admits a suitable exhausting $J_+$-convex function and thus defines
almost Stein structures on suitable subdomains of~$\widehat{E}$, 
homotopic to the Liouville structure~$\lambda$;
\item The corner in $\p E$ can be smoothed to produce a contact hypersurface
contactomorphic to $(M,\xi)$;
\item A neighborhood of infinity in $\widehat{E}$ can be identified with the
half-symplectization $[0,\infty) \times M$ of a suitable stable Hamiltonian
structure $\hH$ on $M$, with $J_+ \in \jJ(\hH)$;
\item The symplectization $\RR \times M$ of the aforementioned stable
Hamiltonian structure admits a
foliation by embedded $J_+$-holomorphic curves that project to $M\paper$
as the pages of~$\boldsymbol{\pi}$;
\item $\widehat{E}$ also contains embedded $J_+$-holomorphic curves that 
intersect the holomorphic pages transversely and project to $M\spine$
as sections of $\pi\spine : M\spine \to \Sigma$, i.e.~``holomorphic vertebrae''.
\end{enumerate}

Since it is only semi-standard, we recall the following definition from
\cite{LisiVanhornWendl1}*{\S 1.1} of a geometric structure that is intermediate
between Stein and Weinstein structures.

\begin{defn}
\label{defn:almostStein}
An \defin{almost Stein structure} $(J,f)$
on a smooth compact oriented manifold $W$ with boundary and corners consists of an
almost complex structure $J : TW \to TW$ and smooth function $f : W \to \RR$ such that
$\lambda := - df \circ J$ is a Liouville form with $J$ tamed by~$d\lambda$,
and $\lambda$ restricts to a positive contact form on every smooth
face of~$\p W$.  (Note that we do not require $f|_{\p W}$ to be constant since $\p W$
may have corners, but it is automatic that the Liouville vector field dual to
$\lambda$ is gradient-like for $f$ and outwardly transverse to every face of~$\p W$.) 
\end{defn}

It will be immediate that the construction outlined above can also be modified by 
replacing the symplectic form $d\lambda$ with $C\, d\lambda + \eta$ for any 
closed $2$-form
$\eta$ and a sufficiently large constant $C > 0$; this makes the smoothing
of $\p E$ a \emph{weakly} contact hypersurface, and makes it possible to
attach the model on top of weak fillings of~$(M,\xi)$.  We will see that
this trick only interferes with the construction of the stable
Hamiltonian structure and resulting $J_+$-holomorphic curves if the symplectic 
structure is non-exact on the spine, so when this is not the case, we obtain 
nontrivial moduli spaces in completions of weak fillings and will use them in
\S\ref{sec:impressivePart} for the proofs of Theorems~\ref{thm:classification},
\ref{thm:weak} and \ref{thm:SteinDeformation}.  After explaining the
construction of the stable Hamiltonian structure and holomorphic curves
in \S\ref{sec:model}, it will be the purpose of
\S\ref{sec:holOpenBook} to extend the construction to the case
$\p M \ne \emptyset$, and then to show that the data on $M$ can be
perturbed to a contact structure isotopic to~$\xi$ and to explore the
consequences of this.
For genus zero pages, the perturbation results in a finite energy foliation
just as for planar open books (cf.~\cite{Wendl:openbook}), and this foliation
will be used in \S\ref{sec:invariants} to prove 
Theorems~\ref{thm:algTorsion}, \ref{thm:ECHvanish} and~\ref{thm:Umap}.

\begin{remark}
Some higher-dimensional analogues of the double completion model 
(inspired by an earlier draft of the present paper) appear
in \cites{Moreno:thesis,Moreno:algebraicGiroux,Moreno:SFTcomputations}.
\end{remark}

\subsection{Collar neighborhoods and smoothed hypersurfaces}
\label{sec:collars}

We will need to use the following notation originating in
\cite{LisiVanhornWendl1}*{\S 2.2}.  

We denote collar neighborhoods
of the boundaries in $\Sigma$, $M\spine$ and $M\paper$ by
$\nN(\p\Sigma) \cong (-1,0] \times \p\Sigma$, 
$\nN(\p M\spine) \cong (-1,0] \times \p M\spine$ and 
$\nN(\p M\paper) \cong (-1,0] \times \p M\paper$
respectively, where it is assumed that
$\pi\spine^{-1}(\nN(\p\Sigma)) = \nN(\p M\spine)$, and a trivialization
of $\pi\spine : M\spine \to \Sigma$ has been fixed so as to
identify $\nN(\p M\spine)$ with $\nN(\p\Sigma) \times S^1$.
Fixing an identification of each component of $\p\Sigma$
with $S^1$ then determines coordinates
\begin{equation*}
\begin{split}
(s,\phi) \in (-1,0] \times S^1 &\subset (-1,0] \times \p \Sigma = 
\nN(\p\Sigma) \subset \Sigma,\\
(s,\phi,\theta) \in (-1,0] \times S^1 \times S^1 &\subset
(-1,0] \times \p M\spine = \nN(\p M\spine) \subset M\spine,
\end{split}
\end{equation*}
which satisfy $\pi\spine(s,\phi,\theta) = (s,\phi) \in \nN(\p\Sigma)$
on $\nN(\p M\spine)$.  Making suitable choices of collar coordinates
$(t,\theta) \in (-1,0] \times S^1$ near the boundary of a fiber
of $\pi\paper : M\paper \to S^1$ and adjusting the monodromy $\mu$ by
an isotopy so that $\mu(t,\theta)=(t,\theta)$ in these coordinates
(but allowing a permutation of boundary components), we also 
identify each component of
$\nN(\p M\paper)$ with $S^1 \times (-1,0] \times S^1$ and thus
define coordinates
$$
(\phi,t,\theta) \in S^1 \times (-1,0] \times S^1 \subset
\nN(\p M\paper) \subset M\paper
$$
in which 
\begin{equation}
\label{eqn:localMult}
\pi\paper(\phi,t,\theta) = m\phi \in S^1 \quad\text{ on }\quad S^1 \times (-1,0] \times S^1 \subset \nN(\p M\paper)
\end{equation}
for some $m \in \NN$.  Here $m$ is the \emph{multiplicity} of 
$\pi\paper$ at the adjacent boundary component of the spine
(see Definition~\ref{defn:multiplicity}),
and it may have distinct values
on different connected components of $\nN(\p M\paper)$.
The coordinates defined on $\nN(\p M\spine)$ and $\nN(\p M\paper)$
should be assumed consistent with each other in the sense that
the respective $\phi$- and $\theta$-coordinates match each other on
$\p M\spine = \p M\paper$.

The model collar neighborhood of $M \subset (-1,0] \times M$ was defined
in \cite{LisiVanhornWendl1}*{\S 4.1} as
$$
E := \big( (-1,0] \times M\spine\big) \cup_\Phi \big( (-1,0] \times M\paper\big),
$$
where $(-1,0] \times \nN(\p M\spine)$ is glued to $(-1,0] \times \nN(\p M\paper)$
via the diffeomorphism
$$
(-1,0] \times \overbrace{(-1,0] \times \p M\spine}^{\nN(\p M\spine)} \stackrel{\Phi}{\longrightarrow} 
(-1,0] \times \overbrace{(-1,0] \times \p M\paper}^{\nN(\p M\paper)} :
(t,s,x) \mapsto (s,t,x)
$$
for $x \in \p M\spine = \p M\paper$.  This object is depicted as the more
darkly shaded region in Figure~\ref{fig:doubleCompletion}, along with the
vertical and horizontal boundaries
$$
\p_v E := \{0\} \times M\paper \subset E, \qquad
\p_h E := \{0\} \times M\spine \subset E,
$$
and their respective collar neighborhoods
$$
\nN(\p_v E) := (-1,0] \times M\paper \subset E, \qquad
\nN(\p_h E) := (-1,0] \times M\spine \subset E,
$$
whose intersection (a neighborhood of the corner $\p_v E \cap \p_h E$)
we sometimes denote by
$$
\nN(\p_v E \cap \p_h E) := \nN(\p_v E) \cap \nN(\p_h E) \subset E.
$$
The definition of the gluing map $\Phi$ gives rise to well-defined coordinates
$$
(s,\phi,t,\theta) \in (-1,0] \times S^1 \times (-1,0] \times S^1 \subset
\nN(\p_v E \cap \p_h E)
$$
on each component of $\nN(\p_v E \cap \p_h E)$.

On the collars $\nN(\p_h E)$ and $\nN(\p_v E)$ there are natural fibrations
\begin{equation*}
\begin{split}
\nN(\p_h E) = (-1,0] \times (\Sigma \times S^1) &\stackrel{\Pi_h}{\longrightarrow} \Sigma : 
\big(t,(z,\theta)\big) \mapsto \pi\spine(z,\theta) = z,\\
\nN(\p_v E) = (-1,0] \times M\paper 
&\stackrel{\Pi_v}{\longrightarrow} (-1,0] \times S^1 : (s,x) \mapsto (s,\pi\paper(x)),
\end{split}
\end{equation*}
which can be written in the coordinates $(s,\phi,t,\theta)$ on 
$\nN(\p_v E \cap \p_h E)$ as
\begin{equation}
\label{eqn:fibrationCorner}
\Pi_h(s,\phi,t,\theta) = (s,\phi) \in\nN(\p\Sigma), \qquad
\Pi_v(s,\phi,t,\theta) = (s,m\phi) \in (-1,0] \times S^1.
\end{equation}
Since the fibers of these two fibrations match on the region of overlap,
they give rise to
a well-defined \defin{vertical subbundle}
$$
V E := \ker T\Pi_h \text{ or } \ker T\Pi_v \subset T E,
$$
which on $\nN(\p_h E)$ is spanned by the vector fields
$\p_t$ and~$\p_\theta$.  Figure~\ref{fig:doubleCompletion} is drawn so that
the fibers would be represented as vertical lines in the picture.

Observe that in light of the canonical identifications
$\p_v E = M\paper$ and $\p_h E = M\spine$, the boundary $\p E = \p_v E \cup \p_h E$
has a canonical identification with $M = M\paper \cup M\spine$, though it
cannot be regarded as a smooth submanifold due to the corner at $\p_v E \cap \p_h E$.
We can however smooth the corner to define a smooth hypersurface diffeomorphic
to~$M$.  Specifically,
choose a pair of smooth functions
$F , G : (-1,1) \to (-1,0]$ that satisfy the following conditions:
\begin{itemize}
\item $(F(\rho),G(\rho)) = (\rho,0)$ for $\rho \le - 1 / 4$;
\item $(F(\rho),G(\rho)) = (0,-\rho)$ for $\rho \ge 1 / 4$;
\item $G'(\rho) < 0$ for $\rho > - 1 / 4$;
\item $F'(\rho) > 0$ for $\rho < 1 / 4$.
\end{itemize}
Now let
$$
M^0 \subset E
$$
denote the smooth hypersurface obtained from $\p E$ by replacing 
$\p E \cap \nN(\p_v E \cap \p_h E)$ in $(s,\phi,t,\theta)$-coordinates with
\begin{equation}
\label{eqn:smoothing}
\left\{ (F(\rho),\phi, G(\rho),\theta)\ \Big|\ 
\phi,\theta \in S^1,\ -1 < \rho < 1 \right\};
\end{equation}
see Figure~\ref{fig:doubleCompletion}.
We shall present $M^0$ as the union of three open subsets
$$
M^0 = \widecheck{M}^0\paper \cup \widecheck{M}^0\corner \cup \widecheck{M}^0\spine,
$$
defined as follows (see Figure~\ref{fig:M0cover}):
\begin{itemize}
\item $\widecheck{M}^0\paper$ is the complement of $\{ t \ge -1/2\} \subset 
\nN(\p M\paper)$ in~$\p_v E$.
\item $\widecheck{M}^0\spine$ is the complement of $\{ s \ge -1/2\}$
in~$\p_h E$.
\item $\widecheck{M}^0\corner$ is the smoothing region \eqref{eqn:smoothing}, 
each of whose connected components carry
a positively oriented coordinate system $(\rho,\phi,\theta)$ identifying
it with $(-1,1) \times S^1 \times S^1$.
\end{itemize}
The portion of $\widecheck{M}^0\paper$ intersecting $\widecheck{M}^0\corner$ carries coordinates
$(\phi,t,\theta)$ that identify its connected components with
$S^1 \times (-1,-1/2) \times S^1$ and are related to the coordinates
on $\widecheck{M}^0\corner$ by $\rho = -t$.  
Similarly, the overlap of $\widecheck{M}^0\spine$ with $\widecheck{M}^0\corner$
carries coordinates $(s,\phi,\theta)$ that identify its components with
$(-1,-1/2) \times S^1 \times S^1$ and satisfy $s = \rho$.

It will also be convenient to define a second hypersurface
$$
M^- \subset E
$$
by translating $M^0$ a distance of $-3/4$ in both the $s$- and $t$-coordinates;
see Figure~\ref{fig:doubleCompletion}.  This
contains portions of the two hypersurfaces
$\{-3/4\} \times M\spine \subset \nN(\p_h E)$ and
$\{-3/4\} \times M\paper \subset \nN(\p_v E)$ and a translated copy of
\eqref{eqn:smoothing} replacing the neighborhood of their intersection.

\begin{figure}
    \begin{postscript}
\psfrag{...}{$\ldots$}
\psfrag{phE}{$\p_h E$}
\psfrag{-1}{$-1$}
\psfrag{s}{$s$}
\psfrag{t}{$t$}
\psfrag{N(phE)}{$\nN(\p_h E)$}
\psfrag{Nhat(phE)}{$\widehat{\nN}(\p_h E)$}
\psfrag{Nhat(pvEintphE)}{$\widehat{\nN}(\p_v E \cap \p_h E)$}
\psfrag{N(pvE)}{$\nN(\p_v E)$}
\psfrag{Nhat(pvE)}{$\widehat{\nN}(\p_v E)$}
\psfrag{pvE}{$\p_v E$}
\psfrag{Sigma}{$\Sigma$}
\psfrag{Sigmahat}{$\widehat{\Sigma}$}
\psfrag{pSigma}{$\p\Sigma$}
\psfrag{N(pSigma)}{$\nN(\p\Sigma)$}
\psfrag{Nhat(pSigma)}{$\widehat{\nN}(\p\Sigma)$}
\psfrag{Pih}{$\Pi_h$}
\psfrag{M0}{$M^0$}
\psfrag{M-}{$M^-$}
\includegraphics{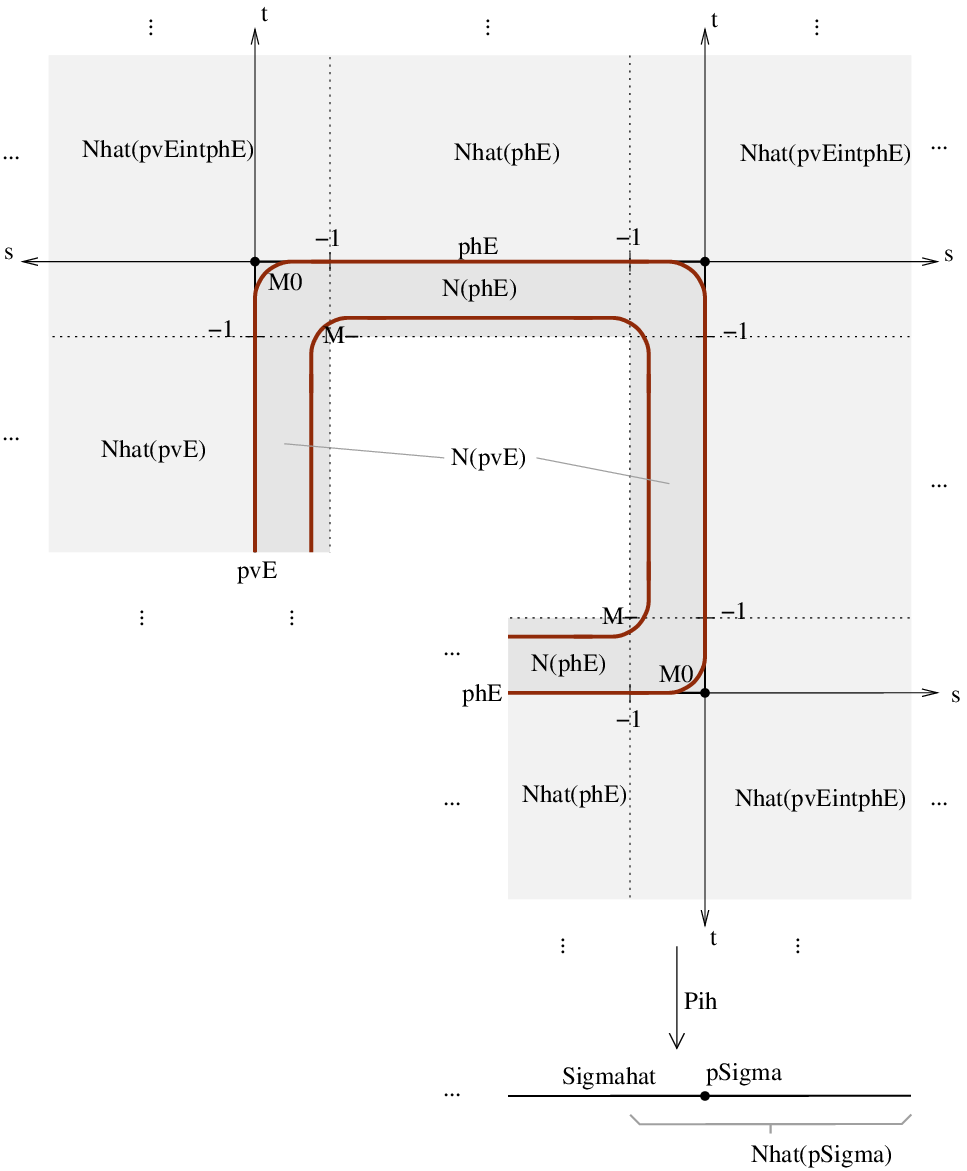}
\end{postscript}
\caption{\label{fig:doubleCompletion}
The darkly shaded region is the model collar neighborhood $E$ (with boundary
$\p E = \p_v E \cup \p_h E$ and corner $\p_v E \cap \p_h E$),
together with the smooth hypersurfaces $M^0,M^- \subset E$
defined in \S\ref{sec:collars}.  The lightly shaded region represents the
rest of the double completion $\widehat{E} \supset E$ as defined in 
\S\ref{sec:doubleCompletion}.}
\end{figure}

\begin{figure}
    \begin{postscript}
\psfrag{N(phE)}{$\nN(\p_h E)$}
\psfrag{N(pvE)}{$\nN(\p_v E)$}
\psfrag{-1/2}{$-1/2$}
\psfrag{-1}{$-1$}
\psfrag{t}{$t$}
\psfrag{s}{$s$}
\psfrag{pvE}{$\p_v E$}
\psfrag{phE}{$\p_h E$}
\psfrag{Mcheck0P}{$\widecheck{M}^0\paper$}
\psfrag{Mcheck0S}{$\widecheck{M}^0\spine$}
\psfrag{Mcheck0I}{$\widecheck{M}^0\corner$}
\psfrag{...}{$\ldots$}
\includegraphics{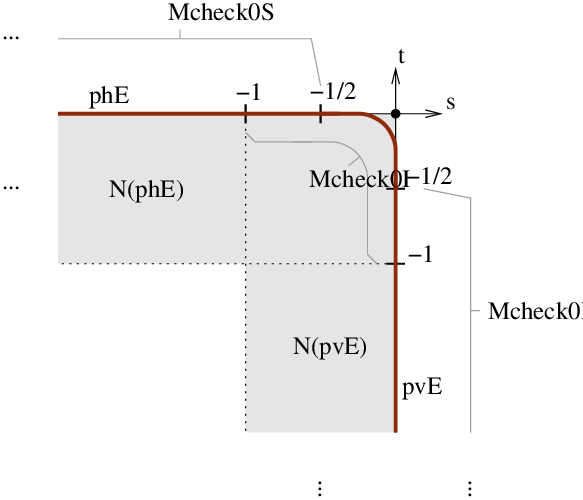}
\end{postscript}
\caption{\label{fig:M0cover}
The decomposition $M^0 = \widecheck{M}^0\paper \cup \widecheck{M}^0\corner \cup \widecheck{M}^0\spine$.}
\end{figure}

\subsection{The double completion}
\label{sec:doubleCompletion}

The collar neighborhoods $\nN(\p \Sigma) \subset \Sigma$,
$\nN(\p M\spine) \subset M\spine$ and $\nN(\p M\paper) \subset M\paper$
give rise to \emph{completions}, constructed in each case by attaching 
cylindrical ends
and extending the coordinate $s$ or $t$ to take values
in $(-1,\infty)$: we shall indicate each of these completions 
by placing hats over the relevant symbol, hence
\begin{align*}
\widehat{\Sigma} &:= \Sigma \cup_{\p \Sigma} \left( [0,\infty) \times \p \Sigma \right), &\\
\widehat{M}\spine &:= M\spine \cup_{\p M\spine} \left( [0,\infty) \times \p M\spine \right),& \\
\widehat{M}\paper &:= M\paper \cup_{\p M\paper} \left( [0,\infty) \times \p M\paper \right),
\end{align*}
and the collars become cylindrical ends whose components have coordinates
\begin{equation*}
\begin{split}
(s,\phi) \in (-1,\infty) \times S^1 \subset (-1,\infty) \times \p\Sigma
&=: \widehat{\nN}(\p\Sigma) \subset \widehat{\Sigma},\\
(s,\phi,\theta) \in (-1,\infty) \times S^1 \times S^1 \subset 
(-1,\infty) \times \p M\spine &=:
\widehat{\nN}(\p M\spine) \subset \widehat{M}\spine, \\
(\phi,t,\theta) \in S^1 \times (-1,\infty) \times S^1 \subset
(-1,\infty) \times \p M\paper &=:
\widehat{\nN}(\p M\paper) \subset \widehat{M}\paper.
\end{split}
\end{equation*}
We will continue to denote by $\pi\spine$ and $\pi\paper$ the natural 
extensions of the fibrations to $\widehat{M}\spine = \widehat{\Sigma} \times S^1
\to \widehat{\Sigma}$ and
$\widehat{M}\paper \to S^1$ respectively.  The \defin{double completion}
$\widehat{E}$ of $E$ is defined as
$$
\widehat{E} := \big( (-1,\infty) \times \widehat{M}\spine \big) \cup_{\widehat{\Phi}}
\big( (-1,\infty) \times \widehat{M}\paper \big),
$$
where $\widehat{\Phi}$ is the obvious extension of the previous gluing map to a
diffeomorphism
\begin{equation*}
\begin{split}
(-1,\infty) \times \overbrace{(-1,\infty) \times \p M\spine}^{\widehat{\nN}(\p M\spine)} &\stackrel{\widehat{\Phi}}{\to} 
(-1,\infty) \times \overbrace{(-1,\infty) \times \p M\paper}^{\widehat{\nN}(\p M\paper)}, \\
(t,s,x) &\mapsto (s,t,x).
\end{split}
\end{equation*}
This noncompact $4$-manifold without boundary contains $E$ as a bounded
subdomain, and the collars $\nN(\p_v E)$ and $\nN(\p_h E)$ are then
bounded subsets of the enlarged subsets
$$
\widehat{\nN}(\p_v E) := (-1,\infty) \times \widehat{M}\paper, \qquad
\widehat{\nN}(\p_h E) := (-1,\infty) \times \widehat{M}\spine,
$$
with the $s$- and $t$-coordinates now taking values in $(-1,\infty)$.
Their intersection is the so-called \defin{diagonal end}
$$
\widehat{\nN}(\p_v E \cap \p_h E) := \widehat{\nN}(\p_v E)
\cap \widehat{\nN}(\p_h E),
$$
whose connected components carry coordinates $(s,\phi,t,\theta)$ identifying
them with $(-1,\infty) \times S^1 \times (-1,\infty) \times S^1$.
The fibrations $\Pi_h$ and $\Pi_v$ have natural extensions
\begin{equation*}
\begin{split}
\widehat{\nN}(\p_h E) = (-1,\infty) \times (\widehat{\Sigma} \times S^1) &\stackrel{\Pi_h}{\longrightarrow} \widehat{\Sigma} : 
\big(t,(z,\theta)\big) \mapsto \pi\spine(z,\theta) = z,\\
\widehat{\nN}(\p_v E) = (-1,\infty) \times \widehat{M}\paper 
&\stackrel{\Pi_v}{\longrightarrow} (-1,\infty) \times S^1 : (s,x) \mapsto (s,\pi\paper(x)),
\end{split}
\end{equation*}
hence $\Pi_h(s,\phi,t,\theta) = (s,\phi)$ and $\Pi_v(s,\phi,t,\theta) = (s,m\phi)$
on the diagonal end.  We denote the resulting vertical subbundle by
$V\widehat{E} \subset T\widehat{E}$.
Figure~\ref{fig:doubleCompletion} shows the complement of $E$ in $\widehat{E}$
as the lightly shaded region.

\subsection{Symplectic structure}
\label{sec:Liouville}

The data on $\widehat{E}$ defined in this section constitute an 
enhancement and extension of the Liouville
structure already defined on $E \subset \widehat{E}$ in \cite{LisiVanhornWendl1}*{\S 4.1}.

Fix a complex structure $j$ on $\Sigma$ that takes the form
$$
j \p_s = \frac{1}{m} \p_\phi \quad \text{ on $\widehat{\nN}(\p\Sigma)$},
$$
where for each component of $\widehat{\nN}(\p\Sigma)$, $m \in \NN$ is the 
multiplicity that appeared in \eqref{eqn:localMult}; recall that this number may differ on
distinct connected components of $\widehat{\nN}(\p\Sigma)$.
Next, fix a $j$-convex function $\varphi : \widehat{\Sigma} \to \RR$ with
$$
\varphi(s,\phi) = e^s \quad\text{ on $\widehat{\nN}(\p\Sigma)$}.
$$
Such a function can always be found by starting from a 
Morse function on $\Sigma$ with critical points of index~$0$ and~$1$ and then
postcomposing it with a sufficiently convex function,
see e.g.~\cite{LatschevWendl}*{Lemma~4.1}.  This gives rise to a Liouville form
$$
\sigma := - d\varphi \circ j
$$
on $\widehat{\Sigma}$ with
$$
\sigma = m e^s \, d\phi \quad \text{ on $\widehat{\nN}(\p\Sigma)$}.
$$
We will also use $\sigma$ to denote the pullback of this Liouville form
under the trivial bundle projection 
$\Pi_h : \widehat{\nN}(\p_h E) \to \widehat{\Sigma}$, and since 
$\pi\paper(\phi,t,\theta) = m\phi$ on $\widehat{\nN}(\p M\paper)$, $\sigma$ extends 
globally to a $1$-form on $\widehat{E}$ satisfying
$$
\sigma = e^s \, d\pi\paper \quad \text{ on $\widehat{\nN}(\p_v E)$},
$$
where we are abusing notation slightly by using $\pi\paper : 
\widehat{\nN}(\p_v E) \to S^1$ to denote the composition of the 
fibration $\pi\paper : \widehat{M}\paper \to S^1$ with the obvious projection
$\widehat{\nN}(\p_v E) = (-1,0] \times \widehat{M}\paper \to \widehat{M}\paper$,
hence defining $d\pi\paper$ as a real-valued $1$-form on~$\widehat{\nN}(\p_v E)$.

Next, choose a $1$-form $\lambda$ on $\widehat{M}\paper$ such that
$d\lambda$ is positive on all fibers of $\pi\paper : \widehat{M}\paper \to S^1$
and
$$
\lambda = e^t \, d\theta \quad\text{ on } \widehat{\nN}(\p M\paper).
$$
Such a $1$-form can easily be found by first defining it on a single fiber
and then acting on it with the monodromy and interpolating
(see e.g.~\cite{Etnyre:lectures}*{Theorem~3.13}).  We will use the same symbol
to denote the pullback of $\lambda$ via the projection
$\widehat{\nN}(\p_v E) = (-1,\infty) \times \widehat{M}\paper \to \widehat{M}\paper$,
and it then extends to a global $1$-form on $\widehat{E}$ such that
$$
\lambda = e^t \, d\theta \quad\text{ on } \widehat{\nN}(\p_h E).
$$
Since $d\lambda|_{V\widehat{W}} > 0$ by construction, one can regard $\lambda$
as a \emph{fiberwise Liouville form} (cf.~\cite{LisiVanhornWendl1}*{\S 2})
on~$\widehat{E}$, and we observe also that since $\lambda|_{T(\p_h E)} = d\theta$,
its restriction to $\p E = \p_v E \cup \p_h E$ can also be regarded as a
\emph{fiberwise Giroux form}.

For applications in the almost Stein category, it will be convenient to
add another condition on the construction of~$\lambda$.
Pick a smooth family $J\fib$ of complex structures on the fibers of
$\pi\paper : \widehat{M}\paper \to S^1$ such that 
$$
J\fib \p_t = \p_\theta \quad \text{ on $\widehat{\nN}(\p M\paper)$}.
$$
On each individual fiber, the space of 
smooth $J\fib$-convex functions that match $e^t$ in the collar near 
the boundary is convex.  One can therefore use a partition of unity to
construct a smooth function $f\fib : \widehat{M}\paper \to \RR$ whose restriction
to each fiber has this property, and we are free to assume
\begin{equation}
\label{eqn:fibLiouvilleCondition}
\lambda\big|_{V\widehat{E}} = - d f\fib \circ J\fib\big|_{V\widehat{E}}.
\end{equation}

We can now apply the Thurston trick as described in \cite{LisiVanhornWendl1}*{\S 2.1}: 
for any constant $K \ge 0$, we define a
$1$-form $\lambda_K$ by
$$
\lambda_K := K\sigma + \lambda.
$$
Then there exists a constant $K_0 \ge 0$ such that
$d\lambda_K$ is symplectic everywhere on~$\widehat{E}$ for each $K \ge K_0$.  
Note that the unboundedness of $\widehat{E}$ does not pose any problem here
due to the precise formulas we have for $\lambda$ near infinity
(cf.~\cite{LisiVanhornWendl1}*{Remark~4.1}).  In particular, we have
\begin{equation}
\label{eqn:lambdaKboundary}
\lambda_K = K\, \sigma + e^t \, d\theta
\text{ on $\widehat{\nN}(\p_h E)$},\qquad\text{ and }\qquad
\lambda_K = K e^s\, d\pi\paper + \lambda
\text{ on $\widehat{\nN}(\p_v E)$},
\end{equation}
hence
\begin{equation}
\label{eqn:lambdaKcorner}
\lambda_K = K m e^s\, d\phi + e^t\, d\theta
\quad \text{ on $\widehat{\nN}(\p_v E \cup \p_h E)$}.
\end{equation}
We will assume that the condition $K \ge K_0$ holds from now on, and we will
occasionally also enlarge $K_0$ in order to satisfy extra conditions
(e.g.~for Lemma~\ref{lemma:LiouvilleTransverse} below).  Since
$d\lambda_K$ is now symplectic, there exists a Liouville vector field
$V_K$ on $(\widehat{E},d\lambda_K)$ defined via the condition
$$
d\lambda_K(V_K,\cdot) \equiv \lambda_K.
$$
From \eqref{eqn:lambdaKboundary} we find
\begin{equation}
\label{eqn:LiouvilleFormula}
V_K = V_\sigma + \p_t \quad \text{ on $\widehat{\nN}(\p_h E) = (-1,\infty) \times \widehat{\Sigma} \times S^1$},
\end{equation}
where $V_\sigma$ denotes the Liouville vector field on $\widehat{\Sigma}$
dual to~$\sigma$; in particular, $V_\sigma = \p_s$ on the cylindrical end
$\widehat{\nN}(\p\Sigma)$, hence
$$
V_K = \p_s + \p_t \quad \text{ on $\widehat{\nN}(\p_v E \cap \p_h E)$}.
$$

\begin{lemma}
\label{lemma:LiouvilleTransverse}
If $K_0 \ge 0$ is sufficiently large and $K \ge K_0$, then
$ds(V_K) > 0$ holds on $\widehat{\nN}(\p_v E)$.
\end{lemma}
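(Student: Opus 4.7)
The plan is to compute $ds(V_K)$ explicitly on $\widehat{\nN}(\p_v E) = (-1,\infty) \times \widehat{M}\paper$ and show that its deviation from $1$ is of size $O(1/K)$. From \eqref{eqn:lambdaKboundary},
\[
\lambda_K = K e^s\, d\pi\paper + \lambda, \qquad d\lambda_K = K e^s\, ds \wedge d\pi\paper + d\lambda,
\]
where $\lambda$ and $\pi\paper$ are pulled back from the $\widehat{M}\paper$-factor, so $\iota_{\p_s}$ annihilates each of $d\pi\paper$, $\lambda$ and $d\lambda$. I will decompose $V_K = A\,\p_s + W$ with $A := ds(V_K)$ and $W$ tangent to $\widehat{M}\paper$, and extract $A$ and $W$ from $\iota_{V_K}d\lambda_K = \lambda_K$ by contracting with suitable vector fields.

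Contracting the identity with $\p_s$ forces $K e^s\, d\pi\paper(W) = 0$, so $W$ is tangent to the pages of $\pi\paper$; contracting with a page-tangent vector $X$ then yields $d\lambda(W,X) = \lambda(X)$, which identifies $W$ fibrewise with the Liouville vector field of the page symplectic form $d\lambda|_{\mathrm{page}}$ dual to $\lambda|_{\mathrm{page}}$. Finally, contracting with any lift $Y$ with $d\pi\paper(Y) = 1$ yields
\[
A \;=\; 1 + \frac{\rho}{K e^s}, \qquad \rho \;:=\; \lambda(Y) - d\lambda(W,Y),
\]
and a short check using the previous characterisation of $W$ shows that $\rho$ is independent of the choice of such $Y$. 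Thus $\rho$ is a smooth function on $\widehat{M}\paper$ determined entirely by $(\lambda,\pi\paper)$.

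To conclude, I would pin down $\rho$ on the extended collar $\widehat{\nN}(\p M\paper)$ where $\lambda = e^t\, d\theta$ and $\pi\paper = m\phi$: the choices $Y = \tfrac{1}{m}\p_\phi$ and $W = \p_t$ give $\lambda(Y) = 0 = d\lambda(W,Y)$, so $\rho \equiv 0$ there. Hence $\rho$ is supported in the compact subset $\widehat{M}\paper \setminus \widehat{\nN}(\p M\paper)$ and obeys $|\rho| \le C$ for some $C$ depending only on $(\lambda,\pi\paper)$. Since $s > -1$ throughout $\widehat{\nN}(\p_v E)$ we have $K e^s > K/e$, so enlarging the earlier threshold $K_0$ to exceed $eC$ guarantees $A > 0$ throughout $\widehat{\nN}(\p_v E)$. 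The main obstacle will be the bookkeeping in the first step---peeling apart the $\p_s$- and page-tangent components of $V_K$ and verifying that $\rho$ is both intrinsic and vanishes on the collar; once that is in hand, the conclusion is a Thurston-trick-style estimate of exactly the same flavour as the argument producing the threshold $K_0$ making $d\lambda_K$ symplectic in the first place.
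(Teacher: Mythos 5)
Your argument is correct, and it takes a different route from the paper.  The paper observes that $ds(V_K)>0$ is equivalent to the $3$-form condition $ds\wedge\lambda_K\wedge d\lambda_K>0$ (i.e.~that each slice $\{s\}\times\widehat{M}\paper$ is a contact hypersurface), expands that $3$-form using $\lambda_K=Ke^s\,d\pi\paper+\lambda$, and shows the dominant term $Ke^s\,ds\wedge d\pi\paper\wedge d\lambda$ is a positive volume form while the remainder $ds\wedge\lambda\wedge d\lambda$ is uniformly bounded in an $s$-invariant metric.  You instead extract $V_K$ explicitly: peel off the $\partial_s$-component $A=ds(V_K)$ from the tangential component $W$, use $\iota_{V_K}d\lambda_K=\lambda_K$ to see that $W$ must be the fiberwise Liouville field of $(d\lambda|_{\mathrm{page}},\lambda|_{\mathrm{page}})$, and arrive at the closed formula $A=1+\rho/(Ke^s)$ with $\rho$ an $s$-independent function that vanishes on $\widehat{\nN}(\p M\paper)$.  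Both proofs rest on the same Thurston-trick estimate and the same formula for $\lambda_K$, but yours is the more explicit one (you actually identify the Liouville field and its normal component to leading order in $1/K$), whereas the paper's $4$-form criterion is shorter and avoids decomposing $V_K$ at all.  A minor bookkeeping point worth making explicit: contracting $\iota_{V_K}d\lambda_K=\lambda_K$ with $\partial_s$ actually gives $-Ke^s\,d\pi\paper(V_K)=0$ (the sign comes from $ds\wedge d\pi\paper$), which of course still yields $d\pi\paper(W)=0$; and the uniqueness of $W$ relies on the fiberwise Liouville condition $d\lambda|_{V\widehat E}>0$, which you should cite from the construction of $\lambda$ in \S\ref{sec:Liouville}.
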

\begin{proof}
It suffices to show that the restriction of $\lambda_K$ to 
$\{s\} \times \widehat{M}\paper$ for each $s \in (-1,\infty)$ is a positive 
contact form, or equivalently,
$$
ds \wedge \lambda_K \wedge d\lambda_K > 0 \quad \text{ on $\widehat{\nN}(\p_v E)$}.
$$
Since $\sigma = e^s\, d\pi\paper$ on $\widehat{\nN}(\p_v E)$, we compute
\begin{equation*}
\begin{split}
ds \wedge \lambda_K \wedge d\lambda_K &=
ds \wedge (K e^s \, d\pi\paper + \lambda) \wedge (K e^s \, ds \wedge d\pi\paper
+ d\lambda) \\
&= K e^s \left( ds \wedge d\pi\paper \wedge d\lambda +
\frac{1}{K e^s} \, ds \wedge \lambda \wedge d\lambda \right),
\end{split}
\end{equation*}
and see that the first term is a positive volume form since
$d\lambda\big|_{V\widehat{E}} > 0$, while the second is bounded with respect to any
$s$-invariant metric and thus uniformly small if $K$ is large.
\end{proof}

The lemma implies that for any pair of constants $s_0 , t_0 \in (-1,\infty)$,
the boundary of the region $\{ s \le s_0, \ t \le t_0 \} \subset \widehat{E}$
is a contact hypersurface in $(\widehat{E},d\lambda_K)$ after smoothing
the corner.

For applications to weak fillings, we will also need to allow certain
cohomological perturbations to the model $(\widehat{E},d\lambda_K)$.
Fix on $M$ a closed $2$-form $\eta$ that has support in the interior of
$M\paper \setminus \nN(\p M\paper)$, so pulling back via the projection
$\widehat{\nN}(\p_v E) = (-1,\infty) \times \widehat{M}\paper \to 
\widehat{M}\paper$ defines $\eta$ as a closed $2$-form on $\widehat{E}$
that vanishes in $\widehat{\nN}(\p_h E)$ and is uniformly bounded on
$\widehat{\nN}(\p_v E)$ for any choice of $s$-invariant metric.  In the
following, we say that an oriented hypersurface endowed with a co-oriented
contact structure in a symplectic $4$-manifold is \defin{weakly contact} if
the restriction of the symplectic form to the contact structure is positive.

\begin{lemma}
\label{lemma:etaPert}
Given $K \ge K_0$, there exists a constant $C_0 > 0$ such that for all 
$C \ge C_0$, the $2$-form $\omega'_E := C \, d\lambda_K + \eta$
is symplectic on $\widehat{E}$ and for each $s \in (-1,\infty)$,
the hypersurface
$$
\p_v^s \widehat{E} := \{s\} \times \widehat{M}\paper \subset \widehat{E}
$$
with contact structure 
$\xi_v^s := \ker \left( \lambda_K|_{T(\p_v^s\widehat{E})} \right)$ is
weakly contact in $(\widehat{E},\omega'_E)$.
\end{lemma}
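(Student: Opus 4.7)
The plan is to exploit that $\eta$ is supported on $\widehat{\nN}(\p_v E)$ and pulled back from a compactly supported $2$-form on $\widehat{M}\paper$, so it is bounded on the vertical end and zero outside. Outside $\widehat{\nN}(\p_v E)$, $\omega'_E$ is just a positive multiple of the already-symplectic form $d\lambda_K$, so there is nothing to check. The heart of the argument concerns $\widehat{\nN}(\p_v E)$, where the explicit formula $\lambda_K = K e^s\, d\pi\paper + \lambda$ from~\eqref{eqn:lambdaKboundary}, combined with the fiberwise positivity $d\lambda|_{V\widehat{E}} > 0$, lets us treat $\eta$ as a perturbation that is absorbed by scaling $d\lambda_K$ via a sufficiently large~$C$. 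The one point requiring care will be uniformity at the non-compact end $s \to \infty$, which should follow because the dominant $e^s$-factors appear symmetrically in the relevant expressions.

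\textbf{Symplecticity.} Because $\eta$ is pulled back from a $2$-form on the three-dimensional manifold $\widehat{M}\paper$, both $d\lambda \wedge \eta$ and $\eta \wedge \eta$ vanish as $4$-forms on $\widehat{\nN}(\p_v E)$. A short calculation analogous to the one in the proof of Lemma~\ref{lemma:LiouvilleTransverse} will yield
\[ \omega'_E \wedge \omega'_E \;=\; 2 C K e^s\, ds \wedge d\pi\paper \wedge \bigl( C\, d\lambda + \eta \bigr), \]
which is positive iff $(C\, d\lambda + \eta)|_{V\widehat{E}} > 0$ on the vertical subbundle. On each fiber $d\lambda$ is a strictly positive area form by construction of~$\lambda$, so the pointwise ratio $\eta|_{V\widehat{E}} / d\lambda|_{V\widehat{E}}$ is a real-valued function on $\widehat{\nN}(\p_v E)$ that vanishes off the support of~$\eta$ and is bounded on the closure of that support, which is compact in~$\widehat{M}\paper$. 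Choosing $C_0$ larger than this bound will deliver symplecticity for every $C \ge C_0$.

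\textbf{Weakly contact condition.} At each $s$, the condition $\omega'_E|_{\xi_v^s} > 0$ is equivalent to positivity of $\lambda_K|_{\p_v^s} \wedge \omega'_E|_{\p_v^s}$ as a $3$-form on $\widehat{M}\paper$. Since $d\lambda_K|_{\p_v^s} = d\lambda$, Lemma~\ref{lemma:LiouvilleTransverse} already guarantees that $\lambda_K|_{\p_v^s} \wedge d\lambda$ is a positive volume form. Expanding
\[ \lambda_K|_{\p_v^s} \wedge \omega'_E|_{\p_v^s} \;=\; C\, \lambda_K|_{\p_v^s} \wedge d\lambda \;+\; \lambda_K|_{\p_v^s} \wedge \eta, \]
the key observation is that the common factor $\lambda_K|_{\p_v^s}$ carries all the $s$-dependence, so the ratio of the second summand to the first should be bounded uniformly in $s \in (-1,\infty)$ on the support of~$\eta$; enlarging $C_0$ if necessary to dominate that bound will then complete the proof.
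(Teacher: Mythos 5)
Your argument is correct and follows essentially the same route as the paper: reduce to the vertical end $\widehat{\nN}(\p_v E)$ where $\eta$ is supported, compute the relevant $4$- and $3$-forms from the explicit collar formula for $\lambda_K$, and note that all terms carry a common $e^s$-factor so that a fixed, $s$-independent $C_0$ absorbs the perturbation uniformly. Your observation that $d\lambda \wedge d\lambda$, $d\lambda \wedge \eta$, and $\eta \wedge \eta$ vanish identically (being $4$-forms pulled back from the $3$-manifold $\widehat{M}\paper$) makes the symplecticity computation slightly cleaner than the paper's, which instead just bounds these terms, while the weak-contact step in both cases reduces to the same modification of the proof of Lemma~\ref{lemma:LiouvilleTransverse}.
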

\begin{proof}
This is mainly a matter of replacing $d\lambda$ with 
$d\lambda + \frac{1}{C} \eta$ and then
repeating the usual calculations carefully enough
to make sure that nothing goes wrong as $s \to \infty$. 
The nondegeneracy of $\omega'_E$ on $\widehat{\nN}(\p_v E)$, for instance, 
follows by computing
\begin{equation*}
\begin{split}
\frac{1}{C^2} & \omega'_E \wedge \omega'_E =
\left( d\lambda_K + \frac{1}{C}\eta \right) \wedge 
\left( d\lambda_K + \frac{1}{C}\eta \right) \\
&= K e^s \left[ 2 \, ds \wedge d\pi\paper \wedge \left( d\lambda +
\frac{1}{C}\eta \right) + 
\frac{1}{K e^s} \left(d\lambda + \frac{1}{C}\eta \right)
\wedge \left( d\lambda + \frac{1}{C} \eta \right) \right],
\end{split}
\end{equation*}
in which the first term in the brackets is uniformly positive for sufficiently large
$C > 0$ and the second is bounded with respect to any $s$-invariant metric.
The weak contact condition for $(\p_v^s \widehat{E},\xi_v^s)$ follows by a similar 
modification of the proof of Lemma~\ref{lemma:LiouvilleTransverse},
showing
$$
\frac{1}{C} \, ds \wedge \lambda_K \wedge \omega'_E = ds \wedge
\lambda_K \wedge \left( d\lambda_K + \frac{1}{C} \eta \right) > 0
\quad
\text{ on $\widehat{\nN}(\p_v E)$.}
$$
\end{proof}

For the remainder of \S\ref{sec:model}, we fix $K \ge K_0$,
$C \ge C_0$ as in the above lemma and consider the rescaled symplectic form
$$
\omega_E := \frac{1}{K C} \left( C\, d\lambda_K + \eta \right) =
d\sigma + \frac{1}{K} d\lambda + \frac{1}{KC} \eta
$$ 
on $\widehat{E}$.  The scaling by $1 / KC$ has no deep significance but will
be convenient for technical reasons when we talk about stable Hamiltonian
structures below.  Since $\eta$ vanishes in $\widehat{\nN}(\p_h E)$,
$V_K$ is also a Liouville vector field for $\omega_E$ in this region.
Then the fact that $V_K = \p_s + \p_t$ in the diagonal end implies that
the boundary
of any region of the form $\{ s \le s_0,\ t \le t_0 \} \subset \widehat{E}$
can be made into a \emph{weakly} contact hypersurface in 
$(\widehat{E},\omega_E)$ by smoothing the corner, with the contact structure
defined by restricting~$\lambda_K$.  Two specific examples of smooth hypersurfaces
were defined in this way at the end of \S\ref{sec:collars}: we define contact
forms and contact structures on $M^0$ and $M^-$ respectively by
\begin{equation*}
\begin{split}
\alpha^0 := \lambda_K\big|_{T M^0}, \qquad & \xi_0 := \ker\alpha^0 \subset T M^0,\\
\alpha^- := \lambda_K\big|_{T M^-}, \qquad & \xi_- := \ker\alpha^- \subset T M^-.
\end{split}
\end{equation*}
This makes $(M^-,\xi_-)$ and $(M^0,\xi_0)$ into weakly contact hypersurfaces
in $(\widehat{E},\omega_E)$, and the definition of $\lambda_K$ implies that
both are (after suitably identifying $M^0$ and $M^-$ with~$M$) supported by
the spinal open book~$\boldsymbol{\pi}$, hence both are isotopic to~$\xi$.

\subsection{Stable Hamiltonian structure}
\label{sec:SHS}

We now endow the hypersurface $M^0 \subset \widehat{E}$ 
with a stable Hamiltonian structure that is related to its
contact structure $\xi_0$ but will be
better suited for finding holomorphic pages in its symplectization.
Fix a smooth cutoff function $\beta : (-1,\infty) \to [0,1]$ satisfying
\begin{itemize}
\item $\beta \equiv 0$ on $(-1,-1/2]$;
\item $\beta \equiv 1$ on $[-1/4,\infty)$;
\item $\beta' \ge 0$ and $\supp(\beta') \subset (-1/2,-1/4)$.
\end{itemize}
Now consider the vector field on $\widehat{E}$ defined by (see Figure~\ref{fig:stabVec})
$$
Z := \begin{cases}
V_\sigma + \beta(t) \p_t & \text{ on $\widehat{\nN}(\p_h E)$},\\
\p_s & \text{ everywhere else}.
\end{cases}
$$
Here again $V_\sigma$ denotes the Liouville vector field dual to $\sigma$
on~$\widehat{\Sigma}$, so we observe that $Z \equiv V_K$ on the region
$\{ t \ge -1/4 \} \subset \widehat{\nN}(\p_h E)$.  Everywhere else in
$\widehat{\nN}(\p_v E \cap \p_h E)$, we can
plug in $V_\sigma = \p_s$ and thus write $Z = \p_s + \beta(t) \p_t$.
This vector field is obviously transverse to $M^0$; we will now show that
it is also a \emph{stabilizing} vector field in the sense of
\S\ref{sec:stable}, and thus makes $M^0$ into a stable hypersurface.

\begin{figure}
    \begin{postscript}
\psfrag{Nhat(phE)}{$\widehat{\nN}(\p_h E)$}
\psfrag{Nhat(pvEintphE)}{$\widehat{\nN}(\p_v E \cap \p_h E)$}
\psfrag{Nhat(pvE)}{$\widehat{\nN}(\p_v E)$}
\psfrag{-1/2}{$t=-1/2$}
\psfrag{-1/4}{$t=-1/4$}
\psfrag{t}{$t$}
\psfrag{s}{$s$}
\psfrag{pvE}{$\p_v E$}
\psfrag{phE}{$\p_h E$}
\psfrag{M0}{$M^0$}
\psfrag{...}{$\ldots$}
\includegraphics{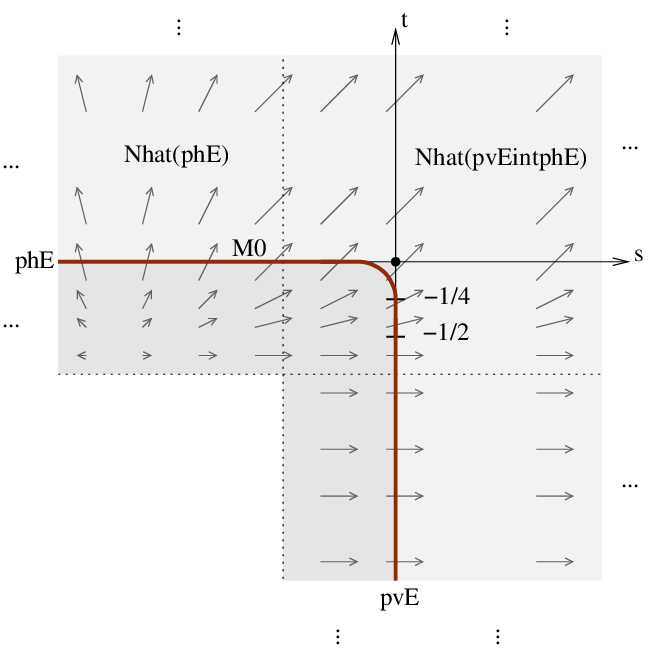}
\end{postscript}
\caption{\label{fig:stabVec}
The stabilizing vector field $Z$ transverse to $M^0 \subset \widehat{E}$.}
\end{figure}

\begin{lemma}
\label{lemma:stabilizing}
The vector field $Z$ is a \emph{stabilizing vector field} for $M^0$
in $(\widehat{E},\omega_E)$.
\end{lemma}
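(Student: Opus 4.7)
The plan is to verify the three defining conditions of a stable Hamiltonian structure for $\Omega := \omega_E|_{TM^0}$ and $\Lambda := \iota_Z\omega_E|_{TM^0}$, together with the transversality of $Z$ to $M^0$. Closedness $d\Omega = 0$ is automatic since $\omega_E$ is closed, so the nontrivial content is the positivity $\Lambda \wedge \Omega > 0$ and the inclusion $\ker\Omega \subset \ker d\Lambda$. I would verify these on each of the three open pieces in the cover $M^0 = \widecheck{M}^0\paper \cup \widecheck{M}^0\corner \cup \widecheck{M}^0\spine$ separately.

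On $\widecheck{M}^0\spine$ (where $t = 0$), $\beta(0) = 1$ forces $Z = V_\sigma + \p_t = V_K$ to coincide with the Liouville vector field, and $\eta$ vanishes on this region; hence $\Lambda = \tfrac{1}{K}\alpha^0$ and $\Omega = \tfrac{1}{K}d\alpha^0$, and both stability conditions follow immediately from $\alpha^0$ being a positive contact form. On $\widecheck{M}^0\paper$ (where $s = 0$), $Z = \p_s$ is no longer Liouville, but direct computation using $\sigma = e^s\, d\pi\paper$ on $\widehat{\nN}(\p_v E)$ together with $\iota_{\p_s}d\lambda = \iota_{\p_s}\eta = 0$ yields $\Lambda = d\pi\paper|_{TM^0}$, which is closed, so the stability condition is immediate. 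The positivity $\Lambda \wedge \Omega = \tfrac{1}{K}\, d\pi\paper \wedge d\lambda + \tfrac{1}{KC}\, d\pi\paper \wedge \eta$ then holds after enlarging $C_0$ if necessary, since $d\pi\paper \wedge d\lambda$ is a positive volume form on $\widehat{M}\paper$ by fiberwise positivity of $d\lambda$.

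The main computation happens on the corner region $\widecheck{M}^0\corner$, where I would work in coordinates $(\rho, \phi, \theta)$ with $s = F(\rho)$, $t = G(\rho)$. There $\eta = 0$, and plugging $Z = \p_s + \beta(t)\p_t$ into $\omega_E = me^s\, ds \wedge d\phi + \tfrac{e^t}{K}\, dt \wedge d\theta$ yields the explicit formulas
\[
\Lambda = me^{F(\rho)}\, d\phi + \tfrac{e^{G(\rho)}\beta(G(\rho))}{K}\, d\theta, \qquad
\Omega = me^{F(\rho)}F'(\rho)\, d\rho \wedge d\phi + \tfrac{e^{G(\rho)}G'(\rho)}{K}\, d\rho \wedge d\theta.
\]
Taking the wedge gives $\Lambda \wedge \Omega = \tfrac{m}{K}e^{F+G}\bigl(\beta(G(\rho))F'(\rho) - G'(\rho)\bigr)\, d\rho \wedge d\phi \wedge d\theta$, which is strictly positive: $F' \geq 0$, $G' \leq 0$, the pair $(F', G')$ never vanishes simultaneously on $(-1,1)$, and $\beta(G(\rho)) = \beta(0) = 1$ wherever $G' = 0$.

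The hard part is the stability condition $\ker\Omega \subset \ker d\Lambda$, where the nonconstancy of $\beta$ prevents $\Lambda$ from being the pullback of a Liouville primitive. The kernel of $\Omega$ is spanned by $v = \tfrac{e^G G'}{K}\p_\phi - me^F F'\p_\theta$ (degenerating to $\p_\theta$ or $\p_\phi$ when one of $F', G'$ vanishes), and a short calculation yields
\[
\iota_v d\Lambda = \tfrac{m e^{F+G}G'(\rho)F'(\rho)}{K}\bigl(\beta(G(\rho)) + \beta'(G(\rho)) - 1\bigr)\, d\rho.
\]
The crucial observation saving the argument is a product-of-supports phenomenon: $G'(\rho)F'(\rho)$ is nonzero only on the interpolation range $-1/4 < \rho < 1/4$, and throughout that range $G(\rho) \in [-1/4, 0]$, where $\beta \equiv 1$ (so $\beta' \equiv 0$) by the defining properties of $\beta$; hence $\beta + \beta' - 1 \equiv 0$ wherever the prefactor is nonzero, so $\iota_v d\Lambda$ vanishes identically. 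Transversality of $Z$ to $M^0$ is then immediate from the coordinate formulas, completing the verification.
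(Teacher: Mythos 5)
Your proof is correct, but it takes a genuinely different route from the paper's. The paper verifies the stabilizing condition directly in its original form: it checks that the characteristic line field is preserved by the flow of $Z$, splitting into three regions by the \emph{form of $Z$} (where $Z = V_K$ is Liouville, where $Z = \p_s$ and $\Lambda$ is closed, and the transition region $-1/2 \le t \le -1/4$), observing in the last case that the characteristic line field is simply spanned by $\p_\phi$, which $Z$-flow preserves. You instead verify the equivalent SHS axioms ($\Lambda \wedge \Omega > 0$ and $\ker\Omega \subset \ker d\Lambda$) on the standard decomposition $M^0 = \widecheck{M}^0\paper \cup \widecheck{M}^0\corner \cup \widecheck{M}^0\spine$, carrying out an explicit coordinate computation on the corner region. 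The two arguments rest on complementary observations: the paper notes that in the transition zone where $\beta$ is non-constant, $F' = 0$ so the characteristic line field is trivially $\p_\phi$; your identity $\iota_v d\Lambda = \tfrac{m}{K}e^{F+G}F'G'(\beta + \beta' - 1)\,d\rho$ exposes the ``product-of-supports'' structure from the other side (wherever $F'G' \neq 0$, we have $\beta \equiv 1$, $\beta' \equiv 0$), which handles all of $\widecheck{M}^0\corner$ uniformly rather than splitting it further. The paper's proof is terser and more conceptual; yours is more computational but makes the cancellation mechanism fully explicit and verifiable.

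One small point worth tightening: in your discussion of $\widecheck{M}^0\paper$, the needed positivity of $d\pi\paper \wedge (\tfrac{1}{K}d\lambda + \tfrac{1}{KC}\eta)$ for $C$ large does hold by the argument you sketch (fiberwise positivity of $d\lambda$ plus boundedness of the $\eta$ term with respect to an $s$-invariant metric), but you should check that the constant $C_0$ already fixed by Lemma~\ref{lemma:etaPert} suffices or note explicitly that it can be enlarged without affecting earlier choices; the paper avoids this step entirely since the condition $\ker\Omega \subset \ker d\Lambda$ is the only nontrivial SHS axiom and positivity of $\Lambda \wedge \Omega$ is already implicit in $Z$ being transverse and $\omega_E$ being tame on $\Xi_0$.
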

\begin{proof}
This is immediate in the region where $Z = V_K$, since $V_K$ is Liouville
for $\omega_E$ in that region and all Liouville vector fields have the
stabilizing property.  It is similarly immediate in the region where
$Z = \p_s$, as the hypersurfaces obtained by flowing $\p_v E$ along $\p_s$
each have constant $s$-coordinate and $\omega_E$ thus restricts to each of
them as $\frac{1}{K} \, d\lambda + \frac{1}{K C} \eta$, 
defining a characteristic line field that
does not depend on~$s$.  It remains to consider the region 
$-1/2 \le t \le -1/4$ in which $Z = \p_s + \beta(t) \p_t$, and the key
point here is that the characteristic line field is exceedingly simple:
we have $\omega_E = \frac{1}{K}\, d\lambda_K = m e^s\, ds \wedge d\phi +
\frac{1}{K} e^t\, dt \wedge d\theta$ in this region, while the hypersurfaces in
question still have constant $s$-coordinate, hence the characteristic
line field on each is spanned by $\p_\phi$, a vector field that is
preserved by the flow of~$Z$.
\end{proof}

The lemma implies that the pair $\hH_0 = (\Omega_0,\Lambda_0)$ defined by
$$
\Omega_0 := \omega_E\big|_{TM^0}, \qquad 
\Lambda_0 := \iota_Z \omega_E\big|_{TM^0}
$$
is a stable Hamiltonian structure on~$M^0$,
cf.~\S\ref{sec:stable}.  We will denote the corresponding
oriented hyperplane field by $\Xi_0 := \ker \Lambda_0 \subset TM^0$
and the Reeb vector field by $R_0$, where by definition
$$
\Omega_0(R_0,\cdot) \equiv 0, \qquad \Lambda_0(R_0) \equiv 1.
$$
One can compute the following explicit
formulas for $\Lambda_0$, $\Omega_0$ and $R_0$ in the regions
$\widecheck{M}^0\spine , \widecheck{M}^0\corner, \widecheck{M}^0\paper \subset M^0$ defined in
\S\ref{sec:collars} (cf.~Figure~\ref{fig:M0cover}).

On $\widecheck{M}^0\spine \subset \p_h E = \Sigma \times S^1$, $Z$ matches the Liouville
vector field $V_K$, which is $\omega_E$-dual to $\frac{1}{K} \lambda_K$, 
hence
$$
(\Omega_0,\Lambda_0) = \left(\frac{1}{K}\, d\alpha^0, \frac{1}{K} \alpha^0\right)
= \left(d\sigma, \frac{1}{K} \, d\theta + \sigma \right)
\quad
\text{ on $\widecheck{M}^0\spine$}.
$$
It follows that $R_0$ is a suitably rescaled version of the Reeb vector
field for $\alpha^0$, that is,
$$
R_0 = K \p_\theta \quad
\text{ on $\widecheck{M}^0\spine$}.
$$

On $\widecheck{M}^0\paper \subset \p_v E = M\paper$, $Z = \p_s$ and thus
$$
(\Omega_0,\Lambda_0) = \left(\frac{1}{K}\, d\lambda + \frac{1}{KC}\eta, d\pi\paper\right) \quad
\text{ on $\widecheck{M}^0\paper$},
$$
so $d\Lambda_0 = 0$ on this region and thus $\Xi_0$ is integrable; 
indeed, the integral submanifolds of $\Xi_0$ are simply the fibers of 
$\pi\paper : M\paper \to S^1$.  The Reeb vector field can be written as
\begin{equation}
\label{eqn:RHpaper}
R_0 = e_{S^1}^\# \quad \text{ on $\widecheck{M}^0\paper$},
\end{equation}
where we denote by $e_{S^1} \in T S^1$ the canonical unit vector field on
$S^1 = \RR / \ZZ$ and use the superscript ``$\#$'' to denote its
horizontal lift with respect to the
connection on $\pi\paper : M\paper \to S^1$ defined as the
$(C\, d\lambda + \eta)$-symplectic complement of the vertical subbundle.
In each collar $S^1 \times (-1,-1/2) \times S^1 \subset \nN(\p M\paper)
\cap \widecheck{M}^0\paper$, we can write $d\pi\paper = m\, d\phi$
for the appropriate multiplicity $m \in \NN$ and use 
$(\phi,t,\theta)$-coordinates to write
\begin{equation}
\label{eqn:RHpaperCollar}
R_0 = \frac{1}{m} \p_\phi
\quad \text{ on $\widecheck{M}^0\paper \cap \nN(\p M\paper)$}.
\end{equation}

Finally, using the coordinates $(\rho,\phi,\theta) \in (-1,1) \times S^1
\times S^1$ on connected components of $\widecheck{M}^0\corner$, $\Omega_0$ and $\Lambda_0$
are determined by the functions $F$ and $G$ that were chosen in
\S\ref{sec:collars} for smoothing
the corner, as well as the cutoff function $\beta$ in the definition
of~$Z$: we have
\begin{equation}
\label{eqn:OmegaLambda}
\begin{split}
\Omega_0 &=  m e^{F(\rho)} F'(\rho) \, d\rho \wedge d\phi +
\frac{1}{K} e^{G(\rho)} G'(\rho) \, d\rho \wedge d\theta , \\
\Lambda_0 &= m e^{F(\rho)} \, d\phi + \frac{1}{K} e^{G(\rho)} \beta(G(\rho))
\, d\theta ,
\end{split}
\end{equation}
which leads to
\begin{equation}
\label{eqn:RH}
R_0 = \frac{1}{\beta(G(\rho)) F'(\rho) - G'(\rho)}
\left( -\frac{1}{m} e^{-F(\rho)} G'(\rho) \, \p_\phi
+ K e^{-G(\rho)} F'(\rho) \, \p_\theta \right) .
\end{equation}

\subsection{Nondegenerate perturbation}
\label{sec:nondegPert}

The stable Hamiltonian structure $\hH_0 = (\Omega_0,\Lambda_0)$ defined above on
$M^0$ has the unfortunate property that the orbits of $R_0$ in $\widecheck{M}^0\spine$
are degenerate.  We will follow the standard procedure for perturbing
to get nondegenerate orbits, which depends on a choice of Morse function
on the space parametrizing the orbits, in this case~$\Sigma$.
Choose a smooth function
$$
H : \Sigma \to [0,\infty)
$$
such that
\begin{enumerate}
\item $H$ is Morse outside of $\nN(\p\Sigma)$;
\item On $\nN(\p\Sigma)$ in coordinates $(s,\phi)$, $H$ depends only on~$s$,
and it satisfies $\p_s H < 0$ except on a smaller neighborhood of the
boundary with closure in the region $\{ -1/4 < s \le 0 \}$, where $H$ vanishes.
\end{enumerate}
We shall denote by
$$
\CritMorse(H) \subset \Sigma
$$
the finite set of Morse critical points of $H$; this excludes the critical
points near~$\p \Sigma$ where $H$ vanishes.  Extend $H$ to a smooth function 
$$
\widehat{H} : M^0 \to [0,\infty)
$$ 
that vanishes on $\widecheck{M}^0\paper$ and satisfies $\widehat{H}(z,\theta) = H(z)$
on $\widecheck{M}^0\spine \subset \Sigma \times S^1$, and
$\widehat{H}(\rho,\phi,\theta) = H(F(\rho),\phi)$ on~$\widecheck{M}^0\corner$.  Now if
$\Phi^\tau_Z$ denotes the flow of $Z$ for time~$\tau$, we fix a small constant
$\nondegParam > 0$ and observe that the perturbed hypersurface (see Figure~\ref{fig:nondegenerate})
$$
M^+ := \left\{ \Phi_Z^{\nondegParam \widehat{H}(x)}(x) \in \widehat{E}
\ \Big|\ x \in M^0 \right\}
$$
is still stabilized by~$Z$; indeed, $M^+$ still matches $\p_v E$
in the region where $Z$ is not~$V_K$, and everywhere else $Z$ is Liouville
and manifestly transverse to~$M^+$.  The obvious diffeomorphism
of $M^0$ to $M^+$ defined by flowing along~$Z$ induces a
decomposition
$$
M^+ = \widecheck{M}^+\spine \cup \widecheck{M}^+\corner \cup \widecheck{M}^+\paper
$$
corresponding to the decomposition $M^0 = \widecheck{M}^0\spine \cup \widecheck{M}^0\corner \cup \widecheck{M}^0\paper$
that we defined in \S\ref{sec:collars}, and we will use the same coordinate
systems on these subsets that were used on $\widecheck{M}^0\spine$, $\widecheck{M}^0\corner$
and~$\widecheck{M}^0\paper$.  Let
$$
\hH_+ := (\Omega_+,\Lambda_+)
$$
denote the stable Hamiltonian structure induced by $Z$ on $M^+$,
with oriented hyperplane field $\Xi_+$ and Reeb vector
field $R_+$.
Since $\widecheck{M}^0\spine$ is a contact hypersurface and
$\widecheck{M}^+\spine$ is obtained
from it by flowing along a Liouville vector field,
$(\Omega_+,\Lambda_+)$ on $\widecheck{M}^+\spine$
takes the form $((1/K)\, d\alpha^+, (1/K) \alpha^+)$,
where $\alpha^+$ is a contact form given by
\begin{equation}
\label{eqn:alpha'}
\alpha^+ := \lambda_K|_{T \widecheck{M}^+\spine} = 
e^{\nondegParam \widehat{H}} \alpha^0 =
e^{\nondegParam \widehat{H}} (K \sigma + d\theta).
\end{equation}
The resulting perturbed Reeb vector field takes the form
\begin{equation}
\label{eqn:ReebSigma}
R_+ = e^{-\nondegParam \widehat{H}} \left(
\left(1 + \nondegParam \sigma(X_H)\right) K \p_\theta
- \nondegParam X_H \right) \quad \text{ on $\widecheck{M}^+\spine$,}
\end{equation}
where $X_H$ denotes the Hamiltonian vector field of $H$ on 
$(\Sigma,d\sigma)$, determined by
$$
d\sigma(X_H,\cdot) = -dH.
$$
Notice that for some large threshold $T > 0$ that goes to $\infty$ as
$\nondegParam \to 0$, all periodic orbits up to period $T$ in $\widecheck{M}^+\spine$
have image $\{z\} \times S^1 \subset \Sigma \times S^1$ for some
$z \in \CritMorse(H)$.  We will generally fix the value of $K$ and
assume $\nondegParam > 0$ is sufficiently small to arrange this whenever
convenient; we can then also assume without loss of generality that
$\p_\theta$ and $\Xi_+$ are transverse, so the projection
$T\pi\spine : T \widecheck{M}^+\spine \to T\Sigma$ restricts to 
$\Xi_+$ as a fiberwise orientation-preserving isomorphism
\begin{equation}
\label{eqn:fiberwiseIso}
\Xi_+|_{\widecheck{M}^+\spine} \stackrel{T\pi\spine}{\longrightarrow}
T\Sigma.
\end{equation}

\begin{figure}
\begin{postscript}
\psfrag{Nhat(phE)}{$\widehat{\nN}(\p_h E)$}
\psfrag{Nhat(pvEintphE)}{$\widehat{\nN}(\p_v E \cap \p_h E)$}
\psfrag{Nhat(pvE)}{$\widehat{\nN}(\p_v E)$}
\psfrag{t}{$t$}
\psfrag{s}{$s$}
\psfrag{pvE}{$\p_v E$}
\psfrag{phE}{$\p_h E$}
\psfrag{M+}{$M^+$}
\psfrag{...}{$\ldots$}
\includegraphics{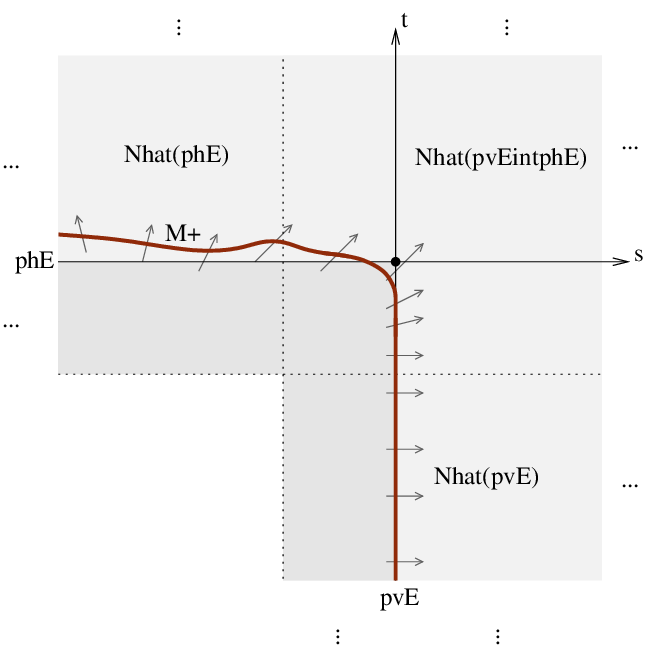}
\end{postscript}
\caption{\label{fig:nondegenerate}
The perturbed stable hypersurface $M^+ \subset \widehat{E}$.}
\end{figure}

On $\widecheck{M}^+\corner$, the formulas \eqref{eqn:OmegaLambda} and
\eqref{eqn:RH} can be adapted to write $\Omega_+$, $\Lambda_+$
and $R_+$ as
\begin{equation}
\label{eqn:OmegaLambdaEps}
\begin{split}
\Omega_+ &=  m e^{F_+(\rho)} F_+'(\rho) \, d\rho \wedge d\phi +
\frac{1}{K} e^{G_+(\rho)} G_+'(\rho) \, d\rho \wedge d\theta , \\
\Lambda_+ &= m e^{F_+(\rho)} \, d\phi + \frac{1}{K} e^{G_+(\rho)} \beta(G_+(\rho))
\, d\theta ,\\
R_+ &= \frac{1}{\beta(G_+(\rho)) F_+'(\rho) - G_+'(\rho)}
\left( -\frac{1}{m} e^{-F_+(\rho)} G_+'(\rho) \, \p_\phi
+ K e^{-G_+(\rho)} F_+'(\rho) \, \p_\theta \right) ,
\end{split}
\end{equation}
where the perturbed versions of the functions $F$ and $G$ are defined by
$$
F_+(\rho) := F(\rho) + \nondegParam H(F(\rho),\cdot), \qquad
G_+(\rho) := G(\rho) + \nondegParam H(F(\rho),\cdot).
$$
Let us file away for future use the following detail, which results from
the particular conditions we have imposed on $G$ and~$H$.
\begin{lemma}
\label{lemma:GnotFlat}
The function $G_+$ satisfies $G_+'(\rho) < 0$ for all 
$\rho \in (-1,1)$, hence by \eqref{eqn:OmegaLambdaEps}, 
$d\phi(R_+) \ne 0$ on~$\widecheck{M}^+\corner$.  \qed
\end{lemma}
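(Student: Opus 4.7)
The plan is to differentiate $G_+(\rho) = G(\rho) + \nondegParam H(F(\rho), \cdot)$ and then split into cases. Because $H$ depends only on $s$ throughout $\nN(\p\Sigma)$, the second summand reduces to a function of one variable and the chain rule gives
\[
G_+'(\rho) = G'(\rho) + \nondegParam \, H'(F(\rho)) \, F'(\rho).
\]
The key observation is that both summands are \emph{non-positive}: $G'(\rho) \le 0$ by construction (with equality only for $\rho \le -1/4$), while $H'(s) \le 0$ on $\nN(\p\Sigma)$ (since $\p_s H < 0$ holds away from the boundary strip where $H$ is identically zero) and $F'(\rho) \ge 0$ always. So the proof reduces to verifying that the two summands never vanish simultaneously.

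I would handle this with a three-case split along the pieces on which $F$ and $G$ have simple closed forms. For $\rho \ge 1/4$, the Liouville term dominates: $G'(\rho) < 0$ by the defining condition on $G$, so the sum is strictly negative regardless of the $H$-term. For $-1/4 < \rho < 1/4$, again $G'(\rho) < 0$ by hypothesis, so strict negativity is immediate.

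The only delicate case is $\rho \le -1/4$, where $G'(\rho) = 0$ and $F(\rho) = \rho$, $F'(\rho) = 1$. Here we need the perturbation term to pick up the slack, i.e.\ we need $H'(\rho) < 0$. This is exactly arranged by our choice of $H$: the region where $H$ is flat is required to have its closure inside $\{-1/4 < s \le 0\}$, so for $s \le -1/4$ we are strictly outside that flat region and hence $\p_s H < 0$. Thus $\nondegParam H'(F(\rho)) F'(\rho) = \nondegParam H'(\rho) < 0$, giving $G_+'(\rho) < 0$ as required.

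The main (mild) obstacle is just being careful with the boundaries of the piecewise definitions --- e.g.\ ensuring that at the transition $\rho = -1/4$ the strict inequality $H'(\rho) < 0$ still holds, which is why the flat region of $H$ was chosen with \emph{closed} containment inside $\{-1/4 < s \le 0\}$. The second conclusion of the lemma, that $d\phi(R_+) \ne 0$ on $\widecheck{M}^+\corner$, is then immediate from the explicit formula \eqref{eqn:OmegaLambdaEps} for $R_+$, whose $\p_\phi$-coefficient is $-\frac{1}{m} e^{-F_+(\rho)} G_+'(\rho)$ and hence nonzero precisely when $G_+'(\rho) \ne 0$.
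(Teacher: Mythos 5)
Your proof is correct and fills in the details of a claim the paper leaves unproved (the lemma is stated with a $\qed$, so the authors treated it as a direct consequence of the construction). Your decomposition into three cases along the piecewise definition of $F$ and $G$, with the observation that the delicate case $\rho \le -1/4$ is handled precisely by the requirement that the flat region of $H$ has closure inside $\{-1/4 < s \le 0\}$, is the natural verification.
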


The region $\widecheck{M}^+\paper$ is identical to $\widecheck{M}^0\paper$, and here 
$(\Omega_+,\Lambda_+) = (\Omega_0,\Lambda_0)$, with
$R_+$ also given by \eqref{eqn:RHpaper} and~\eqref{eqn:RHpaperCollar}.

\subsection{The cylindrical end}
\label{sec:end}

As was discussed in \S\ref{sec:stable}, one can use the Moser deformation
trick to show that $M^+$ has a
collar neighborhood in $(\widehat{E},\omega_E)$ that can be identified
symplectically with
\begin{equation}
\label{eqn:SHScollar}
\left( (-\delta,\delta) \times M^+ , 
d\big( (e^r - 1) \Lambda_+\big) + \Omega_+ \right),
\end{equation}
for sufficienlty small $\delta > 0$, with $r$ denoting the coordinate
on $(-\delta,\delta)$.  This is true for arbitrary stable Hamiltonian
structures, but it will be convenient to take advantage of a few properties
of our example $\hH_+ = (\Omega_+,\Lambda_+)$ that are
nicer than the general case.  To start with, $\Xi_+ = \ker
\Lambda_+$ is everywhere either a positive contact structure or a
foliation, hence it is a \emph{confoliation}; equivalently,
$\Lambda_+ \wedge d\Lambda_+ \ge 0$.  As a consequence,
the collar \eqref{eqn:SHScollar} remains symplectic if we replace
$(-\delta,\delta) \times M^+$ by an infinite half-cylinder:
\begin{equation}
\label{eqn:SHSend}
\left( [0,\infty) \times M^+ , d\big( (e^r - 1) \Lambda_+ \big)
+ \Omega_+ \right).
\end{equation}
Observe that this reduces in regions where $\Omega_+ = d\Lambda_+$ to the
usual half-symplectization of a contact form,
$\left( [0,\infty) \times M^+ , d(e^r \Lambda_+) \right)$.
As it turns out, a half-cylinder of the form \eqref{eqn:SHSend}
is already present in the model $(\widehat{E},\omega_E)$.  Denote
$$
\widehat{\nN}_+(\p E)
:= \left\{ \Phi_Z^\tau(x) \in \widehat{E}\ \Big|\ \tau \ge 0,\ 
x \in M^+ \right\} \subset \widehat{E},
$$
with $\Phi^\tau_Z$ again denoting the flow of~$Z$ for time~$\tau$.
This is the \emph{unbounded} closed subset of $\widehat{E}$ with
boundary~$M^+$; see Figure~\ref{fig:end}.

\begin{figure}

\begin{postscript}
\psfrag{t}{$t$}
\psfrag{s}{$s$}
\psfrag{pvE}{$\p_v E$}
\psfrag{phE}{$\p_h E$}
\psfrag{M+}{$M^+$}
\psfrag{M-}{$M^-$}
\psfrag{-1/4}{$\rho=1/4$}
\psfrag{-1/2}{$\rho=1/2$}
\psfrag{r}{$r$}
\psfrag{rho}{$\rho$}
\psfrag{Vhat}{$\widehat{\vV}$}
\psfrag{...}{$\ldots$}
\includegraphics{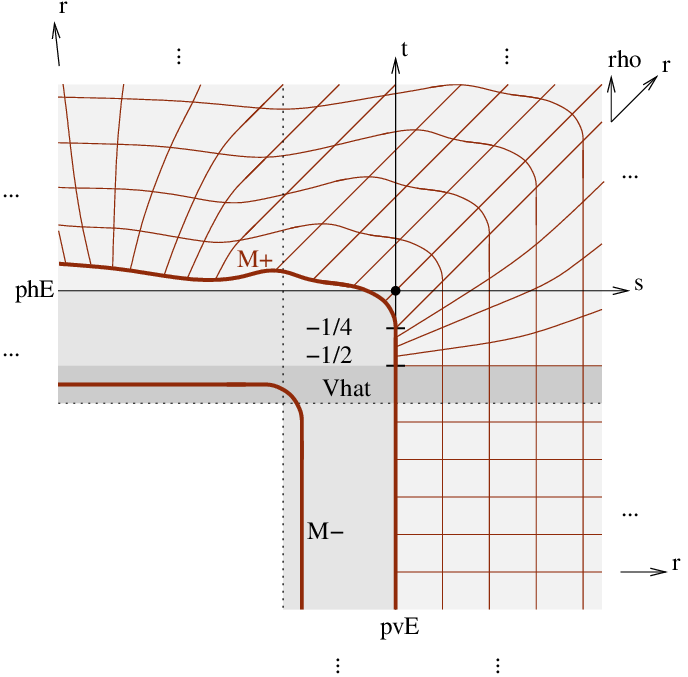}
\end{postscript}
\caption{\label{fig:end}
The grid represents the half-cylinder $[0,\infty) \times M^+ \cong \widehat{\nN}_+(\p E) \subset \widehat{E}$.
The darkly shaded region is the set $\widehat{V} \subset \widehat{E}$ defined in
\S\ref{sec:Jandf}, where the holomorphic vertebrae live.}
\end{figure}

\begin{lemma}
\label{lemma:theEnd}
The region $\widehat{\nN}_+(\p E)$ is the image of an embedding
$\Psi : [0,\infty) \times M^+ \hookrightarrow
\widehat{E}$ defined on $[0,\infty) \times (1/4,1/2) \times S^1 \times S^1
\subset [0,\infty) \times \widecheck{M}^+\corner$ by
\begin{equation}
\label{eqn:PsiInterface}
\begin{split}
\Psi(r,\rho,\phi,\theta) &= \left( r , \phi ,
-\rho + \log [ (e^r-1) \beta(-\rho) + 1] , \theta \right) \\
& \quad \in (-1,\infty) \times S^1 \times (-1,\infty) \times S^1 \subset
\widehat{\nN}(\p_v E \cap \p_h E),
\end{split}
\end{equation}
and everywhere else by
$$
\Psi(r,x) = \Phi_Z^r(x),
$$
Moreover, $\Psi^*\omega_E = d((e^r-1) \Lambda_+) + \Omega_+$.
\end{lemma}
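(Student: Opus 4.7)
The plan is to define $\Psi$ piecewise, using the flow $\Phi_Z^r$ in the regions where this is natural and the explicit formula \eqref{eqn:PsiInterface} in the small interface region where $Z$ transitions between a Liouville vector field for $\omega_E$ and the vector field $\partial_s$, and then check that (i) the various definitions patch together smoothly, (ii) the resulting map is a diffeomorphism onto $\widehat{\nN}_+(\p E)$, and (iii) $\Psi^*\omega_E = d((e^r-1)\Lambda_+) + \Omega_+$.

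First I would partition $M^+$ into three regions, following the decomposition $M^+ = \widecheck{M}^+\spine \cup \widecheck{M}^+\corner \cup \widecheck{M}^+\paper$ from \S\ref{sec:collars}. On $\widecheck{M}^+\spine$, $Z = V_K$ is Liouville for $\omega_E = \tfrac{1}{K}d\lambda_K$ (since $\eta \equiv 0$ on $\widehat{\nN}(\p_h E)$), and on $\widecheck{M}^+\paper$ the vector field $Z = \partial_s$ is transverse to every slice $\{s = \mathrm{const}\}$ with the induced characteristic line field $\partial_s$-invariant. In both cases the formula $\Psi(r,x) = \Phi_Z^r(x)$ is well suited: in the Liouville case the calculation that $\Phi_V^{r*}\omega = e^r\omega$ produces $\Psi^*\omega_E = d(e^r\Lambda_+) = d((e^r-1)\Lambda_+)+\Omega_+$ (using $\Omega_+ = d\Lambda_+$ there), and in the product case a direct computation with $\omega_E = e^s\,ds\wedge d\pi\paper + \tfrac{1}{K}d\lambda + \tfrac{1}{KC}\eta$ and $\Lambda_+ = d\pi\paper$, $\Omega_+ = \tfrac{1}{K}d\lambda + \tfrac{1}{KC}\eta$ gives the desired identity.

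Inside the corner $\widecheck{M}^+\corner$ I would split further according to the value of $\rho$. For $\rho \le 1/4$, $\beta(G_+(\rho)) \equiv 1$ and $Z = V_K$ is Liouville, so $\Psi = \Phi_Z^r$ again works exactly as in the Liouville case; for $\rho \ge 1/2$, $\beta(G_+(\rho)) \equiv 0$ and $Z = V_\sigma = \partial_s$, so $\Psi = \Phi_Z^r$ works exactly as in the product case (since $\eta$ vanishes in the corner overlap). The delicate case is $\rho \in (1/4, 1/2)$, where $Z = \partial_s + \beta(t)\partial_t$ is neither Liouville nor a pure product vector, and naively flowing along $Z$ would \emph{not} pull $\omega_E$ back to the target form. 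This is precisely the reason for the explicit formula \eqref{eqn:PsiInterface}: one verifies by direct computation, using $\omega_E = m e^s\,ds\wedge d\phi + \tfrac{1}{K}e^t\,dt\wedge d\theta$ in the corner overlap (where $\eta \equiv 0$) together with the formulas \eqref{eqn:OmegaLambdaEps} for $\Lambda_+$ and $\Omega_+$ (noting that $F_+ \equiv 0$ and $G_+(\rho) = -\rho$ on this sub-region because $H$ vanishes near $s=0$), that the $r$- and $\rho$-derivatives of the exponential $e^{\psi(r,\rho)}$, with $\psi(r,\rho) := -\rho + \log[(e^r-1)\beta(-\rho)+1]$, conspire exactly so that $\Psi^*(\tfrac{1}{K}e^t\,dt\wedge d\theta)$ equals the mixed $(dr\wedge d\theta)$ and $(d\rho\wedge d\theta)$ terms of $d((e^r-1)\Lambda_+)+\Omega_+$; the $m e^r\,dr\wedge d\phi$ term comes out identically from both sides.

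Next I would verify that the piecewise definition yields a smooth map: at $\rho = 1/4$ the formula \eqref{eqn:PsiInterface} reduces to $(r,\phi,-1/4 + r,\theta) = \Phi_{V_K}^r(0,\phi,-1/4,\theta)$, agreeing with the Liouville flow, and at $\rho = 1/2$ it reduces to $(r,\phi,-1/2,\theta) = \Phi_{\partial_s}^r(0,\phi,-1/2,\theta)$, agreeing with the product flow; smooth dependence of $\beta$ makes all derivatives match as well. Transversality of $Z$ to $M^+$ (which was already used to make $M^+$ a stable hypersurface, cf.\ Lemma~\ref{lemma:stabilizing}) together with the explicit monotonicity in $r$ of each of the component formulas then shows $\Psi$ is an injective immersion, hence a diffeomorphism onto its image. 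To identify this image with $\widehat{\nN}_+(\p E)$, note that by construction $\Psi([0,\infty)\times x) \subseteq \{\Phi_Z^\tau(x) : \tau \ge 0\}$ in the regions where $\Psi = \Phi_Z^r$, and in the interface region one checks directly that the curves $r \mapsto \Psi(r,\rho,\phi,\theta)$ lie on the $Z$-trajectory through $M^+$ parametrized by a different (monotone) time-reparametrization.

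The main obstacle is the computation in the interface region, since there $\Psi$ is \emph{not} the $Z$-flow, and the identity $\Psi^*\omega_E = d((e^r-1)\Lambda_+)+\Omega_+$ relies on a nontrivial cancellation between terms coming from the $\rho$-dependence of $\beta$ and the $r$-dependence of $\psi$. However, this cancellation is the motivating reason for the particular form of \eqref{eqn:PsiInterface}, and once carried out it also makes transparent why the formula interpolates smoothly between the two flow prescriptions at $\rho = 1/4$ and $\rho = 1/2$.
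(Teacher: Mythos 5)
Your plan is essentially the same as the paper's: define $\Psi$ by flowing along $Z$ where this works, verify the pullback formula region-by-region using $\Omega_+ = d\Lambda_+$ in the Liouville regions, the product structure in the paper region, and a direct computation in the interface, and observe that the explicit formula interpolates between the two flow prescriptions. The region-by-region verification of $\Psi^*\omega_E = d((e^r-1)\Lambda_+) + \Omega_+$, including the interface computation, is exactly what the paper does.

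However, one supporting claim in your proposal is incorrect. You assert that for $\rho$ in the interface region, the curves $r \mapsto \Psi(r,\rho,\phi,\theta)$ ``lie on the $Z$-trajectory through $M^+$ parametrized by a different (monotone) time-reparametrization.'' This is false. Writing the $t$-coordinate of $\Psi(r,\rho,\phi,\theta)$ as $\psi(r,\rho) = -\rho + \log[(e^r-1)\beta(-\rho)+1]$, one computes $\p_r\psi = e^r\beta(-\rho)\big/\big[(e^r-1)\beta(-\rho)+1\big]$, whereas the $Z$-trajectory with the same starting point satisfies $dt/ds = \beta(t)$. Since $\p_r s = 1$, a reparametrization would require $\p_r\psi = \beta(\psi(r,\rho))$, i.e.\ the velocity must depend on the \emph{current} value of $t$, whereas your expression depends only on the \emph{initial} value $\beta(-\rho)$. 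These agree at $r=0$ (both equal $\beta(-\rho)$) and in the regions where $\beta$ is constant, but not otherwise: the $\Psi$-curves and the $Z$-trajectories through the same point of $M^+$ genuinely diverge inside the interpolation zone. So the identification $\im\Psi = \widehat{\nN}_+(\p E)$ cannot be argued this way. One correct route is to note that $\Psi$ is a proper embedding agreeing with the $Z$-flow map on the complement of a compact sub-region of the interface, and that both maps are diffeomorphisms onto closed subsets bounded by $M^+$ on the same side; alternatively one can observe that for fixed $r$, $\rho\mapsto\psi(r,\rho)$ is a decreasing diffeomorphism whose range, together with the matching flow-regions, fills the slice $\{s=r\}$ of $\widehat{\nN}_+(\p E)$. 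The paper simply asserts the identification without an argument, so your attempted justification is not required, but as written it would not hold up.
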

\begin{proof}
It is straightforward to see that $\Psi$ is a smooth map whose image
is $\widehat{\nN}_+(\p E)$; indeed, since $Z = \p_s + \beta(t) \p_t$
on the diagonal end, the definition in \eqref{eqn:PsiInterface}
matches the flow of $Z$ for time $r$ on regions where
$\beta(-\rho)$ is $0$ or $1$, which excludes only a compact subset of
$\{ \rho \in (1/4,1/2)\}$.
The main thing to verify is thus the formula for~$\Psi^*\omega_E$.
Consider first $(r,\rho,\phi,\theta) \in [0,\infty) \times \widecheck{M}^+\corner$
with $\rho \in [1/4,1/2]$.  Then $F_+(\rho) = 0$ and
$G_+(\rho) = -\rho$, so we have $\Lambda_+ = K m \, d\phi
+ \frac{1}{K} e^{-\rho} \beta(-\rho) \, d\theta$ and
$\Omega_+ = - \frac{1}{K} e^{-\rho} \, d\rho \wedge d\theta$.
Meanwhile, since the image of $\Psi$ on this region lies
in $\widehat{\nN}(\p_v E \cap \p_h E)$, we have
$\omega_E = \frac{1}{K}\, d\lambda_K$ and 
$\lambda_K = K m e^s \, d\phi + e^t\, d\theta$, hence
$$
\Psi^*\lambda_K = K m e^r \, d\phi + e^{-\rho} [(e^r-1) \beta(-\rho) + 1]\, d\theta.
$$
From these formulas, a quick computation shows $\Psi^*\omega_E =
\frac{1}{K} \, d(\Psi^*\lambda_K) = d((e^r-1) \Lambda_+) + \Omega_+$.

For $x \in \widecheck{M}^+\paper \subset \p_v E$, we have $Z = \p_s$ and thus
$\Psi(r,x) = (r,x) \in \widehat{\nN}(\p_v E)$, so 
writing $\omega_E = \frac{1}{K}\, d\lambda_K + \frac{1}{KC} \eta$ with 
$\lambda_K = K e^s\, d\pi\paper + \lambda$
gives
\begin{equation*}
\begin{split}
\Psi^*\omega_E &= \frac{1}{K} \, d(\Psi^*\lambda_K) + \frac{1}{KC} \Psi^*\eta = 
d\left(e^r \, d\pi\paper + \frac{1}{K}\lambda\right) + \frac{1}{KC} \eta \\
&= d(e^r-1) \wedge d\pi\paper + \frac{1}{K}\, d\lambda + \frac{1}{KC}\eta.
\end{split}
\end{equation*}
This also reproduces $d((e^r-1) \Lambda_+) + \Omega_+$ when we plug in
$\Lambda_+ = \Lambda_0 = d\pi\paper$ and $\Omega_+ =
\Omega_0 = \frac{1}{K}\, d\lambda + \frac{1}{KC}\eta$.

On the remaining regions, $Z$ is the Liouville vector field $V_K$, and
$\Lambda_+$ and $\Omega_+$ match the restrictions of
$\frac{1}{K} \lambda_K$ and $\omega_E = \frac{1}{K}\, d\lambda_K$ 
respectively to~$TM^+$, thus 
$$
\Psi^*\left(\frac{1}{K} \lambda_K\right) = e^r \Lambda_+,
$$
implying $\Psi^*\omega_E = d(e^r \Lambda_+) =
d((e^r-1) \Lambda_+) + d\Lambda_+$.  The desired formula follows
since $\Omega_+ = d\Lambda_+$ on this region.
\end{proof}

Throughout the following, we will omit the embedding $\Psi$ from the
notation and simply identify $[0,\infty) \times M^+$ with the subdomain
$\widehat{\nN}_+(\p E) \subset \widehat{E}$.

\subsection{Almost complex and almost Stein structures}
\label{sec:Jandf}

We shall now choose an almost complex structure from the space
$\jJ(\hH_+)$ of $\RR$-invariant structures compatible with~$\hH_+$
(see \S\ref{sec:energy}).  Any $J_+ \in \jJ(\hH_+)$ determines an
$\omega_E$-compatible almost complex structure on
$[0,\infty) \times M^+ = \widehat{\nN}_+(\p E)$, and we will choose
$J_+ \in \jJ(\hH_+)$ to satisfy some
additional conditions that will be convenient for our main applications.
It suffices to specify an orientation-preserving complex structure on
the subbundle
$$
J_+ : \Xi_+ \to \Xi_+
$$
over each of the regions $\widecheck{M}^+\spine$, $\widecheck{M}^+\corner$ 
and~$\widecheck{M}^+\paper$, as the translation-invariance condition and
$J_+(\p_r) = R_+$ then determine $J_+ \in \jJ(\hH_+)$ uniquely.

Recall that in \S\ref{sec:Liouville} we endowed $\widehat{\Sigma}$ 
with a complex structure $j$ satisfying 
\begin{equation}
\label{eqn:littlej}
j\p_s = \frac{1}{m} \p_\phi
\end{equation}
on its cylindrical ends, where the multiplicity $m \in \NN$ may be different
on distinct ends.
Over $\widecheck{M}^+\spine \subset \Sigma \times S^1$, 
define $J_+ : \Xi_+ \to \Xi_+$ 
as the pullback of $j$ under the fiberwise isomorphism 
$\Xi_+ \to T\Sigma$ defined via~$T\pi\spine$
(see~\eqref{eqn:fiberwiseIso}).  This makes $J_+$ invariant
under the $S^1$-action defined by translating the $\theta$-coordinate.

On $\widecheck{M}^+\paper$, which is canonically identified with an open
subset of $M\paper \subset M = \p E$, choosing $J_+ \in \jJ(\hH_+)$ 
is equivalent to choosing smoothly varying complex structures on the fibers
of $\pi\paper : M\paper \to S^1$.  We already made such a choice
when $\lambda$ was defined in \S\ref{sec:Liouville}: set
$$
J_+|_{\Xi_+} := J\fib \quad \text{ on $\widecheck{M}^+\paper$},
$$
which has the property that
\begin{equation}
\label{eqn:JonVerticalCollar}
J_+ \p_t = \p_\theta \quad\text{ on $\nN(\p M\paper) \cap \widecheck{M}^+\paper$}.
\end{equation}

Now using the coordinates $(\rho,\phi,\theta) \in (-1,1)
\times S^1 \times S^1$ on each component of $\widecheck{M}^+\corner$, 
the formulas \eqref{eqn:OmegaLambdaEps} imply that
$\Xi_+$ is spanned by vector fields of the form
$$
v_1 := \p_\rho, \qquad v_2 := a(\rho) \p_\phi + b(\rho) \p_\theta
$$
for a unique choice of smooth functions $a, b : (-1,1) \to \RR$
such that $\Lambda_+(v_2) \equiv 0$ and 
$\Omega_+(v_1,v_2) \equiv 1$.
In $\widecheck{M}^+\corner \cap \widecheck{M}^+\spine$, we write 
$s = F_+(\rho) = \rho + \nondegParam H(\rho,\cdot)$, 
$t = G_+(\rho) = \nondegParam H(\rho,\cdot) \ge 0$
and $\beta(G_+(\rho)) = 1$,
so the fiberwise isomorphism $\Xi_+ \to T\Sigma$ takes
$v_1$ and $v_2$ to positive multiples of $\p_s$ and $\p_\phi$ respectively,
hence by \eqref{eqn:littlej}, we have
\begin{equation}
\label{eqn:JonCorner}
J_+ v_1 = h(\rho) v_2
\end{equation}
on $\widecheck{M}^+\corner \cap \widecheck{M}^+\spine$
for a suitable choice of smooth positive function~$h$.
Likewise, on $\widecheck{M}^+\corner \cap \widecheck{M}^+\paper$, writing $s = F_+(\rho) = 0$, 
$t = G_+(\rho) = -\rho$ and $\beta(G_+(\rho)) = 0$ gives 
$\Lambda_+ = m \, d\phi$ and 
$\Omega_+ = - \frac{1}{K} e^{-\rho} d\rho \wedge d\theta$, so $v_2$ reduces
to the form $b(\rho) \p_\theta$ with $b(\rho) < 0$.  It follows that $v_1$ 
and $v_2$ are negative multiples of $\p_t$ and $\p_\theta$ respectively, 
implying via \eqref{eqn:JonVerticalCollar} that
\eqref{eqn:JonCorner} is again valid for a suitable choice of positive
function~$h(\rho)$.
We can thus use \eqref{eqn:JonCorner} to extend 
$J_+ : \Xi_+ \to \Xi_+$ 
over the rest of $\widecheck{M}^+\corner$ by extending $h$ arbitrarily to a smooth 
positive function on $(-1,1)$.  For certain computations we will find it
convenient to impose one further condition on the function $h(\rho)$
for $\rho \in (1/4,1)$, in particular on the ``interpolation'' 
region $\{ 1/4 < \rho < 1/2 \}$.  Here we have
$\Lambda_+ = m \, d\phi + \frac{1}{K} e^{-\rho} \beta(-\rho)\, d\theta$
and thus $\Lambda_+(v_2) = 
m a(\rho) + \frac{1}{K} e^{-\rho} \beta(-\rho) b(\rho) = 0$,
implying $b(\rho) \ne 0$, or in other words,
$v_2$ must always have a nontrivial $\p_\theta$-component.  We are therefore
free to choose $h(\rho)$ to produce the obvious extension of
\eqref{eqn:JonVerticalCollar} into this region; since $\p_\rho = -\p_t$,
\eqref{eqn:JonCorner} then becomes
\begin{equation}
\label{eqn:Jinterp}
J_+ \p_\rho = -\p_\theta + \frac{1}{K m} e^{-\rho} \beta(-\rho) \p_\phi \quad 
\text{ on $\{ 1/4 < \rho < 1 \} \subset \widecheck{M}^+\corner$}.
\end{equation}

With the choices above in place, $J_+ \in \jJ(\hH_+)$ now determines
an $\omega_E$-compatible almost complex structure
on $\widehat{\nN}_+(\p E) \subset \widehat{E}$, which will next be
extended to the rest of~$\widehat{E}$.  In order to obtain 
holomorphic vertebrae, we start by making a careful choice of $J_+$
on the open subset (see Figure~\ref{fig:end})
$$
\widehat{\vV} := (-1,-1/2) \times \widehat{\Sigma} \times S^1 \subset 
\widehat{\nN}(\p_h E) \subset \widehat{E}.
$$
Writing tangent spaces at points $(t,z,\theta) \in \widehat{\vV}$ as
$T_{(t,z,\theta)}\widehat{E} = T_z \widehat{\Sigma} \oplus \Span(\p_t,\p_\theta)$
we set
$$
J_+(t,z,\theta)|_{T_z \widehat{\Sigma}} := j(z), \qquad
J_+(t,z,\theta) \p_t = \p_\theta.
$$
Note that since $\p_r = \p_s$ and $R_+ = R_0 = 
\frac{1}{m} \p_\phi^\# = \frac{1}{m} \p_\phi$ on
$\widehat{\vV} \cap \widehat{\nN}_+(\p E)$,
this is consistent with the existing definition of $J_+$ on 
$\widehat{\nN}_+(\p E)$.  The following observation is immediate.

\begin{prop}
\label{prop:vertebrae}
For each connected component $\dot{\Sigma}_0 \subset \widehat{\Sigma}$
and each $(t,\theta) \in (-1,-1/2) \times S^1$, the surface
$$
\{t\} \times \dot{\Sigma}_0 \times \{\theta\} \subset 
\widehat{\vV} \subset \widehat{E}
$$
is the image of a properly embedded $J_+$-holomorphic curve whose
intersection with the cylindrical end $\widehat{\nN}_+(\p E)$
is a union of positive trivial half-cylinders over simply covered closed
orbits of~$R_+$.  \qed
\end{prop}
The $J_+$-holomorphic curves in the above proposition will be referred to
henceforward as \defin{holomorphic vertebrae}.

A natural extension of $J_+$ into the region 
$$
(-1,0] \times \widecheck{M}^+\paper \subset \nN(\p_v E)
$$
is defined by requiring $J_+$ to be invariant under the flow of $\p_s$
on $(-1,\infty) \times \widecheck{M}^+\paper \subset \widehat{\nN}(\p_v E)$.
Note that this is also compatible with previous choices on the
intersection of this region with~$\widehat{\vV}$.

At this point, we have defined $J_+$ everywhere on $\widehat{E}$ except
in the region of $\widehat{\nN}(\p_h E)$ bounded between
$\{ t = -1/2 \}$ and~$M^+$; this is roughly the region between the dark and light
shading in Figure~\ref{fig:end}.
The purpose of the next two lemmas is to find an extension of $J_+$ to this
region that will also fit into an almost Stein structure.

\begin{lemma}
\label{lemma:almostStein}
On the region where $J_+$ has been defined so far, it satisfies
$$
- df_+ \circ J_+ = \lambda_+
$$
for a suitable smooth function $f_+ : \widehat{E} \to \RR$ and
$1$-form $\lambda_+$ on $\widehat{E}$ such that:
\begin{enumerate}
\item $d\lambda_+$ is symplectic and compatible with the orientation of~$\widehat{E}$;
\item $df_+(V') > 0$ everywhere, where $V'$ denotes the vector field dual to
$\lambda_+$, i.e.~defined by $d\lambda_+(V',\cdot) \equiv \lambda_+$;
\item $\lambda_+ = \frac{1}{K} \lambda_K$ on $\widehat{\nN}(\p_h E)$, and the
restrictions of $\lambda_+$ and $\frac{1}{K} \lambda_K$ to the vertical
subbundle $V\widehat{E}$ match everywhere;
\item $\lambda_+$ restricts to $M^-$ as a contact form inducing a contact
structure isotopic to~$\xi_-$.
\end{enumerate}
\end{lemma}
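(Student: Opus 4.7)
My plan is to construct $(f_+, \lambda_+)$ by piecing together the Thurston-trick combination of the $j$-convex function $\varphi$ on $\widehat{\Sigma}$ with the fiberwise $J\fib$-convex function $f\fib$ from \eqref{eqn:fibLiouvilleCondition}, and then modifying on the cylindrical end using the natural symplectization primitives. Outside $\widehat{\nN}_+(\p E)$, set
\[
f_+(t,z,\theta) := \varphi(z) + \tfrac{1}{K}e^t \text{ on } \widehat{\nN}(\p_h E), \qquad
f_+(s,x) := e^s + \tfrac{1}{K}f\fib(x) \text{ on } \widehat{\nN}(\p_v E),
\]
which agree on the diagonal overlap because $\varphi = e^s$ on the cylindrical end of $\widehat{\Sigma}$ and $f\fib = e^t$ on the collar of $\widehat{M}\paper$. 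Using the explicit $J_+$ from \S\ref{sec:Jandf}---namely $J_+\p_t = \p_\theta$ with $J_+|_{T\widehat{\Sigma}} = j$ on $\widehat{\vV}$, and $J_+\p_s = e_{S^1}^\#$ with $J_+|_{V\widehat{E}} = J\fib$ on the $\p_s$-invariant paper extension---together with $\sigma = -d\varphi \circ j$, a direct computation yields $-df_+ \circ J_+ = \sigma + \tfrac{1}{K}\lambda = \tfrac{1}{K}\lambda_K$ throughout this region.

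On the cylindrical end $[0,\infty) \times M^+$, modify $f_+$ piecewise: on the spine portion let $f_+ := e^r$ so that $\lambda_+ = e^r\Lambda_+$, and on the paper portion let $f_+ := e^s - s + \tfrac{1}{K}f\fib$ so that $\lambda_+ = (e^r - 1)\Lambda_+ + \tfrac{1}{K}\lambda$. One checks that both expressions restrict to $V\widehat{E}$ as $\tfrac{1}{K}\lambda_K|_{V\widehat{E}}$: on the spine part this uses $\Lambda_+|_{V\widehat{E}} = \tfrac{e^{\nondegParam\widehat{H}}}{K}d\theta|_{V\widehat{E}}$ combined with $t = r + \nondegParam\widehat{H}$, and on the paper part it uses $\Lambda_+|_{V\widehat{E}} = d\pi\paper|_{V\widehat{E}} = 0$. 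Glue these together through the corner and across the interface at $r = 0$ using smooth monotone cutoffs, and extend $f_+$ arbitrarily (but monotonically) across the region between $\{t = -1/2\}$ and $M^+$ where $J_+$ is not yet defined; on that region the identity $-df_+ \circ J_+ = \lambda_+$ need not hold.

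The four properties then follow. For (1), $d\lambda_+ = \tfrac{1}{K}d\lambda_K$ is symplectic outside the cylindrical end for $K \ge K_0$ by Lemma~\ref{lemma:LiouvilleTransverse}; on the cylindrical end $d(e^r\Lambda_+) = e^r(dr \wedge \Lambda_+ + d\Lambda_+)$ and $d((e^r-1)\Lambda_+ + \tfrac{1}{K}\lambda)$ are each symplectic by the confoliation property $d\Lambda_+|_{\Xi_+} \ge 0$; and across interpolation regions a Thurston-trick estimate absorbs cross-terms once $K$ is large. For (2), in each region the $d\lambda_+$-dual vector field $V'$ is $V_K$ outside the cylindrical end, $\p_r$ on the spine part of it, and a small modification on the paper part, and $df_+(V') > 0$ is a direct computation preserved through monotone interpolation. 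Properties (3) and (4) are then immediate: on $\widehat{\nN}(\p_h E)$ outside the cylindrical end $\lambda_+ = \tfrac{1}{K}\lambda_K$ holds by definition, the vertical-match condition follows from the computations above, and $M^-$ lies in the non-cylindrical region, so $\lambda_+|_{TM^-} = \tfrac{1}{K}\alpha^-$ is a contact form with kernel $\xi_-$.

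I expect the main technical obstacle to be the interpolations---both into the cylindrical end and through the corner between its spine and paper portions---where symplecticity of $d\lambda_+$ and positivity of $df_+(V')$ must persist uniformly. As in \S\ref{sec:Liouville}, these are controlled by enlarging the Thurston-trick parameter~$K$ to dominate the cross-terms produced by the cutoff functions.
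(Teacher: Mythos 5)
Your first paragraph is essentially on the right track (this is the Thurston-trick portion, and it matches what the paper does), though it contains a small but telling error: you claim $-df_+ \circ J_+ = \sigma + \tfrac{1}{K}\lambda = \tfrac{1}{K}\lambda_K$ ``throughout this region.'' On $\widehat{\nN}(\p_v E)$ this is false: $-d f\fib \circ J_+$ is the $1$-form the paper calls $\lambda\fib$, which agrees with $\lambda$ only on the vertical subbundle $V\widehat{E}$ (this is precisely condition \eqref{eqn:fibLiouvilleCondition}), not globally. So the correct conclusion there is $\lambda_+ = \sigma + \tfrac{1}{K}\lambda\fib$, which is \emph{not} $\tfrac{1}{K}\lambda_K$; the lemma only asserts equality of the two Liouville forms on $\widehat{\nN}(\p_h E)$ and of their restrictions to $V\widehat{E}$ elsewhere, and you should be proving exactly that rather than the stronger (false) identity.

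The more serious gap is in your second paragraph. You propose two separate formulas on the cylindrical end $\widehat{\nN}_+(\p E)$ ($f_+ = e^r$ over the spine, $f_+ = e^s - s + \tfrac{1}{K}f\fib$ over the paper) and plan to stitch them together with ``smooth monotone cutoffs,'' appealing to a Thurston-trick estimate in $K$ to absorb the errors. This does not work. The delicate region is the corner interpolation strip $\{1/4 < \rho < 1/2\}$ of $\widecheck{M}^+\corner$, where the cutoff function $\beta(-\rho)$ is nonconstant; there the identity $\tfrac{1}{K}\lambda_K = e^r\Lambda_+$ fails, so $f_+ = e^r$ does \emph{not} yield $-df_+\circ J_+ = \tfrac{1}{K}\lambda_K$, and property~(3) would be violated on this part of $\widehat{\nN}(\p_h E)$. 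More fundamentally, interpolating between your two candidate functions $f_1$ and $f_2$ via a cutoff $\chi$ introduces a term $(f_2 - f_1)\,d\chi$ into $df_+$, and here $f_2 - f_1 = -r + O(1/K)$ contains a $K$-independent part that grows without bound in $r$; no choice of $K$ controls this, so the Thurston trick is the wrong tool. What the paper actually does is observe that $\tfrac{1}{K}\lambda_K \circ J_+$ is an \emph{exact} $1$-form on the interpolation region, explicitly equal to $d\bigl(e^r - \tfrac{1}{K}g(\rho)\bigr)$ with $g(\rho) = \int_0^\rho e^{-x}[1-\beta(-x)]\,dx$; this computation crucially exploits the specific choice \eqref{eqn:Jinterp} of $J_+$ on $\{1/4<\rho<1\}$, and it produces a single smooth $f_+$ that matches $e^r$ on the spine side and transitions, for $\rho\ge 1/2$, to the form $e^s + \tfrac{1}{K}(e^t - c)$ which then extends naturally over $\widehat{\vV}$ via $\varphi + \tfrac{1}{K}(e^t - c)$. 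Without that explicit computation (or some substitute for it), the gluing step in your proposal is a genuine gap, and the properties (1)--(4) in the lemma cannot be verified across the corner.
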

\begin{proof}
We begin by finding a function $f_+$ that satisfies $-df_+ \circ J_+ = 
\frac{1}{K} \lambda_K$, or equivalently $\frac{1}{K}\lambda_K \circ J_+ = df_+$,
on the regions where $J_+$ has been defined so far.

On $[0,\infty) \times \widecheck{M}^+\spine \subset \widehat{\nN}_+(\p E)$
this is easy because, as we saw in the proof of Lemma~\ref{lemma:theEnd},
$\frac{1}{K} \lambda_K = e^r \Lambda_+$.  Since any
$J_+ \in \jJ(\hH_+)$ automatically satisfies
$$
(e^r \Lambda_+) \circ J_+ = d(e^r),
$$
we are free to fix $f_+ := e^r$ on this region.

On $[0,\infty) \times \widecheck{M}^+\corner \subset \widehat{\nN}_+(\p E)$
the same computation is valid and produces $f_+ = e^r$ wherever 
$\frac{1}{K} \lambda_K = e^r \Lambda_+$, which is still true for 
$\rho \le 1/4$ but ceases to be true in 
$\{ 1/4 < \rho < 1 \}$, so here a more careful computation is required.
Since $F_+(\rho) = 0$ and $G_+(\rho) = -\rho$ in this region,
we have $\Omega_+ = -\frac{1}{K} e^{-\rho} \, d\rho \wedge d\theta$
and $\Lambda_+ = m \, d\phi + 
\frac{1}{K} e^{-\rho} \beta(-\rho)\, d\theta$, hence $R_+ = 
\frac{1}{m} \p_\phi$, and $J_+$ is determined by \eqref{eqn:Jinterp}.
In the mean time $\frac{1}{K} \lambda_K = m e^s \, d\phi + \frac{1}{K}
e^t \, d\theta$, with the $s$ and $t$ coordinates related to $r$ and
$\rho$ via the embedding defined in Lemma~\ref{lemma:theEnd}, namely
$$
s = r, \qquad t = -\rho + \log\left[ (e^r - 1) \beta(-\rho) + 1 \right].
$$
Evaluating $\frac{1}{K} \lambda_K \circ J_+$ on the unit vectors in
$(r,\rho,\phi,\theta)$-coordinates then leads to the formula
$$
\frac{1}{K} \lambda_K \circ J_+ = e^r\, dr - \frac{1}{K} e^{-\rho} 
\left[ 1 - \beta(-\rho) \right] \, d\rho = df_+,
$$
where $f_+ := e^r - \frac{1}{K} g(\rho)$, with
$$
g(\rho) := \int_0^\rho e^{-x} \left[ 1 - \beta(-x) \right] \, dx.
$$
The details of $g(\rho)$ are unimportant beyond the following two 
observations: first, it is nonnegative, and strictly positive for all
$\rho \ge 1/2$; second, its derivative for $\rho \ge 1/2$ is $e^{-\rho}$,
so using the alternative coordinates $s=r$ and $t = -\rho$ on this region,
we can rewrite $f_+$ as
$$
f_+ = e^s + \frac{1}{K} (e^t - c)
$$
for some constant $c \in \RR$ such that $e^t - c < 0$ for all
$t \in (-1,-1/2]$.

The above function can now be extended over $\widehat{\vV}$ using the
$j$-convex function $\varphi : \widehat{\Sigma} \to \RR$, which we recall 
satisfies $-d\varphi \circ j = \sigma$ and $\varphi(s,\phi) = e^s$ on
$\widehat{\nN}(\p \Sigma)$.  Indeed, the function
$$
f_+ := \varphi + \frac{1}{K} (e^t - c)
$$
on $\widehat{\vV}$ has the property $-df_+ \circ J_+ = \sigma 
+ \frac{1}{K} e^t\, d\theta = \frac{1}{K} \lambda_K$.  Moreover,
since $\varphi$ is subharmonic and equals~$1$ on~$\p\Sigma$, it is
strictly less than~$1$ on the interior of~$\Sigma$, implying that
$f_+ < 1$ on the portion of $\widehat{\vV}$ disjoint from
$\widehat{\nN}_+(\p E)$.  Since $V_K = V_\sigma + \p_t$, it follows
that $f_+$ can be extended over the rest of $\widehat{\nN}(\p_h E)$
satisfying $df_+(V_K) > 0$.

We will next extend $f_+$ as a $J_+$-convex function to
$\widehat{\nN}(\p_v E) \setminus \widehat{\nN}(\p_v E \cap \p_h E)$,
which includes the rest of $\widehat{\nN}_+(\p E)$.
For this we can use the Thurston trick for almost Stein structures,
as in \cite{LisiVanhornWendl1}*{\S 2.4} or the appendix of \cite{BaykurVanhorn:large}.
Recall that in \S\ref{sec:Liouville} we chose a 
function $f\fib : M\paper \to \RR$ that matches $e^t$ in $\nN(\p M\paper)$
and is fiberwise $J_+$-convex; composing this function with the projection
$\widehat{\nN}(\p_v E) \supset (-1,\infty) \times M\paper \to M\paper$
gives a function
$$
f\fib : (-1,\infty) \times M\paper \to \RR
$$
such that $\p_s f\fib \equiv 0$, $f\fib(s,\phi,t,\theta) = e^t$ for
$t \in (-1,-1/2]$ and the $1$-form
$$
\lambda\fib := -d f\fib \circ J_+
$$
is fiberwise Liouville (outside the region $\{t > -1/2\}$, which we are free
to ignore for this discussion).  Moreover, the restriction of $\lambda\fib$
to the vertical subspaces matches our construction of $\lambda$
from \S\ref{sec:Liouville}.
Now observe that since $\p_r = \p_s$ and
$J_+ \p_r = R_+$ is a horizontal lift of the unit vector field in $S^1$,
the projection $\Pi_v$ maps the region in question \emph{holomorphically} to
$(-1,\infty) \times S^1$ with its standard complex structure~$i$, thus
$$
- d (e^s \circ \Pi_v) \circ J_+ = \Pi_v^*( - d(e^s) \circ i) =
e^s \, d\Pi_v = \sigma.
$$
Extending $f_+$ by $f_+ = e^s + \frac{1}{K} f\fib$, it follows that
$$
\lambda_+ := - df_+ \circ J_+ = \sigma + \frac{1}{K} \lambda\fib,
$$
and by the usual application of the Thurston trick
as in \cites{LisiVanhornWendl1,BaykurVanhorn:large}, $d\lambda_+$ is symplectic if $K > 0$ is
sufficiently large.  We are free to assume this, since none of the
other data on the region in question depends on the value of~$K$.
Note that by the same argument that was used previously for $V_K$
(see Lemma~\ref{lemma:LiouvilleTransverse}), we can
also assume after increasing $K > 0$ that the dual Liouville vector
field $V'$ satisfies $ds(V') > 0$ everywhere on $\widehat{\nN}(\p_v E)$.
Since $\lambda_+ = \frac{1}{K} \lambda_K$ in $\widehat{\nN}(\p_h E)$
by construction, we also have $V' = V_K$ in this region, so that
$V'$ is manifestly transverse to~$M^-$, implying that $\lambda_+|_{TM^-}$
is contact.  Moreover, this contact form on $M^-$ has been constructed
in the same manner as a Giroux form for the spinal open book, implying
that the induced contact structure is isotopic to~$\xi_-$.
\end{proof}

The next result is of a much more general nature.

\begin{lemma}
\label{lemma:almostStein2}
Assume $(W,\omega)$ is a smooth symplectic manifold with a Liouville vector
field $V_\lambda$ that is nowhere zero, and denote the dual Liouville form 
by~$\lambda$ (i.e.~$d\lambda = \omega$
and $\omega(V_\lambda,\cdot) = \lambda$).  Suppose
$f : W \to \RR$ is a smooth function satisfying $df(V_\lambda) > 0$.  Then
$$
\xi := \ker df \cap \ker \lambda \subset TW
$$
is a smooth symplectic subbundle of codimension~$2$, and there is a natural
homeomorphism
$$
\jJ(\lambda,f) \to \jJ(\xi,\omega) : J \mapsto J|_{\xi},
$$
where $\jJ(\lambda,f)$ denotes the space of $\omega$-compatible almost
complex structures $J$ on $W$ satisfying $\lambda = -df \circ J$, and
$\jJ(\xi,\omega)$ is the space of compatible complex structures
on the symplectic vector bundle $(\xi,\omega)$.  In particular, 
it follows that $\jJ(\lambda,f)$ is nonempty and contractible.
\end{lemma}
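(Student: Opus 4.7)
The plan is to establish three things in sequence: that $\xi$ is a smooth codimension-$2$ symplectic subbundle, that any $J \in \jJ(\lambda,f)$ automatically respects the splitting $TW = \xi \oplus \xi^\omega$, and that the constraint $\lambda = -df\circ J$ pins down $J|_{\xi^\omega}$ uniquely. The topological conclusions then reduce to the well-known contractibility of the space of compatible complex structures on a symplectic vector bundle. The delicate step will be the uniqueness of $J|_{\xi^\omega}$: the condition $\lambda = -df\circ J$ must consume precisely the freedom on $\xi^\omega$ not already captured by the choice of $J|_\xi$.

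First I would observe that $\lambda = \iota_{V_\lambda}\omega$ is nowhere vanishing because $\omega$ is nondegenerate and $V_\lambda$ is nowhere zero, while $df$ is nowhere vanishing by hypothesis. The relations $\lambda(V_\lambda) = 0 \ne df(V_\lambda)$ show that $\lambda$ and $df$ are pointwise linearly independent, so $\xi = \ker\lambda \cap \ker df$ is a smooth codimension-$2$ distribution. To identify $\xi^\omega$, let $X_f$ denote the Hamiltonian vector field determined by $\iota_{X_f}\omega = -df$; then $X_f \in \xi^\omega$ because $\xi \subset \ker df$, and $V_\lambda \in \xi^\omega$ because $\xi \subset \ker\lambda$. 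The identity $\omega(V_\lambda, X_f) = df(V_\lambda) > 0$ shows that $V_\lambda$ and $X_f$ are pointwise linearly independent, hence span the rank-$2$ bundle $\xi^\omega$, on which $\omega$ is consequently nondegenerate. It follows that $\xi \cap \xi^\omega = 0$, so $\omega|_\xi$ is also nondegenerate.

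For any $J \in \jJ(\lambda,f)$, the identity $\lambda = -df\circ J$ combined with its consequence $df = \lambda\circ J$ (postcompose with $J$ and use $J^2 = -\mathrm{Id}$) shows that $J$ preserves $\ker\lambda$, $\ker df$, and hence~$\xi$; moreover $\omega$-compatibility forces $J(\xi^\omega) = \xi^\omega$. Thus $J$ decomposes as $J|_\xi \oplus J|_{\xi^\omega}$ with respect to $TW = \xi \oplus \xi^\omega$, and the restriction map is well defined. The core claim is then that the formula
$$J V_\lambda := X_f, \qquad J X_f := -V_\lambda$$
defines the unique $\omega$-compatible complex structure on $\xi^\omega$ satisfying $\lambda = -df \circ J$. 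Writing a candidate $J V_\lambda = aV_\lambda + bX_f$ and $JX_f = cV_\lambda + dX_f$, the symmetry of $g(v,w) := \omega(v, Jw)$ forces $d = -a$, the relation $J^2 = -\mathrm{Id}$ forces $a^2 + bc = -1$, and the two scalar equations $\lambda(V_\lambda) = -df(JV_\lambda)$ and $\lambda(X_f) = -df(JX_f)$ then force $a = 0$, $c = -1$, $b = 1$ (using $df(X_f) = 0$ and $\lambda(X_f) = df(V_\lambda)$). The verification that this $J$ is $\omega$-compatible on $\xi^\omega$ and satisfies $\lambda = -df\circ J$ there is immediate. For any $J_0 \in \jJ(\xi,\omega)$, the block-diagonal structure $J := J_0 \oplus J|_{\xi^\omega}$ thus provides the unique preimage under the restriction map. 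Continuity of the map and its inverse is immediate from the pointwise algebraic formula in $(\omega, \lambda, f)$, and the nonemptiness and contractibility of $\jJ(\lambda, f)$ follow from the corresponding well-known properties of the space of compatible complex structures on the symplectic vector bundle $(\xi, \omega)$.
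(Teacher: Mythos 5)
Your proof is correct and takes essentially the same approach as the paper: show $J$ preserves $\xi$ and then argue $J$ is uniquely determined on the rank-$2$ complement by the constraint $\lambda = -df\circ J$. The paper phrases the latter step by identifying $\frac{1}{df(V_\lambda)}JV_\lambda$ with the Reeb vector field $R_\lambda$ of $\lambda$ on level sets of $f$, which is precisely your $X_f/df(V_\lambda)$, so the arguments agree; the only slip in your writeup is the sentence claiming $J$ ``preserves $\ker\lambda$, $\ker df$'' --- in fact $J$ \emph{interchanges} these two hyperplane fields (since $\lambda = -df\circ J$ and $df = \lambda\circ J$), which is what gives $J(\xi)=\xi$, but this is a wording issue and the conclusion you draw is the right one.
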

\begin{proof}
The fact that $\lambda$ is Liouville and $df(V_\lambda) > 0$ implies that
$\lambda$ restricts as a contact form to each level set of~$f$; the
subbundle $\xi$ is then the union of all the resulting contact structures
in the level sets.
We claim first that any $J \in \jJ(\lambda,f)$ preserves~$\xi$ and thus has
a restriction $J|_\xi$ in $\jJ(\xi,\omega)$.  Indeed, if $\lambda = -df \circ J$
then $v \in \ker \lambda$ implies $Jv \in \ker df$ and $v \in \ker df$ implies
$Jv \in \ker \lambda$, so this proves the claim.
Now let $R_\lambda$ denote the unique vector field on $W$ satisfying
$$
df(R_\lambda) \equiv 0, \qquad \lambda(R_\lambda) \equiv 1, \qquad
d\lambda(R_\lambda,\cdot)|_\xi \equiv 0,
$$
i.e.~$R_\lambda$ restricts to each level set of $f$ as the Reeb vector field
determined by~$\lambda$.  Then the relation $\lambda = -df \circ J$ and the
fact that $\omega$ is $J$-invariant (since $J$ is $\omega$-compatible)
imply that the vector field $\frac{1}{df(V_\lambda)} J V_\lambda$ satisfies
the same conditions that define~$R_\lambda$, hence
$$
J V_\lambda = df(V_\lambda) R_\lambda.
$$
This relation defines the inverse of the map $\jJ(\lambda,f) \to
\jJ(\xi,\omega) : J \mapsto J|_\xi$.
\end{proof}

Using the Liouville form $\lambda_+$ and Lyapunov function $f_+$ on
$\widehat{E}$ supplied by
Lemma~\ref{lemma:almostStein}, we can now use Lemma~\ref{lemma:almostStein2}
to extend $J_+$ over the rest of $\widehat{E}$ such that 
$\lambda_+ = - df_+ \circ J_+$ and $J_+$ is $d\lambda_+$-compatible, hence
$f_+$ is $J_+$-convex.  Note that Lemma~\ref{lemma:almostStein2} allows for
considerable freedom in the choice of this extension, and we shall only
need to impose one further condition:
$$
\text{$J_+$ is $S^1$-invariant on $\widehat{\nN}(\p_h E) = (-1,\infty) \times
\widehat{\Sigma} \times S^1$}.
$$
Here $S^1$-invariance means invariance with respect to the coordinate
$\theta \in S^1$; the assumption is already satisfied on the portions of 
$\widehat{\nN}(\p_h E)$ where $J_+$ has been defined so far, and it is
possible on the rest because $\lambda_K$, $\widehat{\vV}$ and $M^+$ are 
all $S^1$-invariant objects, and so is $f_+$ without loss of generality.

For applications to almost Stein fillings, we will take $\eta = 0$ and
our symplectic form on $\widehat{E}$ is thus the exterior derivative of
the Liouville form $\frac{1}{K} \lambda_K$.  
We now have a minor headache however
because the model almost Stein structure $(J_+,f_+)$ arising from the
above construction produces another Liouville structure
$\lambda_+ = -df_+ \circ J_+$,
which is in general different from $\frac{1}{K} \lambda_K$, in particular
they differ on~$\widehat{\nN}(\p_v E)$.  In order to define a useful
notion of energy for holomorphic curves in this setting, we will need
the following interpolation.

\begin{lemma}
\label{lemma:LiouvilleInterp}
There exists a Liouville form $\Theta$ on $\widehat{E}$ with the following
properties:
\begin{enumerate}
\item $\Theta = \lambda_+$ on $E$;
\item $\Theta = \frac{1}{K} \lambda_K$ on $[T,\infty) \times M^+
\subset \widehat{\nN}_+(\p E)$ for $T > 0$ sufficiently large;
\item $d\Theta$ tames~$J_+$.
\end{enumerate}
\end{lemma}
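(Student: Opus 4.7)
The plan is to construct $\Theta$ by linearly interpolating between $\lambda_+$ (near $E$) and $\frac{1}{K}\lambda_K$ (near infinity) using a cutoff in the $r$-coordinate on the cylindrical end, and then to control the resulting error using the translation invariance that prevails on the region where the two forms disagree. Fix a smooth cutoff $\chi \colon [0,\infty) \to [0,1]$ with $\chi \equiv 1$ on $[0,1]$, $\chi \equiv 0$ on $[T-1,\infty)$, and $\chi'(r) \le 0$ everywhere, where $T > 2$ will be chosen sufficiently large. Using the embedding $\Psi$ from Lemma~\ref{lemma:theEnd} to identify $\widehat{\nN}_+(\p E)$ with $[0,\infty)\times M^+$, set
\[
\Theta := \begin{cases}
\lambda_+ & \text{on } \widehat{E} \setminus \widehat{\nN}_+(\p E), \\
\chi(r)\,\lambda_+ + (1-\chi(r))\,\tfrac{1}{K}\lambda_K & \text{on } \widehat{\nN}_+(\p E).
\end{cases}
\]
Since $\chi \equiv 1$ near $r=0$, both pieces agree with $\lambda_+$ in a neighborhood of $M^+$, so $\Theta$ is smooth and properties (1) and (2) hold with the given $T$.

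Writing $\beta := \tfrac{1}{K}\lambda_K - \lambda_+$, on the cylindrical end one computes
\[
d\Theta = \chi(r)\,d\lambda_+ + (1-\chi(r))\,\omega_E \;-\; \chi'(r)\,dr \wedge \beta.
\]
The first two terms form a convex combination of symplectic forms each of which tames $J_+$: $d\lambda_+$ by construction of the almost Stein structure in Lemma~\ref{lemma:almostStein}, and $\omega_E = d((e^r-1)\Lambda_+) + \Omega_+$ by the standard symplectization computation, using $J_+ \in \jJ(\hH_+)$ together with $d\Lambda_+|_{\Xi_+} \ge 0$ (recall $\hH_+$ is of confoliation type on $M^+$). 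Hence $\chi\,d\lambda_+ + (1-\chi)\,\omega_E$ tames $J_+$, and the task reduces to showing that the error term $-\chi'(r)\,dr \wedge \beta$ can be made small enough to preserve taming.

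Here the crucial observation is that $\beta$ vanishes on all of $\widehat{\nN}(\p_h E)$ by Lemma~\ref{lemma:almostStein}(3), so on the cylindrical end the error is supported in $[0,\infty) \times \widecheck{M}^+\paper \setminus \widehat{\nN}(\p_h E)$. On this set $\Psi$ satisfies $r = s$, and one computes $\beta = \tfrac{1}{K}(\lambda - \lambda\fib)$, which is $s$-invariant and, by condition~\eqref{eqn:fibLiouvilleCondition}, vanishes on the vertical subbundle $V\widehat{E}$. In particular $|dr \wedge \beta|$ is uniformly bounded with respect to any $r$-invariant background metric. On the same region $d\lambda_+$, $\omega_E$, and $J_+$ are likewise $r$-invariant (apart from the overall $e^r\,dr \wedge d\pi\paper$ factor in $\omega_E$, which only strengthens the taming as $r$ grows), so the convex combination $\chi\,d\lambda_+ + (1-\chi)\,\omega_E$ tames $J_+$ with a uniform positive constant. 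Choosing $T$ large enough that $\sup|\chi'|$ is correspondingly small then ensures $d\Theta(v, J_+ v) > 0$ for every nonzero tangent vector, so $d\Theta$ is nondegenerate and hence symplectic, establishing~(3).

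The main technical obstacle is this final uniform taming estimate on the paper region; the argument succeeds precisely because the only place where $\lambda_+$ and $\tfrac{1}{K}\lambda_K$ disagree is one where all the relevant geometric data are translation invariant in the $r$-direction, which makes the difference $\beta$ uniformly bounded while simultaneously guaranteeing a uniform taming constant. Were $\beta$ allowed to grow with $r$, one would need a more elaborate interpolation adapted to the growth rate of $\omega_E$ rather than a straightforward convex combination.
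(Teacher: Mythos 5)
Your proof is correct and takes essentially the same approach as the paper: both define $\Theta$ as a cutoff convex combination of $\lambda_+$ and $\tfrac{1}{K}\lambda_K$ (you parametrize the cutoff by the $r$-coordinate on $\widehat{\nN}_+(\p E)$, the paper by the $s$-coordinate on $\widehat{\nN}(\p_v E)$, but these coincide on the paper region where the two forms actually differ), and both control the error term $dr \wedge \beta$ by making the cutoff vary slowly, using the fact that the interpolated form tames $J_+$ with a uniform constant as $s \to \infty$ by translation invariance. Your proof spells out the uniformity argument in somewhat more detail than the paper, which is a welcome addition rather than a departure.
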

\begin{proof}
We set $\Theta = \lambda_+$ on $\widehat{\nN}(\p_h E)$ since $\lambda_+$ and
$\frac{1}{K} \lambda_K$ already match on this region.  On $\widehat{\nN}(\p_v E)$,
choose $\Theta$ to be of the form
$$
\Theta = \left[ 1 - g(s) \right] \frac{1}{K} \lambda_K + g(s) \lambda_+
$$
for some smooth function $g : (-1,\infty) \to [0,1]$ with $g(s) = 1$ for
$s \le 0$ and $g(s) = 0$ for $s$ sufficiently large.  We then have
\begin{equation}
\label{eqn:dTheta}
d\Theta = \left[ 1 - g(s) \right] \frac{1}{K} \, d\lambda_K + g(s) d\lambda_+ +
g'(s) \, ds \wedge \left( \lambda_+ - \frac{1}{K} \lambda_K \right).
\end{equation}
Recall that $J_+$ is tamed by both $\frac{1}{K} d\lambda_K$ and
$d\lambda_+$; it is compatible with the former by construction, and it is tamed
by the latter because $\lambda_+ = - df_+ \circ J_+$ where $f_+$ is $J_+$-convex.
It follows that the interpolation forming the first two terms in
\eqref{eqn:dTheta} is also a nondegenerate $2$-form taming~$J_+$; moreover,
the construction of $\lambda_+$ and $\lambda_K$ guarantees that it tames
$J_+$ in a uniform way as $s \to \infty$.  It therefore suffices to choose
$g$ changing slowly enough so that the $g'(s)$ term in \eqref{eqn:dTheta}
does not ruin nondegeneracy, and this can be done at the cost of achieving
the condition $g(s) = 0$ only for $s \ge T$ with $T$ sufficiently large.
\end{proof}

\subsection{Holomorphic pages}
\label{sec:holPages}

The main advantage of choosing $J_+$ compatible with the stable Hamiltonian
structure $\hH_+ = (\Omega_+,\Lambda_+)$ instead of 
contact data is that the pages of $\boldsymbol{\pi}$ can be lifted to properly
embedded $J_+$-holomorphic curves in $\widehat{\nN}_+(\p E)$.
Since $J_+$ on $\widehat{\nN}_+(\p E) = [0,\infty) \times M^+$
belongs to $\jJ(\hH_+)$, we can equally well regard $J_+$
as an $\RR$-invariant almost complex structure on 
$\RR \times M^+$, and we will now use it to construct a $J_+$-holomorphic 
foliation on $\RR \times M^+$.

Denote by $T\fF_+$ the $2$-dimensional distribution on $\RR \times M^+$
defined by
$$
(T\fF_+)_{(r,x)} = \begin{cases}
\Xi_+ & \text{ for $x \in \widecheck{M}^+\paper$},\\
\Span\{\p_\theta,J_+ \p_\theta\} & \text{ for $x \in \widecheck{M}^+\corner \cup 
\widecheck{M}^+\spine$}.
\end{cases}
$$
This distribution is smooth and $J_+$-invariant;
indeed, $\Xi_+$ is necessarily $J_+$-invariant since
$J_+ \in \jJ(\hH_+)$, and since $\Xi_+$ matches the vertical 
subbundle of $\pi\paper : M\paper \to S^1$ on $\widecheck{M}^+\paper$,
it also is spanned by $\p_\theta$ and $J_+\p_\theta$ in the
collar where the $\theta$-coordinate is defined.
It is also easy to see that $T\fF_+$ is $\RR$-invariant, and it is
\emph{integrable}: the latter is obvious in $\RR \times \widecheck{M}^+\paper$,
and everywhere else it follows from the fact that $J_+$ is $S^1$-invariant,
as this implies
\begin{equation}
\label{eqn:thetaCommute}
[\p_\theta,J_+\p_\theta] \equiv 0.
\end{equation}
Denote by $\fF_+$ the set of leaves of the foliation on $\RR\times M^+$ 
tangent to~$T\fF_+$.  The next result shows that each of these leaves
is the image of an embedded asymptotically cylindrical
$J_+$-holomorphic curve as defined in \S\ref{sec:energy}, hence $\fF_+$ is a
\emph{finite energy foliation} in the sense of
Hofer-Wysocki-Zehnder \cite{HWZ:foliations}.
In the following, we use the Riemannian metric
$$
\langle\cdot,\cdot \rangle := d\sigma(\cdot,j\cdot)
$$
on $\Sigma$ in order to define the gradient vector field $\nabla H$ of
$H : \Sigma \to [0,\infty)$.  Observe that on the collar $\nN(\p\Sigma)$,
since $\sigma = m e^s\, d\phi$ and $j\p_s = \frac{1}{m} \p_\phi$ for the
appropriate multiplicity $m \in \NN$, we have
$d\sigma(\cdot,j\cdot) = e^s \left( ds \otimes ds + m^2 \, d\phi \otimes d\phi \right)$,
while $H(s,\phi)$ depends only on the $s$-coordinate, thus
$\nabla H$ points in the $s$~direction, orthogonal to~$\p\Sigma$.  The
Hamiltonian vector field $X_H$ determined on $(\Sigma,d\sigma)$ by~$H$
can now be written as
\begin{equation}
\label{eqn:XH}
X_H = j \nabla H.
\end{equation}

\begin{prop}
\label{prop:Jfoliation}
The leaves of the $\RR$-invariant foliation $\fF_+$ are the images of
asymptotically cylindrical $J_+$-holomorphic curves.  In fact,
each leaf of this foliation is one of the following:
\begin{enumerate}
\item A \defin{trivial cylinder} $\RR \times \gamma$, where
$\gamma \subset M^+$ is a closed Reeb orbit of the form
$\gamma = \{z\} \times S^1 \subset \widecheck{M}^+\spine \subset
\Sigma \times S^1$ for some $z \in \CritMorse(H)$.
\item A \defin{holomorphic gradient flow cylinder}, admitting a (not necessarily
holomorphic) parametrization 
$u : \RR \times S^1 \hookrightarrow \RR \times M^+$ of the form
$$
u(s,t) = (a(s),\ell(s),t) \in \RR \times \widecheck{M}^+\spine \subset
\RR \times \Sigma \times S^1,
$$
where $a : \RR \to \RR$ is a strictly increasing proper function and
$\ell : \RR \to \Sigma$ is a solution of the gradient flow
equation $\dot{\ell} = \nabla H(\ell)$ approaching two distinct critical
points of $H$ as $s \to \pm\infty$.
\item A \defin{holomorphic page}, which is a connected and properly embedded
submanifold formed as a union of subsets of the following type:
\begin{itemize}
\item $\{s\} \times P \subset \RR \times \widecheck{M}^+\paper$, where
$s \in \RR$ is a constant and $P \subset \widecheck{M}^+\paper$ is the portion of
a page of $\pi\paper : M\paper \to S^1$ lying in $\widecheck{M}^+\paper$;
\item Annuli admitting (not necessarily holomorphic) parametrizations
$u : (-1,1) \times S^1 \hookrightarrow \RR \times M^+$ of the form
$$
u(s,t) = (a(s),s,\phi,t) \in \RR \times (-1,1) \times S^1 \times S^1
\subset \RR \times \widecheck{M}^+\corner
$$
for some bounded functions $a : (-1,1) \to \RR$ and constants $\phi \in S^1$;
\item Half-cylinders admitting (not necessarily holomorphic) parametrizations
$u : [0,\infty) \times S^1 \hookrightarrow \RR \times M^+$ of the form
$$
u(s,t) = (a(s),\ell(s),t) \in \RR \times \widecheck{M}^+\spine \subset
\RR \times \Sigma \times S^1,
$$
where $a : [0,\infty) \to \RR$ is a strictly increasing proper function and
$\ell : [0,\infty) \to \Sigma$ is a solution of the gradient flow
equation $\dot{\ell} = \nabla H(\ell)$ that begins at time $s=0$ as a
trajectory in $\nN(\p\Sigma)$ orthogonal to $\p\Sigma$ and
approaches a critical point of $H$ as $s \to \infty$.
\end{itemize}
\end{enumerate}
In particular, each of the holomorphic gradient flow cylinders and pages
projects through $\RR \times M^+ \to M^+$ to an embedded
surface in $M^+$ whose closure is a compact embedded surface
bounded by Reeb orbits in $\CritMorse(H) \times S^1 \subset \widecheck{M}^+\spine$.
\end{prop}
\begin{proof}
At any point $(z,\theta) \in \CritMorse(H) \times S^1$, $\p_\theta$ is proportional
to~$R_+$, hence $J_+\p_\theta$ is proportional to $\p_r$ and the trivial 
cylinder $\RR\times \gamma \subset \RR \times M^+$ over the periodic 
orbit~$\gamma$ through~$(z,\theta)$ therefore forms an integral 
submanifold of the distribution.  Similarly, each integral submanifold in
$\RR \times \widecheck{M}^+\paper$ is contained
in a set of the form $\{s\} \times P \subset \RR \times M\paper$, with
$s \in \RR$ a constant and $P \subset M\paper$ a page of~$\boldsymbol{\pi}$.
To complete the proof, we mainly need to justify the following two claims:
\begin{itemize}
\item At any point $(z,\theta) \in \widecheck{M}^+\spine \subset \Sigma \times S^1$,
there exist $a,b,c \in \RR$ with $c \ne 0$ such that
\begin{equation}
\label{eqn:linCombNew}
J_+ \p_\theta = a\p_\theta + b \p_r + c\nabla H.
\end{equation}
\item At any point $(\rho,\phi,\theta) \in \widecheck{M}^+\corner$, there exist
$b,c \in \RR$ with $c \ne 0$ such that
\begin{equation}
\label{eqn:linCombNew2}
J_+ \p_\theta = b \p_r + c \p_\rho.
\end{equation}
\end{itemize}
To verify \eqref{eqn:linCombNew}, we first observe that since $\p_\theta$
is transverse to~$\Xi_+$ on $\widecheck{M}^+\spine \subset \Sigma \times S^1$,
there exist unique functions $P, Q : \Sigma \to \RR$ such that
$$
\nabla H + P\p_\theta \in \Xi_+ \quad\text{ and }\quad
j\nabla H + Q\p_\theta \in \Xi_+,
$$
and the definition of $J_+$ in terms of $j$ via the natural fiberwise 
isomorphism $\Xi_+ \to T\Sigma$ then implies
$J_+(\nabla H + P\p_\theta) = j\nabla H + Q\p_\theta$.  
By \eqref{eqn:ReebSigma} and \eqref{eqn:XH}, 
we then have
$$
J_+(\nabla H + P\p_\theta) = -\frac{1}{\nondegParam} e^{\nondegParam H} R_+ + 
\left( Q + \frac{1 + \nondegParam \sigma(X_H)}{\nondegParam} K \right) \p_\theta,
$$
and applying $-J_+$ to both sides yields
$$
\nabla H + P\p_\theta = - \frac{1}{\nondegParam} e^{\nondegParam H} \p_r -
\left( Q + \frac{1 + \nondegParam \sigma(X_H)}{\nondegParam} K \right) J_+ \p_\theta.
$$
The coefficient in front of $J_+\p_\theta$ cannot be zero since
$\nabla H$, $\p_\theta$ and $\p_r$ are not linearly dependent, so this
allows us to write $J_+\p_\theta$ in the form \eqref{eqn:linCombNew} as claimed.
In fact, we obtain the following precise formula for $T\fF_+$ in this region,
\begin{equation}
\label{eqn:TFspine}
T\fF_+ = \Span\left\{ \p_\theta , \nabla H + \frac{1}{\nondegParam} e^{\nondegParam H}
\p_r \right\} \quad \text{ on $\RR \times \widecheck{M}^+\spine$},
\end{equation}
which shows that the functions $a(s)$ appearing in 
parametrizations of leaves in $\RR \times \widecheck{M}^+\spine$ are strictly
increasing.

The proof of \eqref{eqn:linCombNew2} follows similarly from our
definition of~$J_+$ in~$\widecheck{M}^+\corner$.  Here we have $\p_\rho \in \Xi_+$,
with $R_+$ given by \eqref{eqn:OmegaLambdaEps}, and
Lemma~\ref{lemma:GnotFlat} implies that $R_+$ and $\p_\theta$ are 
always linearly independent, so they span the same subbundle as
$\p_\phi$ and~$\p_\theta$.  This implies
$$
J_+\p_\rho \in \Span(\p_\theta,\p_\phi) = \Span(\p_\theta,R_+),
$$
and thus $J_+\p_\rho = a \p_\theta + b R_+$ 
for some $a,b\in \RR$ with $a \ne 0$.  
Applying~$J_+$ to both sides of this gives the desired result.
\end{proof}

In the following, we shall often blur the distinction between 
leaves of $\fF_+$ and the corresponding unparametrized holomorphic curves,
referring to both via parametrizations $u : \dot{S} \to \RR\times M^+$.
We will examine the analytical properties of the curves in $\fF_+$
more closely in~\S\ref{sec:holOpenBook}.

Identifying $[0,\infty) \times M^+ \subset \RR \times M^+$ in the usual
way with $\widehat{\nN}_+(\p E) \subset \widehat{E}$, $\fF_+$ also
determines a foliation on $\widehat{\nN}_+(\p E)$, which we shall extend
into $\widehat{E}$ by setting 
$$
T\fF_+ := \begin{cases}
\Span\{\p_\theta,J_+ \p_\theta\} & \text{ in $\widehat{\nN}(\p_h E)$}, \\
V\widehat{E} & \text{ everywhere else}.
\end{cases}
$$
Indeed:
\begin{prop}
\label{prop:integrable}
The distribution $T\fF_+$ on $\widehat{E}$ is $J_+$-invariant and integrable,
and matches the vertical subbundle $VE$ on a neighborhood of~$M^-$.
Moreover, $T\fF_+$ is transverse to the hypersurfaces $\{ t = \text{const} \}$
in~$\widehat{\nN}(\p_h E)$.
\end{prop}
\begin{proof}
Integrability follows from \eqref{eqn:thetaCommute} since $J_+$ is
$S^1$-invariant in $\widehat{\nN}(\p_h E)$, and $J_+$-invariance is also
immediate because $J_+$ was defined to preserve the vertical subbundle
outside of $\widehat{\nN}(\p_h E)$.  The transversality claim follows
from the fact that $J_+$ is $\omega_E$-tame and 
$\omega_E = d\sigma + \frac{1}{K} e^t \, dt \wedge d\theta$ in
$\widehat{\nN}(\p_h E)$, thus
$$
0 < \omega_E(\p_\theta,J_+\p_\theta) = - \frac{1}{K} e^t \, dt(J_+\p_\theta).
$$
\end{proof}

Figure~\ref{fig:holfol} shows a picture of the foliation on~$\widehat{E}$,
plus a single holomorphic vertebra (see Prop.~\ref{prop:vertebrae}) 
that intersects every leaf positively.

\begin{figure}
\begin{postscript}
\psfrag{t}{$t$}
\psfrag{s}{$s$}
\psfrag{pvE}{$\p_v E$}
\psfrag{phE}{$\p_h E$}
\psfrag{M+}{$M^+$}
\psfrag{M-}{$M^-$}
\psfrag{r}{$r$}
\psfrag{rho}{$\rho$}
\psfrag{Vhat}{$\widehat{\vV}$}
\psfrag{...}{$\ldots$}
\includegraphics{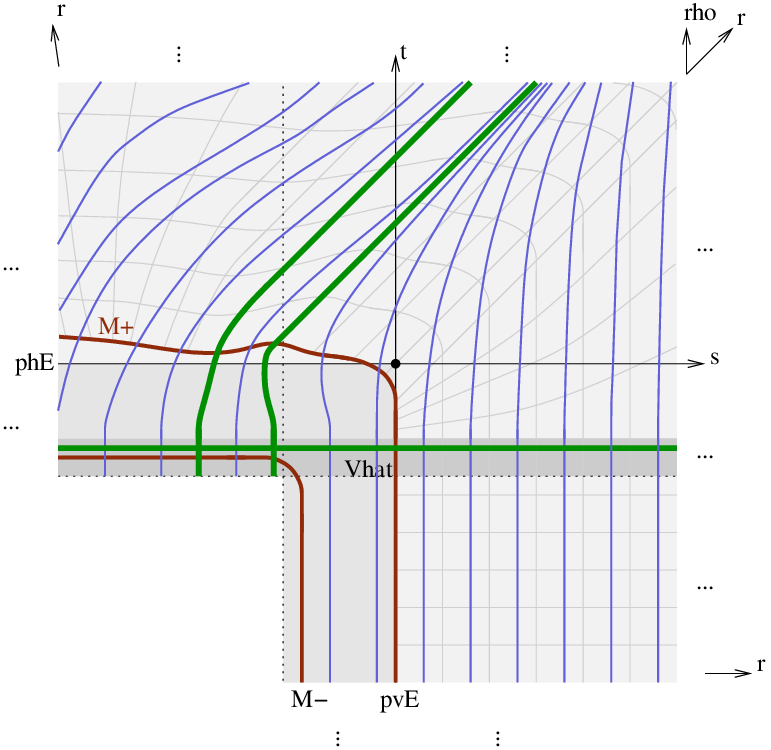}
\end{postscript}
\caption{\label{fig:holfol}
The $J_+$-holomorphic foliation $\fF_+$ in~$\widehat{E}$.  The picture includes
two special leaves whose intersections with the cylindrical end
$\widehat{\nN}_+(\p E) = [0,\infty) \times M^+$ are trivial cylinders over
Reeb orbits corresponding to critical points of $H : \Sigma \to \RR$,
and all other leaves approach these cylinders asymptotically at infinity.
All leaves are also intersected transversely by a holomorphic vertebra
in the region~$\widehat{\vV}$.}
\end{figure}

\subsection{Large subdomains with weakly contact boundary}
\label{sec:largeSubdomains}

The construction in the present subsection will be needed in the final step
of the proofs of Theorems~\ref{thm:classification} and~\ref{thm:weak},
in order to show that our $J$-holomorphic foliation obtained by analytical
methods gives rise to a bordered Lefschetz fibration with supported
symplectic structure in the sense of \cite{LisiVanhornWendl1}*{\S 2.3}.
The goal is to exhaust $\widehat{E}$ by bounded subdomains
$$
\widehat{E}_R \subset \widehat{E}, \qquad
\widehat{E} = \bigcup_{R > 0} \widehat{E}_R
$$
such that each $\p \widehat{E}_R$ is a weakly contact hypersurface 
(with corner) deformation equivalent to $(M^-,\xi_-)$ and a neighborhood
of $\p \widehat{E}_R$ in $\widehat{E}_R$ looks like the neighborhood of
the boundary in a bordered Lefschetz fibration with fibers given by
leaves of~$\fF_+$.

Fix a pair of numbers $c > 0$ and $\delta \in (1/4,1/2)$,
and define a smooth
hypersurface $M^c \subset \widehat{\nN}_+(\p E)$ with nonempty boundary via the
following conditions (see Figure~\ref{fig:largeSubd}):
\begin{enumerate}
\item $M^c$ contains $\{c\} \times \widecheck{M}^+\paper \subset [0,\infty)
\times M^+ = \widehat{\nN}_+(\p E)$;
\item $M^c$ is a union of subsets of leaves of~$\fF_+$;
\item $\p M^c \subset \{ \rho = \delta \} \subset [0,\infty) \times \widecheck{M}^+\corner$.
\end{enumerate}
It will be useful to note that $M^c$ is $\theta$-invariant in the region near
its boundary where the $\theta$-coordinate is defined.
By adjusting $\delta$ appropriately, one can also assume that
$\beta(-\delta) > 0$ and that $M^c$ is everywhere transverse to the Liouville
vector field~$V_K$; the latter follows from the formula $V_K = \p_s + \p_t$
in the diagonal end, as we are free to assume by moving $\delta$ closer to
the region where $\beta(-\rho)=0$ that the tangent spaces to $M^c$ are
always $C^0$-close to those of the ``vertical'' hypersurfaces 
$\{c\} \times \widehat{M}\paper$.  The key consequence of the condition
$\beta(-\delta) > 0$ is the following: by \eqref{eqn:PsiInterface}, we have
$$
\p_r = \p_s + \frac{e^r \beta(-\rho)}{(e^r - 1) \beta(-\rho) + 1} \p_t
$$
in the region $1/4 < \rho < 1/2$, so $\beta(-\rho) > 0$ implies that the flow
of $\p_r$ moves positively in the $t$-coordinate.  In particular, given
$R > 0$, we can find a number $r_1 > 0$ such that if $\Phi_{\p_r}^{t}$ denotes
the time~$t$ flow of $\p_r$,
$$
\p \left( \Phi_{\p_r}^{r_1}(M^c) \right) \subset \{R\} \times \widehat{M}\spine
\subset \widehat{\nN}(\p_h E).
$$
With this understood, define $\widehat{E}_R \subset \widehat{E}$ to be the
region in $\widehat{E}$ bounded by $\Phi_{\p_r}^{r_1}(M^c) \subset
\widehat{\nN}(\p_v E)$ and 
$\{R\} \times \widehat{M}\spine \subset \widehat{\nN}(\p_h E)$.  Its boundary has two
smooth faces $\p \widehat{E}_R = \p_v \widehat{E}_R \cup \p_h \widehat{E}_R$,
where
$$
\p_v \widehat{E}_R = \Phi_{\p_r}^{r_1}(M^c) \subset \widehat{\nN}(\p_v E),
$$
and
$$
\p_h \widehat{E}_R \subset \{R\} \times \widehat{M}\spine \subset
\widehat{\nN}(\p_h E).
$$
The $\RR$-invariance of the foliation $\fF_+$ implies that $\p_v \widehat{E}_R$
is a union of $1$-parameter families of compact subsets of leaves of~$\fF_+$.

\begin{figure}
    \begin{postscript}
\psfrag{t}{$t$}
\psfrag{s}{$s$}
\psfrag{pvE}{$\p_v E$}
\psfrag{phE}{$\p_h E$}
\psfrag{M+}{$M^+$}
\psfrag{M-}{$M^-$}
\psfrag{r}{$r$}
\psfrag{rho}{$\rho$}
\psfrag{Vhat}{$\widehat{\vV}$}
\psfrag{Mc}{$M_c$}
\psfrag{pMc}{$\p M_c$}
\psfrag{pvER}{$\p_v \widehat{E}_R$}
\psfrag{phER}{$\p_h \widehat{E}_R$}
\psfrag{pvERphER}{$\p_v\widehat{E}_R \cap \p_h\widehat{E}_R$}
\psfrag{rhodelta}{$\rho=\delta$}
\psfrag{NpvER}{$\nN(\p_v \widehat{E}_R)$}
\psfrag{NphER}{$\nN(\p_h \widehat{E}_R)$}
\psfrag{t=R}{$t=R$}
\psfrag{...}{$\ldots$}
\includegraphics[scale=1]{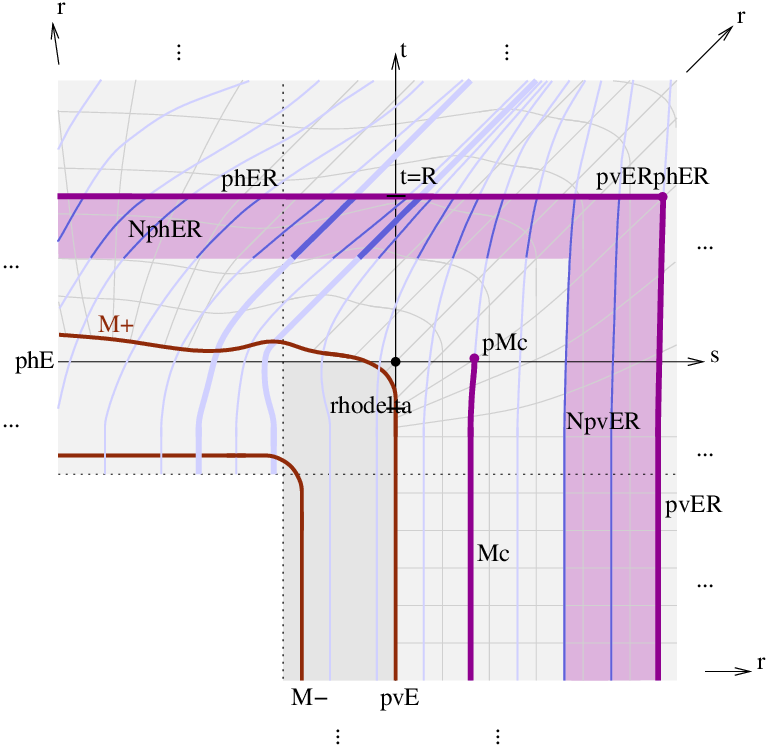}
\end{postscript}
\caption{\label{fig:largeSubd}
The construction of the region $\widehat{E}_R$ with boundary
$\p\widehat{E}_R = \p_v\widehat{E}_R \cup \p_h\widehat{E}_R$ and corner
$\p_v\widehat{E}_R \cap \p_h\widehat{E}_R$, together with the collar
neighborhoods $\nN(\p_v\widehat{E}_R)$, $\nN(\p_h\widehat{E}_R) \subset \widehat{E}_R$, which
carry fibrations whose fibers are leaves of the holomorphic foliation
from Figure~\ref{fig:holfol}.}
\end{figure}

\begin{lemma}
\label{lemma:bigContact}
For each $R > 0$, there exists a smooth isotopy of $\widehat{E}_R$ to $E$ 
through domains with the property that both smooth faces of their boundaries 
are weakly contact hypersurfaces in $(E,\omega_E)$ with the contact structure
induced by~$\lambda_K$, and the corner of each is contained in
$\widehat{\nN}(\p_v E \cap \p_h E)$.
\end{lemma}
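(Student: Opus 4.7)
I would construct the isotopy $\{\widehat{E}_R^\tau\}_{\tau\in[0,1]}$ by simultaneously pushing both smooth faces of $\p\widehat{E}_R$ down toward the corresponding faces of $\p E$, while keeping the corner inside $\widehat{\nN}(\p_v E\cap\p_h E)$ throughout. The key geometric input is that three natural kinds of hypersurfaces carry compatible contact forms induced by $\lambda_K$: (i) constant-$t$ hypersurfaces in $\widehat{\nN}(\p_h E)$ are \emph{strongly} contact in $(\widehat{E},\omega_E)$, because $\eta$ was chosen to vanish on $\widehat{\nN}(\p_h E)$, so $\omega_E=\frac{1}{K}\,d\lambda_K$ there and the Liouville vector field $V_K=V_\sigma+\p_t$ is transverse to $\{t=\text{const}\}$; (ii) constant-$s$ hypersurfaces in $\widehat{\nN}(\p_v E)$ are \emph{weakly} contact in $(\widehat{E},\omega_E)$ by Lemma~\ref{lemma:etaPert}; (iii) any smoothed hypersurface contained in $\widehat{\nN}(\p_v E\cap\p_h E)$ that is transverse to $V_K=\p_s+\p_t$ is strongly contact, since this region sits inside $\widehat{\nN}(\p_h E)$ where $\eta=0$.

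\textbf{The family.} First I would perform a preliminary normalization. Away from the corner region, $\p_r=\p_s$, so the vertical face $\p_v\widehat{E}_R=\Phi^{r_1}_{\p_r}(M^c)$ already coincides with the constant-$s$ hypersurface $\{s=s_0\}\cap\widehat{\nN}(\p_v E)$, where $s_0:=c+r_1$. Inside the corner region, $\p_v\widehat{E}_R$ was built transverse to $V_K$ by the choice of $\delta$; a linear interpolation between $\p_v\widehat{E}_R$ and a model ``straight'' smoothed corner joining $\{s=s_0\}$ to $\{t=R\}$ (produced from a pair of smoothing functions as in \S\ref{sec:collars}) yields a preliminary isotopy in which all intermediate hypersurfaces remain transverse to $V_K$, hence strongly contact by observation~(iii). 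Once $\widehat{E}_R$ is in this normalized form, I would choose continuously varying pairs of smoothing functions $F_\tau,G_\tau:(-1,1)\to(-1,0]$, $\tau\in[0,1]$, interpolating from smoothing functions matched to the endpoints $(s_0,R)$ at $\tau=0$ to those matched to $(0,0)$ at $\tau=1$, with $F_\tau'>0$ and $G_\tau'<0$ on the respective interpolation regions, and satisfying $F_\tau\equiv(1-\tau)s_0$ for $\rho\le -1/4$ and $G_\tau\equiv-(1-\tau)R$ for $\rho\ge 1/4$. Define $\widehat{E}_R^\tau$ to be the region bounded by $\{s=(1-\tau)s_0\}$ in the bulk of $\widehat{\nN}(\p_v E)$, by $\{t=(1-\tau)R\}$ in the bulk of $\widehat{\nN}(\p_h E)$, and by the smoothed corner $\{(F_\tau(\rho),\phi,G_\tau(\rho),\theta)\mid-1<\rho<1\}\subset\widehat{\nN}(\p_v E\cap\p_h E)$. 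At $\tau=0$ this recovers the normalized $\widehat{E}_R$, and at $\tau=1$ it gives $E$ together with the natural corner of $\p E$. By (i)--(iii) of the previous paragraph, both smooth faces of every $\p\widehat{E}_R^\tau$ are weakly contact in $(\widehat{E},\omega_E)$ with contact structure induced by $\lambda_K$, and in fact strongly contact on the horizontal side and the smoothed corner, while the corner remains contained in $\widehat{\nN}(\p_v E\cap\p_h E)$ by construction.

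\textbf{Main difficulty.} The only delicate point is the parametric smoothing construction: the family $F_\tau,G_\tau$ must fit continuously with the preliminary normalization at $\tau=0$ and degenerate to the unsmoothed corner of $E$ as $\tau\to 1$, while keeping the smoothed corner transverse to $V_K=\p_s+\p_t$ throughout. This is a parametric version of the smoothing construction from \S\ref{sec:collars}, and it is routine because, once the endpoints $(F_\tau(-1/4),G_\tau(-1/4))=((1-\tau)s_0,0)$ and $(F_\tau(1/4),G_\tau(1/4))=(0,-(1-\tau)R)$ are fixed, the set of admissible pairs $(F,G)$ satisfying the monotonicity constraints is convex, so convex combinations provide the required continuous family.
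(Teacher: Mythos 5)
Your observations (i)--(iii) are correct and are essentially the geometric input of the paper's proof; the overall strategy (isotope the vertical face to a constant-$s$ slice, then shrink a rectangular domain down to $E$) also mirrors the paper's. However, there is a genuine conceptual gap in the construction of the family $\widehat{E}_R^\tau$.

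The lemma requires an isotopy \emph{through domains with corners}: each intermediate domain is supposed to have two smooth boundary faces meeting along a corner, with each face weakly contact and the corner contained in $\widehat{\nN}(\p_v E\cap\p_h E)$. Both endpoints $\widehat{E}_R$ and $E$ are of this type. Your construction instead produces domains whose boundaries are \emph{smooth}: the conditions $F_\tau\equiv(1-\tau)s_0$ for $\rho\le -1/4$ and $G_\tau\equiv -(1-\tau)R$ for $\rho\ge 1/4$ (together with $F_\tau'>0$, $G_\tau'<0$ on the interpolation region) describe a single smooth hypersurface interpolating between the vertical and horizontal flat pieces, with no corner at all. This means the endpoints of your family are rounded versions of $\widehat{E}_R$ and $E$ rather than those domains themselves, and your ``preliminary normalization'' asks for a ``linear interpolation'' between the bent face $\p_v\widehat{E}_R$ --- which is a hypersurface with boundary in $\{t=R\}$ --- and a smooth corner-rounding of the whole boundary, two objects of different topological type, so the interpolation is not well-defined. (There are also sign and codomain issues: the smoothing functions are declared to take values in $(-1,0]$ but are required to equal $(1-\tau)s_0>0$, and your later boundary conditions force $F_\tau$ to be decreasing rather than satisfying $F_\tau'>0$.)

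The fix is to keep the corner throughout. The paper's proof first verifies that the two faces of $\p\widehat{E}_R$ are each (weakly or strongly) contact --- for the vertical face this uses exactly the stretching argument you allude to ($\Phi_{\p_r}^{r_1}$ stretches the $t$-direction but not the $s$-direction, so the tangent $\p_t+a\p_s$ with $|a|$ small stays transverse to $V_K=\p_s+\p_t$) --- and then isotopes $\p_v\widehat{E}_R$ \emph{with fixed boundary in $\{t=R\}$} to a subset of $\{s=s_0\}\times\widehat{M}\paper$ through hypersurfaces transverse to $V_K$. The corner (which lies in $\{t=R\}\cap\widehat{\nN}(\p_v E\cap\p_h E)$) survives this step unchanged. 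One then shrinks the resulting rectangular domain $\{s\le s_0,\,t\le R\}$ down to $E=\{s\le 0,\,t\le 0\}$ through domains bounded by constant-$s$ and constant-$t$ hypersurfaces, which remain weakly contact by your observations (i) and (ii). Your parametric corner-smoothing step should simply be dropped: there is no need to smooth the corner, and doing so changes the class of domains the lemma is about.
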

\begin{proof}
Since $\omega_E$ is exact in $\widehat{\nN}(\p_h E)$ with Liouville vector
field~$V_K = V_\sigma + \p_t$, the contact-type property for
$\p_h \widehat{E}_R$ is immediate.  The weakly contact property for
$\p_v \widehat{E}_R$ follows mostly from Lemma~\ref{lemma:etaPert};
we only need to examine the ``bent'' region near the boundary of
$\p_v\widehat{E}_R$ slightly more closely.  Since this region also lies
in $\widehat{\nN}(\p_h E)$, it suffices to check that $\p_v\widehat{E}_R$
is transverse to $V_K = \p_s + \p_t$.  We have explicitly assumed this to
be true for $M^c$, so we need to show that it remains true after flowing
$M^c$ by~$\p_r$, particularly in the region $\{ 1/4 < \rho < 1/2 \}$,
where the flow is given by \eqref{eqn:PsiInterface}.  We can assume
each tangent space to $M^c$ in the relevant region is spanned by
$\p_\phi$, $\p_\theta$ and $\p_t + a \p_s$ for some $a \in \RR$ with
$|a|$ small.  The flow of $\p_r$ does not change the first two vectors
in this frame, and its change to the third one stretches the $t$-direction
but not the $s$-direction.  Thus as long as $\delta$ has been chosen to
make $|a|$ sufficiently small, flowing by $\p_r$ cannot make these tangent
spaces tangent to $\p_s + \p_t$; moreover, one sees from this discussion
that $\p_v \widehat{E}_R$ is isotopic to a subset of
$\{s_0\} \times \widehat{M}\paper \subset \widehat{\nN}(\p_v E)$ for some
constant $s_0 > 0$, through
a family of weakly contact hypersurfaces that are all transverse to $V_K$
and have fixed boundary.  One can then define a suitable isotopy to $E$
through domains bounded by hypersurfaces of the form
$\{\text{const}\} \times \widehat{M}\spine \subset \widehat{\nN}(\p_h E)$ and 
$\{\text{const}\} \times \widehat{M}\paper \subset \widehat{\nN}(\p_v E)$.
\end{proof}

Figure~\ref{fig:largeSubd} depicts the boundary faces of $\p \widehat{E}_R$
on the backdrop of the cylindrical end and holomorphic foliation from
Figures~\ref{fig:end} and~\ref{fig:holfol} respectively.  It also shows
the collar neighborhoods $\nN(\p_v\widehat{E}_R)$ and $\nN(\p_h\widehat{E}_R)$
as described in the following lemma, carrying fibrations whose fibers
are leaves of the foliation.

\begin{lemma}
\label{lemma:bigFibrations}
For each $R > 0$, the boundary faces of $\widehat{E}_R$ admit collar 
neighborhoods
\begin{equation*}
\begin{split}
\p_v\widehat{E}_R &\subset \nN(\p_v\widehat{E}_R) \cong (-1,0] \times
\p_v\widehat{E}_R \subset \widehat{E}_R,\\
\p_h\widehat{E}_R &\subset \nN(\p_h\widehat{E}_R) \cong (-1,0] \times
\p_h\widehat{E}_R \subset \widehat{E}_R,
\end{split}
\end{equation*}
with fibrations
$$
\Pi_v^R : \nN(\p_v\widehat{E}_R) \to (-1,0] \times S^1,\qquad
\Pi_h^R : \nN(\p_h\widehat{E}_R) \to \Sigma
$$
that satisfy the following conditions:
\begin{enumerate}
\item The vertical subbundle for both fibrations is defined by the
integrable distribution~$T\fF_+$.
\item $\Pi_v^R$ is pseudoholomorphic near $\p_v\widehat{E}_R$ with respect to
$J_+$ on $\widehat{E}$ and the standard complex structure 
on $(-1,0] \times S^1$.
\item On $\nN(\p_h\widehat{E}_R)$, $\omega_E$ has a primitive that restricts
to a contact form on $\p_h\widehat{E}_R$ for which the boundaries of the
fibers of $\Pi_h^R$ are closed Reeb orbits.
\end{enumerate}
\end{lemma}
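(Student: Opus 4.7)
The strategy is to build both collar fibrations directly from the $J_+$-holomorphic foliation $\fF_+$ on $\widehat{E}$, so that condition~(1) of the lemma holds by construction: the fibers of $\Pi_h^R$ and $\Pi_v^R$ will each be open pieces of leaves of $\fF_+$. The horizontal fibration exists because $\p_\theta \in T\fF_+$ generates the Reeb flow of $\lambda_K|_{T\p_h\widehat{E}_R}$, making leaves $\p_\theta$-invariant and hence parametrizable by a subset of~$\Sigma$; the vertical fibration exists because $\p_v\widehat{E}_R$ is by construction a union of leaves carrying a natural $S^1$-label inherited from $\pi\paper$.

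For $\Pi_h^R$, begin by observing that on the hypersurface $\{t=R\} \subset \widehat{\nN}(\p_h E)$ the primitive $\frac{1}{K}\lambda_K = \sigma + \frac{1}{K}e^t\,d\theta$ restricts to $\sigma + \frac{1}{K}e^R\,d\theta$, which is contact (by a direct check analogous to the proof of Lemma~\ref{lemma:LiouvilleTransverse}), with Reeb vector field a positive multiple of~$\p_\theta$. By Proposition~\ref{prop:integrable}, $T\fF_+$ is transverse to $\{t=R\}$; since $\p_\theta \in T\fF_+$ is also tangent to $\{t=R\}$, each local leaf of $\fF_+$ meets $\p_h\widehat{E}_R$ in a union of $\p_\theta$-orbits, and by thinning the collar we arrange for exactly one orbit per leaf. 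The quotient $\p_h\widehat{E}_R / \p_\theta$ identifies canonically with an open subset of $\Sigma$ via the projection $(R,z,\theta)\mapsto z$, defining the base of~$\Pi_h^R$. Sending each point in the collar to the $\Sigma$-coordinate of the unique leaf through it then yields $\Pi_h^R$ with the required vertical subbundle, and condition~(3) is immediate since boundaries of fibers are precisely the Reeb $\p_\theta$-orbits.

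For $\Pi_v^R$, first note that $\p_v\widehat{E}_R = \Phi_{\p_r}^{r_1}(M^c)$ is a union of leaves of $\fF_+$ by construction, since $\fF_+$ is $\RR$-invariant in the cylindrical end $\widehat{\nN}_+(\p E)$ and the flow of $\p_r$ realizes that $\RR$-action. Each such leaf meets the slice $\{c\}\times \widecheck{M}^+\paper \subset M^c$ and thus carries a well-defined $S^1$-label given by $\pi\paper$ of that intersection. Define the collar $\nN(\p_v\widehat{E}_R)$ by flowing $\p_v\widehat{E}_R$ inward through $\widehat{E}_R$ along a vector field transverse to $\fF_+$, e.g.\ $-\p_r$ in the un-bent portion (modified via Lemma~\ref{lemma:bigContact} near the corner). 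Since this flow preserves~$\fF_+$, each slice remains a union of leaves with inherited $S^1$-label, and we define $\Pi_v^R$ to send each point to its collar parameter together with its $S^1$-leaf-label. Vertical subbundle is $T\fF_+$ by construction. For pseudoholomorphicity, in the un-bent region one uses $J_+\p_s = R_+$ together with $R_+ = e_{S^1}^\#$ projecting under $\pi\paper$ to $\p_\phi$ (see~\eqref{eqn:RHpaper}), which exactly matches the standard complex structure $j_0\p_\tau = \p_\phi$ on $(-1,0]\times S^1$.

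The main obstacle will be handling the corner region near the ``bend'' of $\p_v\widehat{E}_R$, where we must work with the explicit formulas \eqref{eqn:PsiInterface} for the embedding of the cylindrical end and \eqref{eqn:Jinterp} for $J_+$. There the flow of $\p_r$ no longer coincides with $\p_s$-translation, and the pseudoholomorphicity computation requires a direct verification using these coordinate formulas, combined with the $S^1$-invariance of all data in the $\theta$-direction. A secondary task is to shrink the collar enough (adjusting the size of $\widehat{E}_R$ slightly if needed) so that the leaves of $\fF_+$ meeting $\p_h\widehat{E}_R$ really do so in single closed orbits and the leaves on $\p_v\widehat{E}_R$ really fiber $\p_v\widehat{E}_R$ smoothly over~$S^1$.
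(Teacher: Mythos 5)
Your overall strategy coincides with the paper's: both fibrations are built so that their fibers are pieces of leaves of $\fF_+$, with $\Pi_h^R$ defined via the $\Sigma$-coordinate of the leaf through a point (using that the characteristic direction on $\p_h\widehat{E}_R$ is $\p_\theta$) and $\Pi_v^R$ defined via the collar parameter along $\p_r$ together with the $S^1$-label of the leaf. The pseudoholomorphicity argument for $\Pi_v^R$ via $J_+\p_r = R_+$ and $R_+$ being a horizontal lift of the unit field on $S^1$ is likewise the paper's.

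The one genuine gap is the ``corner/bend'' region near $\p_v\widehat{E}_R \cap \p_h\widehat{E}_R$, which you flag but do not resolve, predicting that a direct computation with \eqref{eqn:PsiInterface} and \eqref{eqn:Jinterp} will be needed. That prediction is overly pessimistic and points toward a harder argument than is actually required. The worry that ``$\p_r$ no longer coincides with $\p_s$-translation'' is beside the point: pseudoholomorphicity is a statement about the pair $(\p_r, J_+\p_r)$, not $(\p_s, J_+\p_s)$, and $J_+\p_r = R_+$ holds throughout $\widehat{\nN}_+(\p E)$ simply because $J_+ \in \jJ(\hH_+)$ — it does not depend on any choice of coordinates and does not use \eqref{eqn:Jinterp}. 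The only remaining issue is whether $R_+$ is the horizontal lift $e_{S^1}^\#$ near $\p_v\widehat{E}_R$, and this follows from \eqref{eqn:OmegaLambdaEps}: a neighborhood of $\p_v\widehat{E}_R$ only meets $[0,\infty)\times\widecheck{M}^+\corner$ where $\rho>1/4$, so $F_+\equiv 0$, $F_+'\equiv 0$, and the formula collapses to $R_+ = \tfrac{1}{m}\p_\phi$, which is exactly $e_{S^1}^\#$ since $\pi\paper(\phi,t,\theta) = m\phi$. So the bent region poses no additional difficulty once one observes this. (There is also a small notational slip in your un-bent argument: you write $J_+\p_s = R_+$ where you mean $J_+\p_r = R_+$; these happen to agree in the un-bent region but differ near the corner, which is exactly the distinction you needed to invoke.)
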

\begin{proof}
This is based essentially on four observations.  First, the characteristic
line field of $\p_h\widehat{E}_R$ as a hypersurface in $(\widehat{E},\omega_E)$
is spanned by~$\p_\theta$.  Since $J_+$ is $\omega_E$-compatible, it follows 
that $J_+\p_\theta \in T\fF_+$ is transverse to $\p_h\widehat{E}_R$, hence
the leaves of $\fF_+$ intersect $\p_h\widehat{E}_R$ transversely in loops
tangent to~$\p_\theta$, and these are Reeb orbits for any contact form
given by a primitive of~$\omega_E$.

The second observation, which is clear already from the construction of
$\widehat{E}_R$, is that $\p_v\widehat{E}_R$ is a union of (compact subsets
of) leaves of~$\fF_+$, in particular it is a disjoint union of
$S^1$-parametrized families of leaves.  The collar
$\nN(\p_v\widehat{E}_R)$ with its fibration can therefore be obtained by 
extending these $S^1$-families to families parametrized by annuli.

Third, the foliation $\fF_+$ is invariant under flows in the $r$-direction,
and in a neighborhood of $\p_v\widehat{E}_R$, we have
$ds(\p_r) = 1$.

Finally, $J_+$ satisfies $J_+\p_r = R_+$ throughout 
$\widehat{\nN}_+(\p E)$, and in a neighborhood 
of~$\p_v\widehat{E}_R$, we can assume every point belongs to either 
$[0,\infty) \times \widecheck{M}^+\paper \subset
\widehat{\nN}_+(\p E)$ or the portion of $[0,\infty) \times
\widecheck{M}^+\corner \subset \widehat{\nN}_+(\p E)$ with $\rho > 1/4$,
so that \eqref{eqn:RHpaper} and
\eqref{eqn:OmegaLambdaEps} give 
$R_+ = e_{S^1}^\#$, a horizontal lift of the canonical unit
vector on $S^1$ under the fibration $\pi\paper : M\paper \to S^1$.
By the third observation above, it follows that 
$\Pi_v^R : \nN(\p_v\widehat{E}_R) \to (-1,0]\times S^1$ can be arranged
such that near the boundary, $\p_r$ and $J_+\p_r$ are horitontal lifts
of the two canonical basis vector fields on $(-1,0] \times S^1$, meaning
$\Pi_v^R$ is pseudoholomorphic.
\end{proof}

\section{Holomorphic curves in spinal open books}
\label{sec:holOpenBook}

In this section we study $J$-holomorphic curves in the symplectization
of a contact $3$-manifold carrying a spinal open book.  As we saw in
\S\ref{sec:model}, every spinal open book on a closed
$3$-manifold gives rise to a stable Hamiltonian
structure and a compatible almost complex structure on its symplectization,
for which the pages lift to a foliation by embedded $J$-holomorphic curves
with positive ends approaching nondegenerate Reeb orbits in the spine.
Our first task in this section will be write down the easy extension of
this construction to the case of manifolds with boundary, and then to show
that the stable Hamiltonian structure can be perturbed to a contact
structure supported by the spinal open book.  We then examine the
analytical properties of the curves in the foliation, and show in particular
that the \emph{planar} curves among them are stable and will survive the
perturbation from stable Hamiltonian to contact data; moreover, these will 
in fact be the \emph{only} holomorphic curves with certain asymptotic behavior
that exist after the perturbation.
The most important results are Propositions~\ref{prop:J0foliation}
(existence), \ref{prop:IFT} and \ref{prop:stableFEF} (stability under
perturbation), and~\ref{prop:uniqueness} (uniqueness).  These generalize
results that were proved for open books in \cite{Wendl:openbook}
and blown up summed open books in \cite{Wendl:openbook2}, and
will serve as crucial ingredients for the computations of contact
invariants in \S\ref{sec:invariants} and compactness arguments 
in~\S\ref{sec:impressivePart}.

Throughout this section, $(M',\xi)$ is a closed contact $3$-manifold, and
$M = M\paper \cup M\spine \subset M'$ is a compact connected submanifold 
(possibly with boundary) carrying a spinal open book 
$$
\boldsymbol{\pi} = \Big(\pi\spine : M\spine \to \Sigma,
\pi\paper : M\paper \to S^1, \{m_T\}_{T \subset \p M}\Big).
$$
that admits a smooth overlap and supports~$\xi|_M$.

\subsection{A family of stable Hamiltonian structures}
\label{sec:SHSfamily}

This subsection and the next will consist mostly of repackaged notation and 
results from \S\ref{sec:model}.  We shall use the notation from 
Section \ref{sec:collars} for collar neighborhoods, and we also need to recall
from \cite{LisiVanhornWendl1}*{\S 2.2} the open covering
$$
M = \widecheck{M}\spine \cup \widecheck{M}\corner \cup \widecheck{M}\paper \cup 
\widecheck{M}\bndry
$$
defined whenever $M$ carries a spinal open book with smooth overlap.
Here $\widecheck{M}\paper$ denotes the complement of the region
$\{t \ge -1/2\} \subset \nN(\p M\paper)$ in~$M\paper$, $\widecheck{M}\spine$
is the complement of $\{s \ge -1/2\} \subset \nN(\p M\spine)$ in~$M\spine$,
$\widecheck{M}\corner$ is the union of $\nN(\p M\spine)$ with the adjacent
components of $\nN(\p M\paper)$, and $\widecheck{M}\bndry$ is the union of
all components of $\nN(\p M\paper)$ that touch~$\p M$.  The components of
$\widecheck{M}\corner$ carry coordinates
$$
(\rho,\phi,\theta) \in (-1,1) \times S^1 \times S^1 \subset \widecheck{M}\corner
$$
which are related to the collar coordinates from \S\ref{sec:collars}
by $\rho = s$ and $\rho = -t$ on the regions of overlap, and components of
$\widecheck{M}\bndry$ similarly carry coordinates
$$
(\rho,\phi,\theta) \in [0,1) \times S^1 \times S^1 \subset \widecheck{M}\bndry
$$
with $\rho = -t$.  Since $\nN(\p M\paper)$ is contained in $\widecheck{M}\corner
\cup \widecheck{M}\bndry$, we can use the coordinate $\rho = -t$ as an
alternative to $t$ on~$\nN(\p M\paper)$.

Recall that in \S\ref{sec:collars}, the hypersurface
$M^0 \subset E$ was endowed with a similar open covering
$M^0 = \widecheck{M}^0\spine \cup \widecheck{M}^0\corner \cup \widecheck{M}^0\paper$;
in the case $\p M = \emptyset$,
these three regions have obvious canonical identifications with
$\widecheck{M}\spine$, $\widecheck{M}\corner$ and $\widecheck{M}\paper$
respectively, thus defining a diffeomorphism $M \cong M^0$.
This gives rise to a diffeomorphism of $M$ with the hypersurface $M^+ \subset
\widehat{E}$ from \S\ref{sec:nondegPert} after flowing from $M^0$ to $M^+$
along the stabilizing vector field~$Z$.  The idea behind most of the
definitions in this section is to use this identification of $M$ with $M^+$
in order to endow $M$ with the same stable Hamiltonian structure that was defined in
\S\ref{sec:nondegPert}, and its symplectization likewise with the same
almost complex structure as in \S\ref{sec:Jandf}.  Only minor modifications will
be needed for the case $\p M \ne \emptyset$.

The following contact form on $M$ takes on the role that was previously played
by the restriction of $\frac{1}{K}\lambda_K$ to~$M^+$: define
$$
\alpha := \begin{cases}
e^{\nondegParam \widehat{H}} \left( \sigma + \frac{1}{K} \, d\theta\right) & \text{ on $\widecheck{M}\spine$},\\
m e^{F_+(\rho)} \, d\phi + \frac{1}{K} e^{G_+(\rho)} \, d\theta
& \text{ on $\widecheck{M}\corner$},\\
d\pi\paper + \frac{1}{K} \lambda & \text{ on $\widecheck{M}\paper$},\\
f_K(\rho) \, d\theta + m g(\rho) \, d\phi 
& \text{ on $\widecheck{M}\bndry$},
\end{cases}
$$
where the various symbols have the following meanings.
The multiplicity $m \in \NN$ is a number that may vary among different
connected components of $\widecheck{M}\corner \cup \widecheck{M}\bndry$ (see \S\ref{sec:collars}), while $\sigma$ is the
pullback via $\pi\spine : M \to \Sigma$ of the Liouville form on $\Sigma$
defined in \S\ref{sec:Liouville}, $\lambda$ is the fiberwise Liouville
form on $M\paper$ defined in the same subsection, and $K > 0$ is the
(arbitrarily) large constant that was used for building a Liouville form out
of these ingredients via the Thurston trick.
The functions $\widehat{H} : \Sigma \times S^1 \to [0,\infty)$ and
$F_+,G_+ : (-1,1) \to \RR$ were defined for the perturbation of
$M^0$ to $M^+$ in \S\ref{sec:nondegPert}, which also depended on a
constant $\varepsilon > 0$ that may be assumed arbitrarily small.
The only new pieces of data in our definition of $\alpha$ are the functions
$f_K$ and $g$ required for the collar near~$\p M$: to define these, we
first choose smooth functions
$f , g : [0,1) \to [0,1]$ such that:
\begin{itemize}
\item $(f(\rho),g(\rho)) = ( e^{-\rho},1 )$ 
for $\rho \ge 1/4$;
\item $f g' - f' g > 0$;
\item $f'(\rho) < 0$ for $\rho > 0$;
\item $(f(0),g(0)) = (1,0)$, $(f'(0),g'(0)) = (0,1)$, and $f''(0) < 0$.
\end{itemize}
The function $f_K : [0,1) \to [0,1]$ is then defined by
$$
f_K(\rho) := \left[\beta(-\rho) \left(1 - \frac{1}{K}\right) + 
\frac{1}{K}\right] f(\rho),
$$
where $\beta : (-1,0] \to [0,1]$ is the same cutoff function that was used
in \S\ref{sec:SHS} to define a stabilizing vector field.
It follows that
$f_K(\rho) = \frac{1}{K} f(\rho)$ for $\rho \ge 1/2$,
$f_K(\rho) = f(\rho)$ for $\rho \le 1/4$, and
$$
f_K(\rho) = \left[\beta(-\rho) \left(1 - \frac{1}{K}\right) + 
\frac{1}{K}\right] e^{-\rho} \quad
\text{ for $\rho \ge 1/4$},
$$
hence $f_K'(\rho) < 0$ for $\rho \ge 1/4$, so that
$f_K g' - f_K' g > 0$ everywhere.

For $K > 0$ sufficiently large and $\nondegParam > 0$ sufficiently small,
$\alpha$ is a Giroux form for $\boldsymbol{\pi}$, so we can assume after
adjusting $\xi$ by an isotopy that $\alpha$ extends to a contact form on 
$M'$ with
$$
\xi = \ker \alpha.
$$
The Reeb vector field for $\alpha$ will be denoted by $R_\alpha$ and can
be written as
\begin{equation}
\label{eqn:RalphaSpine}
R_\alpha =
e^{-\nondegParam \widehat{H}} \left(
\left[1 + \nondegParam \sigma(X_H)\right] K \p_\theta
- \nondegParam X_H \right) \quad \text{ on $\widecheck{M}\spine$},
\end{equation}
where the Hamiltonian vector field $X_H$ on $\Sigma$ is defined by
$d\sigma(X_H,\cdot) = -dH$.
On the interface, $R_\alpha$ satisfies
\begin{equation}
\label{eqn:RalphaInterface}
R_\alpha = \frac{1}{F_+'(\rho) - G_+'(\rho)} \left(
-\frac{1}{m} e^{-F_+(\rho)} G_+'(\rho) \p_\phi +
K e^{-G_+(\rho)} F_+'(\rho) \p_\theta \right) \quad
\text{ on $\widecheck{M}\corner$},
\end{equation}
while near~$\p M$,
\begin{equation}
\label{eqn:RalphaBndry}
R_\alpha = \frac{1}{f_K(\rho) g'(\rho) - f_K'(\rho) g(\rho)} \left(
g'(\rho) \p_\theta - \frac{f_K'(\rho)}{m} \p_\phi \right) \quad
\text{ on $\widecheck{M}\bndry$}.
\end{equation}
This last formula implies since $f'(0) = 0$ that $\p M$ is foliated by 
closed Reeb orbits in the $\theta$-direction, and these orbits form
Morse-Bott families due to the condition $f''(0) < 0$.  Similarly, the
assumption that $H$ is Morse away from $\p\Sigma$ implies that
Reeb orbits of the form $\{z\} \times S^1$ for $z \in \CritMorse(H)$ are
nondegenerate.  Here we again denote by
$$
\CritMorse(H) \subset \Sigma
$$
the finite set of Morse critical points of $H$, thus excluding the critical 
points in the region near $\p\Sigma$ where $H$ vanishes.

The stable Hamiltonian structure from \S\ref{sec:nondegPert} can be written
in the present context as $\hH = (\Omega,\Lambda)$, where
$$
\Omega := d\alpha + \frac{1}{K C} \eta
$$
for some large constant $C > 0$ and a closed $2$-form $\eta$ that is
assumed to vanish outside of $\widecheck{M}\paper$.  The stabilizing $1$-form is
$$
\Lambda := \begin{cases}
\alpha & \text{ on $\widecheck{M}\spine$},\\
m e^{F_+(\rho)} \, d\phi + \frac{1}{K} e^{G_+(\rho)}
\beta(G_+(\rho)) \, d\theta & \text{ on $\widecheck{M}\corner$},\\
d\pi\paper & \text{ on $\widecheck{M}\paper$},\\
\beta(-\rho) f(\rho) \, d\theta + m g(\rho) \, d\phi 
& \text{ on $\widecheck{M}\bndry$},\\
\alpha & \text{ on $M' \setminus M$}.
\end{cases}
$$
The only feature of this discussion that did not already appear in
\S\ref{sec:model} is the definition of $\hH$ in
$\widecheck{M}\bndry$, since we are now allowing $\p M \ne \emptyset$.
To see that $(\Omega,\Lambda)$ satisfies the conditions of a stable Hamiltonian
structure in this region, we need to check that $\ker \Omega \subset
\ker d\Lambda$: this is obvious whenever either $\Lambda = d\pi\paper$ or
$(\Omega,\Lambda) = (d\alpha,\alpha)$, so we only still need to inspect
the region where $0 < \beta(-\rho) < 1$, which means $\rho \in (1/4,1/2)$.
Here $g(\rho) = 1$, so
\begin{equation}
\label{eqn:interpRegion}
\begin{split}
\Lambda &= \beta(-\rho) f(\rho) \, d\theta + m \, d\phi, \\
\Omega &= d\left( f_K(\rho) \, d\theta + m \, d\phi \right) =
f_K'(\rho) \, d\rho \wedge d\theta
\end{split}
\end{equation}
in $\{ 1/4 \le \rho \le 1/2 \} \subset \widecheck{M}\bndry$, implying 
that $\ker d\Lambda$ and $\ker \Omega$ are both generated by~$\p_\phi$.
This shows that on the region in question, $(\Omega,\Lambda)$ is indeed
a stable Hamiltonian structure and its induced Reeb vector field is simply
$\frac{1}{m} \p_\phi$, which is the same as~$R_\alpha$.
For future reference it will be useful to note that the same result holds
in the region $\{ 1/4 \le \rho \le 1/2 \} \subset \widecheck{M}\corner$, as here
$F_+(\rho) = 0$ and $G_+(\rho) = -\rho$, so that
\eqref{eqn:interpRegion} is valid with $f(\rho)$ replaced by $e^{-\rho}$
and $f_K(\rho)$ replaced by $\frac{1}{K} e^{-\rho}$.
One can also see from this formula and the conditions
imposed on $\beta$ that $\Lambda$ satisfies the contact condition as soon as 
$\beta(-\rho) > 0$, so in particular, $\Lambda \wedge d\Lambda \ge 0$ and
the induced hyperplane distribution
$$
\Xi_0 := \ker \Lambda
$$ 
is therefore a confoliation.

To summarize the discussion so far:

\begin{prop}
The pair $\hH = (\Omega,\Lambda)$ is a confoliation-type stable Hamiltonian 
structure on~$M'$.  Its induced 
Reeb vector field $R_\hH$ matches $R_\alpha$ outside of~$\widecheck{M}\paper$,
and is colinear with $R_\alpha$ if $\eta \equiv 0$. \qed
\end{prop}

The possibly non-exact $2$-form $\eta$ is a harmless but necessary piece of
the setup for applications to weak fillability, though in the present section
we will be interested primarily in the case $\eta \equiv 0$.  Our
stable Hamiltonian structure then has some convenient extra properties
arising from the fact that $R_\hH$ and $R_\alpha$ are in this case
colinear.  This implies
in the first place that in addition to the confoliation condition
$\Lambda \wedge d\Lambda \ge 0$, we have
$$
\Lambda \wedge d\alpha > 0 \quad \text{ and }\quad
\alpha \wedge d\Lambda \ge 0,
$$
and therefore:

\begin{prop}
\label{prop:SHSpert}
For every constant $\param \in [0,1]$, the pair
$$
\hH_\param := (\Omega_\param,\Lambda_\param) :=
(d\alpha , (1 - \param) \Lambda + \param \alpha)
$$
is a stable Hamiltonian structure on~$M'$ whose induced Reeb vector field
is colinear with~$R_\alpha$.  Moreover, $\hH_0 = \hH$ if $\eta \equiv 0$,
while $\Lambda_\param = \alpha$ on $\widecheck{M}\spine$ for all~$\param$, and
$\Lambda_\param$ is a contact form everywhere for $\param > 0$, with an
induced contact structure isotopic to~$\xi$.   \qed
\end{prop}
\begin{remark}
The reader should be cautioned that while the notation $\hH_\param$
makes sense for $\param=0$, $\hH_0$ under this definition is not the same 
SHS that was denoted this way in \S\ref{sec:model}; it corresponds 
rather to what was previously denoted by~$\hH_+$ in the case
$\eta \equiv 0$ and $\p M = \emptyset$.
\end{remark}

For each $\param \ge 0$, we shall denote the induced hyperplane distribution and 
Reeb vector field for $\hH_\param$ by $\Xi_\param$ and $R_\param$ respectively.  
Note that if $\eta \ne 0$, then $R_0 \ne R_\hH$ on $\widecheck{M}\paper$, though 
$\hH$ does induce the same hyperplane distribution $\Xi_0$ as~$\hH_0$.
The scaling of $R_\param$ changes in general with the value
of $\param$, but its direction does not.  Since $\Lambda_\param$
is contact for $\param > 0$, Proposition~\ref{prop:sameJ} implies
$$
\jJ(\hH_\param) = \jJ(\Lambda_\param).
$$

\subsection{The unperturbed finite energy foliation}
\label{sec:FEFunperturbed}

In this subsection we consider the unperturbed stable Hamiltonian structure
$\hH = (\Omega,\Lambda)$ with $\Omega = d\alpha + \frac{1}{K C} \eta$,
where $\eta$ is allowed to be nonzero in~$\widecheck{M}\paper$.
Let us now rewrite the construction of the holomorphic foliation from 
\S\ref{sec:holPages} in the present context.
Choose $J_0 \in \jJ(\hH)$ to satisfy the
same conditions as $J_+ \in \jJ(\hH_+)$ in \S\ref{sec:Jandf} on the region
$\widecheck{M}\spine \cup \widecheck{M}\corner \cup \widecheck{M}\paper$,
while on $\widecheck{M}\bndry$ it is determined by the same condition as
on $\widecheck{M}\corner$, namely
$$
J_0 v_1 = h(\rho) v_2
$$
for some smooth function $h(\rho) > 0$, with $v_1 := \p_\rho$ and
$v_2$ denoting the unique linear combination of $\p_\phi$ and $\p_\theta$
that lies in $\Xi_0$ and satisfies $\Omega(v_1,v_2)=1$.

We can then define a smooth $J_0$-invariant and translation-invariant
distribution $T\fF$ on $\RR \times M$ by 
$$
T\fF_{(r,x)} := \begin{cases}
\Xi_0 & \text{ for $x \in \widecheck{M}\paper$},\\
\Span(\p_\theta, J_0\p_\theta) & \text{ for $x \in \widecheck{M}\spine \cup
\widecheck{M}\corner \cup \widecheck{M}\bndry$}.
\end{cases}
$$
Using the metric $\langle\cdot,\cdot\rangle := d\sigma(\cdot,j\cdot)$ on
$\Sigma$ to define the gradient $\nabla H$ of $H$,
Proposition~\ref{prop:Jfoliation} now adapts to the present setting as
follows:

\begin{prop}
\label{prop:J0foliation}
The distribution $T\fF$ is integrable, and thus defines an $\RR$-invariant
foliation $\fF$ on $\RR \times M$
whose leaves are the images of embedded and asymptotically cylindrical 
$J_0$-holomorphic curves.  
Each leaf of this foliation is one of the following:
\begin{enumerate}
\item A \defin{trivial cylinder} $\RR \times \gamma$, where
$\gamma \subset M$ is either a nondegenerate Reeb orbit of the form
$\gamma = \{z\} \times S^1 \subset \widecheck{M}\spine \subset
\Sigma \times S^1$ for some $z \in \CritMorse(H)$, or part of a Morse-Bott $2$-torus
of Reeb orbits in the $\theta$-direction foliating~$\p M$.
\item A \defin{holomorphic gradient flow cylinder}, admitting a smooth (but not necessarily holomorphic) parametrization 
$u : \RR \times S^1 \hookrightarrow \RR \times M$ of the form
$$
u(s,t) = (a(s),\ell(s),t) \in \RR \times \widecheck{M}\spine \subset
\RR \times \Sigma \times S^1,
$$
where $a : \RR \to \RR$ is a strictly increasing proper function and
$\ell : \RR \to \Sigma$ is a solution of the gradient flow
equation $\dot{\ell} = \nabla H(\ell)$ approaching two distinct critical
points of $H$ as $s \to \pm\infty$.
\item A \defin{holomorphic page}, which is a connected and properly embedded
submanifold formed as a union of subsets of the following type:
\begin{itemize}
\item $\{s\} \times P \subset \RR \times \widecheck{M}\paper$, where
$s \in \RR$ is a constant and $P \subset \widecheck{M}\paper$ is the portion of
a page of $\pi\paper : M\paper \to S^1$ lying in $\widecheck{M}\paper$;
\item Annuli admitting smooth (but not necessarily holomorphic) parametrizations 
$u : (-1,1) \times S^1 \hookrightarrow \RR \times M$ of the form
$$
u(s,t) = (a(s),s,\phi,t) \in \RR \times (-1,1) \times S^1 \times S^1
\subset \RR \times \widecheck{M}\corner
$$
for some bounded function $a : (-1,1) \to \RR$ and constant $\phi \in S^1$;
\item Half-cylinders admitting smooth (but not necessarily holomorphic) parametrizations
$u : [0,\infty) \times S^1 \hookrightarrow \RR \times M$ of the form
$$
u(s,t) = (a(s),\ell(s),t) \in \RR \times \widecheck{M}\spine \subset
\RR \times \Sigma \times S^1,
$$
where $a : [0,\infty) \to \RR$ is a strictly increasing proper function and
$\ell : [0,\infty) \to \Sigma$ is a solution of the gradient flow
equation $\dot{\ell} = \nabla H(\ell)$ that begins at time $s=0$ as a
trajectory in $\nN(\p\Sigma)$ orthogonal to $\p\Sigma$ and
approaches a critical point of $H$ as $s \to \infty$;
\item Half-cylinders admitting smooth (but not necessarily holomorphic) parametrizations
$u : [0,\infty) \times S^1 \hookrightarrow \RR \times M$ of the form
$$
u(s,t) = (a(s),b(s),\phi,t) \in \RR \times [0,1) \times S^1 \times S^1
\subset \RR \times \widecheck{M}\bndry
$$
where $a : [0,\infty) \to \RR$ is a function with
$\lim_{s \to \infty} a(s) = +\infty$ and $b : [0,\infty) \to (0,1)$ is a
strictly decreasing function with $\lim_{s \to \infty} b(s) = 0$.
\end{itemize}
\end{enumerate}
In particular, each of the holomorphic gradient flow cylinders and pages
projects through $\RR \times M \to M$ to an embedded
surface in $M$ whose closure is a compact embedded surface
bounded by Reeb orbits in 
$(\CritMorse(H) \times S^1) \cup \p M$.
\end{prop}
\begin{proof}
The only detail that has not been covered already by
Proposition~\ref{prop:Jfoliation} is the behavior of the holomorphic
pages as they approach~$\p M$.  The relevant calculation here is the
same as for $\widecheck{M}\corner$, but carried out in $\widecheck{M}\bndry$
instead, the key point being that everywhere in the interior of
$\widecheck{M}\bndry$, $\Span(\p_\phi,\p_\theta) = \Span(\p_\theta,R_\hH)$,
hence $J_0 \p_\theta$ is a linear combination of $\p_r$ and~$\p_\rho$.
At $\p M$ this ceases to be true because $R_\hH$ is proportional
to~$\p_\theta$, which implies that the trivial cylinders over orbits
in $\p M$ are tangent to~$T\fF$ and the interior leaves that approach $\p M$
are therefore properly embedded.  Orientation considerations imply that
the ends of those leaves are positive, as their projections to $M$
are embedded surfaces with closures bounded by \emph{positively}
oriented Reeb orbits on~$\p M$.
\end{proof}

The foliation $\fF$ defines a finite energy foliation as in
\cite{HWZ:foliations}, but it is not generally a \emph{stable} finite energy
foliation, because the $J_0$-holomorphic curves forming its leaves may in
general have negative Fredholm index and will thus die under small
perturbations of the data.  We will be examining issues of this type
for the remainder of \S\ref{sec:holOpenBook}.

Each Morse critical point $z \in \CritMorse(H)$ gives rise to an
embedded periodic Reeb orbit parametrized by
$$
\gamma_z : S^1 \to M : t \mapsto (z,t) \subset \widecheck{M}\spine \subset
\Sigma \times S^1.
$$
This is also a periodic orbit of
$R_\param$ for $\param > 0$ since $\hH_\param = \hH$ on $\widecheck{M}\spine$
for all~$\param$.
We will denote the $k$-fold cover of this orbit for any $k \in \NN$
by
$$
\gamma_z^k : S^1 \to M : t \mapsto \gamma_z(kt).
$$
Observe that the natural $S^1$-action on
$\Sigma \times S^1$ induces a preferred trivialization of the contact
bundle along~$\gamma_z$.  We shall denote the Conley-Zehnder index
of $\gamma_z^k$ with respect to this trivialization by
$$
\muCZ(\gamma_z^k) \in \ZZ,
$$
and let $\Morse(z) \in \{0,1,2\}$ denote the Morse index of $z \in \CritMorse(H)$.

\begin{lemma}
\label{lemma:periodsAndIndices}
There exists a number $T_1 > 0$ such that for any $T_0 > 0$,
the above construction can be arranged by choosing $K > 0$ and
$\nondegParam > 0$ sufficiently large and small respectively so that
the dynamics of $R_\hH$ have the following properties:
\begin{enumerate}
\item For each Morse critical point $z \in \CritMorse(H)$,
all orbits in a neighborhood of $\gamma_z : S^1 \to \widecheck{M}\spine$ 
are nondegenerate.
\item Every closed orbit with period less than~$T_1$ is of the form
$\gamma_z^k$ for some Morse critical point $z \in \CritMorse(H)$ and $k \in \NN$, 
and all such orbits satisfy
$$
\muCZ(\gamma_z^k) = \Morse(z) - 1.
$$
\item The orbits $\gamma_z$ for $z \in \CritMorse(H)$ have period less
than~$T_0$.
\item The families of orbits that foliate $\p M$ are Morse-Bott.
\end{enumerate}
Moreover, we can also assume that the dynamics of
$R_\param$ have these same properties for all $\param \ge 0$ sufficiently
small.
\end{lemma}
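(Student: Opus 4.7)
The plan is to analyze the Reeb dynamics of $R_\alpha$ region by region using the explicit formulas \eqref{eqn:RalphaSpine}, \eqref{eqn:RalphaInterface} and \eqref{eqn:RalphaBndry}, then transfer the conclusions to $R_\param$ via Proposition~\ref{prop:SHSpert}. First I would establish claim~(4), which is purely local at $\p M$ and independent of $K$ and~$\nondegParam$: the Reeb vector field near $\p M$ is \eqref{eqn:RalphaBndry}, and since $f'(0)=0$, $g(0)=0$, and $g'(0)=1$, $\p M$ consists of periodic orbits in the $\theta$-direction; since $f''(0)<0$, the linearized return map in the $\rho$-direction is a nontrivial shear, which is precisely the Morse--Bott condition. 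Claim~(3) is equally direct: at a Morse critical point $z$ of $H$ we have $X_H(z)=0$, so $R_\alpha(\gamma_z(t)) = K e^{-\nondegParam H(z)}\p_\theta$, giving simple period $e^{\nondegParam H(z)}/K$, which can be made arbitrarily small by taking $K$ large (keeping $\nondegParam$ bounded).

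Next I would locate all short periodic orbits, which is the heart of~(2). I would pick a constant $T_1>0$ depending only on data that is independent of $K$ and~$\nondegParam$, chosen as follows. Outside of $\widecheck{M}\spine$ the Reeb vector field always has a nonzero $\p_\phi$-component (from \eqref{eqn:RalphaInterface} on $\widecheck{M}\corner$, \eqref{eqn:RalphaBndry} on $\widecheck{M}\bndry \setminus \p M$, and from $R_\alpha \in \Span(e_{S^1}^\#)$ on $\widecheck{M}\paper$ away from the collars); an orbit that stays in this region must wind nontrivially in the $\phi$-coordinate of the paper, which forces a uniformly positive period. Inside $\widecheck{M}\spine$, the orbit equation is the coupled system $\dot\theta = K e^{-\nondegParam H}(1 + \nondegParam \sigma(X_H))$, $\dot z = -\nondegParam e^{-\nondegParam H}X_H(z)$, so any orbit that is not of the form $\gamma_z^k$ for $z \in \CritMorse(H)$ projects to a nonconstant closed trajectory of $X_H$, whose period relative to the Reeb time is bounded below by $T_{\min}(H)/\nondegParam$, where $T_{\min}(H)>0$ is the minimum period of nonconstant closed orbits of~$X_H$ on~$\Sigma$; taking $\nondegParam$ small makes this large. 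For orbits that cross between $\widecheck{M}\spine$ and the other regions, the $\phi$-winding argument again gives a lower period bound. So fixing $T_1$ slightly below the minimum of these universal lower bounds (independent of $K$ and~$\nondegParam$), for $K$ large and $\nondegParam$ small all orbits of period less than $T_1$ are of the form $\gamma_z^k$ with $k$ bounded; making $\nondegParam$ smaller if needed, we can also assume all $k$-fold covers with $k \le k_0$ are accommodated.

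For the Conley--Zehnder computation and nondegeneracy in~(1), I would exploit that the linearized Reeb flow along~$\gamma_z$ is conjugate to the time-$kT_z$ linearized Hamiltonian flow of $\nondegParam H$ on $(T_z\Sigma, d\sigma_z)$, where $T_z$ is the simple period. Using the $S^1$-invariant trivialization of $\xi$ along $\gamma_z$, the return map of $\gamma_z^k$ is exactly $\exp\!\big(k T_z\, \nondegParam J \,\Hess H(z)\big)$ up to a small correction from the $e^{-\nondegParam H}$ conformal factor. The matrix $J\Hess H(z)$ is elliptic with purely imaginary eigenvalues when $z$ is a local extremum, and hyperbolic when $z$ is a saddle, so for $k T_z \nondegParam$ sufficiently small (which we can enforce since $T_z = O(1/K)$ and $\nondegParam$ is small) the exponential stays in an arbitrarily small neighborhood of the identity. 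Then the standard Conley--Zehnder index for such a short path through elliptic or hyperbolic matrices is readily computed: $-1$ for a local minimum, $0$ for a saddle, and $+1$ for a local maximum, matching $\Morse(z)-1$. Nondegeneracy of all $\gamma_z^k$ with $k \le k_0$ then follows from the fact that these linearized return maps differ from the identity, which I expect to be the main technical obstacle to verify carefully, since one has to control simultaneously the rotation rates of all $k$-fold covers for $k$ up to the threshold dictated by~$T_1$; this is done by making $K$ sufficiently large compared to $\nondegParam$, which is permitted since we adjust $K$ last.

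Finally, for claim~(5), Proposition~\ref{prop:SHSpert} says $R_\param$ is pointwise colinear with $R_\alpha$ for all $\param \in [0,1]$, so the oriented orbit set is identical and only the periods rescale by the pointwise factor $\alpha(R_\param)$. Since this factor is a smooth positive function converging uniformly to $1$ (on the relevant compact regions) as $\param \to 0$, the period bounds in~(2) and~(3) persist after possibly shrinking $T_1$ slightly, nondegeneracy persists because it is an open condition on the return map, the Conley--Zehnder indices are unchanged since they are integer-valued and vary continuously in $\param$, and the Morse--Bott condition at $\p M$ is likewise preserved for small $\param$ by openness of the transversality condition defining it.
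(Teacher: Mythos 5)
Your proposal is correct and follows essentially the same route as the paper: a region-by-region analysis of $R_\alpha$ using the explicit formulas, with uniform positive period bounds coming from the $\phi$-winding outside $\widecheck{M}\spine$ and from the $1/\nondegParam$-scaling of $X_H$-periods inside $\widecheck{M}\spine$, the Conley--Zehnder computation via the linearized Hamiltonian return map near Morse critical points (which the paper outsources to \cite{SalamonZehnder:Morse}), the Morse--Bott condition at $\p M$ from $f''(0)<0$, and the colinearity of $R_\param$ with $R_\alpha$ to transfer the conclusions to the perturbed SHS. One small clarification: the closeness of the linearized return map to the identity for orbits $\gamma_z^k$ of period $< T_1$ is controlled just by $\nondegParam$ being small (since $k\,T_z\,\nondegParam < \nondegParam\,T_1$), so the tension you flag between $K$ and $\nondegParam$ is not actually an issue once the period bound is imposed; there is no separate need to coordinate the two parameters for the index computation.
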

\begin{proof}
Consider first the dynamics of~$R_\hH$.  On $\widecheck{M}\spine$ we have
$R_\hH = R_\alpha$, so we see from \eqref{eqn:RalphaSpine} that
periodic orbits in $\widecheck{M}\spine \setminus (\CritMorse(H) \times S^1)$
correspond to nonconstant periodic orbits
of the Hamiltonian vector field $X_H$ on $\Sigma$, where $X_H$ is scaled 
by $\nondegParam$ and the
periods of its orbits are therefore scaled by $1/\nondegParam$.  (We can
assume the additional scaling factor of $e^{-\nondegParam \widehat{H}}$
is arbitrarily close to~$1$.)
Since the periods of nonconstant orbits of $X_H$ have positive infimum,
we can make the scaled periods larger than any given $T_1$ by choosing
$\nondegParam$ small.
In $\widecheck{M}\paper$, we have $\Lambda = d\pi\paper$ and thus
all closed orbits of $R_\hH$ have period at least~$1$.
In $\widecheck{M}\corner$, $R_\hH$ matches $R_\alpha$ and is thus given by
\eqref{eqn:RalphaInterface}, so the condition
$G_+' < 0$ from Lemma~\ref{lemma:GnotFlat} implies that 
$R_\hH$ has a nonzero $\p_\phi$ component whose size does not depend on~$K$;
one can therefore find a lower bound independent of $K$ on the periods
in $\widecheck{M}\corner$ and choose $T_1 > 0$ smaller than this bound.
A similar computation works in $\widecheck{M}\bndry$ using \eqref{eqn:RalphaBndry},
since $f_K'(\rho) < 0$ for $\rho > 0$ implies that $R_\alpha$ has a positive
$\p_\phi$-component in the interior, and the dependence of the contact
form on $K$ is limited to the region $\{ \rho \ge 1/4 \}$, where the
Reeb vector field is simply $\frac{1}{m} \p_\phi$.  The orbits that foliate
$\p M$ have period $1$ since $f_K(0) = f(0) = g'(0) = 1$.  With this
understood, let us assume henceforward that $T_1$ is smaller than the
periods of all orbits other than the $\gamma_z$ for $z \in \CritMorse(H)$.
The periods of the latter can however be made arbitrarily small by
increasing~$K$.

The computation of the Conley-Zehnder
index is a standard result from Floer theory, see for example
\cite{SalamonZehnder:Morse} or \cite{Wendl:SFT}*{\S 10.3.2}.
Similarly, the Morse-Bott condition at $\p M$ follows from the
condition $f''(0) < 0$.

All of the above applies immediately to $R_0$, since this is the special
case of $R_\hH$ with $\eta \equiv 0$.  Considering the perturbations
$\hH_\param$ for $\param > 0$, the same conclusions remain valid for
$R_\param$ if $\param$ is sufficiently small:
this is because the perturbation does not change the direction of the
Reeb vector field, so the only change to the dynamics is a very slight
change in the periods of closed orbits.
\end{proof}

From now on we will assume the data to be chosen so that 
Lemma~\ref{lemma:periodsAndIndices} is satisfied for some 
constants $T_0, T_1 > 0$, for which we may assume
$T_1 / T_0$ is as large as needed.

Lemma~\ref{lemma:periodsAndIndices} provides enough information to compute 
the Fredholm indices of the leaves of~$\fF$ by
applying the Riemann-Roch formula to the normal bundles of the curves.
This computation was carried out for the ordinary open book case 
in \cite{Wendl:openbook}, and the result in our setting is:

\begin{prop}
\label{prop:FredholmIndices}
Suppose $u : \dot{S} \to \RR \times M$ represents a holomorphic page in
$\fF$ that has genus $g \ge 0$ and $k + n$ punctures, where $k \ge 0$ of them
are asymptotic to orbits $\gamma_{z_1},\ldots,\gamma_{z_k}$ in 
$\widecheck{M}\spine$ for Morse critical points $z_1,\ldots,z_k \in \CritMorse(H)$,
and the rest are asymptotic to Morse-Bott orbits in~$\p M$.  Then
\begin{equation}
\label{eqn:indexPage}
\ind(u) = 2 - 2g - \sum_{i=1}^k \left[ 2 - \Morse(z_i) \right].
\end{equation}
If $u : \RR \times S^1 \to \RR \times M$ represents a holomorphic gradient
flow cylinder with postive end at $\gamma_{z_+}$ and negative end at
$\gamma_{z_-}$ for $z_\pm \in \CritMorse(H)$, then
$$
\ind(u) = \Morse(z_+) - \Morse(z_-).
$$  \qed
\end{prop}

We shall say that a leaf $u \in \fF$ is an \defin{interior leaf} if all its
asymptotic orbits are in $\widecheck{M}\spine$, none in~$\p M$.  This
includes all the trivial cylinders over orbits in $\CritMorse(H) \times S^1$,
all gradient flow cylinders and all holomorphic pages that are modelled on
pages with boundary contained in~$M\spine$.

Observe that if one starts from any point $z \in \p\Sigma$ and traces an
inward curve in $\nN(\p\Sigma)$, orthogonal to the boundary, it soon becomes
a gradient flow line that ends at a critical point of index either~$1$ or~$2$, 
and the former is the case only for finitely many starting points in~$\p\Sigma$.  
This implies that $\fF$ contains at most finitely many leaves (up to
$\RR$-translation) whose asymptotic orbits include index~$1$ critical points.
The \emph{generic} leaf is therefore either a
holomorphic page with positive ends approaching orbits in~$\p M$ and
index~$2$ critical points, or a gradient flow cylinder connecting a
critical point of index~$0$ to one of index~$2$.  Since every component
of $\Sigma$ has nonempty boundary, $H$ can always be
chosen such that
$$
\Morse(z) \in \{1,2\} \quad \text{ for all $z \in \CritMorse(H)$},
$$
and we shall assume this from now on so that \emph{all} generic leaves are
holomorphic pages rather than gradient flow cylinders.
The index formula \eqref{eqn:indexPage}
shows that only the \emph{planar} holomorphic pages in $\fF$ can have 
positive index, and in general some of these may even have
$\ind(u) \le 0$ if they have multiple ends approaching
index~$1$ critical points.  This can be avoided by a generic
perturbation of~$H$ and~$j$ away from the boundary to arrange the
following conditions:
\begin{enumerate}[label=(\roman{enumi})]
\item $\nabla H$ is Morse-Smale,
\item No two gradient flow lines approaching index~$1$ critical points 
enter $\p\Sigma$ at points with the same value of $m \phi$, where
$\phi$ denotes the usual collar coordinate at $\p\Sigma$ and $m$ is the
relevant multiplicity determined by the adjacent component of
$\pi\paper : M\paper \to S^1$ as in \eqref{eqn:localMult}.
\end{enumerate}

\begin{defn}
\label{defn:GP}
We will say that the pair $(H,j)$ are in \defin{general position} whenever
$H$ has no index~$0$ Morse critical points and
both of the above conditions are satisfied.  
\end{defn}

Plugging in the index formulas
above and applying the automatic transversality criterion from
\cite{Wendl:automatic}, we find:

\begin{lemma}
\label{lemma:stable}
If $(H,j)$ are in general position, then every gradient flow cylinder
and every holomorphic page with genus zero has
index~$1$ or~$2$ and is Fredholm regular.  Moreover, these are the only
leaves of~$\fF$ with positive index.  \qed
\end{lemma}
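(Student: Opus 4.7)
The plan is to combine three ingredients: the index formulas of Proposition~\ref{prop:FredholmIndices}, Lemma~\ref{lemma:periodsAndIndices} for Conley-Zehnder indices, and the automatic transversality criterion of Proposition~\ref{prop:automatic}. The proof splits naturally into index computation, transversality, and exclusion of other positive-index leaves.

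First I would compute indices. For a gradient flow cylinder from $z_-$ to $z_+$, Morse-Smale (condition~(i) of general position) together with the absence of index-$0$ critical points of $H$ forces $\Morse(z_+)=2$ and $\Morse(z_-)=1$, since gradient trajectories under $\dot\ell=\nabla H$ run between critical points of strictly increasing Morse index; Proposition~\ref{prop:FredholmIndices} then gives $\ind(u)=1$. For a genus-zero page, each end in $\widecheck{M}\spine$ contributes $0$ or $1$ to $\sum_{i=1}^k[2-\Morse(z_i)]$ according to whether $\Morse(z_i)=2$ or $1$. The key observation here is that the $k$ half-cylinder ends of a single page are distinguished by the constant value of the coordinate $m\phi$ along each page-boundary component (see the annular pieces in Proposition~\ref{prop:J0foliation} where $\phi$ is constant, and the induced value of $m\phi$ near $\partial\Sigma$). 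General position~(ii) precisely rules out two ends with the same value of $m\phi$ both flowing to index-$1$ critical points, so any single page contains at most one end approaching an index-$1$ critical point, giving $\ind(u)\in\{1,2\}$.

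Next, for Fredholm regularity I would invoke Proposition~\ref{prop:automatic} by verifying $\ind(u)>c_N(u)$ in each case, using \eqref{eqn:2cN}: $2c_N(u)=\ind(u)-2+2g+\#\Gamma_0$. For a gradient flow cylinder, $\muCZ(\gamma_{z_\pm})=\Morse(z_\pm)-1$ by Lemma~\ref{lemma:periodsAndIndices} produces one odd and one even Conley-Zehnder index, so $\#\Gamma_0=1$ and $c_N(u)=0<1=\ind(u)$. For a genus-zero page, the ends at index-$2$ critical points have odd Conley-Zehnder index (contributing $0$ to $\#\Gamma_0$) while the at most one end at an index-$1$ critical point has even index (contributing $1$); combined with the bound on ends at the Morse-Bott families in $\partial M$ (where an auxiliary perturbation determines parity), one checks $2c_N(u)<2\ind(u)$ in both the $\ind=1$ and $\ind=2$ cases. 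The most delicate step is handling the Morse-Bott ends at $\partial M$: their contribution to $\#\Gamma_0$ depends on a choice of Morse function on the torus of orbits, and I would fix this choice so that these ends contribute odd Conley-Zehnder (consistent with the fact that trivial cylinders over them foliate $\partial M$ and must remain part of the foliation after perturbation). This is the step I expect to be the main obstacle, and it is where one needs to be careful to match conventions with Proposition~\ref{prop:automatic}.

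Finally, for the ``only'' statement, I would return to the classification in Proposition~\ref{prop:J0foliation}. Trivial cylinders (including those over orbits foliating $\partial M$) have index $0$ as unparametrized curves, so they do not count as positive-index leaves. Any holomorphic page of genus $g\ge 1$ satisfies
\[
\ind(u)=2-2g-\sum_{i=1}^k[2-\Morse(z_i)]\le 0,
\]
since $2-2g\le 0$ and every summand is nonnegative. Thus the only leaves of $\fF$ of positive index are precisely the gradient flow cylinders and the genus-zero pages considered above.
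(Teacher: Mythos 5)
Your proposal is essentially correct and takes the same approach the paper indicates (the paper itself gives no proof beyond ``plugging in the index formulas above and applying the automatic transversality criterion from \cite{Wendl:automatic}''). Your deduction of $\ind = 1$ for gradient flow cylinders via Morse--Smale transversality plus the absence of index-$0$ critical points is right, and your interpretation of general position condition~(ii) is the correct one: every spinal boundary component of a single holomorphic page shares the same value of $\pi\paper$, hence the same value of $m\phi$ at the point where its gradient trajectory enters $\p\Sigma$, so condition~(ii) forces at most one of those ends to flow to an index-$1$ critical point. The exclusion of higher-genus pages and trivial cylinders from the positive-index list is also correct.

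Two small remarks. First, a slight imprecision in wording: you say the half-cylinder ends of a page are ``distinguished by the constant value of $m\phi$'' — in fact they all have the \emph{same} $m\phi$ value (equal to $\pi\paper(P)$) and are distinguished by $\phi$; it is precisely this coincidence of $m\phi$ values that makes condition~(ii) bite. Second, the Morse--Bott ends at $\p M$ are indeed the one place where care is needed, and you flag this honestly. The relevant observation is that the pages project to $M$ as embedded surfaces, so $\windpi = 0$, and since the ends approach $\p M$ with the asymptotic winding of the actual extremal eigenfunction (the pages limit onto the Morse--Bott torus of trivial cylinders), one also has $\asympdef = 0$ and hence $c_N = 0$ via \eqref{eqn:windpi}, independently of how the parity of the Morse--Bott orbits is counted. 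This makes the automatic transversality criterion $\ind(u) > c_N(u) = 0$ immediate without any auxiliary choice of Morse function on the torus; that would be a cleaner route than tracking the parity of $\#\Gamma_0$ through \eqref{eqn:2cN}.
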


For later arguments in \S\ref{sec:invariants} and \S\ref{sec:impressivePart},
we will also need some control over the indices of multiple covers of
curves in $\fF$, especially the trivial cylinders and gradient flow cylinders.

\begin{lemma}
\label{lemma:wS}
Suppose $(H,j)$ are in general position, and $u$ is a connected stable 
holomorphic building in $\RR \times M'$ with no nodes, with arithmetic genus 
$g \ge 0$, and whose connected 
components are all covers of leaves of $\fF$ contained in $\RR \times
\widecheck{M}\spine$.  Assume moreover that
the sum of the periods of the positive asymptotic orbits of $u$ is less 
than~$T_1$.  Then
$$
\ind(u) = 2g - 2 + \#\Gamma_0^+ + 2 \#\Gamma_1^+ + \#\Gamma_0^-,
$$
where $\Gamma^\pm_0$ and $\Gamma^\pm_1$ denote the sets of positive/negative
punctures of $u$ at which the asymptotic orbit has even or odd Conley-Zehnder
index respectively.  In particular:
\begin{enumerate} 
\item
The index is nonnegative, 
with equality if and only if $u$ is either
a trivial cylinder or a building composed entirely of branched covers
of trivial cylinders over an orbit $\{z\} \times S^1$ with $\Morse(z)=2$,
each connected component having exactly one positive puncture.
\item If $u$ is a cover of a gradient flow cylinder, then
$\ind(u) \ge 1$, with equality if and only if $u$ itself is a cylinder and 
the cover is unbranched, in which case $u$ is also Fredholm regular.
\end{enumerate}
\end{lemma}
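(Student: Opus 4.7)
The plan is to compute $\ind(u)$ by summing the Fredholm indices of the individual connected components of the levels of $u$ and then reassembling into an expression in terms of the aggregate data of the building. The decisive input from Lemma~\ref{lemma:periodsAndIndices} is that each asymptotic orbit of $u$ has the form $\gamma_z^d$ for some $z \in \CritMorse(H)$ and $d \ge 1$, with $\muCZ(\gamma_z^d) = \Morse(z) - 1$ \emph{independently} of $d$. This collapses what would otherwise be an unwieldy dependence on covering multiplicities.

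First I would fix on each simple orbit $\gamma_z$ the trivialization $\Phi$ of $\xi|_{\gamma_z}$ induced by the canonical $S^1$-action on $\Sigma\times S^1\supset\widecheck{M}\spine$, and pull it back by the covering to trivialize $\xi|_{\gamma_z^d}$ consistently for every~$d$. With this choice, the restriction of $T(\RR\times M')$ to any trivial cylinder over $\gamma_z$ or gradient flow cylinder is a trivial complex rank-two bundle, so for every component $u_k$ (a branched cover of such a cylinder) one has $c_1^\Phi(u_k^*T(\RR\times M')) = 0$. Plugging into \eqref{eqn:index} with $n=2$, exploiting the Conley-Zehnder constancy, and separating punctures by parity yields
\[
\ind(u_k) = 2g_k - 2 + \#\Gamma_{k,0}^+ + 2\#\Gamma_{k,1}^+ + \#\Gamma_{k,0}^-,
\]
where $g_k$ is the genus of the $k$-th component and $\Gamma_{k,i}^\pm$ denotes its positive/negative punctures with Conley-Zehnder index of parity~$i$.

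Next I would show these indices add up to $\ind(u)$. Because $u$ has no nodes, the only identifications producing $\widehat{S}$ from the disjoint union of the $\dot{S}_k$ are along breaking orbits, which gives $\chi(\widehat{S}) = \sum_k \chi(\dot{S}_k)$; the consistency of $\Phi$ on both sides of each breaking orbit makes the relative Chern numbers add across breaks; and each breaking orbit contributes its Conley-Zehnder index with opposite signs to the two components it joins, so these contributions cancel. A short bookkeeping step using the identity $g = 1 - m + B + \sum_k g_k$ (derived from Euler characteristic additivity with $m$ components and $B$ breaking orbits) together with the analogous accounting of breaking orbits by parity then yields the desired formula $\ind(u) = 2g - 2 + \#\Gamma_0^+ + 2\#\Gamma_1^+ + \#\Gamma_0^-$.

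For the consequences I would argue component by component. Riemann-Hurwitz combined with connectedness of $\dot{S}_k$ forces $n_k^+, n_k^- \ge 1$ for covers of gradient flow cylinders and $n_k^+ + n_k^- \ge 2$ for covers of trivial cylinders. A case check using the component-wise formula then gives $\ind(u_k) \ge 0$ in all cases, with equality only for (i) unbranched cylindrical covers of trivial cylinders over Morse-$1$ orbits---which are themselves trivial cylinders over $\gamma_z^d$---or (ii) genus-zero covers of trivial cylinders over Morse-$2$ orbits with exactly one positive puncture, while covers of gradient flow cylinders always satisfy $\ind(u_k) \ge 2g_k + 1 \ge 1$. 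Since breaking orbits require matched asymptotics on both sides, and types (i) and (ii) have different Morse indices, a connected building cannot mix them; stability further rules out any purely type-(i) building other than a single trivial cylinder, proving the characterization. For the statement about covers of gradient flow cylinders, $\ind(u)\ge 1$ is immediate, and at equality the cover is unbranched and hence immersed with $c_N(u)=0$ by \eqref{eqn:2cN}, so Proposition~\ref{prop:automatic} supplies Fredholm regularity. The main subtlety is the equality-case analysis, where one must combine Riemann-Hurwitz with the matching of breaking orbits and the stability condition to rule out extraneous configurations.
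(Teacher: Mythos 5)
Your argument is correct and proves the lemma. The one genuine difference from the paper's proof is organizational: you compute the index component-by-component, pulling back the $S^1$-invariant trivialization to get $c_1^\Phi=0$ and the parity-dependent formula $\ind(u_k)=2g_k-2+\#\Gamma_{k,0}^++2\#\Gamma_{k,1}^++\#\Gamma_{k,0}^-$ for each piece, and then re-assemble via the Euler-characteristic identity $g = 1-m+B+\sum_k g_k$ and the cancellation of breaking-orbit Conley--Zehnder contributions. The paper instead applies \eqref{eqn:index} \emph{once} to the glued surface $\widehat{S}$ (using the same observation that the $S^1$-invariant trivialization extends over $\widecheck{M}\spine$ so the relative Chern number vanishes), which gives the formula in a single line and avoids the breaking-orbit bookkeeping entirely; the equality analysis is then run on the aggregate puncture counts of $u$ via the implications $\Gamma^\pm\neq\emptyset$ and ``$\#\Gamma_1^+=0\Rightarrow\#\Gamma_1^-=0$,'' rather than component-by-component. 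Your version is a bit more verbose but trades the building-level index formula (whose definition you would otherwise have to invoke) for an explicit gluing computation, so either route is reasonable. Two small points worth tightening if you write this up: first, state explicitly that $n_k^\pm\ge 1$ for \emph{every} cover of a cylinder (not just gradient-flow ones) --- the weaker ``$n_k^++n_k^-\ge 2$'' you give for trivial-cylinder covers is true but gives the misleading impression that something different is happening there; second, in ruling out multi-level purely type-(i) buildings, spell out that connectedness together with $g=0$ forces $B=m-1$ and $\sum g_k = 0$, so each component is a genus-zero unbranched cylinder cover, hence a trivial cylinder, and a level consisting only of trivial cylinders contradicts stability.
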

\begin{proof}
By Proposition~\ref{prop:posPuncture}, all the negative asymptotic orbits
of $u$ also have periods less than~$T_1$, so fixing the $S^1$-invariant
trivialization, the Conley-Zehnder indices of all asymptotic orbits are
given by Lemma~\ref{lemma:periodsAndIndices}, i.e.~they are $1$ for 
punctures in $\Gamma_1$ and $0$ for the others.
The index computation then follows easily from the standard formula
\eqref{eqn:index} after observing that the relative first Chern number
vanishes, as the $S^1$-invariant trivialization extends globally 
over~$\widecheck{M}\spine$: writing the Euler characterstic as
$2 - 2g - \#\Gamma$, we thus obtain
$$
\ind(u) = - \left( 2 - 2g - \#\Gamma \right) + \#\Gamma_1^+ - \#\Gamma_1^-,
$$
which reduces to the stated formula.  

To understand the consequences of this formula,
observe first that we always have
\begin{equation}
\label{eqn:impossible}
\begin{split}
&\Gamma^+ \ne \emptyset \text{ and } \Gamma^- \ne \emptyset,\\
&\#\Gamma_1^+ = 0 \quad \text{ implies } \quad \#\Gamma_1^- = 0.
\end{split}
\end{equation}
Indeed, these statements follow from the fact that they manifestly hold for 
all of the trivial cylinders and gradient flow
cylinders that constitute the somewhere injective curves covered by
components of~$u$ (the second statement depends on the fact that 
$\nabla H$ is Morse-Smale).
Thus if $\ind(u) < 0$, we necessarily have $g=0$
and $\#\Gamma_1^+ = 0$, implying $\#\Gamma_1^- = 0$, but then
$\#\Gamma_0^+$ and $\#\Gamma_0^-$ must both be positive and we have a
contradiction.  The index must also be strictly positive if $g \ge 1$ since
$\#\Gamma_0^+$ and $\#\Gamma_1^+$ will never both be zero.
Now suppose $g=0$ and $\ind(u) = 0$.  We have the following possibilities:
\begin{enumerate}
\item If $\#\Gamma_0^+ = 2$, then $\#\Gamma_1^+ = \#\Gamma_0^- = 0$,
but the implication in \eqref{eqn:impossible} then gives $\#\Gamma_1^- = 0$
and thus contradicts the fact that $\Gamma^- \ne \emptyset$.
\item If $\#\Gamma_0^- = 2$, then $\#\Gamma_0^+ = \#\Gamma_1^+ = 0$,
which is impossible since $\Gamma^+ \ne \emptyset$.
\item If $\#\Gamma_0^+ = \#\Gamma_0^- = 1$ and $\#\Gamma_1^+ = 0$, then
\eqref{eqn:impossible} implies $\#\Gamma_1^- = 0$, so $u$ is a trivial
cylinder over an even orbit, meaning a cover of some
$\gamma_z$ with $\Morse(z)=1$.
\item If $\#\Gamma_0^+ = \#\Gamma_0^- = 0$ and $\#\Gamma_1^+ = 1$, then
no components of $u$ can be covers of gradient flow cylinders since these
always have even negative punctures, thus we have a building whose
components are all covers of trivial cylinders over an odd orbit
(hence $\gamma_z$ with $\Morse(z)=2$), and the building has exactly
one positive puncture.  Since $g=0$, the latter implies that each of its 
components also has exactly one positive puncture.
\end{enumerate}
Finally, applying the index formula to the case where 
$u : \dot{S} \to \RR \times \widecheck{M}\spine$ is a cover of a
gradient flow cylinder, we have $\#\Gamma_0^+ = \#\Gamma_1^- = 0$ and thus
\begin{equation*}
\begin{split}
\ind(u) &= 2g - 2 + 2 \#\Gamma^+ + \#\Gamma^- =
2g - 2 + \#\Gamma + \#\Gamma^+ \\
&= -\chi(\dot{S}) + \#\Gamma^+,
\end{split}
\end{equation*}
which equals at least~$1$ since $\chi(\dot{S}) \le 0$ and
$\#\Gamma^+ \ge 1$.  Equality then holds if and only if $\dot{S}$ is
a cylinder, and the Riemann-Hurwitz formula then implies that the cover is 
unbranched.  Since $u$ is immersed in this case, Fredholm
regularity follows from the main result of \cite{Wendl:automatic}.
\end{proof}

\subsection{Perturbation of stable leaves}
\label{sec:perturbation}

For the SFT and ECH computations in \S\ref{sec:invariants}, we will need
to perturb the planar holomorphic curves in~$\fF$
as the confoliation $\Xi_0$ changes to the contact 
structure~$\Xi_\param$ for $\param > 0$.  To this end,
assume $(H,j)$ are in general position, and denote 
$$
\jJ(M' ; H) := \bigcup_{\hH'} \jJ(\hH'),
$$
where the union is over all confoliation-type stable Hamiltonian structures
$\hH'$ on $M'$ that match $\hH$ on a neighborhood
of $\CritMorse(H) \times S^1 \subset \widecheck{M}\spine$; 
note that this is true in particular for all $\hH_\param$ with $\param \ge 0$.
We assign to $\jJ(M' ; H)$ the
natural $C^\infty$-topology as a subset of the space of all smooth
translation-invariant almost complex structures on $\RR \times M'$.
Then for $J \in \jJ(M' ; H)$, denote by
$$
\mM(J ; H)
$$
the moduli space of $\RR$-equivalence classes of nonconstant, connected, 
unparametrized finite-energy $J$-holomorphic curves whose asymptotic orbits
are all contained in $\CritMorse(H) \times S^1$.\footnote{See \S\ref{sec:energy}
for some clarifying remarks on moduli spaces of unparametrized finite-energy
$J$-holomorphic curves and their topologies.}
This moduli space is naturally contained in the corresponding space of 
stable $J$-holomorphic buildings as defined in \cite{SFTcompactness},
$$
\mM(J ; H) \subset \overline{\mM}(J ; H).
$$
We shall consider the
resulting \emph{universal} moduli spaces
\begin{equation*}
\begin{split}
\mM(\jJ(M' ; H)) &:= \left\{ (J,u) \ \big|\ 
J \in \jJ(M' ; H), \ u \in \mM(J ; H) \right\}, \\
\overline{\mM}(\jJ(M' ; H)) &:= \left\{ (J,u) \ \big|\ J \in \jJ(M' ; H),
\ u \in \overline{\mM}(J ; H) \right\},
\end{split}
\end{equation*}
which inherit natural topologies.  Let
$$
\mM^\fF(J_0) \subset \mM(J_0 ; H)
$$
denote the subset consisting of $\RR$-equivalence classes of
embedded curves whose images are leaves of the foliation~$\fF$.
Its closure $\overline{\mM}^\fF(J_0) \subset \overline{\mM}(J_0 ; H)$
in the compactified moduli space consists of stable holomorphic
buildings whose levels are likewise disjoint unions of leaves of~$\fF$.
We will see in Lemma~\ref{lemma:uniqueness} that
$\overline{\mM}^\fF(J_0)$ is an open and closed subset of $\overline{\mM}(J_0 ; H)$;
note that this is clear
already for the components with arithmetic genus zero, due to
Fredholm regularity (Lemma~\ref{lemma:stable}).
Choose an open neighborhood
$$
\overline{\mM}^\fF(\jJ(M';H)) \subset \overline{\mM}(\jJ(M';H))
$$
of $\{J_0\} \times \overline{\mM}^\fF(J_0)$, let
$\mM^\fF(\jJ(M';H)) \subset \overline{\mM}^\fF(\jJ(M';H))$ denote the
open subset consisting of smooth $1$-level curves with no nodes,
and for each $J \in \jJ(M' ; H)$, denote\footnote{To clarify:
depending on the size of the neighborhood 
$\overline{\mM}^\fF(\jJ(M';H))$, one should expect $\mM^\fF(J)$
and $\overline{\mM}^\fF(J)$ to be empty unless $J$ is close to~$J_0$.
(The latter will of course be the main case of interest.)}
\begin{equation*}
\begin{split}
\mM^\fF(J) &:= \left\{ u \in \mM(J ; H)\ \big|\ (J,u) \in
\mM^\fF(\jJ(M';H)) \right\}, \\
\overline{\mM}^\fF(J) &:= \left\{ u \in \overline{\mM}(J ; H)\ \big|\ 
(J,u) \in \overline{\mM}^\fF(\jJ(M';H)) \right\}.
\end{split}
\end{equation*}
The components of each of these spaces consisting of curves or buildings with 
a prescribed index $i \in \ZZ$ will be written as
$$
\mM_i^\fF(J),\quad 
\overline{\mM}_i^\fF(J),\quad 
\mM_i^\fF(\jJ(M';H)),\quad
\overline{\mM}_i^\fF(\jJ(M';H)).
$$
After shrinking the neighborhood $\overline{\mM}^\fF(\jJ(M';H))$ if necessary,
we shall assume without loss of generality that the following conditions hold
for all $u \in\overline{\mM}^\fF(J)$:
\begin{itemize}
\item
All components of levels in $u$ are somewhere injective;
\item
If $u$ has arithmetic genus zero, then all components of levels in $u$ are
Fredholm regular.
\end{itemize}
Both conditions follow from the fact that they are already known to hold
for $\overline{\mM}^\fF(J_0)$.  If additionally $J \in \jJ(M';H)$ is generic,
then it now follows that $\mM^\fF_i(J)$ is empty for all $i \le 0$.
With or without genericity, we can also conclude that
$\mM^\fF_1(J)$ and $\mM^\fF_2(J)$ are
smooth manifolds of dimensions~$0$ and~$1$
respectively (recall that we divided out the $\RR$-action in defining these spaces).
By inspection of the foliation~$\fF$, we see that 
$\mM^\fF_1(J_0) = \overline{\mM}^\fF_1(J_0)$ has 
finitely many elements, and $\overline{\mM}^\fF_2(J_0)$ is homeomorphic to a
disjoint union of finitely many circles and compact intervals whose endpoints
are $2$-level buildings, 
both levels being unions of trivial cylinders with curves in 
$\mM^\fF_1(J_0)$.  The implicit function theorem now implies that
this description also holds for $\overline{\mM}^\fF_i(J)$ 
whenever $J$ is sufficiently close to~$J_0$:

\begin{prop}
\label{prop:IFT}
For all $J \in \jJ(M';H)$ sufficiently close to~$J_0$,
there exist families of homeomorphisms
$$
\Psi_J : \mM^\fF_1(J_0) \to \mM^\fF_1(J),\qquad
\Psi_J : \overline{\mM}^\fF_2(J_0) \to \overline{\mM}^\fF_2(J)
$$
that depend continuously on $J \in \jJ(M';H)$ and satisfy $\Psi_{J_0} = \Id$.
In particular, $\mM^\fF_1(J)$ contains
finitely many elements, and $\overline{\mM}^\fF_2(J)$ is homeomorphic
to a disjoint union of finitely many circles and compact intervals whose 
endpoints consist of
$2$-level holomorphic buildings in which each level is a union of
trivial cylinders with a curve in~$\mM^\fF_1(J)$.  

Moreover, if $J$ is also generic, then $\mM^\fF_i(J) = \emptyset$ 
for all~$i \le 0$.
\qed
\end{prop}

We obtain a stronger result in the special case where all pages are
both interior and planar.  The proof of the following will be postponed until
the end of~\S\ref{sec:intersections}, since it 
requires a bit of intersection theory.

\begin{prop}
\label{prop:stableFEF}
Assume $\p M = \emptyset$ and every page in $M\paper$ has genus zero.
Then for $J \in \jJ(M';H)$ sufficiently close to~$J_0$,
the trivial cylinders over
orbits in $\CritMorse(H) \times S^1$, together with the curves
in $\mM_1^\fF(J) \cup \mM_2^\fF(J)$ foliate $\RR\times M$, and thus
form a stable finite energy foliation of $(\RR\times M,J)$.
\end{prop}

\subsection{Intersection-theoretic properties}
\label{sec:intersections}

We now examine the properties of the foliation $\fF$ and perturbed moduli spaces
$\mM^\fF(J)$ in terms of Siefring's intersection theory
of punctured holomorphic curves (see \S\ref{sec:Siefring}).
We shall assume throughout the following that the period
bounds and index formula of Lemma~\ref{lemma:periodsAndIndices} 
hold for some constants $T_1 > T_0 > 0$.
We first observe the following
immediate consequence of Lemma~\ref{lemma:periodsAndIndices} and
\eqref{eqn:CZwinding}.  Note that whenever $\gamma$ is a nondegenerate
Reeb orbit with $\muCZ(\gamma)=0$, the same holds for all covers
of~$\gamma$.

\begin{lemma}
\label{lemma:winding}
For any $z \in \CritMorse(H)$ with Morse index~$0$ or~$2$ and $k \in \NN$ such 
that $\gamma_z^k$ has period
less than~$T_1$, the extremal 
winding numbers $\alpha_\pm(\gamma_z^k)$ behave as follows:
\begin{itemize}
\item If $\Morse(z) = 2$, then $\alpha_-(\gamma_z^k) = 0$ and 
$\alpha_+(\gamma_z^k) = 1$.
\item If $\Morse(z) = 0$, then $\alpha_-(\gamma_z^k) = -1$ and 
$\alpha_+(\gamma_z^k) = 0$.
\end{itemize}
Moreover if $\Morse(z)=1$, then 
$\alpha_-(\gamma_z^k) = \alpha_+(\gamma_z^k) = 0$ for all $k \in\NN$.
\qed
\end{lemma}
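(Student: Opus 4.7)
The plan is to read off $\alpha_\pm(\gamma_z^k)$ directly from the Conley--Zehnder indices already computed in Lemma~\ref{lemma:periodsAndIndices}, by solving the algebraic system~\eqref{eqn:CZwinding}.

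First, I would recall that Lemma~\ref{lemma:periodsAndIndices} gives
\[
\muCZ(\gamma_z^k) = \Morse(z) - 1 \in \{-1, 0, 1\}
\]
with respect to the preferred $S^1$-invariant trivialization, valid for every orbit of period less than $T_1$. Substituting each value into the relations
\[
\muCZ(\gamma) = 2\alpha_-(\gamma) + p(\gamma) = 2\alpha_+(\gamma) - p(\gamma), \qquad p(\gamma) \in \{0,1\},
\]
and using that $\alpha_\pm(\gamma) \in \ZZ$, the parity of $\muCZ$ forces the value of $p$. The odd cases $\muCZ = \pm 1$ (i.e.\ $\Morse(z) \in \{0,2\}$) force $p = 1$ and yield $(\alpha_-,\alpha_+) = (-1,0)$ and $(0,1)$ respectively, while the even case $\muCZ = 0$ (i.e.\ $\Morse(z) = 1$) forces $p = 0$ and gives $\alpha_\pm = 0$. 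This handles all covers of period less than $T_1$.

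To extend the $\Morse(z) = 1$ statement to arbitrary $k \in \NN$, I would invoke the remark preceding the lemma: in our setting the contact bundle has real rank~$2$, so a nondegenerate orbit $\gamma$ with $\muCZ(\gamma) = 0$ is necessarily positive hyperbolic, and positive hyperbolic orbits satisfy $\muCZ(\gamma^k) = k \cdot \muCZ(\gamma) = 0$ for every $k \ge 1$. Since the $k=1$ case of Lemma~\ref{lemma:periodsAndIndices} already gives $\muCZ(\gamma_z) = 0$ when $\Morse(z) = 1$, this extends $\muCZ(\gamma_z^k) = 0$ to all multiplicities, and the same parity argument as above yields $\alpha_\pm(\gamma_z^k) = 0$ as claimed.

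The argument is essentially mechanical and I do not anticipate any significant obstacle: the statement is a direct consequence of the Conley--Zehnder computation in Lemma~\ref{lemma:periodsAndIndices} together with the standard dictionary~\eqref{eqn:CZwinding} between the Conley--Zehnder index and the extremal asymptotic windings, plus the elementary behavior of $\muCZ$ under iteration of a positive hyperbolic orbit in dimension two.
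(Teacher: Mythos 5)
Your proof is correct and follows the same approach the paper takes: the lemma is marked $\qed$ and the paper simply observes that it is an immediate consequence of Lemma~\ref{lemma:periodsAndIndices} combined with the algebraic relations~\eqref{eqn:CZwinding}, together with the remark that $\muCZ(\gamma) = 0$ for a nondegenerate orbit implies $\muCZ(\gamma^k) = 0$ for all covers. Your spelling out of the parity-forces-$p$ computation and the positive-hyperbolic explanation for the last bullet is exactly the intended reasoning.
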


\begin{lemma}
\label{lemma:extremalWinding}
For every $J \in \jJ(M';H)$ and $u \in \mM^\fF(J)$, if~$u$ is not a
trivial cylinder, then $c_N(u)=0$ and $u$ has zero asymptotic winding 
(in the $S^1$-invariant trivialization) at each puncture.
\end{lemma}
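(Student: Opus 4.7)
The plan is to compute $c_N(u)$ directly at $J_0$ using the index formulas of Proposition~\ref{prop:FredholmIndices}, propagate the result to all $J \in \jJ(M';H)$ by homotopy invariance of $c_N$ through the universal moduli space, and then read off the winding statement from~\eqref{eqn:windpi} combined with Lemma~\ref{lemma:winding}.

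At $J_0$, any $u \in \mM^\fF(J_0)$ that is not a trivial cylinder is either a gradient flow cylinder or an \emph{interior} holomorphic page — interior, because asymptotic orbits of curves in $\mM(J_0;H)$ must lie in $\CritMorse(H) \times S^1$, which rules out Morse--Bott ends on $\p M$. I would then compute $c_N(u)$ from~\eqref{eqn:2cN} in each case. A gradient flow cylinder has $\ind(u)=1$, $g=0$, and exactly one even asymptotic orbit (the negative end, at a Morse index~$1$ critical point, by Lemma~\ref{lemma:periodsAndIndices}), so $\#\Gamma_0 = 1$ and $2c_N(u) = 1 - 2 + 0 + 1 = 0$. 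An interior page of genus $g$ with $k_i$ positive ends at Morse index~$i$ critical points for $i=1,2$ has $\ind(u) = 2 - 2g - k_1$ by Proposition~\ref{prop:FredholmIndices} and $\#\Gamma_0 = k_1$, whence
\[
2c_N(u) = (2 - 2g - k_1) - 2 + 2g + k_1 = 0.
\]

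To transport this conclusion to $J \neq J_0$, I would use that $c_N(u)$ is determined by the asymptotic orbits (with their $S^1$-invariant trivializations) and the relative homology class of $u$, both of which are locally constant on connected components of the universal moduli space. Since $\mM^\fF(\jJ(M';H))$ is by construction a neighborhood of $\{J_0\}\times\overline{\mM}^\fF(J_0)$, any $(J,u) \in \mM^\fF(\jJ(M';H))$ lies in the same component as some non-trivial leaf of $\fF$ at $J_0$ (possibly via a building in $\overline{\mM}^\fF(J_0)$ of which $u$ is the smooth resolution), and hence $c_N(u)=0$.

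With $c_N(u)=0$ in hand, the formula \eqref{eqn:windpi} applies because $u$ is somewhere injective and not a trivial cylinder, giving
\[
0 \le \windpi(u) + \asympdef(u) = c_N(u) = 0,
\]
so $\asympdef(u)=0$ and the asymptotic winding at each puncture attains the extremal value $\alpha_-$ at a positive puncture or $\alpha_+$ at a negative puncture. Lemma~\ref{lemma:winding} then finishes the job: in the $S^1$-invariant trivialization, $\alpha_-(\gamma_z^k) = 0$ for both $\Morse(z)=1$ and $\Morse(z)=2$, so every positive end of $u$ has winding zero; and $\alpha_+(\gamma_z^k) = 0$ when $\Morse(z)=1$, which is the only case of a negative end of a non-trivial leaf of $\fF$, since gradient flow cylinders run from index~$2$ down to index~$1$. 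The one point of genuine care is the middle step, where the fact that the breaking datum does not disturb $c_N$ is needed; but this is essentially automatic from the additivity of $\ind$, $g$ and $\#\Gamma_0$ across the levels of a building that glues to a smooth resolution, so no deep input beyond~\eqref{eqn:2cN} is required.
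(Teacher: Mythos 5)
Your proof is correct, and the only place where it genuinely departs from the paper is in the derivation of $c_N(u)=0$ at~$J_0$. The paper uses the geometric identity~\eqref{eqn:windpi}: it observes that for a non-trivial leaf $u\in\fF$, the projection to $M'$ is embedded (so $\windpi(u)=0$) and each end of $u$ is an $S^1$-invariant cover of a gradient trajectory and hence has asymptotic winding exactly zero, which is extremal by Lemma~\ref{lemma:winding} (so $\asympdef(u)=0$); then $c_N(u)=\windpi(u)+\asympdef(u)=0$. You instead compute $c_N$ directly from~\eqref{eqn:2cN} and the Fredholm index formulas of Proposition~\ref{prop:FredholmIndices}, which is a purely combinatorial derivation of the same number. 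Both routes are valid; the paper's is shorter and geometrically transparent, while yours has the small advantage of not needing the explicit description of the asymptotic behavior of the leaves in that first step (you only use it at the very end when identifying the extremal winding as zero, just as the paper does). From $c_N(u)=0$ at $J_0$ onward, the two arguments coincide: homotopy invariance of $c_N$ across $\mM^\fF(J)$, then \eqref{eqn:windpi} gives $\asympdef(u)=0$, and Lemma~\ref{lemma:winding} identifies the extremal winding as zero at every puncture.

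One minor remark on your final caveat about ``additivity across levels'': it is not actually needed. As you and the paper both note, $c_N(u)$ is determined by the asymptotic orbits of $u$ (with their $S^1$-invariant trivializations) and the relative homology class; since a curve in $\mM^\fF(J)$ close to a building in $\overline{\mM}^\fF(J_0)$ has the same asymptotic data and the same relative homology class as the total building (and hence as a nearby leaf of $\fF$), the conclusion follows without any discussion of level-wise additivity.
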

\begin{proof}
For any leaf $u \in \fF$ which is not a trivial cylinder, the projection
of~$u$ to~$M'$ is embedded and thus $\windpi(u)=0$.  Since each end
of~$u$ is an
$S^1$-invariant cover of a gradient flow line and thus has asymptotic
winding zero, this winding is extremal by Lemma~\ref{lemma:winding} and
we have $\asympdef(u)=0$.  Then \eqref{eqn:windpi} implies $c_N(u)=0$.
Since $c_N(u)$ depends only on the asymptotic orbits and relative
homology class, it remains zero for any $u \in \mM^\fF(J)$ which is
not a trivial cylinder, so \eqref{eqn:windpi} then implies 
$\asympdef(u)=0$ and the result on asymptotic winding follows.
\end{proof}

\begin{lemma}
\label{lemma:leavesOrbits}
Suppose $k,m \in \NN$, $J \in \jJ(M';H)$, $u \in \mM^\fF(J)$ is not a 
trivial cylinder and $u^k$ denotes any $J$-holomorphic 
$k$-fold cover of~$u$.  Then for any
$z \in \CritMorse(H)$ and $m \in \NN$ such that $\gamma_z^{km}$ has period
less than~$T_1$,  
$$
u^k * (\RR\times \gamma_z^m) = 0.
$$
If $\Morse(z)=1$, then this also holds without any restriction on
$k , m \in \NN$.
\end{lemma}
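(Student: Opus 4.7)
\medskip

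The plan is to prove this by reducing to an explicit computation on the original foliation $\fF$ and then applying Siefring's intersection theory together with the extremal winding established in Lemma~\ref{lemma:extremalWinding}. Since the intersection number $u^k * (\RR \times \gamma_z^m)$ depends only on the asymptotic orbits and the relative homology class of $u^k$, and since Proposition~\ref{prop:IFT} provides a continuous homotopy relating $\mM^\fF_i(J_0)$ to $\mM^\fF_i(J)$ preserving both, I would first reduce to the case $J = J_0$ with $u$ an explicit leaf of the foliation described in Proposition~\ref{prop:J0foliation}; then $u^k$ becomes an explicit $k$-fold cover of that leaf, whose punctures cover those of $u$ and whose asymptotic orbits are corresponding iterates of the orbits of $u$.

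The next step is to observe that positivity of intersections contributes nothing. By Proposition~\ref{prop:J0foliation}, the projection of $u$ to $M'$ is an embedded surface whose closure is bounded by closed Reeb orbits, and the image of the simple orbit $\gamma_z$ is the embedded circle $\{z\}\times S^1 \subset \widecheck{M}\spine$. These two subsets of $M'$ are disjoint except possibly as an asymptotic limit: the interior of $u$'s projection never meets $\gamma_z$'s image. Because $\RR \times \gamma_z^m$ is $\RR$-invariant, the images of $u$ (hence of $u^k$) and $\RR \times \gamma_z^m$ in $\RR \times M'$ are disjoint. Thus there are no ``honest'' intersections, and $u^k * (\RR \times \gamma_z^m)$ reduces entirely to the asymptotic contribution coming from shared asymptotic orbits.

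To compute this asymptotic contribution, I would fix the $S^1$-invariant trivialization $\Phi$ of $\xi$ along the simple orbit $\gamma_z$ that comes from the structure of $\alpha$ on $\widecheck{M}\spine$. By Lemma~\ref{lemma:winding}, this trivialization satisfies $\alpha_-^\Phi(\gamma_z^\ell)=0$ for every multiplicity $\ell$ whose orbit has period less than $T_1$ (which covers the first assertion), and for every $\ell\in\NN$ when $\Morse(z)=1$ (which covers the second). Using the infinite-$\RR$-translation decomposition employed in the proof of Lemma~\ref{lemma:nicelyEmbedded}, write $u^k * (\RR \times \gamma_z^m)$ as a sum of pairwise intersection numbers between levels of $(u^k)^+$ (top level $u^k$ with trivial cylinders over its negative asymptotic orbits below) and $(\RR \times \gamma_z^m)^-$ (bottom level $\RR \times \gamma_z^m$ with trivial cylinders on top), plus nonnegative breaking contributions. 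Since $(u^k)^+$ and $(\RR \times \gamma_z^m)^-$ both consist of pieces with only positive (resp.\ negative) outer punctures and $\alpha_-^\Phi(\gamma_z^\ell)=0$ at every relevant orbit, Lemma~\ref{lemma:u*v} converts each levelwise intersection number into a relative intersection number $\bullet_\Phi$. Each such $\bullet_\Phi$ counts the intersections of $u^k$ with a small push-off of the trivial cylinder $\RR \times \gamma_z^m$ in the $\Phi$-direction near infinity; because $u^k$ approaches every cover of $\gamma_z$ with extremal winding zero in $\Phi$ by Lemma~\ref{lemma:extremalWinding}, this push-off remains disjoint from $u^k$'s image, and the count is~$0$. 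The breaking contributions vanish for the same reason (the eigenfunctions controlling the braiding lie in extremal eigenspaces of equal winding on both sides), so $u^k * (\RR \times \gamma_z^m)=0$.

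The main obstacle is Step~3: making the reduction to Lemma~\ref{lemma:u*v} fully rigorous when $\RR\times\gamma_z^m$ has both a positive and a negative puncture, and carefully tracking the breaking contributions between the two buildings. These vanish only because the extremal winding condition forces the leading eigenfunctions at each shared breaking orbit to lie in one-dimensional extremal eigenspaces, exactly as in the proof of Lemma~\ref{lemma:nicelyEmbedded}; verifying this bookkeeping (and that it persists under taking $k$-fold covers, where windings and extremal values scale consistently) is the principal technical point of the argument.
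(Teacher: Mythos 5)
Your overall plan is sound in outline---reduce to $J = J_0$ by homotopy invariance, observe that there are no honest intersections, and then argue that the asymptotic contribution vanishes because the relevant windings are extremal in the $S^1$-invariant trivialization. This matches the paper's strategy. The specific mechanism you use in Step~3, however, does not work as stated.

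The obstruction you flag at the end is in fact a genuine dead end, not just a bookkeeping point. Lemma~\ref{lemma:u*v} requires \emph{both} buildings to have only positive punctures, and $\RR\times\gamma_z^m$ always has a negative one, so the hypothesis simply fails. Worse, the infinite-$\RR$-translation decomposition you borrow from the proof of Lemma~\ref{lemma:nicelyEmbedded} is circular here: the top level of your building $(\RR\times\gamma_z^m)^-$ consists of ``trivial cylinders over the positive asymptotic orbits of $\RR\times\gamma_z^m$,'' which is again $\RR\times\gamma_z^m$ itself. Trivial cylinders are $\RR$-invariant, so you cannot translate them away, and the level-wise pairing with $u^k$ at the top reproduces exactly the quantity $u^k * (\RR\times\gamma_z^m)$ you set out to compute. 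This is consistent with the logical order in the paper: Lemma~\ref{lemma:nicelyEmbedded} is designed to be \emph{fed} with vanishing results of the form $u * (\RR\times\gamma) = 0$, which the present lemma supplies by a different route, so one cannot invoke that style of decomposition to prove it. Finally, the closing claim that ``the push-off remains disjoint from $u^k$'s image'' because both windings are zero is intuition rather than proof; equal windings do not by themselves force disjointness of the push-off, and this is precisely what the asymptotic-contribution formalism is there to handle.

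The paper's argument skips the detour and invokes Siefring's definition of the asymptotic contribution directly. After noting that the projection of $u^k$ to $M'$ meets the image of $\gamma_z$ only asymptotically (so there are no interior intersections), the asymptotic contribution at a shared underlying orbit is controlled by whether the winding of the \emph{$m$-fold cover} of the relevant end of $u^k$ (approaching $\gamma_z^n$ for some $n \le k$) achieves the extremal value $\alpha_\pm(\gamma_z^{nm})$. The winding is manifestly zero in the $S^1$-invariant trivialization (as you observe via Lemma~\ref{lemma:extremalWinding}), and the period bound $nm \le km$ guarantees that $\gamma_z^{nm}$ falls within the range where Lemma~\ref{lemma:winding} gives the extremal value as zero; when $\Morse(z)=1$ no period bound is needed since $\alpha_\pm$ vanish for all iterates. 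Your write-up never tracks this $n \le k$, $nm \le km$ bookkeeping, which is where the hypothesis on the period of $\gamma_z^{km}$ is actually used, so that detail should be made explicit even if you adopt the direct route.
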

\begin{proof}
By homotopy invariance, it suffices to show that this holds for
$J=J_0$ and any leaf $u \in \fF$ which is not a trivial cylinder.
The image of~$u^k$ then covers a gradient flow line wherever it intersects
$M\spine$, so $u^k$ has no actual intersections with
$\RR\times \gamma_z^m$, and it remains to rule out asymptotic contributions.
By the definition in \cite{Siefring:intersection}, these can exist only
if~$u$ has an end approaching~$\gamma_z$, in which case
$u^k$ has an end approaching $\gamma_z^n$ for some $n \le k$.  Moreover,
the asymptotic contribution is then zero if and only if the asymptotic
winding of the $m$-fold cover of this end differs from the a priori
bound set by $\alpha_\pm(\gamma_z^{mn})$.  In the $S^1$-invariant trivialization,
this asymptotic winding is manifestly zero, so in the case
$\Morse(z) \in \{0,2\}$, $nm \le km$ implies that
$\gamma_z^{mn}$ has period less than~$T_1$, and Lemma~\ref{lemma:winding}
then gives $\alpha_\pm(\gamma^{mn}) = 0$ for the appropriate choice of sign.
For $\Morse(z) = 1$, the same holds with no restrictions on multiplicities
since $\alpha_\pm(\gamma^k)=0$ for all $k \in \NN$.
\end{proof}

\begin{prop}
\label{prop:int0}
For any $J \in \jJ(M';H)$, every curve $u \in \mM^\fF(J)$ is embedded,
and any two such curves $u , v$ satisfy $u*v = 0$ unless 
both are trivial cylinders.
\end{prop}
\begin{proof}
The proof of $u*v=0$ is immediate from Lemmas~\ref{lemma:nicelyEmbedded}
and~\ref{lemma:leavesOrbits}.  If~$u$ is not a trivial cylinder, then
this also implies $u*u=0$, and since $u$ is somewhere injective by the
definition of $\overline{\mM}^\fF(\jJ(M';H))$,
it now follows from the adjunction formula
\eqref{eqn:adjunction} that $u$ is nicely embedded, hence also embedded.
\end{proof}

\begin{lemma}
\label{lemma:intersectingPages}
Assume $J \in \jJ(M';H)$, and
$v \in \overline{\mM}(J;H)$ is a $J$-holomorphic curve whose 
positive ends are all asymptotic to orbits of the
form $\gamma_z^k$ with $z \in \CritMorse(H)$ and $k \in \NN$ having period less
than~$T_1$.
Then for any curve $u \in \mM^\fF(J)$ with no negative ends, $v * u = 0$.
\end{lemma}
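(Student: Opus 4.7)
The plan is to compute $v * u$ by adapting the ``infinite $\RR$-translation'' technique used in the proof of Lemma~\ref{lemma:nicelyEmbedded}. By Siefring's homotopy invariance of $*$ and the stretching procedure from \cite{Siefring:intersection}*{Lemma 5.7, Theorem 5.8}, we have $v * u = v^- * u^+$, where $u^+$ is the $2$-level holomorphic building with $u$ on its top level and trivial cylinders over the negative asymptotic orbits of $u$ on its bottom level, and $v^-$ is the building obtained by stacking a topmost level of trivial cylinders over the positive asymptotic orbits of $v$ on top of $v$ itself. Since $u$ has no negative ends by hypothesis, the bottom level of $u^+$ is empty, so effectively $u^+$ is just $u$ placed on the top level.

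I would then split $v^- * u^+$ as a sum of intersection numbers of corresponding levels, together with nonnegative breaking contributions arising from common breaking orbits at the interface between $u^+$ and $v^-$. A common breaking orbit would require a negative asymptotic orbit of $u$ to coincide with a positive asymptotic orbit of $v$; but $u$ has no negative ends, so no such contributions arise. Moreover, the only nonempty level of $u^+$ is $u$, so the only nonzero contribution to the level-wise sum comes from the intersection of $u$ with the trivial cylinders forming the top level of $v^-$. This gives
$$
v * u \;=\; \sum_{\gamma} u * (\RR \times \gamma),
$$
where $\gamma$ ranges over the positive asymptotic orbits of $v$.

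To finish, I would invoke Lemma~\ref{lemma:leavesOrbits}. By hypothesis, each such $\gamma$ is of the form $\gamma_z^k$ with $z \in \CritMorse(H)$ and period less than $T_1$. Since $u$ has no negative ends, it cannot be a trivial cylinder (each of which has both a positive and a negative puncture), and by Proposition~\ref{prop:int0} it is embedded, hence somewhere injective and not a multiple cover. Lemma~\ref{lemma:leavesOrbits} (with its parameter $k = 1$ and orbit multiplicity equal to our $k$) therefore applies, giving $u * (\RR \times \gamma_z^k) = 0$ for each positive asymptotic orbit of $v$. Summing yields $v * u = 0$.

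The main subtlety is that $v \in \overline{\mM}(J;H)$ may be a multi-level building rather than a smooth curve. This can be accommodated either by applying the stretching argument above to push $u$ above all levels of $v$ simultaneously, or by exploiting the additivity of $*$ across the levels of $v$ and treating each level separately; in either case, the hypothesis that $u$ has no negative ends still rules out any breaking involving $u$. One should also note that, by Proposition~\ref{prop:posPuncture}, the total period of the orbits appearing anywhere in $v$ (as outer asymptotic orbits or as internal breaking orbits) is controlled by the total period of the outer positive ends of $v$, which is less than $T_1$; combined with Lemma~\ref{lemma:periodsAndIndices}, every orbit encountered is again of the form $\gamma_z^k$ over a critical point of $H$ with period less than~$T_1$, so Lemma~\ref{lemma:leavesOrbits} applies uniformly.
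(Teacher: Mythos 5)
Your proof is correct and follows essentially the same strategy as the paper's: both translate $u$ away from the negative half, reduce $v*u$ to a sum of intersection numbers $u*(\RR\times\gamma_z^k)$ with the trivial cylinders over the positive asymptotic orbits of $v$, and then apply Lemma~\ref{lemma:leavesOrbits}. The paper phrases the reduction as a homotopy of $v$ to a non-holomorphic map $v'$ whose part in $[0,\infty)\times M$ consists of trivial cylinders, whereas you phrase it via the building/stretching formalism from the proof of Lemma~\ref{lemma:nicelyEmbedded}; these are the same mechanism expressed in two equivalent ways.
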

\begin{proof}
After an $\RR$-translation we can assume the image of $u$ is contained in
$[0,\infty) \times M$.  Likewise, we can homotop~$v$ through asymptotically
cylindrical maps to a (non-holomorphic) map $v'$ which looks the same
near its negative ends but whose intersection with
$[0,\infty) \times M$ consists only of the trivial cylinders over its
positive asymptotic orbits.  Thus by the homotopy invariance of the
intersection number,
$$
v * u = v' * u = 
\sum_i (\RR\times \gamma_{z_i}^{k_i}) * u,
$$
for some finite set of critical points $z_i \in \CritMorse(H)$ and 
natural numbers 
$k_i$ such that $\gamma_{z_i}^{k_i}$ has period less than~$T_1$.
This is zero by Lemma~\ref{lemma:leavesOrbits}.
\end{proof}

We are now in a position to prove Proposition~\ref{prop:stableFEF}.

\begin{proof}[Proof of Proposition~\ref{prop:stableFEF}]
By Prop.~\ref{prop:int0}, every curve in $\mM^\fF_1(J) \cup
\mM^\fF_2(J)$ is nicely embedded and disjoint from its own asymptotic
orbits, and any two such curves are either identical 
(up to $\RR$-translation) or disjoint; in fact the latter is also true
for arbitrary $\RR$-translations, implying that their projections to~$M$
are either identical or disjoint.  It remains to show that these curves
fill the entirety of~$M$.  Let $\Delta \subset M$ denote the compact
set consisting of all points that lie either in 
$\CritMorse(H) \times S^1$ or in the projection of any curve
in $\mM^\fF_1(J)$.  Also, let $\oO \subset M \setminus\Delta$ denote
the set of points that lie in the projection of any curve in
$\mM^\fF_2(J)$.  Applying the implicit function theorem to finitely
many index~$2$ leaves of~$\fF$, we find that for $J$ sufficiently close to~$J_0$,
every connected component of $M \setminus \Delta$ contains points in~$\oO$.

To finish, we claim that~$\oO$ is an open and closed subset of
$M\setminus \Delta$.  To see that it is closed, suppose $x_k \in \oO$ 
converges to $x \in M\setminus \Delta$.  Then the points $x_k$ are contained 
in the projections of curves $u_k \in \mM^\fF_2(J)$, and these have
a subsequence converging to either another curve $u \in \mM^\fF_2(J)$
or a building $u \in \overline{\mM}^\fF_2(J)$.  In the former case we
conclude $x \in \oO$, and in the latter case, $u$ has two levels which
are each unions of trivial cylinders with curves in $\mM^\fF_1(J)$,
so $x \in \Delta$ and we have a contradiction.  That~$\oO$ is open follows
from an implicit function theorem for nicely embedded index~$2$ curves,
cf.~\cite{Wendl:thesis}*{Theorem~4.5.44}.
The main point is that since any $u \in \mM^\fF_2(J)$ is embedded, the nearby curves 
in $\mM^\fF_2(J)$ can all be identified with sections of the normal
bundle of~$u$, and the condition $c_N(u)=0$ from
Lemma~\ref{lemma:extremalWinding} implies that these sections must be
nowhere zero, cf.~\cite{Wendl:automatic}*{Equation~(2.7)}.
\end{proof}

\subsection{Uniqueness}
\label{sec:uniqueness}

For any $J \in \jJ(M';H)$, define the spaces
$$
\mM(J ; H,T_1), \qquad \overline{\mM}(J ; H,T_1)
$$
to consist of all connected $\RR$-equivalence classes of unparametrized
finite energy $J$-holomorphic curves or buildings respectively
in $\RR\times M'$
whose positive asymptotic orbits are in $\CritMorse(H) \times S^1$ and have
periods adding up to less than~$T_1$.  Proposition~\ref{prop:posPuncture}
then implies that the same condition holds at the negative ends, thus
all negative asymptotic orbits of
curves in $\mM(J ; H,T_1)$ or $\overline{\mM}(J ; H,T_1)$
are also in $\CritMorse(H) \times S^1$ due to 
Lemma~\ref{lemma:periodsAndIndices}.
We denote by
$$
\mM^*(J ; H,T_1) \subset \mM(J ; H,T_1)
$$
the set of somewhere injective curves in $\mM(J ; H,T_1)$.
The following is now an easy consequence of the intersection theory
from~\S\ref{sec:intersections}.

\begin{lemma}
\label{lemma:uniqueness}
Every curve in $\mM^*(J_0 ; H,T_1)$ is an interior leaf of~$\fF$.
\end{lemma}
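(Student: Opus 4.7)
The plan is to show that any somewhere injective $u \in \mM^*(J_0;H,T_1)$ must have image equal to that of some leaf of $\fF$, via Siefring's intersection theory combined with the foliation property of $\fF$. First handle the trivial-cylinder case: if $u$ is a (possibly multiply covered) cover of a trivial cylinder, then somewhere-injectivity forces $u$ itself to be some $\RR \times \gamma_z$ for $z \in \CritMorse(H)$, and this is an interior leaf of $\fF$ by Proposition~\ref{prop:J0foliation}. From now on assume $u$ is not a cover of any trivial cylinder.

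The key claim is that $u * L = 0$ for every leaf $L \in \fF$ which is not a trivial cylinder. To establish this I apply Lemma~\ref{lemma:nicelyEmbedded} with $L$ in the role of ``$u$'' and our $u$ in the role of ``$v$.'' Hypothesis~(i) is vacuous, since holomorphic pages have no negative ends, while gradient flow cylinders have their single negative end at $\gamma_{z_-}$ with $\Morse(z_-)=1$, hence even Conley--Zehnder index by Lemma~\ref{lemma:periodsAndIndices}. Hypothesis~(iii) is immediate from Lemma~\ref{lemma:leavesOrbits} applied to $L$: the positive asymptotic orbits $\gamma_{z_i}^{k_i}$ of $u$ all have period at most the total positive period, which is bounded by~$T_1$. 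The essential input is hypothesis~(ii), requiring $u * (\RR \times \gamma_z^m) = 0$ for each $\gamma_z^m$ appearing as a negative asymptotic orbit of $L$, so $\Morse(z)=1$. Here I invoke Lemma~\ref{lemma:winding}, which gives $\alpha_-(\gamma_z^m) = \alpha_+(\gamma_z^m) = 0$ in the $S^1$-invariant trivialization $\Phi$; this pins the asymptotic winding of $u$ at every end asymptotic to a cover of $\gamma_z$ to its extremal value~$0$, so that all asymptotic contributions in the Siefring decomposition $u * (\RR \times \gamma_z^m) = u \bullet_\Phi (\RR \times \gamma_z^m) + (\text{asymptotic contributions})$ drop out, and a relative-intersection calculation in $\Phi$ modelled on the proof of Lemma~\ref{lemma:leavesOrbits} reduces the remaining term to a purely combinatorial quantity in the asymptotic data that evaluates to zero.

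Once $u * L = 0$ is known for every non-trivial-cylinder leaf $L$, positivity of intersections forces the image of $u$ to be either disjoint from, or identical to, that of $L$. By Proposition~\ref{prop:J0foliation} the leaves of $\fF$ cover all of $\RR \times M$, so the complement of the non-trivial-cylinder leaves is precisely the union of the trivial cylinders. If $u$ were disjoint from every non-trivial leaf, its connected image would lie in this union; but a tangent plane to the image of $u$ at a point of $\RR \times \gamma$ would have to coincide with $\Span(\p_r, J_0 \p_r) = T(\RR \times \gamma)$ by $J_0$-invariance, so $u$ would cover a single trivial cylinder, contradicting the standing assumption. Hence $u$ shares its image with some non-trivial-cylinder leaf $L_0$, and somewhere-injectivity gives $u = L_0$ as unparametrized curves. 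Since the asymptotic orbits of $u$ lie in $\CritMorse(H) \times S^1$ and not on $\p M$, $L_0$ is an interior leaf.

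The principal obstacle is the verification of hypothesis~(ii): the vanishing $u * (\RR \times \gamma_z^m) = 0$ must simultaneously account for the cancellation of asymptotic contributions (guaranteed by the extremal-winding constraint $\alpha_-=\alpha_+=0$) and the absence of stray geometric intersections. The argument extends the template of Lemma~\ref{lemma:leavesOrbits} to curves not already known to be leaves, and hinges on the fact that the extremal-winding condition at orbits with $\Morse(z)=1$ actually determines the relevant relative homology class of $u$.
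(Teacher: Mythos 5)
Your plan follows the same intersection-theoretic blueprint as the paper, but the crucial step—verifying hypothesis~(ii) of Lemma~\ref{lemma:nicelyEmbedded} for gradient flow cylinders—has a genuine gap, and that gap is precisely why the paper takes a different route.

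To verify~(ii) for a gradient flow cylinder~$L$, you must prove $u * (\RR \times \gamma_{z_-}^m) = 0$, where $\gamma_{z_-}$ is the hyperbolic orbit at the negative end of~$L$. The Siefring number $u * (\RR \times \gamma_{z_-}^m)$ splits into \emph{actual} intersections plus \emph{asymptotic} contributions. Your extremal-winding argument ($\alpha_-=\alpha_+=0$ at even orbits) only controls the asymptotic piece. It does nothing to rule out a somewhere injective, not-yet-a-leaf curve $u$ from \emph{actually} passing through the trivial cylinder $\RR \times \gamma_{z_-}$ somewhere. And you cannot model the remaining calculation on the proof of Lemma~\ref{lemma:leavesOrbits}: that proof begins by observing that a \emph{leaf} of~$\fF$ covers a gradient flow line wherever it meets $M\spine$, hence has no geometric intersections with trivial cylinders—but $u$ being a leaf is exactly the conclusion you are trying to reach, so invoking that structure here is circular. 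Consequently the claim ``$u * L = 0$ for every non-trivial leaf $L$'' is not established for gradient flow cylinders, and without it your final disjoint-or-identical dichotomy does not get off the ground: you only legitimately control intersections with pages (via Lemma~\ref{lemma:intersectingPages}, which requires the leaf to have no negative ends), and the complement of the pages in $\RR \times M$ is a $3$-dimensional stratum, not merely the trivial cylinders, so your tangency argument at a trivial cylinder needs to be replaced by a more careful CR-type argument in that stratum.

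The paper sidesteps the problem entirely. Rather than trying to control $u * L$ for \emph{all} non-trivial leaves, it uses that $u$ is not a leaf of the foliation to find, by positivity of intersections near the asymptotic orbit $\gamma_z^k$, a single \emph{generic} page $v$ (all ends at index-$2$ critical points, hence no negative ends) with $u * v > 0$. It then applies Lemma~\ref{lemma:intersectingPages}—whose proof pushes $u$ to $-\infty$ so that only trivial cylinders over its positive asymptotic orbits remain, thereby reducing to Lemma~\ref{lemma:leavesOrbits} applied to the \emph{leaf} $v$—to conclude $u * v = 0$, a contradiction. By selecting a leaf with no negative ends, hypothesis~(ii) never arises, and there is no need to know anything about geometric intersections of $u$ with trivial cylinders over hyperbolic orbits. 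If you want to salvage your version, you should similarly restrict your intersection-theoretic step to pages with only elliptic asymptotics; as written, the argument does not close.
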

\begin{proof}
Suppose $u \in \mM^*(J_0 ; H,T_1)$ is not a leaf
of~$\fF$.  By Prop.~\ref{prop:posPuncture} it must have at least one 
positive puncture, which by assumption is asymptotic to an orbit of the form 
$\gamma_{z}^k$ for $z \in \CritMorse(H)$ and $k \in \NN$, with period less
than~$T_1$.  All trivial cylinders over orbits in $\CritMorse(H) \times S^1$
are leaves of~$\fF$, so we may assume~$u$ is not a trivial cylinder.
Then as~$u$ approaches $\gamma_z^k$, it has isolated intersections with 
infinitely many leaves of~$\fF$.  In particular, we can find a \emph{generic}
leaf $v \in \fF$ with $u * v > 0$, i.e.~$v$ has no ends asymptotic to
orbits~$\gamma_\zeta$ with $\Morse(\zeta)=1$.  Since there are no index~$0$
critical points, this implies every end of~$v$ is positive.  Then
Lemma~\ref{lemma:intersectingPages} implies $u * v = 0$, so we have a
contradiction.
\end{proof}

To generalize the above lemma to the perturbation of~$J_0$, we will need
to specialize to the case where $(M,\xi)$ is a partially planar domain,
i.e.~we assume there exists a connected component $M\planar\paper \subset M\paper$
which has genus zero pages and does not touch~$\p M$.  

\begin{remark}
\label{remark:shrinkIt}
If $M\planar\paper \subset M\setminus \p M$ is a connected component of
$M\paper$, then we can always shrink~$M$ to a smaller subdomain 
on which~$\xi$ is still supported by a spinal open book containing the
pages of $M\planar\paper$ in the interior, but with the additional property that
every spinal component intersects $M\planar\paper$.  Indeed, if
$\Sigma_1 \times S^1 \subset M\spine$ is any spinal component disjoint from
$M\planar\paper$, then since~$\xi$ is transverse to the $S^1$-direction at
$\p(\Sigma_1 \times S^1)$, we can replace $M$ with a smaller domain whose
boundary includes all components of $\p M\paper$ that touch 
$\Sigma_1 \times S^1$, and assume after an isotopy of $\xi$ that it is
supported by a spinal open book on the shrunken domain
(cf.~\cite{LisiVanhornWendl1}*{Example~1.11}).
\end{remark}

By the above remark, we lose no generality by imposing
the following conditions on our partially planar domain and the
chosen geometric data:

\begin{assumptions}
\label{ass:partiallyPlanar}
Suppose the spinal open book on $(M,\xi)$, the data $H$ and~$j$ and
constants $K,\varepsilon > 0$ (cf.~Lemma~\ref{lemma:periodsAndIndices})
satisfy the following conditions:
\begin{enumerate}
\item $M\paper \subset M$ contains a connected component
$M\planar\paper$ which has genus zero pages and 
$\p M\planar\paper \subset M\spine$;
\item Every page in the interior of~$M$ has
fewer than $T_1 / T_0$ boundary components;
\item Every component of $M\spine$ intersects~$M\planar\paper$;
\item $H : \Sigma \to [0,\infty)$ has exactly one index~$2$ critical point 
on every connected component of~$\Sigma$.
\end{enumerate}
\end{assumptions}

For the main result of this section, we consider sequences
$\param_\nu > 0$ and $J_\nu \in \jJ(\hH_{\param_\nu}) \subset \jJ(M';H)$ such that
$$
\param_\nu \to 0 \qquad\text{ and }\qquad J_\nu \to J_0.
$$
\begin{prop}
\label{prop:uniqueness}
If Assumptions~\ref{ass:partiallyPlanar} hold, then
for~$\nu$ sufficiently large, $\mM^*(J_\nu ; H,T_1) = \mM^\fF(J_\nu)$.
\end{prop}
\begin{proof}
The claim that $\mM^\fF(J_\nu) \subset \mM^*(J_\nu ; H,T_1)$
follows immediately from Assumptions~\ref{ass:partiallyPlanar} since
every curve in $\mM^\fF(J_\nu)$ has fewer than $T_1 / T_0$ ends,
all approaching simply covered orbits in $\CritMorse(H) \times S^1$ with
period less than~$T_0$.  We will prove
the converse by showing that for
$\nu$ sufficiently large, the presence of the embedded planar 
curves in $\mM^\fF_2(J_\nu)$ forces all other curves in
$\mM^*(J_\nu ; H,T_1)$ to be nicely embedded.  Then
the compactness theorem in \cite{Wendl:compactnessRinvt} implies essentially
that if $\nu$ is large enough, then every such curve in 
$\mM^*(J_\nu ; H,T_1)$ is a perturbation of a nicely embedded 
$J_0$-holomorphic building, whose components must be leaves 
of~$\fF$ due to Lemma~\ref{lemma:uniqueness}.  Here are the details.

Arguing by contradiction, assume there is a sequence of $J_\nu$-holomorphic
curves
$$
u_\nu : \dot{S}_\nu \to \RR\times M'
$$
which define elements in
$\mM^*(J_\nu ; H,T_1) \setminus \mM^\fF(J_\nu)$ as $\nu \to\infty$.
Since the trivial cylinders over orbits in $\CritMorse(H) \times S^1$ are all
leaves of~$\fF$, we may assume $u_\nu$ is never a trivial cylinder.
After taking a subsequence, we may also assume that $u_\nu$ has fixed
numbers of positive and negative ends, always approaching the same collection
of orbits; this follows from the period bound at the positive ends since
there are finitely many combinations of orbits in $\CritMorse(H) \times S^1$
for which the required bound is satisfied.

\textsl{Step~1: Compactness.}  We claim that a subsequence of~$u_\nu$
converges to a $J_0$-holomorphic building $u_\infty$ whose connected
components are all covers of interior leaves in~$\fF$.
The convergence does not immediately follow from \cite{SFTcompactness}, 
for three reasons:
\begin{enumerate}
\item We must check that $u_\nu$ satisfy a suitable energy bound as the
contact structures $\Xi_{\param_\nu}$ degenerate to the confoliation~$\Xi_0$.
\item We have not assumed any bound on the genus of~$\dot{S}_\nu$.
\item The dynamics of~$R_0$ are degenerate.
\end{enumerate}
The first issue is the main reason we have introduced the stable Hamiltonian
structures $\hH_\param = (\Omega_\param,\Lambda_\param)$.
A reasonable notion of energy can be defined by
$$
E_\nu(u_\nu) := \sup_{\varphi \in \tT} \int_{\dot{S}_\nu} u_\nu^* 
\left( d(\varphi(r) \Lambda_{\param_\nu}) + \Omega_{\param_\nu} \right),
$$
where $\tT$ denotes the space of smooth strictly increasing functions
$\varphi : \RR \to (-\delta,\delta)$ for some constant $\delta > 0$ chosen
sufficiently small to make sure that the integrand is nonnegative.
This notion is equivalent to the energy defined in
\cite{SFTcompactness}, in the sense that either satisfies uniform bounds
if and only if the other does.  Since $\Omega_{\param_\nu} = d\alpha$, we
can write
$$
E_\nu(u_\nu) = \sup_{\varphi \in \tT} \int_{\dot{S}_\nu}
u_\nu^*d\left( \varphi(r) \Lambda_{\param_\nu} + \alpha \right)
$$
and thus conclude from Stokes' theorem and
the bound on the periods at the positive ends
(cf.~Prop.~\ref{prop:posPuncture}) that $E_\nu(u_\nu)$ is uniformly bounded.

The second issue is a larger danger.  In order to bound the genus 
of~$\dot{S}_\nu$, we use the following
argument, originally suggested by Michael Hutchings and used already
in \cite{Wendl:openbook2}. 
Since $E_\nu(u_\nu)$ is bounded and we have
convergence of the data $\hH_{\param_\nu} \to \hH_0$ and $J_\nu \to
J_0$, a compactness theorem of Taubes \cite{Taubes:currents}*{Prop.~3.3}
(see also \cite{Hutchings:index}*{Lemma~9.9}) implies that the sequence of
currents represented by~$u_\nu$ has a convergent subsequence.  This implies
in particular that the relative homology classes of~$u_\nu$ have a convergent
subsequence, so taking advantage of the assumption that $u_\nu$ is
somewhere injective, we can write down the adjunction inequality
\eqref{eqn:adjunction},
$$
u_\nu * u_\nu \ge 2\left[\delta(u_\nu) + \delta_\infty(u_\nu)\right] + c_N(u_\nu)
\ge c_N(u_\nu)
$$
and observe that the left hand side is bounded.  Now plugging in the
definition of the normal Chern number from \cite{Wendl:compactnessRinvt},
we have
$$
c_N(u_\nu) = c_1(u^*\Xi_{\param_\nu}) - \chi(\dot{S}_\nu) + C
$$
where the constant $C \in \ZZ$ depends only on the extremal winding
numbers at the asymptotic orbits and is thus fixed, and
$c_1(u^*\Xi_{\param_\nu})$ is the relative first Chern number of
the bundle $u^*\Xi_{\param_\nu} \to \dot{S}_\nu$ with respect to the $S^1$-invariant
trivializations at the asymptotic orbits.  The latter also depends
only on the relative homology class of~$u_\nu$, so we conclude that
$\chi(\dot{S}_\nu)$ is bounded from below, giving a bound on the
genus of~$\dot{S}_\nu$ from above.  Passing again to a subsequence, we may
now assume all the surfaces $\dot{S}_\nu$ are diffeomorphic.

To conclude, we observe that the dynamics of $R_0$ are indeed nondegenerate
up to period~$T_1$.  By Prop.~\ref{prop:posPuncture} and the period 
bound imposed on the positive asymptotic orbits of~$u_\nu$, every orbit 
that can appear in bubbling or breaking is therefore nondegenerate, in which 
case the proof of the main compactness theorem in \cite{SFTcompactness} 
goes through and gives a subsequence 
convergent in the usual sense to a $J_0$-holomorphic building~$u_\infty$.

The fact that all components of $u_\infty$ are covers of interior
leaves in~$\fF$ follows now from Lemma~\ref{lemma:uniqueness}.

\textsl{Step~2: Intersection theory.}
The goal of this step is to show that $u_\nu * u_\nu = 0$ for all
$\nu$ sufficiently large. 
Since $H$ has no Morse critical points of Morse index~$0$, every curve in
$\mM^\fF_2(J_\nu)$ has all its ends at critical points of Morse index~$2$,
thus they are all positive.  Then Lemma~\ref{lemma:intersectingPages}
implies that for any $v \in \mM^\fF_2(J_\nu)$,
\begin{equation}
\label{eqn:zero}
u_\nu * v = 0.
\end{equation}
We claim next that no negative end of $u_\nu$ approaches any orbit of
the form $\gamma_z^k$ with $\Morse(z)=2$, and any positive end that
approaches such an orbit has asymptotic winding zero.  Here we
use the assumption that every connected component of $M\spine$
intersects the planar piece
$M\planar\paper$ and has a unique index~$2$ critical point:
it follows via Proposition~\ref{prop:IFT}
that whenever $\Morse(z)=2$, for sufficiently large~$\nu$
there exists a curve $v \in \mM^\fF_2(J_\nu)$ whose asymptotic orbits
include~$\gamma_z$.  By Lemma~\ref{lemma:extremalWinding}, $v$
approaches~$\gamma_z$ with zero asymptotic winding, so if the asymptotic
winding of $u_\nu$ approaching $\gamma_z^k$ is nonzero, then the
projections of $u_\nu$ and~$v$ to~$M'$ intersect, implying
$u_\nu * v > 0$ and thus contradicting \eqref{eqn:zero}.
Moreover, the end approaching $\gamma_z^k$ cannot be negative, as its
asymptotic winding would then be bounded from below by
$\alpha_+(\gamma^k)$, which is~$1$ by Lemma~\ref{lemma:winding}.

Finally, we claim that for every orbit~$\gamma$ which occurs as an
asymptotic orbit of~$u_\nu$,
\begin{equation}
\label{eqn:zero1}
u_\nu * (\RR\times \gamma) = 0.
\end{equation}
The orbit~$\gamma$ is necessarily of the form $\gamma_z^k$ for
$z \in \CritMorse(H)$ and $k\in \NN$ and has period less than~$T_1$.
If $\Morse(z)=2$ then we may again assume due to Proposition~\ref{prop:IFT}
that $\gamma_z$ is an
asymptotic orbit for some $v \in \mM_2^\fF(J_\nu)$.  Then any
intersection of~$u_\nu$ with $\RR\times \gamma_z$ is necessarily
positive and thus causes an intersection of~$u_\nu$ with an
end of~$v$ approaching~$\gamma_z$, again contradicting
\eqref{eqn:zero}.  Asymptotic contributions to
$u_\nu * (\RR\times \gamma_z^k)$ are also ruled out since, as was
just shown, any end of $u_\nu$ approaching a cover 
of $\gamma_z$ has asymptotic winding zero, and this matches
$\alpha_-(\gamma_z^k)$ by Lemma~\ref{lemma:winding}.

For the case $\Morse(z)=1$ we argue slightly differently: we pass
to the limit and show that $u_\infty * (\RR\times \gamma_z^k) = 0$,
which implies \eqref{eqn:zero1} for sufficiently large~$\nu$.  Recall that
every connected component~$w$ of the building $u_\infty$ is a cover of
some leaf of~$\fF$.  If this leaf is a trivial cylinder, then
$w * (\RR\times \gamma_z^k) = 0$ by Lemma~\ref{lemma:evenOrbits} since
$\muCZ(\gamma_z)$ is even.  If it is not a trivial cylinder, then we instead
obtain the same result from Lemma~\ref{lemma:leavesOrbits}.
There are also no breaking contributions to
$u_\infty * (\RR\times \gamma_z^k)$ since there are no common breaking
orbits with odd Conley-Zehnder index.

We have now established all the conditions to apply
Lemma~\ref{lemma:nicelyEmbedded} and conclude
$$
u_\nu * u_\nu = 0.
$$

\textsl{Step~3: Nicely embedded curves degenerate nicely.}
By the main result of \cite{Wendl:compactnessRinvt}, the limit
building $u_\infty$ must also be nicely embedded, in the sense that
all of its levels are unions of trivial cylinders with nicely embedded
curves: in particular, this means every component of $u_\infty$ is
an interior leaf of~$\fF$.  By inspection of~$\fF$,
the only connected multi-level buildings
one can construct out of leaves have exactly two levels:
the top consists of a disjoint union of trivial cylinders with
gradient flow cylinders, and the bottom is a single holomorphic page.
Any such building is a limit of a sequence of holomorphic pages
in~$\fF$, and thus belongs to $\overline{\mM}^\fF(J_0)$, so we conclude that
$u_\nu \in \mM^\fF(J_\nu)$ for sufficiently large~$\nu$.
\end{proof}

\begin{remark}
\label{remark:nondegenerate}
Proposition~\ref{prop:uniqueness} also holds for any
sufficiently small perturbations $\Lambda_\nu'$ of $\Lambda_{\param_\nu}$ 
(fixed in a neighborhood of $\CritMorse(H) \times S^1$) and
$J_\nu' \in \jJ(d\Lambda_\nu',\Lambda_\nu')$ of $J_\nu$.  In particular we can arrange
in this way for $\Lambda_\nu'$ to be a sequence of nondegenerate contact forms.  
The uniqueness result is proved by repeating the above argument for sequences 
$\Lambda_{\param_\nu}^\mu \to \Lambda_{\param_\nu}$ and $J_\nu^\mu \in 
\jJ(d\Lambda_{\param_\nu}^\mu,\Lambda_{\param_\nu}^\mu)$,
$J_\nu^\mu \to J_\nu$ as $\mu \to \infty$.  The only reason we did not
state Prop.~\ref{prop:uniqueness} to allow this perturbation in the first
place is that there is no obvious way to perturb the stable Hamiltonian
structure $\hH_{\param_\nu}$ together with $\Lambda_{\param_\nu}$---instead,
the compactness argument in the proof as 
$\Lambda_{\param_\nu}^\mu \to \Lambda_{\param_\nu}$ requires the usual
notion of energy for almost complex structures compatible with contact
forms as in \cite{Hofer:weinstein}, taking advantage of
Proposition~\ref{prop:sameJ}.
\end{remark}

\section{Computations in ECH and SFT}
\label{sec:invariants}

We now apply the holomorphic curve construction of the previous section
to prove Theorems~\ref{thm:algTorsion}, \ref{thm:ECHvanish} and~\ref{thm:Umap}.  

Adopting the notation of \S\ref{sec:holOpenBook}, assume $M$ is a
compact $3$-manifold contained in a closed and connected contact 
$3$-manifold $(M',\xi)$,
$\Omega$ is a closed $2$-form on $M'$
and $\xi|_M$ is supported by a an $\Omega$-separating partially planar 
spinal open book~$\boldsymbol{\pi}$.
Fix all data necessary for defining the exact stable Hamiltonian 
structures $\hH_\param = (d\alpha,\Lambda_\param)$, along with 
$J_0 \in \jJ(\hH_0)$ admitting the $J_0$-holomorphic finite energy 
foliation $\fF$ on $\RR \times M$ and the perturbed moduli spaces
defined in \S\ref{sec:perturbation} and~\S\ref{sec:uniqueness}.
After possibly shrinking $M$ to a
smaller domain, we can and shall take
Assumptions~\ref{ass:partiallyPlanar} as given.
The assumptions also imply that $\Omega$ is exact on $M\spine$.

Denote the connected components of~$\Sigma$ by
$$
\Sigma = \Sigma_1 \cup \ldots \cup \Sigma_r
$$
and for each $i=1,\ldots,r$, let $z_i \in \CritMorse(H)$ denote the unique
index~$2$ critical point in~$\Sigma_i$.  Suppose the pages in
$M\planar\paper \subset M\paper$ have $k + 1 \ge 1$ boundary components.
Without loss of generality, we may assume that~$k$ is \emph{minimal}
in the sense that for any other connected component $M\other\paper \subset M\paper$
with planar pages and $\p M\other\paper \subset M\spine$, the pages have
\emph{at least} $k+1$ boundary components.  For $j=1,\ldots,r$, let
$$
m_j \in \NN
$$
denote the number of boundary components of each page in $M\planar\paper$
that lie in $\Sigma_j \times S^1 \subset M\spine$.

For Theorems~\ref{thm:ECHvanish} and~\ref{thm:algTorsion}, we add the
assumption that $M$ is a planar $k$-torsion domain.  In this case there
is at least one other connected component $M\other\paper$ which is
``different'' from $M\planar\paper$ in the sense that at least one of the
following conditions holds:
\begin{enumerate}
\item The pages in $M\other\paper$ are not diffeomorphic to those in~$M\planar\paper$,
\item For some $j \in \{1,\ldots,r\}$, the pages of $M\other\paper$ do not 
have exactly $m_j$ boundary components contained in $\Sigma_j \times S^1$.
\end{enumerate}
These assumptions imply that at least one connected component of $\Sigma$
has disconnected boundary, so after reordering the labels, assume this is
true of~$\Sigma_r$.  We may then assume $\Sigma_r$ contains at least one
extra critical point
$$
\zeta \in \Sigma_r, \qquad
\Morse(\zeta) = 1,
$$
such that the two gradient flow lines ending at $\zeta$ enter
through different components of~$\p\Sigma_r$, one from $M\planar\paper$ and the
other from $M\other\paper$.

Choose a nondegenerate contact form $\Lambda_\nu'$ and almost complex structure 
$J_\nu' \in \jJ(\Lambda_\nu')$ for which Propositions~\ref{prop:IFT}
and~\ref{prop:uniqueness} both hold (see also Remark~\ref{remark:nondegenerate}), 
and assume additionally that $J_\nu'$ is generic, so $\mM^\fF_i(J_\nu')$ is empty for all
$i \le 0$.  For any set of integers $n_1,\ldots,n_r \ge 0$,
define
\begin{equation}
\begin{split}
\mM(J_\nu' ; H ; n_1,\ldots,n_r) &\subset \mM(J_\nu' ; H,T_1),\\
\overline{\mM}(J_\nu' ; H ; n_1,\ldots,n_r) &\subset 
\overline{\mM}(J_\nu' ; H,T_1),
\end{split}
\end{equation}
to consist of all connected curves or buildings respectively such that for 
each $i \in \{1,\ldots,r\}$, the sum of the covering multiplicities of all
positive asymptotic orbits in $\Sigma_i \times S^1$ is less than or
equal to~$n_i$.  Applying Propositions~\ref{prop:IFT}
and~\ref{prop:uniqueness} under the above assumptions, we can now
completely classify the somewhere injective curves in
$\mM(J_\nu' ; H ; m_1,\ldots,m_r)$ as follows.  
The generic curve in this space is an embedded index~$2$ punctured sphere
with no negative ends and $k+1$ positive ends, of which $m_j$ ends are 
asymptotic to $\gamma_{z_j}$ for $j=1,\ldots,r$.
Aside from trivial cylinders, the only other somewhere injective curves
in $\mM(J_\nu' ; H ; m_1,\ldots,m_r)$ are the following: for every
$i=1,\ldots,r$ and every index~$1$ critical point $y \in \Sigma_i$,
\begin{itemize}
\item 
Each gradient flow line~$\ell$ entering through $\p \Sigma_i$ from
$M\planar\paper$ and ending at $y$ corresponds to a unique
index~$1$ punctured sphere $u_\ell$
with no negative ends, and $k+1$ positive ends asymptotic to the
same collection of simply covered orbits as the generic curves,
except with one copy of $\gamma_{z_i}$ replaced by $\gamma_y$;
\item There are exactly two embedded index~$1$ cylinders $v_y^+,v_y^-$, 
each with a positive end 
at $\gamma_{z_i}$ and negative end at $\gamma_{y}$, such that the closed
cycle $[v_y^+] - [v_y^-] \in H_2(M')$ defined by the two relative 
homology classes satisfies
$$
\int_{[v_y^+] - [v_y^-]} \Omega = 0.
$$
\end{itemize}
All of these curves are ECH-admissible, i.e.~they satisfy $\ind(u)=I(u)$.

\begin{proof}[Proof of Theorem~\ref{thm:ECHvanish}]
Consider the orbit set
$$
\boldsymbol{\gamma} = \{(\gamma_{z_1},m_1),\ldots,(\gamma_{z_{r-1}},m_{r-1}),
(\gamma_{z_r},m_r - 1),(\gamma_\zeta,1) \}
$$
as a generator of the ECH chain complex for 
$(M',\Lambda_\nu',J_\nu')$ with coefficients in
$\ZZ[H_2(M') / \ker\Omega]$.  Here we are abusing notation slightly by
allowing the possibility $m_r - 1 = 0$; if this is the case then
$\gamma_{z_r}$ should be removed from the orbit set altogether.
By the above classification, $\p_\ECH\boldsymbol{\gamma}$ counts two index~$1$
cylinders $v_y^+$ and $v_y^-$ for every $y \in \CritMorse(H)$ with 
Morse index~$1$, but these are
homologous in $H_2(M') / \ker\Omega$, and Proposition~\ref{prop:orientIndex1}
implies that for any choice of coherent orientations provided by
\cite{BourgeoisMohnke}, they cancel each other out.  
Thus the only index~$1$ curve remaining to count is the
punctured sphere $u_\ell$ corresponding to the unique gradient flow line~$\ell$
that enters $\p\Sigma_r$ from $M\planar\paper$ and ends at~$\zeta$.
Since~$u_\ell$ has no positive ends,
we find $\p_{\ECH}\boldsymbol{\gamma} = \boldsymbol{\emptyset}$.
\end{proof}
\begin{remark}
\label{remark:hintforAgustin}
In the above proof, we achieved cancelation for the cylinders $v_y^+$ and
$v_y^-$ by appealing to Proposition~\ref{prop:orientIndex1}, which is a
distinctly low-dimensional result, but there are also other ways to
see that the paired cylinders in this particular setting must be
oppositely oriented.  One such approach is to cap off $\Sigma$ by disks
and extend the function $H$ with a single index~$0$ critical point on
each cap, and then identify the normal
Cauchy-Riemann operators for the gradient flow cylinders with linearizations
of the Floer equation with respect to a $C^2$-small time-independent Hamiltonian
on the resulting closed surface.  The computation of
Hamiltonian Floer homology on this surface then implies that paired
cylinders must cancel because the index~$2$ critical point (viewed as a
constant Hamiltonian orbit) is a closed generator of the Floer chain complex.
This approach can also work in higher-dimensional settings, 
cf.~\cite{Moreno:algebraicGiroux}.
\end{remark}

To complete the analogous computation in SFT, we must be a bit more careful
since SFT in principle counts \emph{all} holomorphic curves, not only those
which are somewhere injective.  To be fully correct, the computation of
SFT requires an abstract perturbation of the Cauchy-Riemann equation
to achieve transversality for all solutions,
e.g.~this can be done following the polyfold scheme under development by
Hofer-Wysocki-Zehnder, cf.~\cite{HWZ:polyfoldBook}.  We will not need to
know any details about this perturbation, but only the following general
principles:
\begin{itemize}
\item Any Fredholm regular holomorphic curve with index~$1$ gives rise
uniquely to a solution of the perturbed problem for sufficiently small
perturbations.
\item If solutions of the perturbed problem with given asymptotic
behavior exist for arbitrarily small perturbations, then as the perturbation
is switched off we find a subsequence convergent to a holomorphic building
with the same asymptotic behavior.
\end{itemize}
This understood, counting the solutions of the perturbed problem
requires a precise description of the corresponding
space of index~$1$ $J_\nu'$-holomorphic buildings.

\begin{prop}
\label{prop:buildings}
Suppose $u \in \overline{\mM}(J_\nu'; H; m_1,\ldots,m_r)$ has index~$1$
and only simply covered orbits at its positive ends, including at most
one such end asymptotic to $\gamma_\zeta$ and the others all asymptotic
to $\gamma_{z_i}$ for $i \in \{1,\ldots,r\}$.  Then $u$ has only one level
and no nodes, and is somewhere injective: in particular, it is one of
the curves $u_\ell$ or $v_y^\pm$ that were counted in the proof of
Theorem~\ref{thm:ECHvanish}.
\end{prop}
\begin{proof}
We observe first that~$u$ must have at least one connected component
that is not a cover of a trivial cylinder: were it otherwise,
then since every positive asymptotic orbit is simply covered and at most
one of these is at an index~$1$ critical point, every component would
be either a trivial cylinder or a branched cover of $\RR\times \gamma_z$
for $\Morse(z)=2$.  Since the relevant covers of $\gamma_z$ all have
odd Conley-Zehnder index by Lemma~\ref{lemma:periodsAndIndices}, this
would imply that $\ind(u)$ is even and thus gives a contradiction.

Next, observe that every nonconstant component of the top level belongs to
the moduli space $\mM(J_\nu' ; H ; m_1,\ldots,m_r)$ and is thus a curve in one of the
perturbed moduli spaces arising from the foliation via
Proposition~\ref{prop:IFT}.  By induction, it follows that the nonconstant
components of  all other levels are also covers of such curves, so by
Lemma~\ref{lemma:wS}, they all have nonnegative index.  Since
$\ind(u) = 1$, Proposition~\ref{prop:indexNodes4} now implies that
$u$ cannot have any nodes and therefore (by stability) also has no
constant components.

By assumption, the total multiplicities of the positive ends of $u$ in each 
spinal component are bounded above by those of the holomorphic pages, thus
at most one component of $u$
can be a (perturbed) holomorphic page, and multiple covers of such curves
cannot appear.  If $u$ does have a component that is a page, that component
must be $u_\ell$, it must occupy the bottommost level,
and all other components then must have index~$0$, implying
via Lemma~\ref{lemma:wS} that all other nontrivial components are branched
covers of trivial cylinders with one positive end.  A nontrivial 
cover of this type
cannot appear in the top level since the positive asymptotic orbits are simply
covered; by induction, it follows that such covers cannot appear anywhere,
and we are left with $u = u_\ell$.

If no component of $u$ is a perturbed page, then exactly one component is
a cover of a (perturbed) gradient flow cylinder, which by
Lemma~\ref{lemma:wS} is then the unique component with index~$1$, while all
others have index~$0$.  Now the same argument again rules out any
nontrivial index~$0$ components since the positive asymptotic orbits are simply
covered, and implies at the same time that the index~$1$ component is
somewhere injective, hence $u = v_y^\pm$.
\end{proof}

We briefly recall from \cite{LatschevWendl} the necessary notation for the
version of the SFT 
chain complex that is involved in the definition of algebraic torsion.
A closed Reeb orbit for $\Lambda'_\nu$ is called \defin{good} if it is not
a double cover of an orbit whose odd/even parity (defined in terms of the
Conley-Zehnder index) is different from its own.
Let $\aA$ denote the $\ZZ_2$-graded supercommutative algebra with unit
over the group ring $\RR[H_2(M') / \ker\Omega]$, generated by all formal
variables of the form $q_\gamma$ where $\gamma$ is a good orbit.  
Writing $\dim M' = 2n-1$, the
degree $|q_\gamma| \in \ZZ_2$ of a generator is defined in general to be 
$n - 3 + \muCZ(\gamma)$ (mod~$2$), thus in the present case, $n=2$ and
the odd/even degree of $q_\gamma$ is opposite the parity of $\gamma$ as defined by
the Conley-Zehnder index.  The actual chain complex is the algebra of
formal power series $\aA[[\hbar]]$, where $\hbar$ is a formal variable defined
to have even degree.  Counting index~$1$ solutions to a
small abstract perturbation of the $J_\nu'$-holomorphic curve equation
in the symplectization then gives
rise to a differential operator $D_{\SFT} : \aA[[\hbar]] \to \aA[[\hbar]]$
satisfying $(D_{\SFT})^2 = 0$,
and the contact manifold $(M,\ker \Lambda'_\nu)$ is said to have
$\Omega$-twisted algebraic $k$-torsion for an integer $k \ge 0$ if and only if 
$\hbar^k$ is an exact element in the chain complex
$(\aA[[\hbar]],D_{\SFT})$.

\begin{proof}[Proof of Theorem~\ref{thm:algTorsion}]
We compute the operation of $D_{\SFT}$ on the element
$$
Q := q_{\gamma_{z_1}}^{m_1}\ldots q_{\gamma_{z_{r-1}}}^{m_{r-1}}
q_{\gamma_{z_r}}^{m_r - 1} q_\zeta \in \aA
$$
in the chain complex $(\aA[[\hbar]],D_{\SFT})$ outlined above.
By Proposition~\ref{prop:buildings},
all relevant index~$1$ solutions of the perturbed equation
can be identified with the Fredholm regular $J_\nu'$-holomorphic
curves $u_\ell$, $v_y^\pm$ that were counted in the proof of
Theorem~\ref{thm:ECHvanish}.  Once again the coherent orientations
give $v_y^+$ and $v_y^-$ opposite signs due to Prop.~\ref{prop:orientIndex1}
or Remark~\ref{remark:hintforAgustin},
so these cancel, and what's left is a single curve~$u_\ell$ with genus zero,
$k+1$ positive punctures (one for each generator in~$Q$), and no negative
punctures.   This is exactly the same situation that arose in the more
specialized computations of \cites{LatschevWendl,Wendl:openbook2}, and for the
same reasons, it gives $D_\SFT Q = \hbar^k$.
\end{proof}

\begin{proof}[Proof of Theorem~\ref{thm:Umap}]
Since the theorem is trivial whenever the ECH contact invariant
vanishes, the case with planar torsion is implied by
Theorem~\ref{thm:ECHvanish}.  Assume therefore that $M \subset M'$
is not a planar torsion domain: in this case $M = M'$, there is no
boundary, and all pages are planar and diffeomorphic to each other.  
Given $d \in \NN$, we can without loss of generality arrange $T_0 > 0$
in Lemma~\ref{lemma:periodsAndIndices} sufficiently small so that
$$
(k + 1) d < T_1 / T_0.
$$
Now by Proposition~\ref{prop:stableFEF}
(see also Remark~\ref{remark:nondegenerate}),
we can pick a nondegenerate contact form $\Lambda_\nu'$ and generic
$J_\nu' \in \jJ(\Lambda_\nu')$ so that for a generic point
$x \in M \setminus (\CritMorse(H) \times S^1)$, $(0,x) \in \RR\times M$ is
in the image of a unique index~$2$ $J_\nu'$-holomorphic curve
$$
u_x \in \mM_2^\fF(J_\nu'),
$$
and by Prop.~\ref{prop:uniqueness} $u_x$ is the only such curve in
$\mM^*(J_\nu' ; H,T_1)$.  We consider for $n=1,\ldots,d$ the generator
$$
\boldsymbol{\gamma}_n = \{(\gamma_{z_1},n m_1),\ldots,(\gamma_{z_r},n m_r)\}
$$
in the ECH chain complex for $\Lambda_\nu'$, $J_\nu'$ with coefficients
in $\ZZ[H_2(M) / \ker\Omega]$.  Then $\p_\ECH\boldsymbol{\gamma}_n$
counts only the pairs of cylinders $v_y^+$ and $v_y^-$ (combined with
trivial cylinders) which cancel each other out due to Prop.~\ref{prop:orientIndex1}
or Remark~\ref{remark:hintforAgustin}, thus
$$
\p_\ECH\boldsymbol{\gamma}_n = 0,
$$
so $\boldsymbol{\gamma}_n$ represents a homology class in ECH.  Defining
the $U$-map by counting admissible index~$2$ curves through $(0,x)$, the
action of~$U$ on $\boldsymbol{\gamma}_n$ then counts unions of 
trivial cylinders with the curve~$u_x$ and nothing else, hence
$$
U\boldsymbol{\gamma}_n = \boldsymbol{\gamma}_{n-1},
$$
implying $U^d \boldsymbol{\gamma}_d = \boldsymbol{\emptyset}$.
Since one can choose the data to make this true for arbitrarily large~$d$,
the result follows.
\end{proof}

\section{Spinal open books $\Rightarrow$ Lefschetz fibrations}
\label{sec:impressivePart}

In this section we complete the proofs of
Theorems~\ref{thm:classification}, \ref{thm:weak} and~\ref{thm:SteinDeformation}.  
By the
non-fillability results proved in \cite{LisiVanhornWendl1} via spine removal,
we can restrict
our attention to partially planar spinal open books that do not have planar 
torsion, i.e.~from now on, $M = M'$ has no boundary and $\boldsymbol{\pi}$
is symmetric.  The main idea in the proofs will be to attach to a given
filling $(W,\omega)$ the special
cylindrical end constructed in \S\ref{sec:model}, which contains a
pseudoholomorphic foliation, and then push this foliation into the filling~$W$.
The goal will be to obtain a Lefschetz fibration whose fibres are
the leaves of the foliation and whose base is the moduli space itself.
By looking at intersections of the leaves with each holomorphic vertebra,
we will then show that the moduli space defines a branched cover of each 
vertebra, which will necessarily be unbranched if the spinal open book
is Lefschetz-amenable (see Definition~\ref{defn:simple}).
Then, in order to complete the proof of Theorem~\ref{thm:classification}, it 
will be necessary to understand how the moduli space deforms under a 
generic homotopy of almost complex structures associated to a homotopy
of the symplectic data on~$W$.

The argument is similar to the one in \cite{Wendl:fillable}, and should be
thought of as a punctured version of McDuff's classification of ruled
symplectic manifolds \cite{McDuff:rationalRuled}.  There are two new ingredients in
the spinal setting, however. The first and main new ingredient is that
the moduli space of index~$2$ curves coming from the planar pages of the
open book has codimension~$1$ boundary in addition to codimension~$2$
nodal curves.  As in \cite{Wendl:fillable}, the codimension~$2$ nodal curves
correspond to Lefschetz critical fibers (a proof of this fact is sketched
in the appendix of \cite{Wendl:rationalRuled}).
The codimension~$1$ boundary consists of index~$1$ 
buildings in the filling attached to holomorphic gradient flow
cylinders in $\R \times M$; this phenomenon arises due to the presence
of index~$1$ critical points on vertebrae, thus it can be avoided in the
setting of ordinary open books (where all vertebrae are disks) but not in
the general case.  The key observation however is that
these buildings come in canceling pairs,
since the same can be assumed to be true for the gradient flow cylinders.
The base of the Lefschetz fibration will thus be a quotient moduli space, obtained
by ``sewing together'' the moduli space of index~$2$ curves along canceling
boundary components.  (We note that the actual situation is slightly 
more delicate since there may also be corner points to the moduli space,
at which two boundary strata intersect.)

The second new ingredient compared with \cite{Wendl:fillable} is that in 
the spinal open book setting, it
makes sense to consider \emph{weak} fillings that are exact only on the 
spine.  For a general weak filling, it is not
possible to attach a symplectization end with a holomorphic foliation,
but non-exactness away from the spine was already incorporated into the
stable Hamiltonian model constructed in \S\ref{sec:model}; we will take 
advantage of this by working directly with stable Hamiltonian data 
instead of contact data at infinity.  In the more specialized setting of
blown up summed open books, weak fillings were handled via a different
and less powerful approach in \cite{NiederkruegerWendl}.

\subsection{The completed filling and the moduli space}
\label{sec:moduliSpace}

Our standing assumptions will be as follows.
Assume $(M,\xi)$ is a \emph{closed} contact $3$-manifold with a supporting 
symmetric spinal open book
$$
\boldsymbol{\pi} := \Big(\pi\spine : M\spine \to \Sigma,
\pi\paper : M\paper \to S^1 \Big)
$$
whose pages have genus zero.  As in \S\ref{sec:invariants}, denote the
connected components of $\Sigma$ by 
$$
\Sigma_1,\ldots,\Sigma_r \subset \Sigma,
$$
and let
$$
m_i \in \NN, \qquad i=1,\ldots,r
$$
denote the number of boundary components
that pages have in the component $\Sigma_i \times S^1 \subset M\spine$;
note that this definition does not depend on the choice of a page since
$\boldsymbol{\pi}$ is symmetric.  Assume $\Omega$ is a closed $2$-form
on $M$ such that $\Omega|_{\xi} > 0$ and $\Omega|_{M\spine}$ is exact, and
$(W,\omega)$ is a compact symplectic manifold with boundary $\p W = M$
such that $\omega|_{TM} = \Omega$.  For the strong filling case
of Theorem~\ref{thm:classification}, we will sometimes also
require $\omega = d\lambda$ near $\p W$ for some $1$-form $\lambda$
such that 
$$
\alpha := \lambda|_{TM}
$$
is a contact form for~$\xi$.  The dual Liouville vector field in this case
will be denoted by $V_\lambda$, where by definition
$$
\omega(V_\lambda,\cdot) = \lambda.
$$
For the Liouville case, $\lambda$ will be
assumed to extend to a global primitive of $\omega$ on~$W$, and for the
almost Stein case, $\lambda$ will also have the form $-df \circ J$ for some
smooth function $f : W \to \RR$ and $\omega$-tame almost complex structure~$J$.

In \S\ref{sec:model}, we constructed a noncompact symplectic
model $(\widehat{E},\omega_E)$ containing a weakly contact hypersurface
$$
(M^-,\xi_-) \subset (\widehat{E},\omega_E)
$$
that is contactomorphic to $(M,\xi)$; let us fix such a contactomorphism
and identify $M = M^-$ henceforward.  The symplectic structure takes the form
$$
\omega_E = \frac{1}{KC} \left( C\, d\lambda_K + \eta \right),
$$
where $\eta$ is a closed $2$-form on $M\paper \subset M$ 
with $[\eta] = [\Omega] \in H^2_\dR(M)$,
$\lambda_K$ is a Liouville form whose restriction to $M^-$ is a
contact form for $\xi_-$, and $C > 0$ and $K > 0$ are large
constants.  Let
$$
\widehat{\nN}_-(\p E) \subset \widehat{E}
$$
denote the unbounded region in $\widehat{E}$ with 
$\p \widehat{\nN}_-(\p E) = - M^-$.

In general we only care about the deformation class of the symplectic 
data on~$W$, thus we are free to make modifications in a collar neighborhood
of~$\p W$ and then rescale globally so as to produce any desired
contact form~$\alpha$ at the boundary.  
By \cite{MassotNiederkruegerWendl}*{Lemma~2.10}, we can deform
$\omega$ near $\p W$ and subsequently rescale so
that without loss of generality,
$$
\Omega = \omega_E|_{TM^-}
$$
under the chosen contactomorphism identifying $M$ with $M^-$.  We then
define a completion of $(W,\omega)$ by
$$
(\W,\widehat{\omega}) := (W,\omega) \cup_{M = M^-} 
\left( \widehat{\nN}_-(\p E) , \omega_E \right),
$$
where a standard application of the Moser deformation trick 
(see for example \cite{NiederkruegerWendl}*{Lemma~2.3}) produces collars
near $\p W$ and $\p \widehat{\nN}_-(\p E)$ that permit a smooth symplectic
gluing of the two pieces.    The gluing is simpler to describe 
if $(W,\omega)$ is a strong or exact filling, as we can then use collar
neighborhoods constructed by flowing along Liouville vector fields.
In these cases we can assume $\eta \equiv 0$ so that $\omega_E$ is the
exterior derivative of the Liouville form $\frac{1}{K} \lambda_K$,
and $\lambda$ can then (after a global rescaling) 
be deformed near $\p W$ so that it glues together
smoothly with $\frac{1}{K} \lambda_K$.  Denote the resulting Liouville
form on a neighborhood of $\widehat{\nN}_-(\p E) \subset \W$ by
$\widehat{\lambda}$, so we have
$$
\widehat{\omega} = d\widehat{\lambda}
$$
on this neighborhood if $(W,\omega)$ is a strong filling, and the same
holds globally on $\W$ if the filling is exact.

We must do something slightly different in the almost Stein case:
recall from \S\ref{sec:Jandf} that $(\widehat{E},\omega_E)$ comes equipped 
with a compatible almost complex structure $J_+$ and a $J_+$-convex function
$f_+ : \widehat{E} \to \RR$ such that the induced Liouville form
$\lambda_+ = - df_+ \circ J_+$ matches $\frac{1}{K} \lambda_K$ on the
region $\widehat{\nN}(\p_h E) \subset \widehat{E}$ but not everywhere else.
We shall therefore forget temporarily about $\omega_E$ and glue the almost
Stein manifolds $(W,J,f)$ and $(\widehat{\nN}_-(\p E),J_+,f_+)$ together
along $M = M^-$.  To enable this, one can first deform the Weinstein structures
$(\omega,V_\lambda,f)$ near $\p W$ and rescale $\lambda$ and $f$ globally so 
that these data glue together smoothly with $\lambda_+$ and $f_+$;
this can be done without introducing any critical points of $f$ in the
collar, thus one can then apply Lemma~\ref{lemma:almostStein2} to produce a
deformed $d\lambda$-tame almost complex structure $J$ with
$\lambda = -df \circ J$ such that $J$ glues together smoothly with~$J_+$.
The result is an almost Stein completion
$$
(\W,\widehat{J},\widehat{f}) = (W,J,f) \cup_{M = M^-} \left( \widehat{\nN}_-(\p E),
J_+,f_+ \right)
$$
such that $\widehat{\lambda} := - d\widehat{f} \circ \widehat{J}$ matches the modified
Liouville form $\lambda_+$ in the cylindrical end.  
By gluing $\lambda$ together with the interpolated Liouville form
$\Theta$ provided by Lemma~\ref{lemma:LiouvilleInterp}, we also obtain a
Liouville form $\widehat{\Theta}$ on $\W$ with
\begin{itemize}
\item $\widehat{\Theta} = \widehat{\lambda}$ on~$W$,
\item $\widehat{\Theta} = \frac{1}{K} \lambda_K$ near infinity, and
\item $d\widehat{\Theta}$ tames~$\widehat{J}$ everywhere.
\end{itemize}
We will use $\widehat{\Theta}$ below to define energy for
$\widehat{J}$-holomorphic curves in~$\W$.

Recall now that the end we just attached to form the completion contains a 
region
$$
\widehat{\nN}_+(\p E) \subset \widehat{\nN}_-(\p E)
$$
that is identified with the half-symplectization $[0,\infty) \times M^+$
of a certain stable hypersurface $M^+ = - \p \widehat{\nN}_+(\p E)
\subset \widehat{E}$, carrying a stable Hamiltonian structure
$\hH_+ = (\Omega_+,\Lambda_+)$.  In \S\ref{sec:Jandf} and \S\ref{sec:holPages},
we constructed the compatible almost complex structure $J_+$ such that its
restriction to $\widehat{\nN}_+(\p E)$ is in $\jJ(\hH_+)$, and
$(\widehat{\nN}_-(\p E),J_+)$ contains \emph{holomorphic vertebrae} and
\emph{holomorphic pages}.  Indeed, let us select a holomorphic vertebra
from Proposition~\ref{prop:vertebrae} corresponding to each component
$\Sigma_i \subset \Sigma$, and denote it by
$$
\dot{\Sigma}_i \subset \widehat{\nN}_-(\p E) \subset \W, \qquad i=1,\ldots,r.
$$
Meanwhile, the holomorphic pages form a foliation~$\fF_+$ on $\widehat{E}$ 
whose restriction to $\widehat{\nN}_+(\p E) = [0,\infty) \times M^+$
has the same form as the foliation $\fF$ that we considered 
in \S\ref{sec:holOpenBook}, thus we are free to use the analytical results
of that section, including the index and intersection-theoretic computations.
We are also free to impose 
Assumptions~\ref{ass:partiallyPlanar}---this mostly follows already from 
the premise that $\boldsymbol{\pi}$ is a partially planar domain without 
planar torsion, but it includes also the following conditions on the
Hamiltonian function $H : \Sigma \to [0,\infty)$ and complex structure $j$
on $\Sigma$ that play key roles in the construction of~$\fF_+$:
\begin{itemize}
\item $H : \Sigma \to [0,\infty)$ has no index~$0$ Morse critical points,
and it has exactly one index~$2$ critical point on every connected component 
of~$\Sigma$;
\item $(H,j)$ are in general position (see Definition~\ref{defn:GP}).
\end{itemize}
For technical reasons, it will be convenient (though not essential) to add
one more assumption:\footnote{The purpose of this extra assumption is to
simplify Lemma~\ref{lemma:genAsymp}, which implies a special case of the
unpublished folk theorem that asymptotic contributions to Siefring's
intersection numbers (see \S\ref{sec:Siefring}) are a non-generic phenomenon
for somewhere injective curves.}
\begin{itemize}
\item The Hessian $\nabla^2 H(z) : T_z\Sigma \to T_z\Sigma$ commutes with
$j$ at every $z \in \CritMorse(H)$ with $\Morse(z)=2$.
\end{itemize}
The assumptions on $H$ imply that each
index~$1$ critical point of $H$ is connected to the unique index~$2$
critical point in the same connected component of $\Sigma$ by exactly
two gradient flow lines.  The resulting gradient flow cylinders receive
opposite orientations by Prop.~\ref{prop:orientIndex1}
or Remark~\ref{remark:hintforAgustin}.  It will be useful to note that
the unbranched multiple covers of these cylinders also satisfy the
automatic transversality criterion of Prop.~\ref{prop:automatic} and
have the same properties with regard to orientations, hence:
\begin{lemma}
\label{lemma:cancel}
Every holomorphic gradient flow cylinder $u$ in $\fF_+$ is a Fredholm
regular index~$1$ curve, and so are its unbranched $k$-fold covers for
every $k \in \NN$.  Moreover, $\fF_+$ contains exactly two holomorphic
gradient flow cylinders asymptotic to the same pair of orbits, and for
any choice of coherent orientations from \cite{BourgeoisMohnke} and
for each $k \in \NN$, the unbranched $k$-fold covers of these two
gradient flow cylinders are oppositely oriented.  \qed
\end{lemma}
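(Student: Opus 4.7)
The plan is to assemble Lemma~\ref{lemma:cancel} from tools established earlier in \S\ref{sec:SHSgeneral} and~\S\ref{sec:holOpenBook}, freely transferring the local analysis of the foliation~$\fF$ from \S\ref{sec:holOpenBook} to the foliation $\fF_+$, whose restriction to the cylindrical end has identical structure. First, by Proposition~\ref{prop:FredholmIndices}, a gradient flow cylinder $u \in \fF_+$ with positive end at $\gamma_{z_+}$ and negative end at $\gamma_{z_-}$ has $\ind(u) = \Morse(z_+) - \Morse(z_-)$; the standing assumption that $H$ has no index~$0$ critical points and exactly one index~$2$ critical point per component of~$\Sigma$ forces $\Morse(z_+) = 2$ and $\Morse(z_-) = 1$, so $\ind(u) = 1$. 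Since $u$ projects embeddedly to~$M$ and has extremal asymptotic winding in the $S^1$-invariant trivialization (cf.\ Lemma~\ref{lemma:extremalWinding}), the normal Chern number $c_N(u)$ vanishes, and Proposition~\ref{prop:automatic} yields Fredholm regularity. For unbranched $k$-fold covers, both $\ind = 1$ and Fredholm regularity are already recorded in Lemma~\ref{lemma:wS}.

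Next, I would show that $\fF_+$ contains exactly two gradient flow cylinders asymptotic to any realized pair $(\gamma_z,\gamma_y)$. By Proposition~\ref{prop:J0foliation}, every such cylinder projects to a gradient trajectory leaving~$y$ along its $1$-dimensional unstable manifold and running out to an index~$2$ critical point; the assumption that each component of~$\Sigma$ carries a unique index~$2$ critical point forces both branches of the unstable manifold of~$y$ to terminate at the same point~$z$, producing exactly two such trajectories, each of which lifts uniquely up to~$\RR$-translation to an element of~$\fF_+$.

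For the opposite-orientation claim, I would invoke Proposition~\ref{prop:orientIndex1}. The two cylinders are embedded, not covers of trivial cylinders, share the same set of asymptotic orbits, and have $\ind = 1$ and $c_N = 0$; the unique asymptotic orbit with even Conley--Zehnder index is $\gamma_y$ at the negative end, since $\muCZ(\gamma_y) = \Morse(y) - 1 = 0$. The leading asymptotic eigenfunction at~$\gamma_y$ lies in the $1$-dimensional extremal eigenspace $V_{\gamma_y}^- \subset \Gamma(\gamma_y^*\Xi_+)$, which under the fiberwise identification \eqref{eqn:fiberwiseIso} corresponds to the unstable subspace of the Hessian of~$H$ at~$y$. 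Unpacking the parametrization $u(s,t) = (a(s),\ell(s),t)$ together with the linearized gradient flow $\dot{\xi} = \nabla^2 H(y)\cdot \xi$ as $s \to -\infty$, one sees that this eigenfunction is a positive scalar multiple of the outgoing tangent vector of the trajectory traced by~$\ell$. The two trajectories leave~$y$ in opposite directions along the unstable manifold, so the two leading eigenfunctions are negative multiples of each other, and Proposition~\ref{prop:orientIndex1} assigns the two cylinders opposite signs. The same conclusion extends to the unbranched $k$-fold covers, since pullback by the cover $\gamma_y^k \to \gamma_y$ sends each leading eigenfunction to an element of $V_{\gamma_y^k}^-$ and preserves their linear dependence. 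The most delicate step is verifying that $V_{\gamma_y}^-$ really is identified with the unstable eigenspace of $\nabla^2 H(y)$; this requires linearizing the Reeb dynamics encoded in formula~\eqref{eqn:ReebSigma} at the constant orbit $\{y\} \times S^1$ and matching the resulting asymptotic operator to a Hessian-like operator, but the computation is routine once the conventions are pinned down.
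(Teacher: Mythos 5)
Your argument follows exactly the route the paper takes: the lemma is presented there as an immediate consequence of the paragraph preceding it, which cites Proposition~\ref{prop:automatic} for automatic transversality of the (covers of) gradient flow cylinders, the Morse-theoretic count of gradient trajectories for the "exactly two" claim, and Proposition~\ref{prop:orientIndex1} (or Remark~\ref{remark:hintforAgustin}) for the opposite signs. The only new content you supply is the welcome explicit justification of why Proposition~\ref{prop:orientIndex1} gives opposite signs, namely the identification of the extremal eigenspace at the hyperbolic orbit with the positive eigenspace of $\nabla^2 H(y)$ and the observation that the two exit directions are negative multiples of each other---this is precisely what the paper leaves to the reader, and your verification is correct.
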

We will refer to the pairs of gradient flow cylinders described
in this lemma as \defin{canceling pairs}.

Assumptions~\ref{ass:partiallyPlanar} also presume that
Lemma~\ref{lemma:periodsAndIndices} is applicable, imposing dynamical
conditions on the stable Hamiltonian structure~$\hH_+$: in particular, this
provides constants
$T_0, T_1 > 0$ such that all Reeb orbits of period less than $T_1$ are
covers of
$\{z\} \times S^1 \subset \widecheck{M}^+\spine$,
and each simply covered orbit of this form has period less than~$T_0$,
where $T_1 / T_0$ may be assumed arbitrarily large.  More specifically,
$T_1 / T_0$ is assumed to be larger than the number of boundary
components of any page.  Since $\hH_+$ is of confoliation type,
Proposition~\ref{prop:posPuncture} then implies that all breaking orbits
appearing in the holomorphic buildings discussed below will be covers
of $\{z\} \times S^1$ for various $z \in \CritMorse(H)$.  These orbits are
\defin{elliptic} if $\Morse(z)=2$ and \defin{hyperbolic} if $\Morse(z)=1$,
so we will refer to them as such.

In the almost Stein case, we have already extended $J_+$ to a
$d\widehat{\Theta}$-tame almost complex structure $\widehat{J}$ on $\W$,
and we shall allow a generic $d\widehat{\Theta}$-tame perturbation of 
$\widehat{J}$ in the interior of~$W$; note that this perturbs the Liouville form
$\widehat{\lambda} = - d\widehat{f} \circ \widehat{J}$, but such a change is harmless
since the Liouville condition is open.
In the weak, strong and exact cases, we simply extend $J_+$ arbitrarily to an
$\widehat{\omega}$-tame almost complex structure $\widehat{J}$ on $\W$
which is generic in the interior of~$W$.  In particular, $\widehat{J}$ satisfies
$$
\widehat{J} \equiv J_+ \quad \text{ in $\widehat{\nN}_-(\p E)$},
$$
and this gives $(\W,\widehat{J})$ the structure of an almost complex manifold with
a cylindrical end $([0,\infty) \times M^+,J_+)$ compatible with the stable
Hamiltonian structure~$\hH_+ = (\Omega_+,\Lambda_+)$.  We 
define the \defin{energy} of a
punctured $\widehat{J}$-holomorphic curve $u : \dot{S} \to \W$ as
$$
E(u) := \sup_{\varphi \in \tT} \int_{\dot{S}} u^*\widehat{\omega}_\varphi,
$$
where for some $T > 0$ chosen large enough such that
$\widehat{\Theta} = \frac{1}{K}\lambda_K$ in $[T,\infty) \times M^+ \subset
\widehat{\nN}_+(\p E)$,
$$
\tT := \left\{ \varphi \in C^\infty([T,\infty) \to [T,T+1)) \ \Big|\ 
\text{$\varphi' > 0$ and $\varphi(r) = r$ for $r$ near~$T$} \right\},
$$
and the modified symplectic form $\widehat{\omega}_\varphi$ is defined such that
$$
\widehat{\omega}_\varphi = d \left( ( e^{\varphi(r)} - 1) \Lambda_+ \right) +
\Omega_+ \quad \text{ on $[T,\infty) \times M^+$},
$$
while on the rest of $\W$, $\widehat{\omega}_\varphi$ is defined to match
$d\widehat{\Theta}$ in the almost Stein case or $\widehat{\omega}$ in weak,
strong and Liouville cases.  The point of this definition is that
curves with $E(u) < \infty$ will now have asymptotically
cylindrical behavior and obey the compactness theory in \cite{SFTcompactness}.

We will also need to consider smooth deformations 
$$
\omega_\tau, \quad \lambda_\tau, \quad J_\tau, \quad f_\tau \qquad
0 \le \tau \le 1
$$
of the symplectic data on~$W$.  After suitable modifications near $\p W$
and global rescaling, we can fit this into the above picture by 
considering a smooth $1$-parameter family of completed fillings
$$
(\W,\widehat{\omega}_\tau) \text{ or } 
(\W,\widehat{J}_\tau,\widehat{f}_\tau), \qquad 0 \le \tau \le 1
$$
with a generic $1$-parameter family of tame almost 
complex structures $\widehat{J}_\tau$ such that all the data on 
$\widehat{\nN}_-(\p E) \subset \W$ is $\tau$-independent and matches
the construction above.  The definition of energy is then also
$\tau$-dependent, but this does not affect the existence of uniform
energy bounds since the data is $\tau$-independent in the cylindrical end.

With this setting in place, our moduli spaces of curves 
in $\W$ will now be defined in terms of the $J_+$-holomorphic 
foliation $\fF_+$ of $\RR \times M^+$.
Let $\ModPlus{}$ denote the moduli space of unparametrized
$J_+$-holomorphic curves in $\RR \times M^+$ modulo $\RR$-translation
that belong to the foliation~$\fF_+$, and let
$$
\ModPlus{i} \subset \ModPlus{} \quad\text{ for $i=1,2$}
$$
denote the components with virtual dimension~$i$.
We denote by $\cpctModPlus{}$ and $\cpctModPlus{i} \subset 
\cpctModPlus{}$ respectively the closures of these in the space of stable 
holomorphic buildings in $(\RR \times M^+,J_+)$, see~\S\ref{sec:energy}.  
The structure of the foliation $\fF_+$ implies that
$\ModPlus{1} = \cpctModPlus{1}$ is a finite set, consisting of all holomorphic
gradient flow cylinders and the exceptional holomorphic pages that have
one end asymptotic to a hyperbolic orbit and the rest asymptotic to
elliptic orbits.  Each connected 
component of $\cpctModPlus{2}$ is either a circle or a compact interval 
bounded by buildings with two levels whose unique nontrivial components are
each curves in~$\ModPlus{1}$.  Define an equivalence relation
by saying that for two buildings $u$ and $u'$, $u \sim u'$ if and only if
their positive asymptotic orbits coincide up to a permutation of the punctures
and their bottommost levels are identical; we shall write
$$ 
\quotModPlus := \cpctModPlus{2} / \sim.
$$
This quotient moduli space has the topology of a disjoint union of circles:
indeed, Lemma~\ref{lemma:cancel} implies that the equivalence relation 
identifies pairs of buildings in $\p\cpctModPlus{2}$
having the same index~$1$ holomorphic page in their lower levels and
a canceling pair of gradient flow cylinders in their upper levels.
Note that gradient flow cylinders never appear as bottom levels
of these buildings, thus the elements of $\quotModPlus$ have no
negative ends.

Let $\mM(\widehat{J})$ and $\overline{\mM}(\widehat{J})$ denote the spaces of unparametrized
finite-energy holomorphic curves or stable buildings respectively 
with arithmetic genus zero in $(\W,\widehat{J})$, 
and define a similar equivalence relation by
$$
\widehat{\mM}(\widehat{J}) := \overline{\mM}(\widehat{J}) / \sim,
$$
where $u$ and $u'$ are considered equivalent if and only if their 
asymptotic orbits coincide up to permutation and their bottommost
nonempty levels are identical.  Since the main level is allowed to be empty
in general, it may happen that the bottommost nonempty level of
$u \in \overline{\mM}(\widehat{J})$ is an upper level, and viewing buildings in
$(\RR \times M^+,J_+)$ without negative ends as buildings in $(\W,\widehat{J})$ with empty main levels gives
rise to a natural inclusion
\begin{equation}
\label{eqn:inclusion}
\quotModPlus \subset \widehat{\mM}(\widehat{J}).
\end{equation}
With this inclusion in mind, we define
$$
\quotMod{}{} \subset \widehat{\mM}(\widehat{J})
$$
to be the smallest open and closed subset that contains~$\quotModPlus$.
Observe that since the holomorphic pages in $\ModPlus{}$ have only
positive ends, they can also be regarded as $\widehat{J}$-holomorphic
curves in $[0,\infty) \times M^+ = \widehat{\nN}_+(\p E) \subset \W$, 
so that each $u \in \quotModPlus$ gives
rise to a $1$-parameter family of elements in $\quotMod{}{}$ that converge
in the SFT-topology to~$u$ as their main levels
are pushed to infinity.  All these families of holomorphic pages therefore
belong to~$\quotMod{}{}$, and we will see in Proposition~\ref{T:moduliSurface} 
below that they form collar neighborhoods of the boundary of~$\quotMod{}{}$.

Define subsets
$$
\quotMod{\reg}{},\ \quotMod{\sing}{},\ \quotMod{\exot}{} \subset \quotMod{}{},
$$
where:
\begin{itemize}
\item $u \in \quotMod{\reg}{}$ if its main level is a smooth embedded 
$\widehat{J}$-holomorphic curve with one connected component and
only simply covered asymptotic orbits;
\item $u \in \quotMod{\sing}{}$ if its main level is a nodal $\widehat{J}$-holomorphic
curve with two embedded connected components that intersect each other 
transversely at a single node and nowhere else, and both have only simply 
covered asymptotic orbits;
\item $u \in \quotMod{\exot}{}$ if its main level is a smooth embedded
$\widehat{J}$-holomorphic curve with one connected component such that one of its
asymptotic orbits is doubly covered, and the rest are simply covered.
\end{itemize}
As the notation should suggest, elements of $\quotMod{\reg}{}$ and $\quotMod{\sing}{}$
will give rise to the regular and singular fibers respectively of a
Lefschetz fibration on~$W$ when the spinal open book is {Lefschetz-amenable}.  Elements of 
$\quotMod{\exot}{}$ are a slightly different kind of object that we will refer
to as \defin{exotic fibers}: we will see that they can occur only in the 
{non-amenable} case, thus producing a topological decomposition of $W$ that
is more general than a Lefschetz fibration.

\begin{defn}
\label{defn:genericBC}
For $i=1,2$, assume $\Sigma_i$ are closed oriented surfaces and $\dot{\Sigma}_i \subset \Sigma_i$
are obtained by deleting finitely many points.  A continuous map
$\pi : \dot{\Sigma}_1 \to \dot{\Sigma}_2$ will be called a \defin{branched cover (with degree~$d \in \NN$) of surfaces
with cylindrical ends} if it is proper and its unique extension to a map
$\Sigma_1 \to \Sigma_2$ is a branched cover (with degree~$d$).  
We will say that a branched cover $\pi : \dot{\Sigma}_1 \to \dot{\Sigma}_2$ is
\defin{generic} if its branch points are all simple (i.e.~they have branching order~$2$)
and all have distinct images.\footnote{Note that the generic conditions imposed on
branch points do not apply to all branch points of the \emph{extended} map
$\Sigma_1 \to \Sigma_2$, which can include puntures.}
\end{defn}

\begin{prop} \label{T:moduliSurface}
For generic choices of $\widehat{J}$ on $\W$ satisfying the conditions specified above,
$\quotMod{}{}$ decomposes into disjoint subsets
$$
\quotMod{}{} = \quotMod{\reg}{} \cup \quotMod{\sing}{} \cup \quotMod{\exot}{}
\cup \quotModPlus,
$$
where $\quotMod{\reg}{}$ is an open subset, $\quotMod{\sing}{}$ and 
$\quotMod{\exot}{}$ are each finite, and $\quotMod{}{}$ has the
topology of a compact, connected and oriented surface with boundary
$$
\p\quotMod{}{} = \quotModPlus.
$$
Moreover, every point in $\W$ is in the image of the main level for a unique
curve in the interior of $\quotMod{}{}$, and this interior admits a 
smooth structure such that the resulting continuous surjection
$$
\Pi : \W \to \quotMod{}{} \setminus \quotModPlus : x \mapsto 
\text{the curve through~$x$}
$$
is smooth outside the finitely many nodes of the curves
in~$\quotMod{\sing}{}$.  For each $i=1,\ldots,r$, the holomorphic vertebra
$\dot{\Sigma}_i \subset \W$ is disjoint from the nodes of curves
in $\quotMod{\sing}{}$, and the map $\Pi$ restricts to $\dot{\Sigma}_i$
as a generic branched cover with degree~$m_i$ of surfaces with cylindrical ends,
whose branch points all have image in~$\quotMod{\reg}{}$. 
Finally, $\quotMod{\exot}{}$ is empty if and only if the branched covers
$\Pi|_{\dot{\Sigma}_i}$ have no branch points for every $i=1,\ldots,r$.
\end{prop}

This will be enough to conclude the first part of 
Theorem~\ref{thm:classification}, that a planar spinal open book must be 
uniform if $M$ is fillable, and we will explain in \S\ref{sec:Lefschetz}
how to turn $\Pi : \W \to \quotMod{}{} \setminus \p\quotMod{}{}$ into a
bordered Lefschetz fibration on $W$ whenever there are no exotic fibers.

We now state a corresponding result for $1$-parameter deformations of the
data.  Consider a $1$-parameter family of almost complex structures
$\{ \widehat{J}_\tau \}_{\tau \in [0,1]}$ on $\W$ such that
\begin{itemize}
\item $\widehat{J}_\tau|_{\widehat{\nN}_-(\p E)} = J_+$ for all~$\tau$;
\item $\widehat{J}_\tau$ is $\widehat{\omega}_\tau$-tame 
(or in the almost Stein case $d\widehat{\Theta}_\tau$-tame) for all~$\tau$;
\item $\widehat{J}_0 = \widehat{J}$; 
\item $\widehat{J}_1$ and the homotopy $\{\widehat{J}_\tau\}$ are both generic on the
interior of~$W$.
\end{itemize}
Let $\mM(\{\widehat{J}_\tau\})$, $\overline{\mM}(\{\widehat{J}_\tau\})$ and
$\widehat{\mM}(\{\widehat{J}_\tau\})$ denote the spaces of pairs 
$(u,\tau)$ with $\tau \in [0,1]$ and $u \in \mM(\widehat{J}_\tau)$,
$u \in \overline{\mM}(\widehat{J}_\tau)$ or $u \in \widehat{\mM}(\widehat{J}_\tau)$
respectively.  Since $\widehat{J}_\tau$ is independent of $\tau$ near infinity, the 
inclusion \eqref{eqn:inclusion} generalizes to this parametrized setting as
$$
\quotModPlus \times [0,1] \subset \widehat{\mM}(\{\widehat{J}_\tau\}),
$$
and we define
$$
\quotModHomotop{} \subset \widehat{\mM}(\{\widehat{J}_\tau\})
$$
as the smallest open and closed subset containing $\quotModPlus \times [0,1]$,
along with subsets
$$
\quotModHomotop{\reg}, \ \quotModHomotop{\sing}, \ 
\quotModHomotop{\exot} \subset \quotModHomotop{}
$$
defined by the same criteria as before.  For $\tau \in [0,1]$, let
$$
\quotMod{}{\tau} := \left\{ u \in \widehat{\mM}(\widehat{J}_\tau)\ \Big|\ 
(u,\tau) \in \quotModHomotop{} \right\},
$$
with corresponding subsets $\quotMod{\reg}{\tau}$, $\quotMod{\sing}{\tau}$
and $\quotMod{\exot}{\tau}$.  We will sometimes identify $\quotMod{}{}$
and $\quotMod{}{\tau}$ with the corresponding subsets of~$\quotModHomotop{}$.

\begin{prop} 
\label{prop:deform}
For generic families $\{\widehat{J}_\tau\}_{\tau \in [0,1]}$ satisfying the conditions
specified above, there exists a homeomorphism
$$
\Psi : \quotMod{}{} \times [0,1] \to \quotModHomotop{}
$$
satisfying
\begin{equation*}
\begin{split}
\Psi\big(\quotMod{\reg}{} \times [0,1]\big) &= \quotModHomotop{\reg},\\
\Psi\big(\quotMod{\sing}{} \times [0,1]\big) &= \quotModHomotop{\sing},\\
\Psi\big(\quotMod{\exot}{} \times [0,1]\big) &= \quotModHomotop{\exot}, \\
\Psi\big(\quotMod{}{} \times \{\tau\}\big) &= \quotMod{}{\tau} \text{ for every
$\tau \in [0,1]$},
\end{split}
\end{equation*}
and $\Psi$ restricts to the identity map on 
the subsets $\quotModPlus \times [0,1]$ and $\quotMod{}{} \times \{0\}$ 
in $\quotModHomotop{}$.  Moreover, outside of $\quotModPlus \times [0,1]$,
one can define a natural smooth structure on $\quotModHomotop{}$ 
for which $\Psi$ is smooth outside of possibly finitely many
points in $\quotMod{\sing}{} \times (0,1)$ and $\quotMod{\exot}{} \times (0,1)$,
and the conclusions of Proposition~\ref{T:moduliSurface} hold for
$\quotMod{}{\tau}$ for every $\tau \in [0,1]$, giving a continuous surjection
\begin{equation*}
\begin{split}
\Pi : \W \times [0,1] &\to \quotModHomotop{} \setminus
\left( \quotModPlus \times [0,1] \right) \\
(x,\tau) &\mapsto (u,\tau) \text{ where $x \in \im(u)$}
\end{split}
\end{equation*}
which is smooth outside of the finite collection of continuous and
piecewise smooth paths in $\W \times [0,1]$ traced out
by the nodes of curves in $\quotModHomotop{\sing}$.  The restriction
of $\Pi$ to $\dot{\Sigma}_i \times [0,1]$ for $i=1,\ldots,r$ defines
a smooth deformation of generic branched covers of surfaces with cylindrical ends.
\end{prop}

\begin{remark}
The caveat about the smoothness of $\Psi$ in Proposition~\ref{prop:deform} has 
to do with isolated breaking configurations in $\quotModHomotop{\sing}$ and
$\quotModHomotop{\exot}$ that will be dealt with in Lemma~\ref{lemma:singLeaves}.
Here there are always two ways to glue such configurations and thus move the
parameter $\tau$ forward or backward, producing a $1$-parameter family that is
manifestly continuous, but we have chosen not to worry about whether it is
smooth.  This is in any case immaterial in our main applications, for
which the moduli space does not need to have a canonical smooth structure
as long as the foliation it produces on $\W$ is smooth.
(In places where the latter is in doubt, i.e.~at nodal points, the foliation
can always be smoothed by hand with a small perturbation.)
\end{remark}

\subsection{Generic conditions}
\label{sec:genericity}

Before stating the main compactness results, let us clarify
the role that our genericity conditions on $\widehat{J}$ and $\{\widehat{J}_\tau\}$ are
going to play.  As usual such assumptions guarantee that 
moduli spaces of somewhere injective curves are smooth and have
dimension equal to the index, with the consequence that this index is
bounded from below.  In addition to this, we will need to use genericity on 
occasion to limit non-transverse intersections and asymptotic intersection 
contributions in the sense of 
Siefring.  The results of this subsection should be understood to be true after
choosing $\widehat{J}$ and $\{\widehat{J}_\tau\}$ from comeager subsets of the sets of all 
almost complex structures or smooth homotopies thereof
with the properties specified in \S\ref{sec:moduliSpace}.

\begin{lemma}
\label{lemma:genericity}
For every $\tau \in [0,1]$, all somewhere injective $\widehat{J}_\tau$-holomorphic
curves $v$ in $\W$ that intersect the interior of~$W$ satisfy 
$\ind(v) \ge -1$, and there exists at most one such curve (up to
parametrization) with
$\ind(v) = -1$.  Moreover, for almost every $\tau \in [0,1]$, and in
particular for $\tau=0$ and $\tau=1$, all such curves satisfy $\ind(v) \ge 0$.
\end{lemma}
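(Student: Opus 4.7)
My plan is to prove Lemma~\ref{lemma:genericity} via a standard parametric transversality argument for somewhere injective asymptotically cylindrical holomorphic curves, using that $\widehat{J}_\tau$ may be perturbed freely in the interior of~$W$.

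First, I would set up the universal moduli space
$$
\mM^*(\jJ) := \left\{ (v,\widehat{J}') \ \big|\ \text{$\widehat{J}'\in\jJ$ and $v$ is a somewhere injective $\widehat{J}'$-holomorphic curve hitting $\mathring{W}$} \right\},
$$
where $\jJ$ denotes the Banach manifold of tame almost complex structures that agree with $J_+$ on $\widehat{\nN}_-(\p E)$ (and in the almost Stein case, tame a fixed family of interpolated symplectic forms). Since $v$ is somewhere injective and its image meets $\mathring{W}$, the open dense set of injective points of~$v$ contains some $z$ with $v(z) \in \mathring{W}$; this is precisely the condition needed to run the Dragnev-style transversality argument (see \cite{Wendl:SFT} or \cite{Dragnev}), so $\mM^*(\jJ)$ is a smooth Banach manifold and the projection $\mM^*(\jJ) \to \jJ$ is Fredholm of index~$\ind(v)$.

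Next, applying the Sard-Smale theorem to this projection yields a comeager subset $\jJ_\reg \subset \jJ$ for which every somewhere injective curve hitting $\mathring{W}$ satisfies $\ind(v) \ge 0$; I would take $\widehat{J}_0 = \widehat{J}$ and $\widehat{J}_1$ in this set. For the parametric statement, I would similarly consider the universal moduli space over the Banach manifold of smooth homotopies $\{\widehat{J}_\tau\}_{\tau \in [0,1]}$ with fixed endpoints in $\jJ_\reg$ and cylindrical-end behavior fixed; here the linearized operator acquires an extra direction corresponding to the variation of $\tau$, and the same somewhere-injective-point argument shows the parametric universal moduli space is smooth, with projection to the space of paths being Fredholm of index $\ind(v)+1$. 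A second application of Sard-Smale produces a comeager subset of homotopies for which the parametric moduli space is a smooth manifold of dimension $\ind(v)+1$, forcing $\ind(v) \ge -1$ for any curve appearing along the path, which proves the first claim.

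For the refined statement that at each $\tau$ at most one curve has $\ind(v) = -1$, I would further sharpen the genericity argument: along a generic path, the $0$-dimensional parametric moduli space of index~$-1$ curves projects to $[0,1]$ via a map whose fiber over each $\tau$ is finite, and one arranges by a standard multi-jet-transversality (or pairwise evaluation) argument that the images of distinct elements of this discrete set in $[0,1]$ are all distinct. This is the same kind of argument used to separate bifurcation times in parametric Floer theory, and also immediately yields the ``almost every $\tau$'' statement, since the set of $\tau$ where some index~$-1$ curve appears is a finite (hence measure-zero) subset of $(0,1)$. The main obstacle to watch for here is that the asymptotic orbits in question may be multiply covered with branching of asymptotic eigenfunctions contributing hidden intersections; however, for this lemma we only need bare transversality for the Cauchy-Riemann equation at somewhere injective curves, so the difficulty does not arise, and the result reduces to bookkeeping on top of the standard transversality package.
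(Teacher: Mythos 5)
Your proposal follows essentially the same route as the paper: standard Sard--Smale over the universal moduli space (restricting perturbations to the interior of $W$, using a somewhere-injective point there) for the index bounds $\ind(v)\ge 0$ generically and $\ind(v)\ge -1$ parametrically, followed by a pairwise evaluation argument to separate the appearance times of distinct index~$-1$ curves, which is precisely the diagonal-transversality argument the paper carries out for the product universal moduli space. One small caution: the claim that the set of exceptional $\tau$ is \emph{finite} does not follow from transversality alone (that requires the compactness established later in Proposition~\ref{T:deformationJ}); for this lemma discreteness, hence measure zero, is all you need and all the argument delivers.
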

\begin{proof}
The almost complex stuctures $\widehat{J}_\tau$ are fixed on 
$\widehat{\nN}_-(\p E) \subset \W$
but generic perturbations are allowed in the interior of~$W \subset \W$,
thus the inequalities $\ind(v) \ge -1$ and $\ind(v) \ge 0$ follow from
standard transversality arguments as in \cite{McDuffSalamon:Jhol}.
The fact that no individual $\widehat{J}_\tau$ admits more than one simple
curve of index~$-1$ follows by showing that for generic families $\{\widehat{J}_\tau\}$,
the map
$$
\mM^*(\{\widehat{J}_\tau\}) \times \mM^*(\{\widehat{J}_\tau\}) \to [0,1] \times [0,1] :
\left( (u,\tau) , (u',\tau') \right) \mapsto (\tau,\tau')
$$
is transverse to the diagonal in $[0,1] \times [0,1]$ outside of the
diagonal in its domain.  Here $\mM^*(\{\widehat{J}_\tau\})$ denotes the space of all
pairs $(u,\tau)$ such that $\tau \in [0,1]$ and $u$ is an unparametrized
somewhere injective finite-energy $\widehat{J}_\tau$-holomorphic curve that intersects
the interior of~$W$.  This transversality result is probably also standard,
but since we do not know a good reference for the proof, here is a sketch.
One starts by defining a universal moduli space $\univ^*$ consisting of
tuples $(u,\tau,u',\tau',\{\widehat{J}_\tau\})$, where $\{\widehat{J}_\tau\}$ belongs to a suitable
Banach manifold $\jJ_{[0,1]}$ of homotopies of almost complex structures
(e.g.~of Floer $C_\epsilon$ class or in $C^k$ for some large 
$k \in \NN$), and $(u,\tau)$ and $(u',\tau')$ are two distinct elements
of $\mM^*(\{\widehat{J}_\tau\})$.
The fact that they are distinct implies in particular that whenever $\tau=\tau'$,
each of $u$ and $u'$ has an injective point where it does not intersect the 
other curve.  Standard arguments via elliptic regularity and
the implicit function theorem then show that $\univ^*$ is a 
differentiable Banach manifold and, moreover, that the map
$$
\univ^* \to [0,1] \times [0,1] : \left( u,\tau,u',\tau',\{\widehat{J}_\tau\}\right)
\mapsto (\tau,\tau')
$$
is a submersion.  It follows that the preimage of the diagonal under this
map is a submanifold $\univ^*_\Delta \subset \univ^*$, so applying the
Sard-Smale theorem to the natural projection $\univ^*_\Delta \to \jJ_{[0,1]}$
provides a comeager subset of $\jJ_{[0,1]}$ for which the desired
transversality result is satisfied.  In the final step, one can use the
``Taubes trick'' (cf.~\cite{McDuffSalamon:Jhol}*{\S 3.2}
or \cite{Wendl:lecturesV2}*{\S 4.4.2}) to replace $\jJ_{[0,1]}$ with a
suitable Fr\'echet manifold of \emph{smooth} homotopies $\{\widehat{J}_\tau\}$.
\end{proof}

Genericity also implies that the existence of non-transverse intersections of
somewhere injective $\widehat{J}$-holomorphic curves with fixed holomorphic 
hypersurfaces is a ``codimension two phenomenon''.  We will be interested
especially in controlling intersections with the union of the holomorphic
vertebrae
$$
\dot{\Sigma} := \dot{\Sigma}_1 \cup \ldots \cup \dot{\Sigma}_r.
$$
The next statement follows directly from the results of
\cite{CieliebakMohnke:transversality}*{\S 6}.

\begin{lemma}
\label{lemma:genVertebrae}
For $i \in \ZZ$, $\ell \in \NN$, $\mathbf{k} := (k_1,\ldots,k_\ell) \in \NN^\ell$ 
and $\tau \in [0,1]$, let 
$\mM^*_i(\widehat{J}_\tau ; \dot{\Sigma},\mathbf{k})$ denote the following moduli space of
constrained $\widehat{J}_\tau$-holomorphic curves with $\ell$ marked points: elements of
$\mM^*_i(\widehat{J}_\tau ; \dot{\Sigma},\mathbf{k})$ are represented by tuples
$$
(S,j,\Gamma,(\zeta_1,\ldots,\zeta_\ell),u)
$$
such that $u : (\dot{S} := S \setminus \Gamma,j) \to (\W,\widehat{J}_\tau)$ is a 
somewhere injective finite-energy $\widehat{J}_\tau$-holomorphic curve of index~$i$ 
intersecting the interior of~$W$, $\zeta_1,\ldots,\zeta_\ell \in \dot{S}$ 
are distinct points,
two tuples are equivalent if they are related by a biholomorphic map of their
domains preserving
the ordered sets of punctures $\Gamma$ and marked points $(\zeta_1,\ldots,\zeta_\ell)$,
and $u$ also satisfies the constraints
$$
u(\zeta_j) \in \dot{\Sigma}
$$
such that for each $j=1,\ldots,\ell$,
the local intersection index of $u$ with $\dot{\Sigma}$ at $\zeta_j$
is at least~$k_j$.  Then for almost every $\tau$, and in particular for
$\tau \in \{0,1\}$, $\mM^*_i(\widehat{J}_\tau;\dot{\Sigma},\mathbf{k})$ is a smooth 
manifold with
$$
\dim \mM^*_i(\widehat{J}_\tau;\dot{\Sigma},\mathbf{k}) = i - 2 \sum_{j=1}^\ell (k_j-1).
$$ 
Moreover, the space $\mM^*_i(\{\widehat{J}_\tau\};\dot{\Sigma},\mathbf{k})$ of pairs
$(u,\tau)$ such that $\tau \in [0,1]$ and
$u \in \mM^*_i(\widehat{J}_\tau;\dot{\Sigma},\mathbf{k})$ is a smooth manifold
of dimension $i + 1 - 2 \sum_j (k_j-1)$, so in particular, this
space is empty whenever $i + 1 - 2 \sum_j (k_j-1) < 0$.
\qed
\end{lemma}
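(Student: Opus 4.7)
The plan is to apply the transversality-with-tangency framework developed by Cieliebak and Mohnke in \cite{CieliebakMohnke:transversality}*{\S 6}. The crucial structural point is that each holomorphic vertebra $\dot{\Sigma}_i$ is contained entirely in $\widehat{\nN}_-(\p E)$, where $\widehat{J}_\tau \equiv J_+$ for every $\tau \in [0,1]$ by construction. Hence $\dot{\Sigma}$ is a properly embedded $\widehat{J}_\tau$-holomorphic submanifold of $\W$ for all $\tau$ simultaneously. We cannot therefore achieve transversality by perturbing $\widehat{J}_\tau$ near $\dot{\Sigma}$, but the curves in the moduli space are by assumption somewhere injective and meet the interior of $W$, where $\widehat{J}_\tau$ is free to vary. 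This is exactly the configuration for which the Cieliebak-Mohnke machinery is designed.

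I would first assemble the universal moduli space $\mM^*_i(\jJ;\dot{\Sigma},\mathbf{k})$, where $\jJ$ is a Banach manifold of $\widehat{\omega}$-tame almost complex structures (or of smooth homotopies thereof) that agree with $J_+$ on $\widehat{\nN}_-(\p E)$, and then impose the tangency constraints at the $\ell$ marked points. The unconstrained universal moduli space of somewhere injective finite-energy curves with $\ell$ free marked points is a Banach manifold by the standard argument (see e.g.~\cite{McDuffSalamon:Jhol}). The requirement that the local intersection index of $u$ with $\dot{\Sigma}$ at $\zeta_j$ is at least $k_j$ amounts to a holomorphic $(k_j - 1)$-jet vanishing condition along the normal direction of $\dot{\Sigma}$, and cuts out a smooth Banach submanifold provided the relevant jet-evaluation map is transverse to the ``higher contact'' stratum.

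The technical core is surjectivity of the constrained linearized operator, enlarged to include variations of $\widehat{J}_\tau$ and of the marked points. As in \cite{CieliebakMohnke:transversality}*{\S 6}, this reduces to producing, at any simple $\widehat{J}_\tau$-holomorphic curve $u$ with prescribed tangencies to $\dot{\Sigma}$, an injective point $\zeta_0 \in \dot{S}$ with $u(\zeta_0)$ lying \emph{off} $\dot{\Sigma}$: compactly supported variations of $\widehat{J}_\tau$ near $u(\zeta_0)$ then hit the cokernel of the unperturbed operator. Such a $\zeta_0$ always exists in our situation, because $u$ is somewhere injective and meets the interior of $W$, which is disjoint from $\dot{\Sigma} \subset \widehat{\nN}_-(\p E)$, and $\widehat{J}_\tau$ may be perturbed freely there. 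After upgrading the resulting Sard-Smale conclusion to a residual set of smooth almost complex structures by the usual Taubes trick (cf.~\cite{McDuffSalamon:Jhol}*{\S 3.2}), one obtains that $\mM^*_i(\widehat{J}_\tau;\dot{\Sigma},\mathbf{k})$ is a smooth manifold for almost every $\tau$, and $\mM^*_i(\{\widehat{J}_\tau\};\dot{\Sigma},\mathbf{k})$ is a smooth manifold throughout.

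The dimension count is then bookkeeping: the unconstrained moduli of simple index-$i$ curves with $\ell$ free marked points has dimension $i + 2\ell$; the $\ell$ incidence conditions $u(\zeta_j) \in \dot{\Sigma}$ contribute $-2\ell$; and each tangency of order $k_j - 1$ contributes an additional $-2(k_j - 1)$, yielding $i - 2\sum_{j=1}^\ell (k_j - 1)$, with one more dimension in the parametric version. I expect the only genuine obstacle to be a careful verification that the Cieliebak-Mohnke surjectivity argument survives verbatim in our punctured, cylindrical-end setting, which it should: their local analysis near the tangency points is independent of the behavior at infinity, and our energy bounds together with the fixed data on $\widehat{\nN}_-(\p E)$ ensure that no new analytic difficulty arises.
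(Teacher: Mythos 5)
Your proposal is correct and follows exactly the approach the paper takes, which is to cite the transversality-with-tangency results of Cieliebak--Mohnke \cite{CieliebakMohnke:transversality}*{\S 6} directly (the paper gives no further detail before the \qed). Your elaboration correctly identifies the key structural points that make the citation legitimate here, namely that $\dot{\Sigma}$ is $\widehat{J}_\tau$-holomorphic for all $\tau$ since it lies in $\widehat{\nN}_-(\p E)$ where $\widehat{J}_\tau \equiv J_+$, while the somewhere injective curves meet the interior of $W$ where $\widehat{J}_\tau$ may be perturbed freely, and your dimension bookkeeping is right.
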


Note that any curve in the ordinary moduli space without marked points
gives rise to an element of the space in the above lemma whenever it
intersects~$\dot{\Sigma}$: one can simply add marked points wherever these
intersections occur.  Adding a marked point $\zeta$ with the constraint
$u(\zeta) \in \dot{\Sigma}$ but without any constraint on the local intersection
index does not change the dimension of the moduli space.
Combining this observation with the usual results about 
generic transversality of
the evaluation map from \cite{McDuffSalamon:Jhol}, we obtain:

\begin{lemma}
\label{lemma:nodalBranching}
Suppose $\tau \in [0,1]$ and $u_0$ and $u_1$ are somewhere injective
$\widehat{J}_\tau$-holomorphic curves that both intersect the interior of $W$
such that for each $j=0,1$, we have $\ind(u_j) \in \{-1,0\}$, 
$u_j$ intersects $\dot{\Sigma}$ transversely, and all its asymptotic
Reeb orbits are disjoint from those of~$\dot{\Sigma}$.  Then the sets
$\im u_0 \cap \dot{\Sigma}$ and $\im u_1 \cap \dot{\Sigma}$ are
disjoint.   \qed
\end{lemma}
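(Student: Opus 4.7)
The plan is to upgrade the parametric transversality argument used in the proof of Lemma~\ref{lemma:genericity} to a two-curve configuration carrying a simultaneous incidence constraint against~$\dot\Sigma$. We may assume $u_0$ and $u_1$ represent distinct unparametrized curves, since otherwise the conclusion only holds in the trivial case that $\im u_0 \cap \dot\Sigma = \emptyset$, and that case is immediate. Suppose for contradiction that there existed some $p \in \im u_0 \cap \im u_1 \cap \dot\Sigma$. Since each $u_j$ intersects $\dot\Sigma$ transversely and has asymptotic orbits disjoint from those of~$\dot\Sigma$, the preimages $\zeta_j \in \dot S_j$ of $p$ are isolated, interior points of the domains; we promote them to marked points and view the data as a single constrained holomorphic configuration.

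I would then build a universal moduli space
\[
\univ^* \subset \mM^*(\{\widehat J_\tau\}) \times \mM^*(\{\widehat J_\tau\}) \times \jJ_{[0,1]}
\]
consisting of tuples $((u_0,\zeta_0),(u_1,\zeta_1),\tau,\{J_\tau\})$ with $u_0 \ne u_1$, each $u_j$ a somewhere injective $J_\tau$-holomorphic curve meeting $\Int W$ and endowed with a marked point $\zeta_j$ satisfying $u_j(\zeta_j) \in \dot\Sigma$ and $u_0(\zeta_0) = u_1(\zeta_1)$. Here $\jJ_{[0,1]}$ is a Banach manifold of homotopies (of Floer $C_\varepsilon$ or $C^k$ class for large $k$) that are fixed on $\widehat\nN_-(\p E)$ to equal $J_+$. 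Since both curves intersect $\Int W$, standard elliptic regularity plus the implicit function theorem show $\univ^*$ is a Banach manifold and the projection $\pi\colon \univ^* \to \jJ_{[0,1]}$ is Fredholm; the constraint against the fixed $J_+$-holomorphic divisor $\dot\Sigma$ fits into the framework of \cite{CieliebakMohnke:transversality}*{\S 6}, and distinctness of $u_0, u_1$ guarantees injective points at which $J_\tau$ can be perturbed independently for each curve.

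The crux is the index computation. Adding a marked point and constraining its image to the $2$-dimensional submanifold~$\dot\Sigma$ preserves dimension, so each $u_j$ contributes $\ind(u_j)$; imposing $u_0(\zeta_0) = u_1(\zeta_1)$ within $\dot\Sigma$ is a further codimension-$2$ constraint, and the $\tau$-parameter adds~$1$. Thus
\[
\ind \pi \;=\; \ind(u_0) + \ind(u_1) - 2 + 1 \;\le\; -1
\]
because $\ind(u_j) \in \{-1,0\}$. By Sard-Smale a comeager subset of $\jJ_{[0,1]}$ has empty preimage, and the Taubes trick (as in the last paragraph of the proof of Lemma~\ref{lemma:genericity}) promotes this to a comeager subset of smooth homotopies, which we absorb into the genericity hypothesis on $\{\widehat J_\tau\}$.

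The main technical obstacle I anticipate is handling the endpoints $\tau \in \{0,1\}$: there the parametric family cannot be used, and one must argue static genericity for the single almost complex structures $\widehat J_0$ and $\widehat J_1$. In that case the analogous universal moduli space lives over a Banach manifold of almost complex structures $\widehat J$ (with no $\tau$-direction), and the same index calculation yields $\ind \pi = \ind(u_0) + \ind(u_1) - 2 \le -2$, which is again strictly negative, so the conclusion survives at the endpoints. A minor additional point is to verify that the marked-point setup is compatible with the (possibly) Morse-Bott asymptotic data at~$\p M$ in \S\ref{sec:holOpenBook} — but since we have assumed that no asymptotic orbit of $u_j$ coincides with one of $\dot\Sigma$, all marked points remain in a compact portion of $\dot\Sigma$ where standard elliptic theory applies without modification.
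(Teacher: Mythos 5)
Your argument is essentially the same as the paper's, which simply combines the constrained dimension count from Lemma~\ref{lemma:genVertebrae} with generic transversality of the evaluation map (citing \cite{McDuffSalamon:Jhol}) and leaves the details implicit; your universal moduli space with Sard-Smale plus the Taubes trick is a correct way to spell this out, and your index count $\ind(u_0)+\ind(u_1)-2+1 \le -1$ in the parametric case (resp.\ $\ind(u_0)+\ind(u_1)-2 \le -2$ at $\tau\in\{0,1\}$ where $\ind(u_j)\ge 0$) is accurate. One small blemish: your opening sentence dismissing the $u_0=u_1$ case as ``immediate'' is backwards --- if $u_0=u_1$ the conclusion would assert $\im u_0\cap\dot\Sigma=\emptyset$, which is generally \emph{false} for index-$0$ curves; the correct justification for reducing to distinct curves is that the lemma is only applied (in Lemma~\ref{lemma:branchedCover}) to the two distinct components of a nodal curve, not that the coincident case is easy.
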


A similar phenomenon in Siefring's intersection theory guarantees that
generically, asymptotic contributions to the intersection counts
$u*v$ and $\delta(u) + \delta_\infty(u)$ are zero whenever $u$ and $v$ are 
somewhere injective curves of sufficiently low index.  Since no proof of this 
fact is available in the current literature, we shall only address the following
simpler special case which suffices for our purposes.  Fix a simply covered
elliptic orbit 
$\gamma : S^1 \to \Sigma \times S^1 \subset \widecheck{M}^+\spine : t \mapsto (z,t)$,
where $\Morse(z)=2$.  Using the formula \eqref{eqn:ReebSigma} for the Reeb
vector field and the natural trivialization $\gamma^*\Xi_+ = S^1 \times T_z\Sigma$,
the associated \defin{asymptotic operator} 
$\mathbf{A}_\gamma : \Gamma(\gamma^*\Xi_+) \to \Gamma(\gamma^*\Xi_+)$
(see e.g.~\cite{Wendl:automatic}*{\S 3.2}) is identified with
$$
C^\infty(S^1,T_z\Sigma) \to C^\infty(S^1,T_z\Sigma) : 
v \mapsto - j \dot{v} + \frac{\nondegParam}{K} \nabla_v \nabla H.
$$
In light of the added assumption in \S\ref{sec:moduliSpace} 
that $\nabla^2 H : T_z\Sigma \to T_z\Sigma$ is $j$-linear when
$\Morse(z)=2$, $\mathbf{A}_\gamma$ is therefore complex linear and thus has
real $2$-dimensional eigenspaces.  Let $V^-_\gamma \subset \Gamma(\gamma^*\Xi_+)$
denote the eigenspace with the largest negative eigenvalue, and suppose
$\mM^*(\{\widehat{J}_\tau\})$ denotes any moduli space consisting of pairs
$(u,\tau)$ such that $\tau \in [0,1]$ and $u$ is an unparametrized
and possibly disconnected somewhere injective finite-energy 
$\widehat{J}_\tau$-holomorphic curve that intersects
the interior of~$W$ with each of its connected components
and has at least two punctures $z_1,z_2$ asymptotic
to~$\gamma$.  Then, as was discussed in \S\ref{sec:automatic},
the asymptotic formulas of \cites{HWZ:props1,Mora,Siefring:asymptotics} 
give rise to an \defin{asymptotic evaluation map}
$$
\ev^\infty = (\ev^\infty_1,\ev^\infty_2) : \mM^*(\{\widehat{J}_\tau\}) \to V^-_\gamma \times V^-_\gamma,
$$
where for $i=1,2$, $\ev^\infty_i(u,\tau)$ associates to $u$ the leading
asymptotic eigenfunction of $u$ at~$z_i$.  As with ordinary evaluation maps
as in \cite{McDuffSalamon:Jhol}, one can show that this asymptotic evaluation 
map is a submersion when extended to the universal moduli space, hence 
generic choices of $\{\widehat{J}_\tau\}$ can make it transverse to any given submanifold
of $V^-_\gamma \times V^-_\gamma$, in particular the diagonal.  This leads to the following
result, which is essentially Proposition~3.9 in \cite{HutchingsTaubes:gluing2}.

\begin{lemma}
\label{lemma:genAsymp}
For every $\tau \in [0,1]$ and every pair of somewhere injective 
$\widehat{J}_\tau$-holomorphic
curves $u$ and $v$ in $\W$ of index~$-1$ or~$0$ that intersect the interior 
of~$W$ and each have a puncture asymptotic to the same simply covered elliptic 
orbit in $\CritMorse(H) \times S^1 \subset \widecheck{M}^+\spine$, the values
of the asymptotic evaluation maps at these two punctures are distinct.  
Moreover, the same holds for two punctures of a single curve with these
same properties.  \qed
\end{lemma}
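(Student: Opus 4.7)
The plan is to deduce both statements from a Sard--Smale argument applied to a universal moduli space of marked, somewhere injective pseudoholomorphic pairs, using the submersion property of the parametrized asymptotic evaluation map. This follows the template of \cite{HutchingsTaubes:gluing2}*{Proposition~3.9}, adapted to the parametrized setting already used for Lemma~\ref{lemma:genericity}.

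First I would fix a separable Banach manifold $\jJ_{[0,1]}$ of $1$-parameter families $\{\widehat{J}_\tau\}_{\tau \in [0,1]}$ satisfying the constraints from \S\ref{sec:moduliSpace} (in particular, equal to $J_+$ on $\widehat{\nN}_-(\p E)$ and taming the reference symplectic or $d\widehat{\Theta}_\tau$-forms), in Floer $C_\epsilon$ or $C^k$ regularity so that Sard--Smale applies. The hypothesis imposed at the end of \S\ref{sec:moduliSpace}, that $\nabla^2 H$ commutes with $j$ at each index~$2$ critical point, makes the asymptotic operator $\mathbf{A}_\gamma$ complex linear on $\gamma^*\Xi_+$, so $V^-_\gamma$ is a complex line and hence a real $2$-dimensional vector space.

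Next I would assemble the universal moduli space $\univ^*_{\mathrm{pair}}$ of tuples $(u,v,z_u,z_v,\tau,\{\widehat{J}_\tau\})$, where $\tau \in [0,1]$, $\{\widehat{J}_\tau\} \in \jJ_{[0,1]}$, and $u$, $v$ are somewhere injective finite-energy $\widehat{J}_\tau$-holomorphic curves of index in $\{-1,0\}$ meeting the interior of $W$, equipped with distinguished punctures $z_u,z_v$ asymptotic to $\gamma$ (taken distinct when $u = v$). Elliptic regularity and the implicit function theorem make $\univ^*_{\mathrm{pair}}$ a Banach manifold, and the asymptotic formulas of \cites{HWZ:props1,Mora,Siefring:asymptotics} yield a smooth map
\[
\Phi : \univ^*_{\mathrm{pair}} \to V^-_\gamma, \qquad \Phi := \mathrm{ev}^\infty_{z_u} - \mathrm{ev}^\infty_{z_v}.
\]
The central claim is that $\Phi$ is a submersion, so $\Phi^{-1}(0)$ is a Banach submanifold of codimension~$2$. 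Sard--Smale applied to the projection $\Phi^{-1}(0) \to \jJ_{[0,1]}$ then produces a comeager subset of homotopies for which, at every $\tau \in [0,1]$, the corresponding parametrized moduli space of pairs with matching asymptotic eigenfunctions has virtual dimension $\ind(u)+\ind(v)+1-2 \le -1$ and is therefore empty; the Taubes trick \cite{McDuffSalamon:Jhol}*{\S 3.2} upgrades the genericity statement from $\jJ_{[0,1]}$ to the space of smooth homotopies, and it can be intersected with the generic subsets produced by Lemmas~\ref{lemma:genericity} and~\ref{lemma:genVertebrae}.

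The main obstacle is the submersion property of $\Phi$. The argument is to choose a compactly supported perturbation $Y$ of $\widehat{J}_\tau$ on a small open set $U \subset W$ containing an injective point of $u$ disjoint from the image of $v$ (when $u = v$, one instead chooses $U$ to be near the end $z_u$ and disjoint from the end $z_v$), and to show that the infinitesimal deformation of $u$ delivered by the corresponding inverse of the linearized Cauchy--Riemann operator on a slice can realize any prescribed element of $V^-_\gamma$ as the leading asymptotic eigenfunction at $z_u$, while leaving $v$ unperturbed. The existence of such a $U$ is a consequence of somewhere injectivity of $u$ and $v$ together with unique continuation for pseudoholomorphic curves, as in \cite{HutchingsTaubes:gluing2}*{Proposition~3.9}; the continuous and surjective dependence of the leading eigenfunction on compactly supported interior variations follows from the asymptotic expansion of \cite{Siefring:asymptotics} and is the same mechanism behind the asymptotic evaluation submersion arguments in \cite{Wendl:automatic}*{\S 4}. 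Modulo this analytical step the dimension count above closes the argument for both the two-curve and the single-curve versions.
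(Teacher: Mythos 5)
Your overall framework --- Sard-Smale applied to a parametrized universal moduli space, a submersion property for the asymptotic evaluation map, and a dimension count forcing the constrained space to be empty --- is the same approach the paper takes (and both you and the paper defer the submersion claim to \cite{HutchingsTaubes:gluing2}*{Proposition~3.9}). A cosmetic difference: the paper works with a single possibly disconnected curve carrying two distinguished punctures and asks for transversality of $(\ev^\infty_1,\ev^\infty_2)$ to the diagonal in $V^-_\gamma \times V^-_\gamma$, whereas you use ordered pairs $(u,v)$ and the difference map $\Phi := \ev^\infty_{z_u}-\ev^\infty_{z_v}$; note that your dimension count $\ind(u)+\ind(v)+1-2$ should read $\ind(u)+1-2$ when $u=v$, though both are $\le -1$ so the conclusion is unaffected.

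There is, however, a genuine gap in your treatment of the single-curve case. You propose establishing the submersion property when $u=v$ by ``choosing $U$ to be near the end $z_u$ and disjoint from the end $z_v$,'' but the homotopy $\{\widehat{J}_\tau\}$ is constrained to coincide with the fixed model structure $J_+$ throughout $\widehat{\nN}_-(\p E)$, which contains the entire cylindrical end --- so no perturbation of $\widehat{J}_\tau$ is permitted there. Moreover, both ends at $z_u$ and $z_v$ converge to the same trivial cylinder $\RR\times\gamma$, so there is no region that is simultaneously ``near $z_u$'' and ``disjoint from $z_v$'' at infinity. The correct argument (following \cite{HutchingsTaubes:gluing2}*{Proposition~3.9}) perturbs $\widehat{J}_\tau$ on a small ball near an injective point of $u$ in the interior of $W$, far from both ends, and establishes surjectivity of $(\ev^\infty_1,\ev^\infty_2)$ onto $V^-_\gamma\times V^-_\gamma$ via a duality/unique-continuation argument: a nonzero annihilator of the image of the linearized evaluation map would produce a nontrivial element of a formal cokernel vanishing on an open set, contradicting the similarity principle. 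In other words, the claim to establish is not that one can vary $\ev^\infty_{z_u}$ while holding $\ev^\infty_{z_v}$ fixed --- which is generally false for a connected curve, since the linearized deformation $\eta$ responding to a compactly supported variation of $\widehat{J}_\tau$ is a global object --- but that the combined evaluation map is onto, which suffices for transversality to the diagonal.
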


The main results of \cite{Siefring:asymptotics} imply that whenever two
punctures of the same sign are asymptotic to the same orbit, the asymptotic eigenvalue 
controlling the relative exponential decay rate of the ends to each other is 
extremal if and only if the asymptotic evaluation map for both punctures
has distinct values.  Since the relevant eigenspace in the case at hand is
$2$-dimensional, a non-extremal decay rate is equivalent to non-extremal
asymptotic winding, so by the definitions of $\delta_\infty(u)$ and
$u*v$ in \cite{Siefring:intersection}, Lemma~\ref{lemma:genAsymp} implies:

\begin{lemma}
\label{lemma:noneHidden}
Suppose $\tau \in [0,1]$, $u$ and $v$ are somewhere injective
finite-energy $\widehat{J}_\tau$-holomorphic curves of index~$-1$ or~$0$ that intersect 
the interior of~$W$ and have non-identical images, and every Reeb orbit that 
occurs as an asymptotic orbit
for both $u$ and $v$ is a simply covered elliptic orbit in
$\CritMorse(H) \times S^1 \subset \widecheck{M}^+\spine$.  Then $u * v$ is the
algebraic count of actual intersections of $u$ and $v$, i.e.~it includes
no asymptotic contributions.  Moreover, if every orbit occurring as an
asymptotic orbit for two distinct ends of $u$ is also a simple elliptic
orbit in $\CritMorse(H) \times S^1$, then $\delta_\infty(u) = 0$.
\qed
\end{lemma}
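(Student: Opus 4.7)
The plan is to combine the intersection-theoretic definitions of $u * v$ and $\delta_\infty(u)$ from \cite{Siefring:intersection} with the genericity statement in Lemma~\ref{lemma:genAsymp}. First I would recall that, by Siefring's work, $u * v$ differs from the algebraic count of actual intersections of $u$ and $v$ by a sum of nonnegative \emph{asymptotic contributions}, one attached to each pair $(z, z')$ of punctures of $u$ and $v$ whose asymptotic orbits are covers of a common simple orbit; an entirely analogous formula expresses $\delta(u) + \delta_\infty(u)$ as a count of actual double points plus an asymptotic correction associated to each unordered pair of distinct punctures of $u$ sharing an asymptotic simple orbit. Under our hypotheses every orbit contributing to these sums is a simply covered elliptic orbit $\gamma = \{z_0\} \times S^1 \subset \CritMorse(H) \times S^1$ with $\Morse(z_0) = 2$, so only matchings of \emph{simply covered} ends asymptotic to such $\gamma$ can produce contributions.

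Next, by the refined asymptotic analysis of \cite{Siefring:asymptotics} (building on \cite{HWZ:props1}), the contribution at a matching pair of ends vanishes precisely when the \emph{relative} asymptotic approach of $u$ and $v$ to $\RR \times \gamma$ has winding equal to the extremal value $\alpha_-^\Phi(\gamma)$, equivalently when the difference of the two curves at the cylindrical end decays no faster than the slowest rate permitted by the spectrum of $\mathbf{A}_\gamma$ on $V_\gamma^-$. The extra hypothesis imposed in~\S\ref{sec:moduliSpace} that $\nabla^2 H(z_0)$ commutes with $j$ makes $\mathbf{A}_\gamma$ complex-linear, so the extremal eigenspace $V_\gamma^-$ is $2$-dimensional over $\RR$. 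This $2$-dimensionality is essential: it means ``no asymptotic contribution'' is equivalent to the leading asymptotic eigenfunctions $\ev_z^\infty(u), \ev_{z'}^\infty(v) \in V_\gamma^-$ of the two ends being \emph{distinct} (rather than proportional), since only a cancellation of these two eigenfunctions could produce a faster-than-extremal decay of the difference.

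With this translation in hand, the proof reduces immediately to invoking Lemma~\ref{lemma:genAsymp}, which under our genericity hypothesis on $\widehat{J}_\tau$ asserts precisely that for any two somewhere injective finite-energy curves of index $-1$ or $0$ meeting the interior of $W$ and having punctures asymptotic to the same simply covered elliptic orbit in $\CritMorse(H) \times S^1$, the asymptotic evaluation maps at those punctures take distinct values in $V_\gamma^-$; the same conclusion holds for two distinct punctures of a single such curve. Applying this to every pair of matching punctures of $u$ and $v$ yields vanishing of every asymptotic contribution to $u * v$, giving the first claim; the second claim on $\delta_\infty(u)$ follows identically from the within-curve case of the same lemma.

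The step I expect to require the most care is the precise translation between ``distinct values of the asymptotic evaluation map'' and ``no asymptotic contribution,'' since the results of \cite{Siefring:intersection,Siefring:asymptotics} are phrased in terms of relative winding numbers rather than eigenfunction values. Making this bridge watertight amounts to unpacking the asymptotic formula for $u - v$ on the 2-dimensional eigenspace $V_\gamma^-$: distinct leading eigenfunctions mean their difference is still a nontrivial element of $V_\gamma^-$, hence the relative winding agrees with the extremal winding $\alpha_-^\Phi(\gamma)$, which is exactly the condition for no hidden intersection. Once this bookkeeping is verified in the form stated in \cite{Siefring:asymptotics}, both assertions of the lemma follow with no further argument.
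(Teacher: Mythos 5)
Your proposal is correct and follows the same route as the paper: the paper's ``proof'' of this lemma is precisely the paragraph preceding its statement, which translates distinct leading asymptotic eigenfunctions into extremal relative winding via the $2$-dimensionality of $V_\gamma^-$ (guaranteed by the added $j$-linearity assumption on $\nabla^2 H$), identifies extremal winding with vanishing asymptotic contributions using Siefring's definitions, and concludes by invoking Lemma~\ref{lemma:genAsymp}. You have correctly identified both the chain of equivalences and the one step requiring care (the bridge between eigenfunction distinctness and relative winding), which is handled exactly as you describe.
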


\subsection{Compactness for nicely embedded curves}
\label{sec:compactnessProofs}

In this section we will state and prove two compactness results for
certain classes of nicely embedded holomorphic curves in~$\W$, which
will be used in \S\ref{sec:moduliSurface} to describe the global
structure of the quotient moduli spaces $\quotMod{}{}$ 
and~$\quotModHomotop{}$.  Recall from \S\ref{sec:Siefring} that a
somewhere injective finite-energy $\widehat{J}$-holomorphic curve 
$u$ in $\W$ is \emph{nicely embedded} if 
its intersection numbers as defined by Siefring \cite{Siefring:intersection}
satisfy
$$
\delta(u) = \delta_\infty(u) = 0 \quad\text{ and }\quad u * u \le 0;
$$
moreover, if $u$ is in the $\RR$-invariant setting $(\RR \times M^+,J_+)$
and is not a trivial cylinder, then the condition reduces to $u * u = 0$.
We saw in Proposition~\ref{prop:int0} that the latter is satisfied by
every holomorphic page in $\ModPlus{}$, so they are nicely embedded,
and we will see that the same is therefore true for all the smooth somewhere 
injective curves in~$\quotMod{}{}$.  Thus in order to understand the
strata of $\quotMod{}{}$ that arise from nontrivial holomorphic buildings,
it suffices to understand the closure of the space of nicely embedded
curves.

\subsubsection{Moduli spaces of nicely embedded curves}

Adapting some notation from \S\ref{sec:invariants}, 
we shall abbreviate $\mathbf{m} := (m_1,\ldots,m_r)$ and define
$$
\mM(\widehat{J} ; H ; \mathbf{m}) \subset \mM(\widehat{J}), \qquad
\overline{\mM}(\widehat{J} ; H ; \mathbf{m}) \subset \overline{\mM}(\widehat{J})
$$
as the spaces of curves/buildings $u$ in $(\W,\widehat{J})$ that satisfy the following 
conditions:
\begin{enumerate}
\item All asymptotic orbits of $u$ are in $\Crit(H) \times S^1 \subset
\widecheck{M}^+\spine$ and the sum of all their periods is less than the
bound $T_1$ from Lemma~\ref{lemma:periodsAndIndices};
\item For each $i \in \{1,\ldots,r\}$, the sum of the covering multiplicities
of all asymptotic orbits of $u$ in the component $\Sigma_i \times S^1
\subset \widecheck{M}^+\spine$ is at most~$m_i$;
\item $u$ has (arithmetic) genus~$0$.
\end{enumerate}
Define subsets
$$
\mM\nice(\widehat{J} ; H ; \mathbf{m}) \subset \mM(\widehat{J} ; H ; \mathbf{m}), \qquad
\overline{\mM}\nice(\widehat{J} ; H ; \mathbf{m}) \subset 
\overline{\mM}(\widehat{J} ; H ; \mathbf{m}),
$$
where the first consists of all $u \in \mM(\widehat{J};H;\mathbf{m})$ that are
nicely embedded, and the second is the closure of the first with respect
to the SFT-topology.
Given the $1$-parameter family $\{\widehat{J}_\tau\}$, we analogously 
define spaces of pairs $(u,\tau)$ for $\tau \in [0,1]$, which form subsets
\begin{equation*}
\begin{split}
\mM\nice(\{\widehat{J}_\tau\} ; H ; \mathbf{m}) &\subset \mM(\{\widehat{J}_\tau\} ; H ; \mathbf{m})
\subset \mM(\{\widehat{J}_\tau\}), \\
\overline{\mM}\nice(\{\widehat{J}_\tau\};H;\mathbf{m}) &\subset
\overline{\mM}(\{\widehat{J}_\tau\};H;\mathbf{m}) \subset \overline{\mM}(\{\widehat{J}_\tau\}),
\end{split}
\end{equation*}
where we should clarify that $\overline{\mM}\nice(\{\widehat{J}_\tau\};H;\mathbf{m})$ 
is defined as the closure of $\mM\nice(\{\widehat{J}_\tau\};H;\mathbf{m})$ in
$\overline{\mM}(\{\widehat{J}_\tau\})$; note that this may in general be larger than the 
space of pairs $(u,\tau)$ with $u \in \overline{\mM}\nice(\widehat{J}_\tau;H;\mathbf{m})$,
since it includes all limits of SFT-convergent sequences $(u_\nu,\tau_\nu) \in
\mM\nice(\{\widehat{J}_\tau\};H;\mathbf{m})$, where the $\tau_\nu \in [0,1]$ can vary.
For each $i \in \ZZ$, we denote by
\begin{equation*}
\begin{split}
\mM\nice_i(\widehat{J} ; H ; \mathbf{m}) &\subset \mM\nice(\widehat{J} ; H ; \mathbf{m}),\\
\overline{\mM}_i(\{\widehat{J}_\tau\} ; H ; \mathbf{m}) &\subset
\overline{\mM}(\{\widehat{J}_\tau\} ; H ; \mathbf{m})
\end{split}
\end{equation*}
and so forth the subsets defined by the condition $\ind(u)=i$.  
Our genericity assumptions, in particular Lemma~\ref{lemma:genericity},
imply that a somewhere injective curve in $\mM(\widehat{J}_\tau;H;\mathbf{m})$ for
$\tau \in [0,1]$ will never have index less than~$-1$ if it intersects
the interior of~$W$.  Outside of this region, 
i.e.~in $\widehat{\nN}_-(\p E) \subset \W$, $\widehat{J}_\tau$ was defined 
to match the specially constructed model $J_+$ from \S\ref{sec:Jandf} 
and is thus neither generic nor
$\tau$-dependent, so we must still say something about indices of curves 
with images contained entirely in~$\widehat{\nN}_-(\p E)$.

\begin{lemma}
\label{lemma:index-1}
For every $\tau \in [0,1]$, every curve
$u \in \mM(\widehat{J}_\tau;H;\mathbf{m})$ with image contained in
$\widehat{\nN}_-(\p E)$ is an embedded leaf of $\fF_+$ and is isotopic
to one of the holomorphic pages in $\ModPlus{}$.  In particular,
it has index~$1$ or~$2$.  
\end{lemma}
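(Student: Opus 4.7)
Since $\widehat{J}_\tau|_{\widehat{\nN}_-(\p E)} = J_+$ is independent of~$\tau$, we may regard $u$ as a $J_+$-holomorphic curve in $(\widehat{E}, J_+)$ with image in $\widehat{\nN}_-(\p E)$; all its punctures are positive (since $\W$ has no negative end), asymptotic to Reeb orbits in $\CritMorse(H) \times S^1 \subset \widecheck{M}^+\spine$ with total period less than~$T_1$. The plan is to mimic the proof of Lemma~\ref{lemma:uniqueness}, exploiting the fact that by Proposition~\ref{prop:integrable}, $\fF_+$ extends to a $J_+$-holomorphic foliation of all of~$\widehat{E}$: consequently $u$ must either be contained in a single leaf of~$\fF_+$ or transversely intersect generic leaves with strictly positive count.

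First I would reduce to the case where $u$ is somewhere injective, by induction on the total multiplicity bound $|\mathbf{m}| := \sum_i m_i$. Were $u$ a $k$-fold cover ($k \ge 2$) of a simple $J_+$-holomorphic curve~$v$, then $v$ would have image in $\widehat{\nN}_-(\p E)$, no negative punctures, arithmetic genus~$0$ (via Riemann--Hurwitz), and multiplicities $\mathbf{m}'$ satisfying $|\mathbf{m}'| \le |\mathbf{m}|/k < |\mathbf{m}|$; since $v \in \mM(\widehat{J}_\tau;H;\mathbf{m})$, the inductive hypothesis would force $v$ to be an embedded page, necessarily with multiplicity exactly~$m_i$ at each spinal component by symmetry of~$\boldsymbol{\pi}$. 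But then $u$ has multiplicity $k m_i > m_i$, contradicting the hypothesis.

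For simple~$u$, suppose $u$ is not itself a leaf of~$\fF_+$. Near any positive puncture of~$u$ it has isolated transverse intersections with infinitely many nearby leaves, so one can select a generic holomorphic page $w \in \ModPlus{}$ whose asymptotic orbits are all simply covered elliptic and that satisfies $u * w \ge 1$. The contradiction then comes from computing $u * w = 0$ by adapting the argument of Lemma~\ref{lemma:intersectingPages} from $\RR \times M$ to~$\widehat{E}$: one $\RR$-translates $w$ upward in the cylindrical end $\widehat{\nN}_+(\p E) = [0,\infty)\times M^+$ and homotops $u$ through asymptotically cylindrical maps to a map $u'$ whose intersection with a half-cylinder $[T,\infty)\times M^+$ consists only of trivial half-cylinders over its positive asymptotic orbits $\gamma_{z_i}^{k_i}$; homotopy invariance of Siefring's intersection number then yields
\[
u * w = u' * w = \sum_i (\RR \times \gamma_{z_i}^{k_i}) * w.
\]
Each summand vanishes by Lemma~\ref{lemma:leavesOrbits}, since $w$ is not a trivial cylinder and all relevant orbits are elliptic with $\alpha_-^\Phi(\gamma_{z_i}^{k_i}) = 0$ in the $S^1$-invariant trivialization (Lemma~\ref{lemma:winding}).

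Hence $u$ is contained in a leaf of~$\fF_+$ and, being simple, coincides with that leaf; as $u$ has only positive punctures, the leaf must be a holomorphic page (trivial cylinders and gradient flow cylinders being excluded), and the $\RR$-translation flow in the cylindrical end realizes an isotopy between $u$ and a page in~$\ModPlus{}$. Lemma~\ref{lemma:stable} then gives $\ind(u) \in \{1,2\}$ under the general-position assumption on $(H,j)$. The main technical subtlety I anticipate lies in the intersection-theoretic computation above: Siefring's theory and Lemmas~\ref{lemma:leavesOrbits}--\ref{lemma:intersectingPages} were formulated for curves in $\RR \times M$, so one must verify that the translation-and-homotopy argument carries through once $\widehat{\nN}_+(\p E)$ is recognized as the $\RR$-invariant cylindrical end of~$\widehat{E}$ equipped with the stable Hamiltonian structure~$\hH_+$.
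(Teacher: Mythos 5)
Your proposal is essentially correct and follows the same overall strategy as the paper: rule out multiple covers via the multiplicity bound, then for a simple $u$ that is not a leaf, derive a contradiction by showing simultaneously $u*w \ge 1$ and $u*w = 0$ for some holomorphic page~$w$, with the latter computed by the translation-and-homotopy trick of Lemma~\ref{lemma:intersectingPages} using Lemma~\ref{lemma:leavesOrbits}.

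There are, however, three points where the paper's route is cleaner and your version is either mislabeled or leaves a small hole. First, your ``induction on $|\mathbf{m}|$'' is not really an induction, since $\mM(\widehat{J}_\tau;H;\mathbf{m})$ is defined with $\mathbf{m}$ fixed; what you actually do is prove the simple case first and then reduce multiple covers to it, which is fine but should be stated that way. The paper avoids this distinction entirely by observing that $u$ cannot be a multiple cover of a leaf (covers of trivial cylinders or gradient flow cylinders would have negative ends, which are impossible in $\W$, and covers of pages would violate the multiplicity bound) and then running the intersection argument on $u$ itself regardless of whether it is simple. Second, and more substantively, to get $u*w \ge 1$ you argue via isolated intersections near a positive puncture of~$u$, mirroring the proof of Lemma~\ref{lemma:uniqueness}. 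The paper instead first uses a homology observation: since $u$ is confined to $\widehat{\nN}_-(\p E) \cong [0,\infty)\times M^+$ and the homological sum of its asymptotic orbits is a nonzero multiple of the $S^1$-fiber class, $u$ cannot be confined to a neighborhood of a single spinal component, so it must pass through $[0,\infty) \times \widecheck{M}^+\paper$. This matters because the leaves of $\fF_+$ on $\widehat{E}$ are not all asymptotically cylindrical --- some touch $M^-$ --- whereas the entire region $(-1,\infty) \times \widecheck{M}\paper$ is foliated by asymptotically cylindrical leaves tangent to $\Xi_+$, so the leaf $v$ that $u$ crosses there automatically has a well-defined Siefring intersection number $u*v > 0$. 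Your puncture-based argument can be made to work (the leaves near $\gamma_z$ in the cylindrical end are, after $\RR$-translation, pages in $\ModPlus{}$), but one does need to say a word about why the intersected leaf is asymptotically cylindrical rather than a compact surface with boundary on $M^-$. Third, the paper handles the case where $u$ has no punctures separately (translating a page upward to make it disjoint from $u$), whereas your argument implicitly assumes $u$ has at least one positive puncture; since closed curves are a priori allowed in $\mM(\widehat{J}_\tau;H;\mathbf{m})$, this case should also be addressed, even if briefly.
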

\begin{proof}
Recall that the subset
$\widehat{\nN}_+(\p E) \subset \widehat{\nN}_-(\p E)$ is identified canonically
with the half-symplectization $[0,\infty) \times M^+$, and it is also a 
retraction of $\widehat{\nN}_-(\p E)$, thus we can choose a diffeomorphism 
$\widehat{\nN}_-(\p E) \cong [0,\infty) \times M^+$ that matches the 
canonical one near infinity.  Under this identification, we observe that
a curve $u \in \mM(\widehat{J}_\tau;H;\mathbf{m})$ contained in $\widehat{\nN}_-(\p E)$
must intersect $[0,\infty) \times \widecheck{M}^+\paper$,
as otherwise it would be confined to the neighborhood of a spinal component
in which the homological sum of all its asymptotic orbits is nonzero,
producing a contradiction.  Now if $u$ is not a leaf of $\fF_+$, observe 
that it also cannot be a multiple cover of any leaf since the total
multiplicities of the orbits in each component of the spine would then
be greater than what is allowed for curves in $\mM(\widehat{J}_\tau;H;\mathbf{m})$.
It follows that 
$u$ has at least one isolated intersection with some leaf $v \in \fF_+$
that stays away from $M^-$ and is thus an asymptotically cylindrical
$\widehat{J}$-holomorphic curve; indeed, the entirety of the region
$(-1,\infty) \times \widecheck{M}\paper \subset \widehat{E}$ is foliated by
leaves of this type, which are tangent to~$\Xi_+$.  Positivity of
intersections therefore implies $u * v > 0$.  However, 
a small alteration to the proof of Lemma~\ref{lemma:intersectingPages}
shows that $u * v = 0$ for every holomorphic page $v \in \fF_+$.
Indeed, this is immediate if $u$ has no punctures, as one can then translate
$v$ upward in $[0,\infty) \times M^+$ to make it disjoint from~$u$.
If $u$ does have punctures, then
one can modify it as in the proof of Lemma~\ref{lemma:intersectingPages}
by a homotopy through asymptotically cylindrical
maps so that its intersection with $\widehat{\nN}_+(\p E)$ is a union of
trivial cylinders, and then compute the intersection number again via
Lemma~\ref{lemma:leavesOrbits}.  This contradiction proves the lemma.
\end{proof}

Further constraints on indices hold for nicely embedded curves.
The following lemma implies that $\overline{\mM}\nice_i(\widehat{J}_\tau;H;\mathbf{m})$
is always empty for $i > 2$, and moreover, all buildings $u$ in 
$\overline{\mM}\nice_1(\widehat{J}_\tau;H;\mathbf{m})$
or $\overline{\mM}\nice_2(\widehat{J}_\tau;H;\mathbf{m})$ have only simply covered
asymptotic orbits, all elliptic in the latter case, with exactly one
hyperbolic orbit in the former case, and $u*u = 0$.  For $u$ in
$\overline{\mM}\nice_{-1}(\widehat{J}_\tau;H;\mathbf{m})$ or
$\overline{\mM}\nice_{0}(\widehat{J}_\tau;H;\mathbf{m})$, $u * u$ can be either $-1$
or $0$ depending on an easily denumerable list of combinations of
elliptic/hyperbolic orbits with at most one doubly covered orbit.

\begin{lemma}
\label{lemma:upperBound}
For any $(u,\tau) \in \overline{\mM}\nice(\{\widehat{J}_\tau\} ; H ; \mathbf{m})$,
all asymptotic orbits of $u$ are at most doubly covered,
$-1 \le \ind(u) \le 2$, $u * u \in \{-1,0\}$ and
$$
2 \left( u * u + 1 \right) =
\ind(u) + \#\Gamma_0 + 2 \#\Gamma^2 \in \{0,2\},
$$
where $\Gamma_0$ denotes the set of punctures of $u$ at which the
asymptotic orbit has even Conley-Zehnder index, and $\Gamma^2$ is the
set of punctures at which the orbit is doubly covered.
\end{lemma}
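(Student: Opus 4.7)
The plan is to reduce everything to the case of smooth somewhere injective curves $u \in \mM\nice(\widehat J_\tau;H;\mathbf m)$, since each of the four quantities $u*u$, $\ind(u)$, $\#\Gamma_0$ and $\#\Gamma^2$ depends only on asymptotic data and the relative homology class of $u$; they are therefore continuous along SFT-convergent sequences and extend to the closure $\overline{\mM}\nice$. For a smooth nicely embedded $u$, the adjunction formula \eqref{eqn:adjunctionEq} with $\delta(u) = \delta_\infty(u) = 0$ and genus zero reduces to $u*u = c_N(u) + \bar\sigma(u) - \#\Gamma$. Every asymptotic orbit is a cover $\gamma_z^k$ with total period less than $T_1$ and $\Morse(z) \in \{1,2\}$, so Lemma~\ref{lemma:winding} yields $\alpha_-(\gamma_z^k) = 0$ in the $S^1$-invariant trivialization. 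That in turn triggers Lemma~\ref{lemma:spectralCovering}, giving $\bar\sigma(u) - \#\Gamma = \sum_z (k_z - 1)$ where $k_z$ is the covering multiplicity at puncture~$z$. Combined with the genus-zero specialization $2 c_N(u) = \ind(u) - 2 + \#\Gamma_0$ of \eqref{eqn:2cN}, this produces the preliminary identity
$$2(u*u + 1) \;=\; \ind(u) + \#\Gamma_0 + 2\sum_z (k_z - 1).$$

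The crucial step is to rule out $k_z \ge 3$. If some $k_z \ge 3$ held, then $\sum_z(k_z-1) \ge 2$ and the condition $u*u \le 0$ would force $\ind(u) + \#\Gamma_0 \le -2$, hence $\ind(u) \le -2$. However, Lemma~\ref{lemma:genericity} forces $\ind(u) \ge -1$ whenever $u$ meets the interior of $W$, and Lemma~\ref{lemma:index-1} forces $\ind(u) \in \{1,2\}$ for curves confined to $\widehat{\nN}_-(\p E)$; either case yields a contradiction. Hence $k_z \in \{1,2\}$ throughout and $\sum_z(k_z-1) = \#\Gamma^2$, turning the identity into the advertised
$$2(u*u + 1) \;=\; \ind(u) + \#\Gamma_0 + 2\#\Gamma^2.$$

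The remaining bounds fall out arithmetically. The condition $u*u \le 0$ gives $\ind(u) + \#\Gamma_0 + 2\#\Gamma^2 \le 2$ and so $\ind(u) \le 2$, while a short parity calculation from \eqref{eqn:index} (genus zero, only positive punctures, $\dim_\RR \W = 4$) shows that $\ind(u) \equiv \#\Gamma_0 \pmod{2}$, so the right-hand side is automatically even. Combined with $\ind(u) \ge -1$ (which forces $\#\Gamma_0 \ge 1$ in the borderline case $\ind(u) = -1$, by the same parity), this shows the right-hand side lies in $\{0,2\}$, so $u*u \in \{-1,0\}$ and $\ind(u) \in \{-1,0,1,2\}$ simultaneously. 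Continuity of all four quantities on $\overline{\mM}\nice$ then transports the identity and bounds to every limit building.

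The main obstacle I expect is the step ruling out triply (or higher) covered asymptotic orbits: this is the only place where the combination of nice embeddedness with the generic lower bound $\ind(u) \ge -1$ is genuinely used, and one must be careful to cover both the generic case via Lemma~\ref{lemma:genericity} and the exceptional case of curves trapped in the cylindrical end via Lemma~\ref{lemma:index-1}. Everything else is an arithmetic consequence of the adjunction and index formulas, and the passage from the open stratum $\mM\nice$ to the compactification is then automatic from the homotopy-invariance of $u*u$, $\ind(u)$, and the asymptotic counts.
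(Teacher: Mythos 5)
Your proof is correct and takes essentially the same route as the paper's: both reduce to the smooth nicely embedded case via continuity of the relevant homotopy-invariant quantities on $\overline{\mM}\nice$, apply the adjunction formula \eqref{eqn:adjunctionEq} together with \eqref{eqn:2cN}, compute $\bar\sigma(u) - \#\Gamma$ via Lemmas~\ref{lemma:winding} and~\ref{lemma:spectralCovering}, and then use the lower bound $\ind(u) \ge -1$ from Lemmas~\ref{lemma:genericity} and~\ref{lemma:index-1} together with the parity $\ind(u) \equiv \#\Gamma_0 \pmod 2$ to pin down the remaining arithmetic. The only cosmetic difference is that the paper derives the single inequality $\bar\sigma(u) - \#\Gamma \le 1$ first (which rules out covers of multiplicity $\ge 3$ and bounds $\#\Gamma^2 \le 1$ in one stroke), whereas you first isolate the exclusion of $k_z \ge 3$ and then extract the bound on $\#\Gamma^2$ afterward; the logic is equivalent.
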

\begin{proof}
Denote the set of punctures of $u$ by $\Gamma$ and for each $m \in \NN$,
let $\Gamma^m \subset \Gamma$ denote the subset at which the orbit has
covering multiplicity~$m$.
By assumption, $u$ is either nicely embedded or is the limit in the
SFT-topology of a sequence $u^\nu$ of nicely embedded curves as 
$\nu \to \infty$, thus it suffices to prove the lemma under the assumption
that $u$ itself is nicely embedded.  Given this, we have
$\delta(u) = \delta_\infty(u) = 0$ and $u * u \le 0$, and we already know
$\ind(u) \ge -1$ due to Lemmas~\ref{lemma:genericity} and~\ref{lemma:index-1}.  
To compute 
$u * u$, we first plug the stated conditions
into the adjunction formula \eqref{eqn:adjunctionEq}, obtaining
$c_N(u) = u * u - \left[ \bar{\sigma}(u) - \#\Gamma \right]$,
thus by \eqref{eqn:2cN},
\begin{equation}
\label{eqn:unu}
\begin{split}
\ind(u) -2 + \#\Gamma_0 &= 2 \left( u * u - 
\left[ \bar{\sigma}(u) - \#\Gamma \right] \right) \\
&\le - 2 \left[ \bar{\sigma}(u) - \#\Gamma \right] .
\end{split}
\end{equation}
Since the index formula implies that $\ind(u)$ and $\#\Gamma_0$ always have 
the same parity, and
$\ind(u) \ge -1$, the left hand side of this inequality is at least~$-2$,
implying $\bar{\sigma}(u) - \#\Gamma \le 1$.
But Lemma~\ref{lemma:winding} implies that all the asymptotic orbits 
$\gamma$ of $u$ satisfy $\alpha_-(\gamma) = 0$ in the $S^1$-invariant
trivialization, so Lemma~\ref{lemma:spectralCovering} then implies
$$
\bar{\sigma}(u) - \#\Gamma = \sum_{m \in \NN} m \#\Gamma^m - \#\Gamma
= \sum_{m \ge 2} (m-1) \#\Gamma^m \le 1,
$$
thus $\#\Gamma^m = 0$ for all $m \ge 3$ and
$\bar{\sigma}(u) - \#\Gamma = \#\Gamma^2 \in \{0,1\}$.
The stated identity now follows from \eqref{eqn:unu}.
\end{proof}

\subsubsection{Statements of the main compactness results}

To simplify the wording in the following statements, 
we will describe only the \emph{nontrivial}
components in each level of a holomorphic building, so that each level should 
be understood to
consist of the disjoint union of the specified curves with some trivial
cylinders.  In cases where multiple nontrivial curves appear in 
upper levels (e.g.~case~\ref{item:compactness2doublegradflow} in
Prop.~\ref{T:compactness} below), the actual number of upper levels may vary
depending on whether these curves occupy the same level or not.
Schematic representations of the index~$2$ buildings described in the following two
results are shown in Figures~\ref{fig:compactnessGeneric} and~\ref{fig:compactnessExceptional},
where the correct labelling of the elliptic and hyperbolic orbits in these pictures
can be deduced from Lemma~\ref{lemma:upperBound} above.
In each case, minor simplifications of the same pictures produce representations
of the relevant buildings with lower index as well,
e.g.~the index~$1$ buildings in
Proposition~\ref{T:compactness} look like Figures~\ref{fig:compactness2gradflow}
or~\ref{fig:compactness2doublegradflow} with one gradient flow cylinder removed
and possibly everything shifted up one level.

\begin{prop} \label{T:compactness}
Assume $\widehat{J}$ is generic so that the results of \S\ref{sec:genericity} hold.
Then $\mM\nice_0(\widehat{J};H;\mathbf{m})$ is a finite set and thus matches
$\overline{\mM}\nice_0(\widehat{J};H;\mathbf{m})$.  Buildings in
$\overline{\mM}\nice_1(\widehat{J};H;\mathbf{m}) \setminus 
\mM\nice_1(\widehat{J};H;\mathbf{m})$ all fit either of the following descriptions:
\begin{enumerate}[label=(1\alph{enumi})]
\item The main level is empty and the upper level is a
holomorphic page in~$\ModPlus{1}$;
\label{item:compactness1escape}
\item The main level is a curve $u_0 \in \mM\nice_0(\widehat{J};H;\mathbf{m})$
with $u_0 * u_0 = 0$, and the upper level contains a single
gradient flow cylinder in~$\ModPlus{1}$.
\label{item:compactness1gradflow}
\setcounter{building1}{\value{enumi}}
\end{enumerate}
Finally, every building in
$\overline{\mM}\nice_2(\widehat{J};H;\mathbf{m}) \setminus
\mM\nice_2(\widehat{J};H;\mathbf{m})$ fits one of the following descriptions:
\begin{enumerate}[label=(2\alph{enumi})]
\item The main level is empty and there are either one or two upper levels 
representing an element of $\cpctModPlus{2}$ (the variant with only one upper level is
shown in Figure~\ref{fig:compactness2escape});\footnote{Note that the same diagram, but with a
nonempty curve in $W$ instead of $\RR \times M$, would represent an element of $\mM\nice_2(\widehat{J};H;\mathbf{m})$.}
\label{item:compactness2escape}
\item There are no upper levels, and the main level is a nodal curve in 
$\W$, having
two connected components $u_\pm \in \mM\nice_0(\widehat{J};H;\mathbf{m})$ with
$u_+ * u_+ = u_- * u_- = -1$ and $u_+ * u_- = 1$, and intersecting transversely
at a single node (Figure~\ref{fig:compactness2nodal});
\label{item:compactness2nodal}
\item 
The main level consists of a curve
$u_0 \in \mM\nice_1(\widehat{J};H;\mathbf{m})$, and there is one upper level containing
a single gradient flow cylinder in~$\ModPlus{1}$ (Figure~\ref{fig:compactness2gradflow}); 
\label{item:compactness2gradflow}
\item Case~\ref{item:compactness1gradflow} with a second gradient flow
cylinder in~$\ModPlus{1}$ added in an upper level (Figure~\ref{fig:compactness2doublegradflow});
\label{item:compactness2doublegradflow}
\item 
The main level consists of a curve
$u_0 \in \mM\nice_0(\widehat{J};H:\mathbf{m})$ which has $u_0 * u_0 = 0$ and
elliptic asymptotic orbits including one that is doubly covered,
and there is one upper level containing an index~$2$ branched double cover of 
the trivial cylinder over this orbit, with two positive punctures and one 
negative (Figure~\ref{fig:compactness2branched}).
\label{item:compactness2branched}
\setcounter{building2}{\value{enumi}}
\end{enumerate}
\end{prop}

\begin{figure}
    \begin{subfigure}[t]{0.45\linewidth}
        \includegraphics{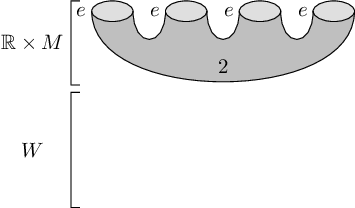}
    \caption{Case~\ref{item:compactness2escape}. 
	\label{fig:compactness2escape}
    }
\end{subfigure}
\hfill
\begin{subfigure}[t]{0.45\linewidth}
    \includegraphics{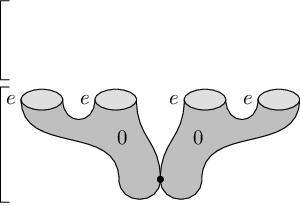}
    \caption{
        Case~\ref{item:compactness2nodal}.\label{fig:compactness2nodal}
    }
\end{subfigure}
\hfill\phantom{ }

\begin{subfigure}[t]{0.45\linewidth}
    \includegraphics{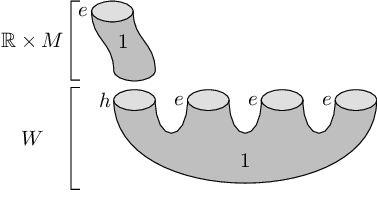}
    \caption{
        Case~\ref{item:compactness2gradflow}.\label{fig:compactness2gradflow}
    }
\end{subfigure}
\hfill
\begin{subfigure}[t]{0.45\linewidth}
    \includegraphics{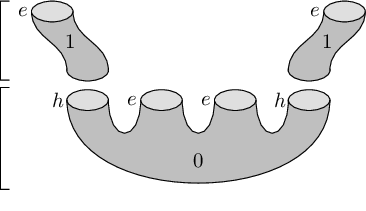}
    \caption{Case~\ref{item:compactness2doublegradflow}.\label{fig:compactness2doublegradflow}
    }
\end{subfigure}

\hfill
\begin{subfigure}[t]{0.45\linewidth}
    \includegraphics{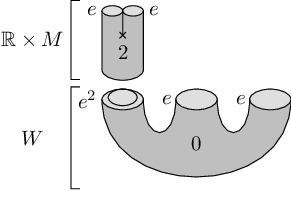}
    \caption{Case~\ref{item:compactness2branched}.\label{fig:compactness2branched}
    }
\end{subfigure} \hfill \null

    \caption{The possible index 2 buildings that can arise in the compactification of the moduli space for generic $J$. 
        The asymptotic orbits are labelled with $e$ or $h$ to indicate elliptic/hyperbolic simple
orbits in the spine. The notation of $e^2$ denotes a double cover of an elliptic orbit. The components of each 
building are labelled with their Fredholm index.}
    \label{fig:compactnessGeneric}
\end{figure}

\begin{prop} \label{T:deformationJ}
Assume the homotopy $\{\widehat{J}_\tau\}$ is generic so that the results of
\S\ref{sec:genericity} hold.  Then there exists a finite set of parameter 
values
$$
I^\sing \subset (0,1)
$$
such that $\mM\nice_{-1}(\{\widehat{J}_\tau\};H;\mathbf{m})$ contains exactly one
pair $(u,\tau)$ for each $\tau \in I^\sing$ and no other elements, hence
$\overline{\mM}\nice_{-1}(\{\widehat{J}_\tau\};H;\mathbf{m}) = 
\mM\nice_{-1}(\{\widehat{J}_\tau\};H;\mathbf{m})$.  Pairs 
$(u,\tau) \in \overline{\mM}\nice_i(\{\widehat{J}_\tau\};H;\mathbf{m})$ for $i=0,1,2$
and $\tau \in [0,1] \setminus I^\sing$ are described by the list in
Proposition~\ref{T:compactness}, while if $\tau \in I^\sing$, the list must
be supplemented as follows.  For $i=0$, $u$ can either be a smooth curve or
one of the following:
\begin{enumerate}[label=(0\alph{enumi})]
\item
The main level is a curve $u_0 \in \mM\nice_{-1}(\widehat{J}_\tau;H;\mathbf{m})$
with $u_0 * u_0 = -1$ or~$0$, and there is one
upper level containing a gradient flow cylinder in~$\ModPlus{1}$;
\label{item:deformation0gradflow}
\item
The main level is a curve $u_0 \in \mM\nice_{-1}(\widehat{J}_\tau;H;\mathbf{m})$
with $u_0 * u_0 = 0$ whose asymptotic orbits include one that is hyperbolic
and doubly covered, while the upper level consists
of an index~$1$ unbranched double cover of a gradient flow cylinder
in~$\ModPlus{1}$.
\label{item:deformation0unbranched}
\end{enumerate}
The cases with index~$1$ not described
in Proposition~\ref{T:compactness} can include the following:
\begin{enumerate}[label=(1\alph{enumi})]
\setcounter{enumi}{\value{building1}}
\item There are no upper levels, and the main level is a nodal curve
in $\W$, having two connected components 
$u_0 \in \mM\nice_0(\widehat{J}_\tau;H;\mathbf{m})$ and
$u_{-1} \in \mM\nice_{-1}(\widehat{J}_\tau;H;\mathbf{m})$ with
$u_0 * u_0 = u_{-1} * u_{-1} = -1$ and $u_0 * u_{-1} = 1$, and intersecting
transversely at a single node;
\label{item:deformation1nodal}
\item Case~\ref{item:deformation0gradflow} with $u_0 * u_0 = 0$ and
a second gradient flow cylinder in $\ModPlus{1}$ added in an upper level;
\label{item:deformation1doublegradflow}
\item The main level is a curve $u_0 \in \mM\nice_{-1}(\widehat{J}_\tau;H;\mathbf{m})$
with $u_0 * u_0 = 0$ whose asymptotic orbits include one that is elliptic
and doubly covered, and there is one
upper level containing an index~$2$ branched double cover of the
trivial cylinder over this elliptic orbit;
\label{item:deformation1branched}
\item The main level is a curve $u_0 \in \mM\nice_{-1}(\widehat{J}_\tau;H;\mathbf{m})$
with $u_0 * u_0 = 0$ whose asymptotic orbits include one that is hyperbolic
and doubly covered, and there are two upper levels:
the first contains an index~$1$ 
branched double cover of the trivial cylinder over the
hyperbolic orbit with two positive punctures and one negative, and 
a gradient flow cylinder in~$\ModPlus{1}$ is stacked on top of this in a
second upper level;
\label{item:deformation1vertigo}
\end{enumerate}
Finally, the following additional possibilities for buildings of index~$2$
can occur:
\begin{enumerate}[label=(2\alph{enumi})]
\setcounter{enumi}{\value{building2}}
\item Case~\ref{item:deformation1doublegradflow} with a third gradient
flow cylinder in $\ModPlus{1}$ added in an upper level (Figure~\ref{fig:deformation2triplegradflow});
\label{item:deformation2triplegradflow}
\item 
Case~\ref{item:deformation1branched} with a gradient flow cylinder
in $\ModPlus{1}$ added in an upper level (Figure~\ref{fig:deformation2thebirds});
\label{item:deformation2thebirds}
\item 
Case~\ref{item:deformation1nodal} with a gradient flow cylinder
in $\ModPlus{1}$ added in an upper level, connected to the index~$-1$
curve along its unique hyperbolic orbit (Figure~\ref{fig:deformation2nodalgradflow});
\label{item:deformation2nodalgradflow}
\item
Case~\ref{item:deformation1vertigo} with an
extra gradient flow cylinder in $\ModPlus{1}$ added on top (Figure~\ref{fig:deformation2vertigo});
\label{item:deformation2vertigo}
\item
Case~\ref{item:deformation0unbranched} with an index~$2$ 
branched double cover of
the trivial cylinder over an elliptic orbit stacked on top of the
unbranched cover (Figure~\ref{fig:deformation2unbranched});
\label{item:deformation2unbranched}
\item 
The main level is a curve $u_0 \in \mM\nice_{-1}(\widehat{J}_\tau;H;\mathbf{m})$
with $u_0 * u_0 = 0$ whose asymptotic orbits include one that is hyperbolic
and doubly covered, and there is one upper level containing an index~$3$
branched double cover of a gradient flow cylinder
in~$\ModPlus{1}$ (Figure~\ref{fig:deformation2ihatethisone}).\footnote{Notice 
that cases~\ref{item:deformation2vertigo} and~\ref{item:deformation2unbranched} 
can arise as the boundary of the space of configurations from case~\ref{item:deformation2ihatethisone}.}
\label{item:deformation2ihatethisone}
\end{enumerate}
\end{prop}

\begin{remark}
In the scenarios in Propositions~\ref{T:compactness} and~\ref{T:deformationJ}
involving multiple components in one level without nodes, it may happen that
some of these components are identical, but this is only possible if
at least one of the multiplicities $m_1,\ldots,m_r$ is greater than~$1$.
The latter is also a necessary condition for any of the scenarios that involve
doubly covered curves.
\end{remark}

\begin{figure} 
    \begin{subfigure}[t]{0.475\linewidth}
\setcounter{subfigure}{5}
        \includegraphics{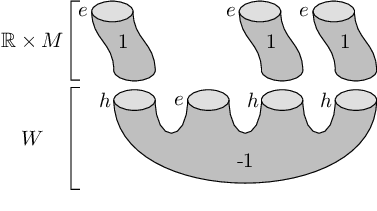}
    \caption{
        Case~\ref{item:deformation2triplegradflow}.\label{fig:deformation2triplegradflow}
    }
\end{subfigure}
\hfill
\begin{subfigure}[t]{0.475\linewidth}
    \includegraphics{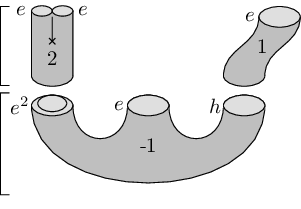}
    \caption{
    Case~\ref{item:deformation2thebirds}.\label{fig:deformation2thebirds}
    }
\end{subfigure}

\begin{subfigure}[t]{0.475\linewidth}
    \includegraphics{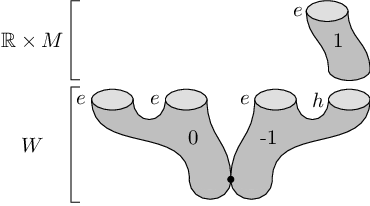}
    \caption{
    Case~\ref{item:deformation2nodalgradflow}.\label{fig:deformation2nodalgradflow}
    }
\end{subfigure}
\hfill
\begin{subfigure}[t]{0.475\linewidth}
    \includegraphics{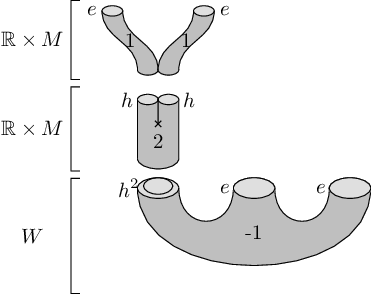}
    \caption{ 
    Case~\ref{item:deformation2vertigo}.\label{fig:deformation2vertigo}
}
\end{subfigure}
\begin{subfigure}[t]{0.475\linewidth}
    \includegraphics{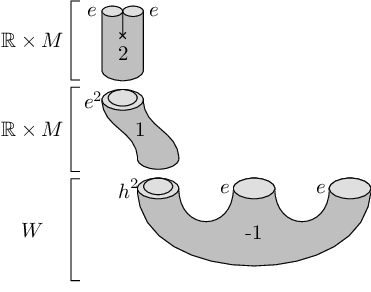}
    \caption{Case~\ref{item:deformation2unbranched}.\label{fig:deformation2unbranched}
    }
\end{subfigure} 
\hfill
\begin{subfigure}[t]{0.475\linewidth}
    \includegraphics{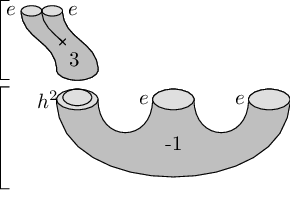}
    \caption{Case~\ref{item:deformation2ihatethisone}.\label{fig:deformation2ihatethisone}
    }
\end{subfigure}

    \caption{The buildings that can arise for exceptional homotopy values, but
    that are not in Figure \ref{fig:compactnessGeneric}.
    Notice that translation invariance of $J$ in $\R \times M$ precludes 
    $-1$ curves in upper levels, so the exceptional buildings must feature a $-1$ curve in $W$. 
}
    \label{fig:compactnessExceptional}
\end{figure}

\subsubsection{The upper levels}

The proofs of Propositions~\ref{T:compactness} and~\ref{T:deformationJ} 
will follow by considering 
inequalities that relate the Fredholm index and self-intersection numbers.
Notice that the statement of Proposition~\ref{T:compactness} is
precisely what remains of Proposition~\ref{T:deformationJ} if one adds the
assumption that somewhere injective curves of index~$-1$ in $\W$ do not exist.
With this understood and Lemmas~\ref{lemma:genericity} 
and~\ref{lemma:index-1} in hand, we will
use the same argument to prove both compactness results.

For the rest of \S\ref{sec:compactnessProofs}, we shall
fix $\tau_\infty \in [0,1]$ and a holomorphic building
$$
u^\infty \in \overline{\mM}(\widehat{J}_{\tau_\infty};H;\mathbf{m})
$$
and work on deriving constraints on the structure of~$u^\infty$.  We will later
add the assumption that $(u^\infty,\tau_\infty) \in 
\overline{\mM}\nice(\{\widehat{J}_\tau\};H;\mathbf{m})$,
but it will be useful to wait a bit before imposing such a restriction.

\begin{lemma}
\label{lemma:up}
All components in upper levels of 
$u^\infty \in \overline{\mM}(\widehat{J}_{\tau_\infty};H;\mathbf{m})$ are covers of leaves
of~$\fF_+$.  Moreover, the main level of $u^\infty$ is empty if and only
if $u^\infty$ contains a component whose image is a holomorphic page; 
in that case, the bottommost nonempty level of $u^\infty$ consists of a
single embedded holomorphic page, all other components are either trivial
cylinders or embedded gradient flow cylinders, and there are no nodes.
\end{lemma}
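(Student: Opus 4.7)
The first claim follows from the uniqueness theorem for somewhere injective curves in $(\RR \times M^+, J_+)$. Any component $v$ of an upper level $u_\ell$ is a $J_+$-holomorphic curve in $\RR \times M^+$, and by iterating Proposition~\ref{prop:posPuncture} upward across levels, the total period at the positive asymptotics of $v$ is bounded by that of the final positive asymptotics of $u^\infty$, which by the definition of $\overline{\mM}(\widehat{J};H;\mathbf{m})$ is less than~$T_1$. Lemma~\ref{lemma:periodsAndIndices} then places all asymptotic orbits of $v$ in $\CritMorse(H) \times S^1$, so the underlying somewhere injective curve belongs to $\mM^*(J_+; H, T_1)$. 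Lemma~\ref{lemma:uniqueness} identifies this simple curve as an interior leaf of $\fF_+$, and hence $v$ is a cover of a trivial cylinder, a holomorphic gradient flow cylinder, or an embedded holomorphic page. Of these three types of leaves, only pages (and their covers) have no negative asymptotic ends.

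For the biconditional, suppose first that the main level $u_1$ is empty; since $u^\infty$ is a nonempty holomorphic building, some upper level must be nonempty, and in the lowest such level $u_{\ell_0}$ every negative puncture would need to be matched by a positive puncture below, which is impossible. Hence $u_{\ell_0}$ has no negative punctures, so each of its components is a cover of a page. Conversely, suppose $u^\infty$ contains a component whose image is a holomorphic page~$P$. Since $\boldsymbol{\pi}$ is symmetric and $P$ is planar, $P$ has exactly $m_j$ boundary circles in each spine component $\Sigma_j \times S^1$ (possibly with one end shifted to an index~$1$ critical orbit if $P$ has index~$1$), so $P$'s asymptotic contributions alone saturate the upper bound $m_j$ on the total positive asymptotic multiplicity of $u^\infty$ in each $\Sigma_j \times S^1$ imposed by the definition of $\overline{\mM}(\widehat{J};H;\mathbf{m})$. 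A careful level-by-level bookkeeping---tracking separately the propagation of multiplicities at index~$1$ and index~$2$ critical orbits through trivial cylinder and gradient flow cylinder covers---shows that any nonempty $u_1$ would contribute extra positive asymptotic mass above $P$'s level and overflow this bound, forcing $u_1$ to be empty.

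The structural claim in the empty-main-level case follows from the same multiplicity bookkeeping: a cover of $P$ of degree $\geq 2$, any additional page, and any branched cover of a trivial or gradient flow cylinder elsewhere in $u^\infty$ would each contribute strictly more than $m_j$ in some spine component. Hence the bottommost nonempty level is a single embedded page, and all other components are embedded (degree-$1$) trivial cylinders or gradient flow cylinders. The arithmetic genus-zero condition on $u^\infty$ then forces the dual graph of the building to be a tree; since the breaking structure between consecutive levels already connects all components into a tree, the presence of any additional node would create a cycle and increase the arithmetic genus, a contradiction.

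\emph{The main obstacle} is the reverse direction of the biconditional, in which a nonempty $u_1$ with positive ends only at index~$1$ critical orbits could in principle be absorbed by gradient flow cylinder covers in intermediate levels; ruling out this possibility requires inspection of how matching orbits propagate at both index~$1$ and index~$2$ critical points. A subsidiary subtlety is the interpretation of ``image is a holomorphic page'': by Lemma~\ref{lemma:index-1}, a main-level component with image in $\widehat{\nN}_-(\p E)$ may itself be a leaf of $\fF_+$, but such a curve is interpreted here as living in $\W$ rather than $\RR \times M^+$ and does not count as a ``holomorphic page'' in the present sense.
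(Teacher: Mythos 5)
Your treatment of the first claim (iterate the period bound via Proposition~\ref{prop:posPuncture}, then apply Lemma~\ref{lemma:uniqueness}) and of the biconditional (a page saturates the multiplicity budget $m_j$ in each spine component, while covers of cylinders preserve it level by level) is essentially the paper's argument, modulo the minor slip that the period constraint propagates \emph{downward} from the top level, not upward.

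There is, however, a genuine gap in the structural claims. You assert that ``any branched cover of a trivial or gradient flow cylinder elsewhere in $u^\infty$ would \dots\ contribute strictly more than $m_j$ in some spine component.'' This is false: a degree-$k$ cover of a cylinder living in $\RR \times \widecheck{M}^+\spine$ has equal total multiplicity at its positive and negative ends, so it does not overflow the budget. Concretely, a branched double cover of $\RR\times\gamma_{z_j}$ with two simple negative ends and one doubly covered positive end, glued along its negative ends to two of the page's $m_j$ simple ends at $\gamma_{z_j}$, contributes total multiplicity exactly~$2$ at the top of the building---the same as the two page ends it replaced---so the bound $\leq m_j$ is met with equality, not exceeded. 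What actually excludes this configuration is arithmetic genus zero: because the page has only simply covered ends, a degree-$k\ge2$ cover directly above it must have $k$ simply covered negative ends, and gluing such a genus-zero curve to the page along $k\ge 2$ pairs of breaking punctures forces arithmetic genus $k-1>0$. You do invoke the genus-zero condition, but only to exclude extra nodes, \emph{after} having (incorrectly) disposed of branched covers via the multiplicity count. The paper's proof uses the genus argument as the single observation that simultaneously rules out multiple covers and nodes; the multiplicity count cannot substitute for it.
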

\begin{proof}
The period constraint on the asymptotic orbits implies the same constraint
on all breaking orbits via Proposition~\ref{prop:posPuncture}, so
the first claim then follows from Lemma~\ref{lemma:uniqueness}.  Given this,
the rest
follows from the assumptions about total multiplicities of orbits in the 
components $\Sigma_i \times S^1$: in particular, these constraints
imply that $u^\infty$ can contain no more than one holomorphic page, which
cannot be multiply covered, and if it is present then there can be nothing
in any level below it.  Moreover, the fact that these pages have simply
covered asymptotic orbits implies since the arithmetic genus is zero that
all curves in levels above the page are simply covered cylinders, and
nodes cannot appear.  Conversely, if the main level is empty, then some
component must be a holomorphic page since all other kinds of leaves have
negative punctures.
\end{proof}

\subsubsection{Index relations}

Let us now fix some notation.  We shall write
the set of (necessarily positive) punctures of 
$u^\infty$ as $\Gamma(u^\infty)$, and for
each $z \in \Gamma(u^\infty)$, let $\gamma_z$ denote the corresponding
asymptotic orbit, and $m_z \in \NN$ its covering multiplicity.\footnote{The
reader should beware that the notation $\gamma_z$ was used with a slightly
different meaning in earlier sections.}  For each $m \in \NN$, define the
subset
$$
\Gamma^m(u^\infty) := \left\{ z \in \Gamma(u^\infty)\ \big|\ m_z = m \right\},
$$
hence $\Gamma(u^\infty) = \Gamma^1(u^\infty) \coprod \Gamma^2(u^\infty)
\coprod \ldots$.  We shall also write 
$$
\Gamma(u^\infty) = \Gamma_0(u^\infty) \coprod \Gamma_1(u^\infty),
$$
where $\Gamma_0(u^\infty)$ and $\Gamma_1(u^\infty)$ denote the sets of
punctures $z$ at which $\muCZ(\gamma_z)$ is even or odd respectively.
By Lemma~\ref{lemma:periodsAndIndices}, this means
$\muCZ(\gamma_z) = \ell$ with respect to the $S^1$-invariant trivialization
for $z \in \Gamma_\ell(u^\infty)$, $\ell=0,1$.

In light of Lemma~\ref{lemma:up}, we shall assume from now on that the main
level of $u^\infty$ is nonempty, all upper levels consist of covers of
trivial cylinders or gradient flow cylinders, and all breaking orbits are
covers of orbits in $\CritMorse(H) \times S^1 \subset \widecheck{M}^+\spine$
with period less than~$T_1$.
Denote the nonconstant connected components of the main level
by $u_1,\ldots,u_L$, and write each of these as
$$
u_i = v_i \circ \varphi_i,
$$
where $v_i$ is a somewhere injective curve and $\varphi_i$ is a holomorphic
branched cover of punctured Riemann surfaces whose unique extension over the
punctures gives a map of closed Riemann surfaces with degree
$$
k_i := \deg(\varphi_i) \in \NN.
$$
We shall use the same notation $\Gamma(u_i)$, $\Gamma(v_i)$ with subsets
$\Gamma^m(u_i)$, $\Gamma_0(u_i)$ etc.~for the (again positive)
punctures of $u_i$
and~$v_i$ and their asymptotic orbits and covering multiplicities.

Lemmas~\ref{lemma:genericity} and~\ref{lemma:index-1} imply $\ind(v_i) \ge -1$, and
$\ind(v_i) \ge 0$ if $\widehat{J}_{\tau_\infty}$ is generic.  If $\ind(v_i) = -1$,
then we notice from the index formula \eqref{eqn:index} that
$\Gamma_0(v_i)$ cannot be empty, thus in general, our genericity assumptions
always imply
\begin{equation}
\label{eqn:lowerBound}
\ind(v_i) + \#\Gamma_0(v_i) \ge 0.
\end{equation}
We will later make use of the fact that if $(u^\infty,\tau_\infty) \in 
\overline{\mM}\nice(\{\widehat{J}_\tau\};H;\mathbf{m})$, then Lemma~\ref{lemma:upperBound}
implies a
corresponding upper bound for $\ind(u^\infty) + \#\Gamma_0(u^\infty)$
plus associated counts of multiply covered punctures,
and our strategy will be to combine these relations with \eqref{eqn:lowerBound}
for deriving constraints on~$u^\infty$.
The workhorse result for this purpose will be Lemma~\ref{L:index} below.  

As preparation, we must first relate the indices of the curves
$v_i$ and their multiple covers~$u_i$.
Given $z \in \Gamma(u_i)$, let $k_z \in \NN$ denote the \emph{branching
order} of $\varphi_i$ at~$z$, meaning that $\varphi_i$ is a $k_i$-to-$1$
covering map on the cylindrical end near this puncture.  These numbers are
related to the total degree of $\varphi_i$ by
\begin{equation}
\label{eqn:ks}
k_i = \sum_{z \in \varphi_i^{-1}(\zeta)} k_z \quad
\text{ for any $\zeta \in \Gamma(v_i)$}.
\end{equation}
We shall use the same notation for branching orders
at critical points $z \in \Crit(\varphi_i)$, so that the algebraic
count of critical points of $\varphi_i$ is
$$
Z(d\varphi_i) := \sum_{z \in \Crit(\varphi_i)} (k_z - 1) \ge 0;
$$
we emphasize that $\varphi_i$ is being viewed here as a branched cover
between \emph{punctured} Riemann surfaces, so the sum over $z \in \Crit(\varphi_i)$
does not include points in~$\Gamma(u_i)$.  The asymptotic orbits of $u_i$
and $v_i$ are now related by
$$
\gamma_z = \gamma_{\zeta}^{k_z} \quad \text{ for $z \in \Gamma(u_i)$ and
$\zeta = \varphi_i(z) \in \Gamma(v_i)$},
$$
where we are abusing notation by identifying $\varphi_i$ with its
holomorphic extension over the punctures.  Since $u^\infty$ has arithmetic
genus~$0$, all the components $u_i$ and $v_i$ also have genus zero and
the Riemann-Hurwitz formula therefore implies
\begin{equation}
\label{eqn:RiemannHurwitz}
Z(d\varphi_i) + \sum_{z \in \Gamma(u_i)} (k_z - 1) = 2 (k_i - 1).
\end{equation}

Since the breaking orbits are all in $\CritMorse(H) \times S^1$ with
periods less than~$T_1$, the orbits $\gamma_z$ for $z \in
\Gamma_\ell(u_i)$ or $z \in \Gamma_\ell(v_i)$ with $\ell=0,1$ also satisfy
$\muCZ(\gamma_z) = \ell$ with respect to the $S^1$-invariant
trivialization.  Moreover, the extension of $\varphi_i$
over the punctures maps $\Gamma_\ell(u_i)$ to $\Gamma_\ell(v_i)$ for
each $\ell=0,1$.
Plugging this information into the index formula \eqref{eqn:index} gives
\begin{equation*}
\begin{split}
\ind(u_i) &= -2 + \#\Gamma(u_i) + \#\Gamma_1(u_i) + 2 c_1(u_i), \\
\ind(v_i) &= -2 + \#\Gamma(v_i) + \#\Gamma_1(v_i) + 2 c_1(v_i),
\end{split}
\end{equation*}
where $c_1(v_i)$ and $c_1(u_i) = k_i c_1(v_i) \in \ZZ$ are abbreviations for
the relative first Chern numbers of the pulled back tangent bundles with
respect to the $S^1$-invariant trivialization at the ends.  A quick computation
combining this with \eqref{eqn:ks} and \eqref{eqn:RiemannHurwitz} yields:

\begin{lemma} \label{L:IndexCover}
For each $i=1,\ldots,L$,
\[
\begin{split}
\ind(u_i) &=
k_i \ind(v_i) + Z(d\varphi_i) 
     - \sum_{z \in \Gamma_1(u_i)} \left(k_z - 1\right)\\
&= 
k_i \left[\ind(v_i) + \# \Gamma_0(v_i) \right] + Z(d\varphi_i) 
     - \sum_{z \in \Gamma_1(u_i)} \left(k_z - 1 \right) 
     - \sum_{z \in \Gamma_0(u_i)} k_z .
\end{split}
\]
\qed
\end{lemma}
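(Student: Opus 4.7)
The plan is to derive both formulas by a direct bookkeeping computation, combining the punctured Riemann–Roch index formula \eqref{eqn:index} with the Riemann–Hurwitz relation \eqref{eqn:RiemannHurwitz} and the preservation of Conley–Zehnder parity under covers. No serious analytic input is required here; the content is entirely combinatorial, so the only ``obstacle'' is keeping the accounting at the punctures straight.

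Concretely, I would begin from the two index expressions recorded just above the statement,
\[
\ind(u_i) = -2 + \#\Gamma(u_i) + \#\Gamma_1(u_i) + 2 c_1(u_i), \qquad
\ind(v_i) = -2 + \#\Gamma(v_i) + \#\Gamma_1(v_i) + 2 c_1(v_i),
\]
and use the identity $c_1(u_i) = k_i c_1(v_i)$ to eliminate the Chern numbers from the difference $\ind(u_i) - k_i \ind(v_i)$. This reduces the problem to comparing $\#\Gamma(u_i)$ with $k_i \#\Gamma(v_i)$, and $\#\Gamma_1(u_i)$ with $k_i \#\Gamma_1(v_i)$.

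The first of these comparisons is governed by the local degree formula \eqref{eqn:ks}, which, rearranged as $\#\varphi_i^{-1}(\zeta) = k_i - \sum_{z \in \varphi_i^{-1}(\zeta)} (k_z - 1)$ and summed over $\zeta \in \Gamma(v_i)$, gives
\[
\#\Gamma(u_i) - k_i \#\Gamma(v_i) = -\sum_{z \in \Gamma(u_i)} (k_z - 1) = Z(d\varphi_i) - 2(k_i - 1),
\]
where the last equality is Riemann–Hurwitz \eqref{eqn:RiemannHurwitz}. For the second comparison, one uses that the extension of $\varphi_i$ over the punctures restricts to a surjection $\Gamma_1(u_i) \to \Gamma_1(v_i)$ (the Conley–Zehnder parity of a cover matches that of the underlying orbit, by Lemma~\ref{lemma:periodsAndIndices} and the period bound inherited from $u^\infty$), so exactly the same local-degree argument applied to $\Gamma_1(v_i)$ alone yields
\[
\#\Gamma_1(u_i) - k_i \#\Gamma_1(v_i) = -\sum_{z \in \Gamma_1(u_i)} (k_z - 1).
\]
Substituting both identities into $\ind(u_i) - k_i \ind(v_i) = 2(k_i - 1) + [\#\Gamma(u_i) - k_i \#\Gamma(v_i)] + [\#\Gamma_1(u_i) - k_i \#\Gamma_1(v_i)]$ collapses the $2(k_i-1)$ term against Riemann–Hurwitz and leaves exactly the first formula in the lemma.

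The second formula is then purely cosmetic: since $\Gamma(u_i) = \Gamma_0(u_i) \sqcup \Gamma_1(u_i)$ and the local-degree formula \eqref{eqn:ks} applied fibrewise over $\Gamma_0(v_i)$ gives $k_i \#\Gamma_0(v_i) = \sum_{z \in \Gamma_0(u_i)} k_z$, one can rewrite $k_i \ind(v_i)$ as $k_i[\ind(v_i) + \#\Gamma_0(v_i)] - \sum_{z \in \Gamma_0(u_i)} k_z$ and substitute into the first formula. I expect the only thing to be careful about throughout is that $Z(d\varphi_i)$ counts branching of $\varphi_i$ at \emph{interior} critical points only, whereas the branching orders $k_z$ at the punctures must be treated separately via \eqref{eqn:RiemannHurwitz}; beyond this bookkeeping, the argument is mechanical.
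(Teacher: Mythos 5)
Your derivation is correct and uses exactly the ingredients the paper cites — the two displayed index formulas, the local degree identity \eqref{eqn:ks}, and Riemann--Hurwitz \eqref{eqn:RiemannHurwitz} — so it is precisely the "quick computation" the paper declines to write out. The one subtle point you flagged (that $\varphi_i$ preserves the parity partition of the punctures, via Lemma~\ref{lemma:periodsAndIndices}) is handled correctly, and the $\Gamma_0$ bookkeeping for the second identity is also right.
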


\begin{lemma} \label{L:index}
If $u^\infty \in \overline{\mM}(\widehat{J}_{\tau_\infty};H;\mathbf{m})$ 
has no nodes, then it satisfies
\[
\begin{split}
\ind(u^\infty) + \# \Gamma_0(u^\infty) + 2 \sum_{m \ge 2} & 
(m-1)\# \Gamma^m(u^\infty) \\
= 2 (L - 1) & + \sum_{i=1}^L \Bigg(
k_i \left[ \ind(v_i) + \# \Gamma_0(v_i) \right] + Z(d\varphi_i) \\
& + \sum_{\zeta \in \Gamma(v_i)} \sum_{z \in \varphi_i^{-1}(\zeta)} \left[ k_{z}(2m_\zeta -1) - 1 \right]
\Bigg).
\end{split}
\]
\end{lemma}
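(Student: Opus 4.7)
The plan is to split $\ind(u^\infty)$ into a main-level contribution and an upper-level contribution
\[
U := \ind(u^\infty) - \sum_{i=1}^L \ind(u_i) = \sum_j \ind(w_j),
\]
where $w_j$ ranges over the nontrivial upper-level components (trivial cylinders have index $0$ and can be ignored). This additivity is legitimate because $u^\infty$ has no nodes: the glued surface $\widehat{S}$ is formed by attaching cylindrical ends, which preserves Euler characteristic, $c_1^\Phi$ is additive under such gluing, and $\muCZ^\Phi$ contributions at paired breaking orbits cancel between the two levels they separate, so \eqref{eqn:index} applied to $\widehat{S}$ decomposes as the sum of $\ind$'s over individual components. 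By Lemma~\ref{lemma:up}, each $w_j$ is a branched cover of a trivial or gradient flow cylinder over a simply covered orbit $\{z\}\times S^1 \subset \widecheck{M}^+\spine$, so Lemma~\ref{lemma:wS} controls its index. Once $U$ is computed, Lemma~\ref{L:IndexCover} will rewrite $\sum_i \ind(u_i)$ in the form prescribed by the $A_i$'s.

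To pin down $U$, I first extract its topological part. Since $u^\infty$ has arithmetic genus $0$ and $\#\Gamma(u^\infty)$ (positive) punctures, $\chi(\widehat{S}) = 2 - \#\Gamma(u^\infty)$, and cylindrical gluing gives
\[
\sum_j(n_j + n_j' - 2) \;=\; 2(L-1) + \#\Gamma(u^\infty) - \sum_{i=1}^L \#\Gamma(u_i),
\]
where $n_j, n_j'$ denote the numbers of positive and negative punctures of $w_j$. Applying \eqref{eqn:index} to each $w_j$---using that $c_1^\Phi = 0$ in the $S^1$-invariant trivialization and that $\muCZ^\Phi$ equals $0$ on $\Gamma_0$-punctures and $1$ on $\Gamma_1$-punctures by Lemma~\ref{lemma:periodsAndIndices}---and summing so that breaking orbits \emph{internal} to the upper-level tower cancel (each contributes $+\muCZ$ from the component below and $-\muCZ$ from the component above), one obtains
\[
U = 2(L-1) + \#\Gamma_0(u^\infty) + 2\#\Gamma_1(u^\infty) - \sum_{i=1}^L \bigl[\#\Gamma_0(u_i) + 2\#\Gamma_1(u_i)\bigr].
\]

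The final ingredient, which will require the most care, is conservation of covering multiplicity through the upper levels. Each $w_j$ covers its base simple orbit with some degree $d_j$, so the covering multiplicities of its asymptotic orbits at positive (resp.~negative) punctures both sum to $d_j$; summing over $j$ and cancelling the breaking orbits that pair two upper-level components yields
\[
\sum_{z \in \Gamma(u^\infty)} m_z \;=\; \sum_{i=1}^L \sum_{z \in \Gamma(u_i)} m_z.
\]
Plugging this together with $2\sum_{m \geq 2}(m-1)\#\Gamma^m(u^\infty) = 2\bigl[\sum_{z \in \Gamma(u^\infty)} m_z - \#\Gamma(u^\infty)\bigr]$ and the above formula for $U$ into the left-hand side of the identity, the terms involving $\#\Gamma_0(u^\infty)$ and $\#\Gamma_1(u^\infty)$ collapse to $2\sum_{z} m_z$, leaving $2(L-1) + \sum_i \bigl[\ind(u_i) - \#\Gamma_0(u_i) - 2\#\Gamma_1(u_i) + 2\sum_{z \in \Gamma(u_i)} m_z\bigr]$. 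Using $m_z = k_z m_{\varphi_i(z)}$ to rewrite $k_z(2m_\zeta - 1) - 1$ as $2m_z - k_z - 1$, and then invoking Lemma~\ref{L:IndexCover} to eliminate $\ind(u_i)$, this matches $2(L-1) + \sum_i A_i$ after routine cancellations. The main obstacle throughout is bookkeeping: separating free from breaking punctures and verifying that both the Conley--Zehnder and multiplicity cancellations at breaking orbits survive through the possibly many stacked branched covers in the upper levels.
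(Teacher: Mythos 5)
Your proof is correct and takes essentially the same route as the paper's: compute the contribution of the upper levels to the index, use conservation of covering multiplicity across levels, and close with Lemma~\ref{L:IndexCover}. The only organizational difference is that the paper packages the upper-level contribution by forming the disjoint building $u^+$, counting its $L_+$ connected components via arithmetic genus, and invoking Lemma~\ref{lemma:wS}, whereas you apply \eqref{eqn:index} directly to each individual component $w_j$ and track the Euler-characteristic and breaking-orbit cancellations by hand; this amounts to inlining the proof of Lemma~\ref{lemma:wS}, so the content is identical and your formula for $U$ coincides exactly with the paper's $\ind(u^+)$.
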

\begin{proof}
If $u^\infty$ has no upper levels, then we can replace it with an unstable
building having one upper level that consists only of trivial cylinders.
Let us therefore denote by $u^+$ the (possibly disconnected)
holomorphic building in
$(\RR \times M^+,J_+)$ consisting of all upper levels of $u^\infty$, and
assume without loss of generality that $u^+$ is a disjoint union
of $L_+ \ge 1$ connected buildings.  
We can compute $L_+$ from the fact that $u^\infty$ has arithmetic genus
zero: indeed, $u^\infty$ gives rise to a contractible graph whose vertices
correspond to the connected components $u_1,\ldots,u_L$ and the $L_+$
connected buildings forming $u^+$, and with edges corresponding to the
punctures of $u_1,\ldots,u_L$, so its Euler characteristic is
$1 = L + L_+ - \sum_{i=1}^L \#\Gamma(u_i)$, implying
$$
L_+ = -(L-1) + \sum_{i=1}^L \#\Gamma(u_i).
$$
Meanwhile, the positive punctures of $u^+$ are in one-to-one correspondence 
with those of $u^\infty$, and its negative punctures correspond to the 
punctures of $u_1,\ldots,u_L$, thus Lemma~\ref{lemma:wS} combines with
the above relation and gives
\begin{equation}
\label{eqn:indexu+}
\begin{split}
\ind(u^+) &= - 2 L_+ + \#\Gamma_0(u^\infty) + 
2 \#\Gamma_1(u^\infty) + \sum_{i=1}^L \#\Gamma_0(u_i) \\
&= 2(L-1) - \sum_{i=1}^L \left[ \#\Gamma_0(u_i) + 2\#\Gamma_1(u_i)\right] + 
\#\Gamma_0(u^\infty) + 2\#\Gamma_1(u^\infty) . 
\end{split}
\end{equation}
Note that since each component of $u^+$ has image in $\RR \times 
\widecheck{M}^+\spine$ and the latter fibers over $S^1$,
there is a well-defined degree of the projection to~$S^1$.
In particular, the total degree of the positive ends agrees with the
total degree of the negative ends, implying
$$
\sum_{z \in \Gamma(u^\infty)} m_z = \sum_{i=1}^L
\sum_{z \in \Gamma(u_i)} m_z = \sum_{i=1}^L \sum_{\zeta \in \Gamma(v_i)}
\sum_{z \in \varphi_i^{-1}(\zeta)} k_z m_\zeta.
$$
The expression on the left hand side of this equation can also be rewritten as
$\sum_{m \in \NN} m \#\Gamma^m(u^\infty)$, thus
\begin{equation}
\label{eqn:multiplicityRelation}
\begin{split}
\#\Gamma_0(u^\infty) &+ 2 \sum_{m \ge 2} (m-1) \#\Gamma^m(u^\infty) =
\#\Gamma_0(u^\infty) + 2 \sum_{z \in \Gamma(u^\infty)} m_z - 2 \#\Gamma(u^\infty) \\
&= - \#\Gamma_0(u^\infty) - 2\#\Gamma_1(u^\infty) +
2 \sum_{i=1}^L \sum_{\zeta \in \Gamma(v_i)}
\sum_{z \in \varphi_i^{-1}(\zeta)} k_z m_\zeta.
\end{split}
\end{equation}
Now writing $\ind(u^\infty) = \ind(u^+) + \sum_{i=1}^L \ind(u_i)$ and
applying Lemma~\ref{L:IndexCover} along with \eqref{eqn:indexu+} and
\eqref{eqn:multiplicityRelation}, we find that
$\ind(u^\infty) + \#\Gamma_0(u^\infty) + 2 \sum_{m\ge 2} (m-1) \#\Gamma^m(u^\infty)$
equals
\begin{equation*}
\begin{split}
2 & (L-1) - \sum_{i=1}^L \left[ \#\Gamma_0(u_i) + 2\#\Gamma_1(u_i)\right] + 
\#\Gamma_0(u^\infty) + 2\#\Gamma_1(u^\infty) \\
& \quad + \sum_{i=1}^L \Bigg(
k_i \left[\ind(v_i) + \# \Gamma_0(v_i) \right] + Z(d\varphi_i) 
     - \sum_{z \in \Gamma_1(u_i)} \left(k_z - 1 \right) 
     - \sum_{z \in \Gamma_0(u_i)} k_z
\Bigg) \\
& \quad - \#\Gamma_0(u^\infty) - 2\#\Gamma_1(u^\infty) +
2 \sum_{i=1}^L \sum_{\zeta \in \Gamma(v_i)}
\sum_{z \in \varphi_i^{-1}(\zeta)} k_z m_\zeta \\
&= 2(L-1) + \sum_{i=1}^L \Bigg( k_i \left[\ind(v_i) + 
\# \Gamma_0(v_i) \right] + Z(d\varphi_i) \\
& \qquad + \sum_{\zeta \in \Gamma_0(v_i)} \sum_{z \in \varphi_i^{-1}(\zeta)}
\left( 2 k_z m_\zeta - k_z - 1 \right) \\
& \qquad + \sum_{\zeta \in \Gamma_1(v_i)} \sum_{z \in \varphi_i^{-1}(\zeta)}
\left( 2 k_z m_\zeta - (k_z - 1) - 2 \right) \Bigg) .
\end{split}
\end{equation*}
\end{proof}

The next step is to combine the above lemma with the lower bounds on indices
arising from Lemmas~\ref{lemma:genericity} and~\ref{lemma:index-1} and some 
information from intersection theory.

\begin{lemma}
\label{lemma:noNodes}
Assume $(u^\infty,\tau_\infty) \in 
\overline{\mM}\nice(\{\widehat{J}_\tau\};H;\mathbf{m})$ and that
$u^\infty$ has nodes.  
Then there is exactly one node, there are no ghost 
bubbles, and the main level has exactly two connected
components $u_1$ and $u_2$, each somewhere injective with all
asymptotic orbits simply covered and satisfying 
$\ind(u_i) + \#\Gamma_0(u_i) = 0$ for $i=1,2$.
\end{lemma}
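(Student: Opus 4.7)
The plan is to sandwich the quantity $\sum_i\bigl[\ind(w_i) + \#\Gamma_0(w_i) + 2\#\Gamma^2(w_i)\bigr]$, summed over the nonconstant maximal non-nodal subbuildings $w_1,\ldots,w_m$ of $u^\infty$, between $0$ (from the nice-embedding upper bound) and $0$ (from the index formula for non-nodal subbuildings), and then to read off the claimed rigid structure from the resulting equalities.

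For the upper bound, I would use that the punctures of the $w_i$ partition those of $u^\infty$, together with Proposition~\ref{prop:indexNodes4} applied in arithmetic genus zero:
$$\ind(u^\infty)\ge 2+\sum_{i=1}^m\ind(w_i),$$
with equality iff there is exactly one node and no ghost bubbles. Adding $\#\Gamma_0(u^\infty)+2\#\Gamma^2(u^\infty)$ to both sides and invoking Lemma~\ref{lemma:upperBound} (which also guarantees all asymptotic orbits are at most doubly covered) yields
$$\sum_{i=1}^m\bigl[\ind(w_i)+\#\Gamma_0(w_i)+2\#\Gamma^2(w_i)\bigr]\le 0.$$
For the lower bound, I would first rule out a subbuilding $w_i$ with empty main level: such a $w_i$ has no negative punctures to break into anything below, so by the analysis in \S\ref{sec:FEFunperturbed} it must be a single holomorphic page in $\ModPlus{}$, which has $\#\Gamma^2=0$ and $\ind+\#\Gamma_0\ge 1$, contradicting $\sum\le 0$. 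Hence each $w_i$ has a nonempty main level and Lemma~\ref{L:index} applies, giving
$$\ind(w_i)+\#\Gamma_0(w_i)+2\#\Gamma^2(w_i)=2(L_i-1)+B_i\ge 0,$$
where $L_i\ge 1$ is the number of main-level components of $w_i$ and $B_i\ge 0$ is the nonnegative sum of $k_j[\ind(v_j)+\#\Gamma_0(v_j)]+Z(d\varphi_j)+\sum_\zeta\sum_z[k_z(2m_\zeta-1)-1]$, with nonnegativity coming from \eqref{eqn:lowerBound}, Riemann--Hurwitz, and $k_z,m_\zeta\ge 1$ respectively.

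Forcing equality throughout then delivers all the conclusions at once. Equality in Proposition~\ref{prop:indexNodes4} gives exactly one node and no ghost bubbles, hence $m=2$. Vanishing of each summand forces $L_i=1$ and $B_i=0$, so $L=L_1+L_2=2$ gives the two main-level components $u_1,u_2$; the conditions $k_j=1$ and $Z(d\varphi_j)=0$ yield $u_j=v_j$ somewhere injective, the condition $m_\zeta=1$ gives that all asymptotic orbits of $u_j$ are simply covered, and $\ind(v_j)+\#\Gamma_0(v_j)=0$ reads as $\ind(u_i)+\#\Gamma_0(u_i)=0$. Finally, $\#\Gamma^2(w_i)=0$ for each $i$ follows from the individual identity $\ind(w_i)+\#\Gamma_0(w_i)+2\#\Gamma^2(w_i)=0$ together with $\ind(w_i)\ge -1$ (Lemmas~\ref{lemma:genericity} and~\ref{lemma:index-1}) and $\#\Gamma_0(w_i)\ge 0$, so summing gives $\#\Gamma^2(u^\infty)=0$. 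The main obstacle is keeping careful bookkeeping for possible upper levels inside each $w_i$, but this is precisely what Lemma~\ref{L:index} is engineered for: it absorbs the entire upper-level contribution into $2(L_i-1)+B_i$, after which the argument reduces to the elementary sandwich above.
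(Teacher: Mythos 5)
Your proof is correct and follows the paper's argument essentially verbatim: apply Lemma~\ref{L:index} to each nonconstant maximal non-nodal subbuilding, bound the sum of the left-hand sides using Lemma~\ref{lemma:upperBound} together with the node deficit from Proposition~\ref{prop:indexNodes4}, and force all inequalities to equalities. The one place where you go slightly beyond the paper's written proof is in explicitly ruling out a subbuilding $w_i$ with empty main level (for which the right-hand side of Lemma~\ref{L:index} as stated would be $2(L_i-1)=-2$ and hence not obviously nonnegative), a point the paper leaves implicit; your final sentence about $\#\Gamma^2(w_i)=0$ is unnecessary for the statement (which concerns only the main-level curves $u_1,u_2$, whose simple coveredness already follows from $m_\zeta=1$), and the cited justification $\ind(w_i)\ge -1$ via Lemmas~\ref{lemma:genericity} and~\ref{lemma:index-1} alone is incomplete since $w_i$ is a building rather than a curve, but this does not affect the correctness of the main argument.
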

\begin{proof}
If $u^\infty$ has nodes, then Lemma~\ref{L:index} applies to each of its
nonconstant maximal non-nodal subbuildings (see \S\ref{sec:energy}),
and the right hand side of this relation is nonnegative due to
\eqref{eqn:lowerBound}.  The sum of the left hand sides over all these
subbuildings is meanwhile at most $2$ due to Lemma~\ref{lemma:upperBound}.
Proposition~\ref{prop:indexNodes4} thus implies that there
are no ghost bubbles, there is exactly one node
connecting two maximal non-nodal subbuildings, and for both of these
the expression on the right hand side in Lemma~\ref{L:index} vanishes.
This implies that each has exactly one component $u_i = v_i \circ \varphi_i$ 
in the main level, where the underlying simple curve $v_i$
satisfies $\ind(v_i) + \#\Gamma_0(v_i) = 0$ and has $m_\zeta = 1$ 
for all $\zeta \in \Gamma(v_i)$.  Moreover, $Z(d\varphi_i) = 0$ and
$k_z = 1$ for all $z \in \Gamma(u_i)$, hence the Riemann-Hurwitz formula
\eqref{eqn:RiemannHurwitz} implies $k_i = 1$, i.e.~$u_i$ 
is somewhere injective and $u_i = v_i$.
\end{proof}

Here is a more complete inventory of the consequences of
Lemma~\ref{L:index}.

\begin{lemma} \label{L:indexGames}
If $(u^\infty,\tau_\infty) \in \overline{\mM}\nice(\{\widehat{J}_\tau\};H;\mathbf{m})$, 
then the following constraints hold:
\begin{itemize}
\item The number of connected components in the main level is at most~$2$;
\item The asymptotic orbits of all the curves $v_i$ are either simply or 
doubly covered;
\item $\ind(v_i) + \# \Gamma_0(v_i) = 0$ or $2$ for each~$i$;
\item All punctures $z \in \Gamma(u_i)$ have $k_z \le 2$, and furthermore
  $k_z m_{\varphi_i(z)} \le 2$.
\end{itemize}
Moreover:
\begin{enumerate}
    \item \label{i}
    If any $v_i$ has $\ind(v_i) + \# \Gamma_0(v_i) =2$, 
      then $L=1$ and $u_1$ is somewhere injective (i.e.~$u_1 = v_1$).
    \item If all the $v_i$ have $\ind(v_i) + \# \Gamma_0(v_i) = 0$, then:
    	\begin{enumerate}
	\item  \label{i:i1}
	       If any of the $v_i$ has a doubly covered orbit,
	      then it is the only doubly covered orbit, $L=1$,
	      and $u_1$ is somewhere injective (i.e.~$u_1=v_1$).
	    \item If on the other hand $m_\zeta = 1$ for all $\zeta \in \Gamma(v_i)$, 
	      we consider two cases:
	      \begin{enumerate}
		\item \label{i:coverCase}
		  If $L=1$, let $\ell \ge 0$ denote the number of punctures
		  $z \in \Gamma(u_1)$ at which $k_z=2$: then $\ell \le 2$
		  and all other punctures have $k_z = 1$.  Moreover,
		  $u_1$ is somewhere injective if $\ell=0$, and otherwise
		  $\ell + Z(d\varphi_1) = 2$ and $k_1 = 2$.
		\item \label{i:node}
		  If $L=2$, then both components in the main level 
		  are somewhere injective, all their asymptotic orbits are
		  simply covered and $\ind(u_i) + \#\Gamma_0(u_i) = 0$
		  for $i=1,2$.
	       \end{enumerate}
	  \end{enumerate}
      \end{enumerate}
      In particular, the components $u_i$ in the main level are somewhere
      injective except possibly in case \ref{i:coverCase}.
\end{lemma}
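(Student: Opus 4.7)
The plan is to extract everything from the master identity of Lemma~\ref{L:index} by combining it with the upper bound from Lemma~\ref{lemma:upperBound} and the lower bound \eqref{eqn:lowerBound}. First I would dispose of the nodal case: if $u^\infty$ has a node, Lemma~\ref{lemma:noNodes} directly produces case~\ref{i:node}, with $L=2$, both components simple, all asymptotic orbits simply covered, and $\ind(u_i)+\#\Gamma_0(u_i)=0$. So from now on assume $u^\infty$ has no nodes. By Lemma~\ref{lemma:upperBound} every asymptotic orbit of $u^\infty$ is at most doubly covered and
\[
\ind(u^\infty)+\#\Gamma_0(u^\infty)+2\sum_{m\ge 2}(m-1)\#\Gamma^m(u^\infty) \;\le\; 2,
\]
while on the right-hand side of Lemma~\ref{L:index} every summand is nonnegative: $k_i[\ind(v_i)+\#\Gamma_0(v_i)]\ge 0$ by \eqref{eqn:lowerBound}, $Z(d\varphi_i)\ge 0$ tautologically, and each puncture term $k_z(2m_\zeta-1)-1\ge 0$ since $k_z,m_\zeta\in\NN$.

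Reading off the global consequences of this ``nonnegative decomposition summing to at most~$2$'' is immediate. The $2(L-1)$ term forces $L\le 2$. Each $\ind(v_i)+\#\Gamma_0(v_i)$ is bounded by $2$; the index formula \eqref{eqn:index} plus the fact (Lemma~\ref{lemma:periodsAndIndices}) that all asymptotic orbits live in $\CritMorse(H)\times S^1$ with Conley–Zehnder index matching the parity of $\Gamma_\ell(v_i)$ makes this quantity even, so it is $0$ or $2$. Finally $k_z(2m_\zeta-1)\le 3$ gives $k_z\le 2$ and $k_zm_\zeta\le 2$, which in particular forces $m_\zeta\le 2$. These four observations establish the four bulleted assertions of the lemma.

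Next I carry out the case analysis. For case~\ref{i}, if some $\ind(v_i)+\#\Gamma_0(v_i)=2$, its contribution is already $2k_i\ge 2$, so the inequality saturates: $L=1$, $k_1=1$, $Z(d\varphi_1)=0$, and every puncture term vanishes, giving $u_1=v_1$. For case~(2), all $\ind(v_i)+\#\Gamma_0(v_i)=0$. In subcase~\ref{i:i1}, if some orbit is doubly covered then $3k_z-1\ge 2$ saturates the budget, forcing $L=1$ and $k_z=1$ at that puncture, all other puncture terms zero (so no other doubly covered orbit, and $Z(d\varphi_1)=0$), and then $\sum_{z}(k_z-1)=0$ together with Riemann–Hurwitz \eqref{eqn:RiemannHurwitz} yields $k_1=1$, i.e.\ $u_1=v_1$. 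If instead all $m_\zeta=1$, Riemann–Hurwitz \eqref{eqn:RiemannHurwitz} rewrites the inequality as $L-1+\sum_{i=1}^L(k_i-1)\le 1$: when $L=2$ this forces both $k_i=1$, yielding case~\ref{i:node}; when $L=1$ it gives $k_1\le 2$, and if $k_1=2$ then $Z(d\varphi_1)+\ell=2$ where $\ell=\#\{z:k_z=2\}$, which is exactly case~\ref{i:coverCase}.

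The main obstacle is bookkeeping rather than geometry: one must be careful that the parity argument for $\ind(v_i)+\#\Gamma_0(v_i)$ genuinely uses only that all asymptotic orbits of $v_i$ have Conley–Zehnder index of definite parity in the $S^1$-invariant trivialization (this was already exploited in the derivation of Lemma~\ref{L:IndexCover}), and that the Riemann–Hurwitz reduction in the last step correctly identifies the branch data; it is important that $u^\infty$ has arithmetic genus zero so that each $u_i$ and $v_i$ does too, which is where \eqref{eqn:RiemannHurwitz} applies. Apart from this careful casework, the proof is a mechanical exhaustion of the options left by the inequality $\mathrm{LHS}\le 2$.
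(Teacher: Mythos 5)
Your overall strategy is exactly that of the paper: handle the nodal case via Lemma~\ref{lemma:noNodes}, then for the non-nodal case observe that the right-hand side of Lemma~\ref{L:index} is a sum of nonnegative terms while the left-hand side is bounded by~$2$ (Lemma~\ref{lemma:upperBound}), and exhaust the cases. The parity argument for the third bullet, the saturation argument for case~\ref{i}, the budget argument for case~\ref{i:i1}, and the Riemann--Hurwitz reformulation for cases~\ref{i:coverCase} and~\ref{i:node} all match the paper's proof.

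However, there is a genuine gap in your derivation of the fourth bullet point. You claim that the puncture-term bound $k_z(2m_\zeta - 1) \le 3$ ``gives $k_z \le 2$ and $k_z m_\zeta \le 2$.'' This is false as stated: the pair $(k_z, m_\zeta) = (3,1)$ satisfies $k_z(2m_\zeta - 1) = 3 \le 3$ but has $k_z = 3 > 2$ and $k_z m_\zeta = 3 > 2$. The inequality alone only rules out $k_z \ge 4$. To exclude $k_z = 3$ one must observe, as the paper does, that if some $z_0 \in \Gamma(u_i)$ had $k_{z_0}=3$ and $m_{\varphi_i(z_0)}=1$, then that one term already contributes~$2$, forcing $Z(d\varphi_i)=0$ and $k_z = 1$ for every other puncture; Riemann--Hurwitz \eqref{eqn:RiemannHurwitz} then gives $2(k_i-1) = Z(d\varphi_i) + \sum_z(k_z-1) = 2$, i.e.~$k_i = 2$, which is incompatible with a branch point of order~$3$ on a degree-$2$ cover. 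Once $k_z \le 2$ is established, the observation that $k_z = m_\zeta = 2$ would make a single term equal to~$5 > 2$ finishes the fourth bullet. Without this Riemann--Hurwitz step, the fourth bullet---and hence the downstream assertion that $k_z\in\{1,2\}$ used in case~\ref{i:coverCase} to identify $\sum_z(k_z-1)$ with $\ell$---is not justified.
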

\begin{proof}
If $u^\infty$ has any nodes then Lemma~\ref{lemma:noNodes} applies and
produces a result consistent with case~\ref{i:node}.  Let us therefore
assume there are no nodes, so that the relation in Lemma~\ref{L:index} applies
to $u^\infty$ directly, and its left hand side is at most $2$ by
Lemma~\ref{lemma:upperBound}.  The first three bullet points are
then immediate from the lemma since $\ind(v_i) + \#\Gamma_0(v_i) \ge 0$
for all~$i$.  For the fourth bullet point, we also see
an immediate contradiction if $k_z \ge 4$ for some $z \in \Gamma(u_i)$, 
so suppose $k_z = 3$, with $\zeta = \varphi_i(z) \in \Gamma(v_i)$.
Then $m_\zeta$ must be~$1$ and $k_z$ must also be~$1$ for all other
$z \in \Gamma(u_i)$, and $Z(d\varphi_i) = 0$, but then 
\eqref{eqn:RiemannHurwitz} gives
$$
Z(d\varphi_i) + \sum_{z \in \Gamma(u_i)} (k_z - 1) = 2 = 2 (k_i - 1),
$$
hence $k_i=2$ and there cannot be a branch point of order~$3$.  We therefore
have both $k_z \le 2$ and $m_\zeta \le 2$ for all punctures; if ever
$k_z = m_\zeta = 2$, then Lemma~\ref{L:index} provides another immediate
contradiction, so this completes the proof of the fourth bullet point.
  
Cases~\ref{i}, \ref{i:i1} and~\ref{i:node} follow from Lemma~\ref{L:index} via 
similar arguments: in each case, somewhere injectivity follows from the
Riemann-Hurwitz formula \eqref{eqn:RiemannHurwitz} after observing that
$Z(d\varphi_i)$ and all the $k_z - 1$ must vanish.

  We now consider case~\ref{i:coverCase}.  
  By hypothesis, we have $L=1$, $\ind(v_1) + \#\Gamma_0(v_1) = 0$
  and $m_\zeta = 1$ for each 
  $\zeta \in \Gamma(v_1)$, and taking account of Lemma~\ref{lemma:upperBound},
  the identity in Lemma~\ref{L:index} now simplifies to
$$
2 \ge \ind(u^\infty) + \#\Gamma_0(u^\infty) + 2\#\Gamma^2(u^\infty) =
Z(d\varphi_1) + \sum_{z \in \Gamma(u_1)} (k_z - 1),
$$
which equals $2(k_1 - 1)$ 
by \eqref{eqn:RiemannHurwitz}.  The stated conclusions follow immediately.
\end{proof}

\subsubsection{Intersection numbers}

In the present setting, Lemma~\ref{lemma:u*v} provides an easy method
for computing Siefring intersection numbers since, according to
Lemma~\ref{lemma:winding},
all the orbits $\gamma$ appearing as positive asymptotic orbits satisfy
$\alpha_-(\gamma) = 0$ in the $S^1$-invariant trivialization.  This implies
that if $u$ and $u'$ each denote any of $u^\infty$, $u_i$ or $v_i$, we have
\begin{equation}
\label{eqn:relative}
u * u' = u \bullet_{\Phi_0} u',
\end{equation}
with $\Phi_0$ denoting the $S^1$-invariant trivialization and
$\bullet_{\Phi_0}$ denoting the relative intersection pairing described
in \S\ref{sec:Siefring}.

\begin{lemma} \label{L:mainLevelIntersection}
$\displaystyle 
\sum_{j,k = 1}^L u_j * u_k = u^\infty * u^\infty$.
\end{lemma}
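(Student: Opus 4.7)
The strategy is to use the continuity and level-wise additivity of Siefring's intersection pairing, together with the identity~\eqref{eqn:relative} relating $*$ with the relative intersection number $\bullet_{\Phi_0}$ in the $S^1$-invariant trivialization $\Phi_0$. Every asymptotic orbit of $u^\infty$ and every breaking orbit appearing in its levels is a cover of some $\gamma_z$ with $z \in \CritMorse(H)$, $\Morse(z) \in \{1,2\}$, and period at most $T_1$; by Lemma~\ref{lemma:winding}, such orbits satisfy $\alpha_-^{\Phi_0}(\gamma) = 0$. Since $u^\infty$ has only positive asymptotic orbits, Lemma~\ref{lemma:u*v} therefore applies and gives
\[
u^\infty * u^\infty = u^\infty \bullet_{\Phi_0} u^\infty.
\]

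The relative intersection number $\bullet_{\Phi_0}$ is bilinear and additive across the constituent pieces of a building; writing $u^{\mathrm{up}}$ for the disjoint union of the upper levels of $u^\infty$---each a cover of a leaf of $\fF_+$ by Lemma~\ref{lemma:up}---one obtains the decomposition
\[
u^\infty \bullet_{\Phi_0} u^\infty = \sum_{j,k=1}^L u_j \bullet_{\Phi_0} u_k + 2 \sum_{j=1}^L u_j \bullet_{\Phi_0} u^{\mathrm{up}} + u^{\mathrm{up}} \bullet_{\Phi_0} u^{\mathrm{up}}.
\]
Applying \eqref{eqn:relative} to each main-main pair converts the first sum into $\sum_{j,k=1}^L u_j * u_k$, which is exactly the desired expression.

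It remains to show that the mixed and the upper-upper terms both vanish. Using \eqref{eqn:relative} once more reduces the question to computing Siefring intersection numbers involving covers of leaves of $\fF_+$: Lemma~\ref{lemma:leavesOrbits} shows that a nontrivial leaf (or any of its covers) has zero intersection with any trivial cylinder over $\gamma_z^m$ in the relevant range, Lemma~\ref{lemma:evenOrbits} handles pairs of covers of a single trivial cylinder over an orbit with even Conley-Zehnder index, and Proposition~\ref{prop:int0} controls intersections among distinct nontrivial leaves of $\fF_+$. These vanishing results combine---in the same spirit as the proof of Lemma~\ref{lemma:nicelyEmbedded}, via the ``infinite $\RR$-translation'' trick of \cite{Siefring:intersection}*{Lemma 5.7, Theorem 5.8}---to force the second and third sums to be zero. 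The main technical obstacle is accounting for possible breaking contributions at common breaking orbits between the main level and $u^{\mathrm{up}}$, but the winding condition $\alpha_-^{\Phi_0}(\gamma) = 0$ ensures that no spurious asymptotic contributions appear, and the calculation reduces cleanly to the vanishing lemmas above.
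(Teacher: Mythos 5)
Your opening step---applying Lemma~\ref{lemma:u*v} via Lemma~\ref{lemma:winding} to rewrite $u^\infty * u^\infty$ as $u^\infty \bullet_{\Phi_0} u^\infty$---is correct and matches the paper. The trouble starts with the decomposition. The relative intersection number for holomorphic buildings is, by the definition recalled in \S\ref{sec:Siefring}, obtained by adding the relative intersection numbers of \emph{corresponding} levels; within a single level $\bullet_{\Phi_0}$ does expand bilinearly over disjoint components, but there is no pairing between a main-level component and an upper-level component---they live in different manifolds ($\W$ versus $\RR\times M^+$), so a term such as $u_j \bullet_{\Phi_0} u^{\mathrm{up}}$ is not even well-defined. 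The mixed term $2\sum_j u_j \bullet_{\Phi_0} u^{\mathrm{up}}$ therefore should not appear in the expansion of $u^\infty \bullet_{\Phi_0} u^\infty$. Once it is dropped, the identity reduces to the claim that each upper level has vanishing self-$\bullet_{\Phi_0}$, while the main-level contribution $\sum_{j,k} u_j \bullet_{\Phi_0} u_k$ converts back to $\sum_{j,k} u_j * u_k$ via \eqref{eqn:relative}.

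The argument you give for the vanishing of the upper-level contributions is also incomplete. Lemma~\ref{lemma:evenOrbits} applies only to covers of trivial cylinders over orbits with \emph{even} Conley--Zehnder index, yet upper levels routinely contain covers of the trivial cylinder over an elliptic orbit $\gamma_z$ with $\Morse(z)=2$, whose index is odd; Lemma~\ref{lemma:leavesOrbits} concerns pairings of covers of leaves against trivial cylinders, not two covers of gradient flow cylinders; and Proposition~\ref{prop:int0} is a statement about the restricted moduli space $\mM^\fF(J)$, not about arbitrary multiple covers appearing in a limit. The paper's argument sidesteps all of this: by Lemma~\ref{lemma:up}, every component of an upper level is a cover of a trivial cylinder or a gradient flow cylinder and hence has image in $\RR\times\widecheck{M}^+\spine$, where the $S^1$-invariant trivialization $\Phi_0$ extends to a global trivialization of $\Xi_+$. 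Perturbing any such curve globally in the $\Phi_0$-direction shifts its projection to $\Sigma$ off itself and off any other such curve, giving $u\bullet_{\Phi_0}u'=0$ for all pairs of upper-level components at once, with no case analysis.
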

\begin{proof}
By Lemma~\ref{lemma:up}, all the components in upper levels of $u^\infty$
are covers of trivial cylinders and gradient flow cylinders.  Any two
curves $u$ and $u'$ of this type satisfy $u \bullet_{\Phi_0} u' = 0$,
as one can define the trivialization $\Phi_0$ globally over $\widecheck{M}^+\spine$
and then make $u'$ disjoint from $u$ by a global perturbation of $u'$ in
the direction of~$\Phi_0$.  The formula thus follows by computing
$u^\infty \bullet_{\Phi_0} u^\infty$ as a double sum over all components
in all levels and applying \eqref{eqn:relative}.
\end{proof}

Let us assume from now on that
$$
(u^\infty,\tau_\infty) \in \overline{\mM}\nice(\{\widehat{J}_\tau\};H;\mathbf{m}).
$$
We can now give a complete description of $u^\infty$ in the case
$u^\infty * u^\infty = -1$, which by Lemma~\ref{lemma:upperBound} means
$\ind(u_\infty) \in \{-1,0\}$, $\#\Gamma_0(u^\infty) \in \{0,1\}$ and
all asymptotic orbits of $u^\infty$ are simply covered.

\begin{lemma}
\label{lemma:-1}
If $u^\infty * u^\infty = -1$, then $u^\infty$ is either a smooth nicely
embedded curve or a building with two nontrivial levels, where the main level
$u_1$ is a connected nicely embedded curve with $\ind(u_1) = u_1 * u_1 = -1$
and simply covered asymptotic orbits,
and the upper level is a disjoint union of trivial cylinders with a single
gradient flow cylinder from $\ModPlus{1}$.
\end{lemma}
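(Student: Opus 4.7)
\medskip

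The plan is to combine the self-intersection constraint with the classification machinery already developed in Lemmas~\ref{lemma:noNodes}, \ref{L:indexGames}, and~\ref{lemma:wS}. First I would apply Lemma~\ref{lemma:upperBound} to the condition $u^\infty * u^\infty = -1$ to conclude $\ind(u^\infty) \in \{-1,0\}$, $\#\Gamma^2(u^\infty)=0$, and $\#\Gamma_0(u^\infty) \in \{0,1\}$. In particular, all asymptotic orbits of $u^\infty$ are simply covered. The first important step is to rule out the nodal case: by Lemma~\ref{lemma:noNodes}, any nodal $u^\infty$ would have exactly one node and two somewhere injective main-level components $u_1, u_2$ with simply covered orbits and $\ind(u_i) + \#\Gamma_0(u_i) = 0$. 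The adjunction formula \eqref{eqn:adjunctionEq} together with Lemma~\ref{lemma:spectralCovering} then forces $u_i * u_i \ge -1$, while positivity of intersections gives $u_1 * u_2 \ge 1$ at the node, so Lemma~\ref{L:mainLevelIntersection} yields $u^\infty * u^\infty \ge -1 - 1 + 2 = 0$, contradicting $u^\infty * u^\infty = -1$.

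With nodes excluded, I would run through the case analysis of Lemma~\ref{L:indexGames}. Plugging the assumptions into the identity of Lemma~\ref{L:index} and comparing with the upper bound $\ind(u^\infty) + \#\Gamma_0(u^\infty) + 2\#\Gamma^2(u^\infty) = 0$ eliminates every case except $L=1$, $\ind(v_1) + \#\Gamma_0(v_1) = 0$, $m_\zeta = 1$ for all $\zeta \in \Gamma(v_1)$, and $u_1 = v_1$ somewhere injective. Indeed, a simple RHS computation rules out the index-2 case and the doubly-covered case, while case~\ref{i:coverCase} of Lemma~\ref{L:indexGames} reduces via Riemann--Hurwitz to $k_1 = 1$, and case~\ref{i:node} would force the LHS to equal $2$, contradicting the constraint. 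Lemma~\ref{L:mainLevelIntersection} then gives $u_1 * u_1 = -1$, and since all asymptotic orbits of $u_1$ are simply covered, Lemma~\ref{lemma:spectralCovering} combined with the adjunction formula and \eqref{eqn:2cN} gives $\delta(u_1) = \delta_\infty(u_1) = 0$, so $u_1$ is nicely embedded.

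Next I would analyze the upper levels $u^+$. By Lemma~\ref{lemma:up} they consist of covers of trivial cylinders and gradient flow cylinders from $\fF_+$, and all breaking orbits are covers of orbits in $\CritMorse(H) \times S^1$ with period $< T_1$. Since the positive orbits of $u^\infty$ are simply covered, Lemma~\ref{lemma:wS} prevents any multiply covered branched trivial-cylinder components, so every nontrivial component of $u^+$ has index $\ge 1$. The additivity $\ind(u^\infty) = \ind(u^+) + \ind(u_1)$ together with the index-difference relation $\ind(u_1) = \ind(u^\infty) - \#(\text{even breaking orbits})$ (derived from the index formula \eqref{eqn:index} and Lemma~\ref{lemma:periodsAndIndices}) will pin down the combinatorics: if $u^+$ is all trivial cylinders then $u^\infty = u_1$ is smooth; otherwise $u^+$ must contain exactly one nontrivial component, an index~$1$ cover of a gradient flow cylinder, which by the second statement in Lemma~\ref{lemma:wS} is unbranched and hence a simply covered element of $\ModPlus{1}$. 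In this latter case, the breaking orbit is a single simply covered even orbit, forcing $\ind(u_1) = -1$ and $\#\Gamma_0(u_1) = 1$, and the adjunction computation then repeats to give $\delta(u_1) = \delta_\infty(u_1) = 0$.

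The argument is essentially assembled from pieces already in place; the main subtlety is the bookkeeping between the orbits of $u^\infty$ and of $u_1$ when a breaking orbit is introduced, which is needed to conclude $\ind(u_1) = -1$ rather than some other value. No serious new difficulty is expected.
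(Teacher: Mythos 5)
Your argument is correct and follows the same strategy as the paper's own proof: apply Lemma~\ref{lemma:upperBound} to pin down the constraints on $u^\infty$, feed these into Lemma~\ref{L:index} (after handling nodes) to obtain $L=1$, $u_1=v_1$ somewhere injective with simple orbits and $\ind(v_1)+\#\Gamma_0(v_1)=0$, then use Lemma~\ref{L:mainLevelIntersection} together with adjunction and \eqref{eqn:2cN} to get that $u_1$ is nicely embedded with $u_1*u_1=-1$, and finish by index bookkeeping over the upper levels. The one place you go a slightly different route is in excluding the nodal case: the paper applies Lemma~\ref{L:index} directly (which presupposes no nodes) and leaves the exclusion implicit, whereas you make it explicit via the intersection-theoretic computation (nodal structure forces $u_i*u_i\ge -1$, $u_1*u_2\ge 1$, hence $u^\infty*u^\infty\ge 0$). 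This is a clean and correct way to close that gap; the paper's intended justification, visible in the argument for Lemma~\ref{lemma:noNodes}, is that a node forces the left-hand side of Lemma~\ref{L:index} for $u^\infty$ to equal $2$ rather than $0$, which amounts to the same contradiction.
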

\begin{proof}
Since Lemma~\ref{lemma:upperBound} implies
$\ind(u^\infty) + \#\Gamma_0(u^\infty) = \#\Gamma^m(u^\infty) = 0$ for all
$m \ge 2$, Lemma~\ref{L:index} then implies
$L=1$, $\ind(v_1) + \#\Gamma_0(v_1) = Z(d\varphi_1) = 0$ and
$k_z = m_\zeta = 1$ for all punctures $\zeta$ and~$z$.  Thus by
the Riemann-Hurwitz formula \eqref{eqn:RiemannHurwitz},
$u_1 = v_1$ and the main level is described by 
Case~\ref{i:coverCase} of Lemma~\ref{L:indexGames} with $\ell=0$.
Lemma~\ref{L:mainLevelIntersection} implies $u_1 * u_1 = -1$,
and $\ind(u_1) + \#\Gamma_0(u_1) = 0$ implies via \eqref{eqn:2cN}
that $c_N(u_1) = -1$, so by the adjunction inequality \eqref{eqn:adjunction},
$\delta(u_1) = \delta_\infty(u_1) = 0$, hence $u_1$ is nicely embedded.
The fact that all asymptotic orbits of both $u_1$ and $u^\infty$ are
simply covered and $u^\infty$ has arithmetic genus~$0$ implies moreover that
all components in upper levels are also somewhere injective.
Adding up the indices across levels, this eliminates all possibilities other 
than what was stated.
\end{proof}

Since $u^\infty * u^\infty$ is always either $-1$ or $0$ by
Lemma~\ref{lemma:upperBound}, we shall consider the case
$u^\infty * u^\infty = 0$ from now on.

\begin{lemma}  \label{L:simplyCovered}
If $u^\infty * u^\infty = 0$, then the main level consists of either a
single nicely embedded curve $u_1$ or two distinct nicely embedded curves
$u_1$ and $u_2$ that intersect each other transversely at a node
and nowhere else.  Moreover, if the main level is a single curve $u_1$,
then all its asymptotic orbits are simply covered if
$\ind(u_1) + \#\Gamma_0(u_1) = 2$, and exactly one of them is doubly covered
if $\ind(u_1) + \#\Gamma_0(u_1) = 0$.
\end{lemma}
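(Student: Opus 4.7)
The plan is to split into two cases according to whether $u^\infty$ has a node.

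Suppose first that $u^\infty$ has a node. Then Lemma~\ref{lemma:noNodes} gives exactly one node, no ghost bubbles, and two main-level components $u_1, u_2$, each somewhere injective with simply covered asymptotic orbits and $\ind(u_i) + \#\Gamma_0(u_i) = 0$. Formula \eqref{eqn:2cN} then yields $c_N(u_i) = -1$, and since Lemma~\ref{lemma:winding} gives $\alpha_-(\gamma) = 0$ for the asymptotic orbits in the $S^1$-invariant trivialization, Lemma~\ref{lemma:spectralCovering} gives $\bar\sigma(u_i) = \#\Gamma(u_i)$, so the adjunction formula \eqref{eqn:adjunctionEq} reduces to
\[
u_i * u_i = 2\left[\delta(u_i) + \delta_\infty(u_i)\right] - 1 \ge -1.
\]
The node forces $u_1 * u_2 \ge 1$, and Lemma~\ref{L:mainLevelIntersection} with $u^\infty * u^\infty = 0$ gives $u_1 * u_1 + u_2 * u_2 = -2\, u_1 * u_2 \le -2$, which combined with both $u_i * u_i \ge -1$ forces equality throughout: $u_1 * u_1 = u_2 * u_2 = -1$, $u_1 * u_2 = 1$, and $\delta(u_i) + \delta_\infty(u_i) = 0$. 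Thus both $u_i$ are nicely embedded and meet only at the node, transversely.

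Now suppose $u^\infty$ has no nodes, and apply Lemma~\ref{L:indexGames}. I first rule out $L = 2$: each component would be somewhere injective with simply covered orbits and $\ind + \#\Gamma_0 = 0$, yielding $c_N = -1$ and $u_i * u_i = 2[\delta(u_i) + \delta_\infty(u_i)] - 1$ odd by adjunction. Since there are no nodes and Lemma~\ref{lemma:noneHidden} excludes asymptotic intersection contributions between somewhere-injective curves whose common asymptotic orbits are simply covered elliptic orbits in $\CritMorse(H) \times S^1$, we conclude $u_1 * u_2 = 0$, so Lemma~\ref{L:mainLevelIntersection} forces $u_1 * u_1 + u_2 * u_2 = 0$—an impossible sum of two odd numbers. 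Hence $L = 1$.

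For $L = 1$ I go through the remaining subcases of Lemma~\ref{L:indexGames}. In case (i), $u_1 = v_1$ is somewhere injective with $\ind(v_1) + \#\Gamma_0(v_1) = 2$; substituting $L = 1$, $k_1 = 1$, $Z(d\varphi_1) = 0$ into Lemma~\ref{L:index} and using Lemma~\ref{lemma:upperBound} on the left-hand side produces $2 = 2 + 2\sum_\zeta(m_\zeta - 1)$, so every asymptotic orbit of $u_1$ is simply covered, and adjunction with $c_N(u_1) = 0$ gives $\delta(u_1) + \delta_\infty(u_1) = 0$. In case (i:i1), $u_1 = v_1$ is somewhere injective with $\ind + \#\Gamma_0 = 0$ and, by the same substitution, exactly one doubly covered asymptotic orbit; here $c_N(u_1) = -1$ and $\bar\sigma(u_1) - \#\Gamma(u_1) = 1$, and adjunction again yields $\delta(u_1) + \delta_\infty(u_1) = 0$. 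In case (i:coverCase), the sub-possibility $k_1 = 1$ is immediately inconsistent with $u^\infty * u^\infty = 0$ because Lemma~\ref{L:index} would then give $2 = 0$; the sub-possibility $k_1 = 2$ would place a branched double cover of $v_1$ in the main level, which cannot occur as a limit of nicely embedded curves by the compactness theory used in the proof of Proposition~\ref{prop:uniqueness} (cf.~\cite{Wendl:compactnessRinvt}).

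The principal obstacle is precisely this last step: excluding a branched multiply-covered main-level component in case (i:coverCase) is not forced by the index arithmetic of Lemma~\ref{L:index} alone, and the argument must instead appeal to the finer structure of limits of nicely embedded holomorphic curves.
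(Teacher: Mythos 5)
Your treatment of the nodal case is correct and in fact a bit cleaner than the paper's: by combining $u_i * u_i = 2[\delta(u_i) + \delta_\infty(u_i)] - 1 \ge -1$ with $u_1 * u_2 \ge 1$ (forced by the node) and the identity $0 = u_1*u_1 + u_2*u_2 + 2\,u_1*u_2$ from Lemma~\ref{L:mainLevelIntersection}, you force all the inequalities to equalities at once, whereas the paper invokes Lemma~\ref{lemma:noneHidden} to kill $\delta_\infty$ first and then counts. Your version avoids Lemma~\ref{lemma:noneHidden} altogether in that case, which is a genuine simplification.

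Your $L=2$ no-node argument, however, has a gap. You infer $u_1 * u_2 = 0$ from ``there are no nodes'' plus Lemma~\ref{lemma:noneHidden}, but the absence of a node in the building does \emph{not} preclude the two main-level components from intersecting each other in $\W$; Lemma~\ref{lemma:noneHidden} only says $u_1 * u_2$ equals the count of actual intersections, and you still must argue that count is zero. The paper does this using the sequence of embedded curves $u^\nu \to u^\infty$: an isolated intersection of $u_1$ with $u_2$ would, by local positivity of intersections, produce an isolated singularity of $u^\nu$ for large $\nu$, contradicting embeddedness. (You should also say a word about the possibility $u_1 = u_2$, where Lemma~\ref{lemma:noneHidden} does not apply; there Lemma~\ref{L:mainLevelIntersection} directly gives $4\,u_1 * u_1 = 0$, again contradicting $u_1 * u_1$ odd.)

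Finally, on the case~\ref{i:coverCase} with $k_1 = 2$ that you flag as the principal obstacle: you are right that index arithmetic alone does not exclude it, but the appeal you propose to \cite{Wendl:compactnessRinvt} does not work---that compactness theorem is for the $\RR$-invariant symplectization, while $u_1$ lives in the cobordism $\W$, and branched covers \emph{can} in general arise as SFT-limits of embedded curves. The paper instead settles this case with a short intersection-theoretic computation: since $[u_1] = 2[v_1]$ in relative homology and Lemma~\ref{lemma:u*v} identifies $u * v$ with $u \bullet_{\Phi_0} v$, bilinearity of the relative pairing gives $u_1 * u_1 = 4\,(v_1 * v_1)$; meanwhile $\ind(v_1) + \#\Gamma_0(v_1) = 0$ gives $c_N(v_1) = -1$, and adjunction for $v_1$ (with simply covered orbits, so $\bar\sigma(v_1) = \#\Gamma(v_1)$) yields $v_1 * v_1 = 2[\delta(v_1) + \delta_\infty(v_1)] - 1$, an odd number, inconsistent with $4(v_1*v_1) = u_1 * u_1 = 0$. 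No compactness input is needed for this step.
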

\begin{proof}
Let us first rule out the possibility of a single doubly covered component
$u_1 = v_1 \circ \varphi_1$ from case~\ref{i:coverCase} of
Lemma~\ref{L:indexGames}.  If this scenario occurs, then
we know $\ind(v_1) + \#\Gamma_0(v_1) = 0$ and all the asymptotic orbits
of $v_1$ are simply covered.  Equation~\eqref{eqn:2cN} thus gives
$c_N(v_1) = -1$, and Lemma~\ref{lemma:spectralCovering} gives
$\bar{\sigma}(v) - \#\Gamma(v) = 0$, so by the adjunction formula
\eqref{eqn:adjunctionEq},
$$
v_1 * v_1 = 2 \left[ \delta(v) + \delta_\infty(v) \right] - 1.
$$
In particular, this is an odd integer.  But using \eqref{eqn:relative} and 
Lemma~\ref{L:mainLevelIntersection}, we also have
$$
0 = u^\infty * u^\infty = u_1 * u_1 = u_1 \bullet_{\Phi_0} u_1
= 4 \left( v_1 \bullet_{\Phi_0} v_1\right) = 4 \left( v_1 * v_1 \right)
$$
since $u_1$ is a double cover of $v_1$, so this implies that $0$ is an
odd number and thus rules out multiply covered components in the main level.

Exactly the same contradiction occurs if we consider
Case~\ref{i:node} of Lemma \ref{L:indexGames} assuming $u_1$ and $u_2$
are the same curve up to parametrization.  Indeed, $u_i * u_i$
is then an odd integer for $i=1,2$ due to the adjunction formula
\begin{equation}
\label{eqn:adjui}
u_i * u_i = 2 \left[ \delta(u_i) + \delta_\infty(u_i) \right] - 1,
\end{equation}
and Lemma~\ref{L:mainLevelIntersection} gives
\begin{equation}
\label{eqn:crossTerms}
0 = u^\infty * u^\infty = u_1 * u_1 + u_2 * u_2 + 2\left( u_1 * u_2 \right),
\end{equation}
which reduces to $0 = 4(u_1 * u_1)$ and again implies that $0$ is an
odd number.

Next consider case~\ref{i:node} when $u_1 \ne u_2$.  Combining
\eqref{eqn:adjui} and \eqref{eqn:crossTerms} in this case implies
$$
0 = 2 \sum_{i=1}^2 \left[ \delta(u_i) + \delta_\infty(u_i) \right] +
2 \left( u_1 * u_2 - 1 \right).
$$
Since $\ind(u_1)$ and $\ind(u_2)$ are both either $-1$ or~$0$,
genericity allows us via Lemma~\ref{lemma:noneHidden} to assume
$\delta_\infty(u_i) = 0$ for $i=1,2$ and moreover that $u_1 * u_2$ is the
(algebraic) count of \emph{actual} intersections between $u_1$ and~$u_2$,
with no additional asymptotic contributions.  Let us therefore rewrite
the above relation as
$$
1 = \delta(u_1) + \delta(u_2) + u_1 \bullet u_2,
$$
with $u_1 \bullet u_2 \ge 0$ denoting the count of actual intersections.
If $u_1$ and $u_2$ are connected at a node, then they necessarily intersect,
implying $u_1 \bullet u_2 = 1$ and $\delta(u_1) = \delta(u_2) = 0$,
hence both are embedded and they have only one intersection, which is
transverse and occurs at the node.  Equation~\eqref{eqn:adjui} then
implies that both satisfy $u_i * u_i = -1$, so they are nicely embedded.
If on the other hand there is no node, then the above relation between
$\delta(u_1)$, $\delta(u_2)$ and $u_1 \bullet u_2$ cannot hold, as all
three terms must be~$0$.  To see this, recall that the assumption
$(u^\infty,\tau_\infty) \in \overline{\mM}\nice(\{\widehat{J}_\tau\};H;\mathbf{m})$
means that there exist sequences
$$
\tau_\nu \to \tau_\infty \quad\text{ and }\quad u^\nu \to u^\infty 
\qquad \text{ as $\nu \to \infty$}
$$
where $\tau_\nu \in [0,1]$ and $u^\nu \in \mM\nice(\widehat{J}_{\tau_\nu};H;\mathbf{m})$,
so in particular all the $u^\nu$ are embedded.  But if any of the three
terms above were positive, then there would be at least one
isolated double point or critical point of $u_1$ or $u_2$, or an isolated
intersection between them, and any of these scenarios would give rise to
an isolated singularity of the curves $u^\nu$ for sufficiently large~$\nu$
due to local positivity of intersections.  This is a contradiction.

Finally, we show that in all remaining cases of Lemma~\ref{L:indexGames},
the single somewhere injective curve $u_1$ in the main level is
nicely embedded.  Lemma~\ref{L:mainLevelIntersection} implies
$u_1 * u_1 = 0$, so we just need to show $\delta(u_1) = \delta_\infty(u_1)
= 0$.  Since $u_1$ cannot have any nodal points in this case, local 
positivity of intersections implies $\delta(u_1) = 0$,
as a singularity in $u_1$ would again be seen by the embedded curves
$u^\nu$ for $\nu$ sufficiently large.  Thus we only still need to prove
$\delta_\infty(u_1) = 0$.  This follows from genericity
(Lemma~\ref{lemma:noneHidden}) if $\ind(u_1) \le 0$, which takes
care of cases~\ref{i:i1} and~\ref{i:coverCase} in Lemma~\ref{L:indexGames}.
These are the cases with $\ind(u_1) + \#\Gamma(u_1) = 0$, hence
$c_N(u_1) = -1$ by \eqref{eqn:2cN}, and the adjunction formula then gives
$$
0 = -1 + \left[ \bar{\sigma}(u_1) - \#\Gamma(u_1) \right],
$$
so by Lemma~\ref{lemma:spectralCovering}, $u_1$ has exactly one doubly
covered asymptotic orbit and the rest are simply covered.
We are now left only with case~\ref{i}, with $\ind(u_1) + \#\Gamma_0(u_1) = 2$.
Now \eqref{eqn:2cN} implies $c_N(u_1) = 0$, so the adjunction
formula \eqref{eqn:adjunctionEq} becomes
$$
0 = 2\delta_\infty(u_1) + \left[ \bar{\sigma}(u_1) - \#\Gamma(u_1) \right]
$$
and thus implies both $\delta_\infty(u_1) = 0$ and
$\bar{\sigma}(u_1) - \#\Gamma(u_1) = 0$.  By Lemma~\ref{lemma:spectralCovering},
the latter implies that all asymptotic orbits of $u_1$ are simply covered.
\end{proof}

\subsubsection{Conclusion of the compactness proof}

The preceding lemmas establish a complete picture of all the possible main 
levels of the building~$u^\infty$.  To finish the proof of
Propositions~\ref{T:compactness} and~\ref{T:deformationJ}, we only need
to describe the possible multiple covers of leaves of $\fF_+$ that can
occur in the upper levels.  These components are highly constrained for the
following reasons:
\begin{enumerate}
\item Most asymptotic orbits of the main level are simply covered, with
at most one exception which is doubly covered and occurs only if the
main level is a single curve $u_1$ with 
$u_1 * u_1 = \ind(u_1) + \#\Gamma_0(u_1) = 0$;
\item The building has arithmetic genus zero: since the possibly nodal
curve forming the main level is always connected, this implies that no
curve in any upper level can have more than one negative puncture;
\item All curves in upper levels are covers of cylinders.
\end{enumerate}
Let us first consider the case where
$u^\infty$ has nodes: then Lemmas~\ref{L:indexGames} 
and~\ref{L:simplyCovered} imply that there is only one node, which occurs
in the main level, where it connects two nicely embedded curves whose
asymptotic orbits are all simply covered.  As observed above, the genus
condition implies that no curve in any upper level can have more than
one negative puncture, and since they are all covers of cylinders,
the fact that orbits are simply covered means that no curves in upper levels
can be multiple
covers.  It follows that the upper levels consist entirely of trivial
cylinders or gradient flow cylinders, where each of the latter contributes~$1$
to the total index of~$u^\infty$.  The two curves in the main level each
have index either~$-1$ or~$0$, but since they are distinct, they cannot
both have index~$-1$ due to genericity (Lemmas~\ref{lemma:genericity}
and~\ref{lemma:index-1}).
This completes the description of all possible nodal buildings.

In the absence of nodes, the main level is a single nicely embedded curve~$u_1$,
and the above description of the upper levels still
applies whenever the asymptotic orbits of $u_1$ are all simple: outside
of the case $u^\infty * u^\infty = -1$, which was dealt with in
Lemma~\ref{lemma:-1}, this is true if and only if 
$\ind(u_1) + \#\Gamma_0(u_1) = 2$.  If $\ind(u_1) + \#\Gamma_0(u_1) = 0$,
then exactly one asymptotic orbit of $u_1$ is doubly covered, which allows
for a limited range of multiple covers to appear in the upper levels:
indeed, there can be doubly covered unbranched cylinders (which are either
trivial or cover gradient flow cylinders and thus have index~$1$),
and exactly one branched double cover with
two positive punctures and one negative puncture.  Suppose $u$ is such a
branched double cover, and the underlying simple curve is~$v$.  From
Lemma~\ref{lemma:wS}, the possible indices of $u$ are as follows:
\begin{itemize}
\item If $v = \RR \times \gamma$ with $\gamma$ elliptic,
then $\ind(u) = 2$;
\item If $v = \RR \times \gamma$ with $\gamma$ hyperbolic,
then $\ind(u) = 1$;
\item If $v$ is a gradient flow cylinder, then $\ind(u) = 3$.
\end{itemize}
The buildings enumerated in Propositions~\ref{T:compactness}
and~\ref{T:deformationJ} are thus found by putting together all possible
combinations of these ingredients that add up to the correct index.

As a particular consequence, the above arguments show that the only 
holomorphic buildings appearing in $\mM\nice_0(\widehat{J};H;\mathbf{m})$ and 
$\mM\nice_{-1}(\{\widehat{J}_\tau\};H;\mathbf{m})$ are smooth curves (i.e.~with no
nodes and only one level), hence these spaces are compact.
Note that by Lemma~\ref{lemma:index-1}, none of the curves in those spaces
are confined to the non-generic domain $\widehat{\nN}_-(\p E)$, hence
our genericity assumptions ensure that $\mM\nice_0(\widehat{J};H;\mathbf{m})$ and 
$\mM\nice_{-1}(\{\widehat{J}_\tau\};H;\mathbf{m})$ are also both $0$-dimensional
manifolds and therefore finite sets.  Lemma~\ref{lemma:genericity} implies
moreover that for any two distinct elements
$(u,\tau)$ and $(u',\tau') \in \mM\nice_{-1}(\{\widehat{J}_\tau\};H;\mathbf{m})$,
we have $\tau \ne \tau'$, and we define $I^\sing$ as the finite set
of values $\tau \in [0,1]$ for which such curves exist; this set cannot
include $0$ or~$1$ since both $\widehat{J}_0$ and $\widehat{J}_1$ are assumed generic.
For any $\tau \not\in I^\sing$, the non-existence of index~$-1$ curves
rules out all of the scenarios listed in Proposition~\ref{T:deformationJ},
leaving only the list in Proposition~\ref{T:compactness}.  The proof of
both propositions is now complete.

\subsection{Holomorphic foliations on the completed filling}
\label{sec:moduliSurface}

In this section we prove Propositions~\ref{T:moduliSurface} 
and~\ref{prop:deform}.  For both results, the main
step will be to show that the holomorphic pages living
in $\widehat{\nN}_-(\p E)$ extend to the rest of $\W$ as a foliation
with finitely many singularities at the nodal points, and that this
foliation varies smoothly with the parameter~$\tau$.  We will then use
the foliation to define suitable smooth structures on the interiors of
$\quotMod{}{}$ and $\quotModHomotop{}$.
It's worth recalling briefly the type of argument that was
used for this step in \cite{Wendl:fillable}: in that simpler setting, all main
level curves in the moduli space either have index~$2$ or are nodal curves 
with components of index~$0$, all of them satisfying the automatic 
transversality criterion of \cite{Wendl:automatic}.  The foliation then 
arises easily from a combination of the implicit function 
theorem and compactness, showing
that the index~$2$ curves fill an open and closed subset of $\W$ in the
complement of the images of finitely many nodal curves---the latter 
being a subset of codimension~$2$---and automatic transversality guarantees
that these families of curves always persist under changes in~$\tau$.
The crucial difference in the present setting is that in the compactness
statements of Propositions~\ref{T:compactness} and~\ref{T:deformationJ},
not all degenerations have codimension at least~$2$; in particular one
can imagine the above argument failing as the index~$2$ curves run into a
``wall'' of codimension~$1$ formed by index~$1$ curves.  Such walls exist
in $\quotMod{}{}$ and $\quotModHomotop{}$,
but it would be more accurate to call them \emph{seams}: since they always
include a gradient flow cylinder in an upper level, they come in
canceling pairs, with the consequence that every such degeneration can be
glued back together using a different gradient cylinder in order to
``cross the wall''.

For $i \in \{1,2\}$, define 
$$
\quotMod{i}{\tau} \subset \quotMod{}{\tau}
$$
to be the subset of all equivalence classes of buildings whose main levels
are connected smooth curves in $\mM\nice_i(\widehat{J}_\tau;H;\mathbf{m})$.
For $i \in \{-1,0\}$, we define $\quotMod{i}{\tau}$ similarly but allow it
additionally to contain equivalence classes of buildings whose main levels
are nodal curves with two connected components, one belonging to
$\mM\nice_0(\widehat{J}_\tau;H;\mathbf{m})$ and the other to 
$\mM\nice_{i}(\widehat{J}_\tau;H;\mathbf{m})$.
The subsets $\quotModHomotop{i} \subset \quotModHomotop{}$ and
$\quotMod{i}{} \subset \quotMod{}{}$ are defined similarly.

\begin{lemma}
\label{lemma:index2}
Suppose $([u_0],\tau_0) \in \quotModHomotop{2}$.  Then there exist 
neighborhoods $\uU \subset \quotModHomotop{}$ of $([u_0],\tau_0)$ and
$\vV \subset [0,1]$ of $\tau_0$ such that
$\uU \subset \quotModHomotop{2}$ and for every $\tau \in \vV$,
$$
\uU_\tau := \uU \cap \quotMod{}{\tau}
$$
is a contractible open subset of $\quotMod{}{\tau}$ 
in which the main levels define a smooth
$2$-parameter family of embedded curves with disjoint images that foliate
an open subset of~$\W$.
\end{lemma}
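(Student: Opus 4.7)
The plan is to establish Fredholm regularity of $u_0$ via the automatic transversality criterion, then combine the parametric implicit function theorem with Siefring's intersection theory to produce the desired foliation.

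First, I would verify that $u_0$ meets the hypotheses of Proposition~\ref{prop:automatic}. Since $([u_0],\tau_0) \in \quotModHomotop{2}$, the main level $u_0$ is a smooth connected curve in $\mM\nice_2(\widehat{J}_{\tau_0};H;\mathbf{m})$, so by Lemma~\ref{lemma:upperBound} we have $u_0 * u_0 = 0$, $\#\Gamma_0 = 0$, and $\#\Gamma^2 = 0$; in particular every asymptotic orbit of $u_0$ is simply covered with odd Conley-Zehnder index. Since $u_0$ has genus zero, \eqref{eqn:2cN} gives $2c_N(u_0) = \ind(u_0) - 2 + \#\Gamma_0 = 0$, so $c_N(u_0) = 0 < 2 = \ind(u_0)$. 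The nicely embedded condition implies $u_0$ is embedded (hence immersed), so Proposition~\ref{prop:automatic} gives Fredholm regularity of $u_0$, and by openness the same holds for all nearby curves in the parametric moduli space.

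Next, I would apply the parametric implicit function theorem to obtain a smooth $3$-parameter family
\[
(\tau, s) \in \vV \times D^2 \mapsto u_{(\tau,s)}
\]
of $\widehat{J}_\tau$-holomorphic curves near $u_0$, with $u_{(\tau_0,0)} = u_0$, all sharing the same (nondegenerate, simply covered) asymptotic orbits as $u_0$. Shrinking $\vV$ and $D^2$, these curves remain somewhere injective with $c_N = 0$ and $u_{(\tau,s)} * u_{(\tau,s)} = 0$ by homotopy invariance, so \eqref{eqn:adjunctionEq} combined with Lemma~\ref{lemma:spectralCovering} forces $\delta(u_{(\tau,s)}) = \delta_\infty(u_{(\tau,s)}) = 0$: every curve in the family is nicely embedded. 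Because the bottommost nonempty level of such a smooth curve is the entire curve and the asymptotic orbits are all simply covered, the equivalence relation defining $\widehat{\mM}(\widehat{J}_\tau)$ is trivial on this family, so the map $D^2 \to \quotMod{}{\tau}$ sending $s \mapsto [u_{(\tau,s)}]$ is a homeomorphism onto an open set $\uU_\tau$, which is therefore contractible. Taking $\uU$ to be the image of $\vV \times D^2$ in $\quotModHomotop{}$ yields a neighborhood with $\uU \subset \quotModHomotop{2}$ satisfying the asserted slicing by the $\uU_\tau$.

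For the foliation property, fix $\tau \in \vV$ and consider two distinct parameters $s, s' \in D^2$. Since the Siefring intersection number is homotopy-invariant through moduli with fixed asymptotic orbits, $u_{(\tau,s)} * u_{(\tau,s')} = u_0 * u_0 = 0$, and because this number is a sum of nonnegative local and asymptotic contributions, $u_{(\tau,s)}$ and $u_{(\tau,s')}$ have disjoint images. To see that the family sweeps out an open subset of $\W$, note that by the same argument used in the proof of Proposition~\ref{prop:stableFEF} (invoking \cite{Wendl:automatic}*{Prop.~2.2}), the fact that $c_N(u_{(\tau,s)}) = 0$ forces every nontrivial element of $\ker \mathbf{D}_{u_{(\tau,s)}}^N$ to be nowhere zero; the tangent space to the family therefore spans directions transverse to each curve at every point, so the evaluation map $D^2 \times \dot{S} \to \W$, $(s,z) \mapsto u_{(\tau,s)}(z)$, is a local diffeomorphism and its image is an open subset of $\W$ foliated by the curves.

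I do not anticipate a serious obstacle: the automatic transversality framework of \S\ref{sec:automatic} together with the intersection-theoretic tools of \S\ref{sec:Siefring} are tailored to exactly this configuration, and the index computation for nicely embedded index $2$ curves always lands in the regime $\ind > c_N$. The only mildly subtle point is the need to rule out hidden asymptotic intersections between two distinct curves in the family sharing the same simply covered asymptotic orbits, but the identity $u_{(\tau,s)} * u_{(\tau,s')} = 0$ controls both actual and asymptotic contributions simultaneously, so positivity of intersections suffices without any further transversality input.
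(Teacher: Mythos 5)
Your proposal is correct and follows essentially the same route as the paper's proof, which simply cites the implicit function theorem for nicely embedded index~$2$ curves (\cite{Wendl:thesis}*{Theorem~4.5.42}) and isolates the same two key facts you verify explicitly: automatic transversality from $\ind(u_0)=2 > c_N(u_0)=0$, and the nowhere-vanishing of normal-bundle sections forced by $c_N(u_0)=0$, which makes the family sweep out an open set. You have merely unpacked the citation; there is no substantive difference.
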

\begin{proof}
This is essentially a standard application of the implicit function theorem
for nicely embedded index~$2$ curves, see \cite{Wendl:thesis}*{Theorem~4.5.42}
or \cite{Wendl:Durham}*{Theorem~3.26}.
It derives mainly from two crucial facts: (1)~curves in 
$\mM\nice_2(\widehat{J}_\tau;H;\mathbf{m})$
satisfy the automatic transversality criterion of \cite{Wendl:automatic},
hence genericity is not required and the moduli space perturbs smoothly
with~$\tau$, and (2)~tangent spaces $T_u \mM\nice_2(\widehat{J}_\tau;H;\mathbf{m})$ are
equivalent to spaces of holomorphic sections of the normal bundle along~$u$,
and these sections are always nowhere zero.
\end{proof}

\begin{lemma}
\label{lemma:index1}
Suppose $([u_0],\tau_0) \in \quotModHomotop{1}$.  Then there exist
neighborhoods $\uU \subset \quotModHomotop{}$ of $([u_0],\tau_0)$ and
$\vV \subset [0,1]$ of $\tau_0$ and a homeomorphism
$$
\Psi : (-1,1)^2 \times \vV \to \uU
$$
such that for all $(x,y,\tau) \in (-1,1)^2 \times \vV$,
$$
\Psi(0,y,\tau) \in \quotMod{1}{\tau} \quad \text{ and } \quad
\Psi(x,y,\tau) \in \quotMod{2}{\tau} \text{ if $x \ne 0$}.
$$
Moreover, for each $\tau \in \vV$, the embedded curves
that constitute the main levels of $\Psi(x,y,\tau)$ for $(x,y) \in (-1,1)^2$
are disjoint from each other and form 
the leaves of a smooth foliation on an open subset of~$\W$.
\end{lemma}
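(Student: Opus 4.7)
The plan is to combine automatic transversality for the index-1 main level curve with an SFT gluing argument that exploits the canceling pair of gradient flow cylinders from Lemma~\ref{lemma:cancel}. The main level of $u_0$ is by definition a connected smooth nicely embedded $\widehat{J}_{\tau_0}$-holomorphic curve with $\ind(u_0) = 1$. Lemma~\ref{lemma:upperBound} then forces $u_0 * u_0 = 0$, $\#\Gamma_0(u_0) = 1$, $\#\Gamma^2(u_0) = 0$, i.e.\ all punctures are simply covered and exactly one is asymptotic to a hyperbolic orbit $\gamma_h$ (in the sense of \S\ref{sec:compactnessProofs}). Equation~\eqref{eqn:2cN} gives $c_N(u_0) = 0$, and $\ind(u_0) > c_N(u_0)$, so Proposition~\ref{prop:automatic} yields Fredholm regularity. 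A standard implicit function theorem argument for nicely embedded curves with parameters (as in \cite{Wendl:thesis}) then provides a smooth $2$-parameter family $\{u_{0,y,\tau}\}_{(y,\tau) \in (-1,1) \times \vV}$ of nicely embedded index-$1$ $\widehat{J}_\tau$-holomorphic curves with $u_{0,0,\tau_0} = u_0$, all sharing the same set of asymptotic orbits as $u_0$.

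Next, by Proposition~\ref{T:compactness}\ref{item:compactness2gradflow} and the corresponding cases in Proposition~\ref{T:deformationJ}, any sequence of smooth index-$2$ curves in $\overline{\mM}\nice_2(\{\widehat{J}_\tau\};H;\mathbf{m})$ converging in the SFT-topology to a building whose bottommost nonempty level is $u_{0,y,\tau}$ must consist of $u_{0,y,\tau}$ plus exactly one upper level containing a single gradient flow cylinder from $\ModPlus{1}$ asymptotic to the hyperbolic orbit $\gamma_h$. By Lemma~\ref{lemma:cancel}, $\fF_+$ contains exactly two such gradient flow cylinders, forming a canceling pair $g^\pm$. Applying the SFT gluing theorem at a single simply-covered hyperbolic breaking orbit (a textbook case, since both $u_{0,y,\tau}$ and $g^\pm$ are Fredholm regular and the breaking orbit is simple) to each of $g^+$ and $g^-$ produces two smooth families of nicely embedded index-$2$ $\widehat{J}_\tau$-holomorphic curves, parametrized by a positive gluing parameter $\rho \in (R_0,\infty)$ together with $(y,\tau)$. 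Setting $x = 1/\rho$ for the $g^+$-family and $x = -1/\rho$ for the $g^-$-family, and defining $\Psi(0,y,\tau) := [u_{0,y,\tau}] \in \quotMod{1}{\tau}$ (well-defined in $\quotMod{}{\tau}$ because the equivalence relation collapses the upper levels), we obtain a continuous map $\Psi: (-1,1)^2 \times \vV \to \quotModHomotop{}$, whose continuity at $x = 0$ is part of the SFT convergence statement. Injectivity and openness of the image are standard consequences of the parametric gluing theorem.

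To upgrade $\Psi$ to a homeomorphism onto an open neighborhood of $([u_0],\tau_0)$, one argues by contradiction using the compactness results: given any sequence $([u_n],\tau_n) \to ([u_0],\tau_0)$ in $\quotModHomotop{}$, the main levels converge in the SFT-topology to a building whose bottommost nonempty level must be some $u_{0,y,\tau}$, and Propositions~\ref{T:compactness}--\ref{T:deformationJ} then force $[u_n] \in \quotMod{1}{\tau_n}$ or $\quotMod{2}{\tau_n}$ of exactly the form produced by $\Psi$, so by uniqueness of the SFT gluing parametrization we recover $[u_n] = \Psi(x_n,y_n,\tau_n)$ for some convergent sequence of parameters. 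Finally, the foliation claim reduces, away from $x = 0$, to Lemma~\ref{lemma:index2}; at $x = 0$, the curves $u_{0,y,\tau}$ for fixed $\tau$ and varying $y$ satisfy $u_{0,y,\tau} * u_{0,y',\tau} = 0$ by homotopy invariance (Proposition~\ref{prop:int0}-style arguments applied in the limit), so they are pairwise disjoint, and an open neighborhood of the image of each $u_{0,y,\tau}$ is foliated by the gluings from both $g^+$ and $g^-$ together with the nicely embedded $u_{0,y',\tau}$ for $y'$ nearby.

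The main obstacle is the gluing analysis, in particular verifying that (i) the two gluing families coming from the canceling pair $g^\pm$ approach the index-$1$ wall from genuinely opposite ``sides'' so that $x$ can be taken to range over $(-1,1)$ rather than two disjoint intervals, and (ii) together they exhaust all of $\quotMod{2}{\tau}$ in a neighborhood of $([u_0],\tau_0)$. Point (i) is the geometric incarnation, in the present $4$-dimensional setting, of the orientation phenomenon behind Proposition~\ref{prop:orientIndex1}: the two canceling gradient flow cylinders produce ``opposite'' gluings, and it is precisely this that lets the two open families be glued together into a connected $2$-disc across the $1$-dimensional wall of index-$1$ curves, making $\quotMod{}{\tau}$ a manifold (rather than a space with a codimension-$1$ boundary) at such points. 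Point (ii) follows from the strong constraints in Propositions~\ref{T:compactness}--\ref{T:deformationJ}, which rule out any other boundary stratum touching the index-$1$ locus.
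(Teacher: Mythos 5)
Your proposal identifies the same overall architecture as the paper's proof---automatic transversality for the index-$1$ curve $u_0$, the observation that $[u_0]$ is represented by exactly two buildings $u_\pm$ whose upper levels contain the canceling pair of gradient flow cylinders, and gluing each of $u_\pm$ to produce two one-parameter families of index-$2$ curves that fill in the two sides of the wall. You also correctly flag the crux: showing that the two glued families land on opposite sides of the hypersurface foliated by the $y \mapsto \Psi(0,y,\tau)$.

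Where your route diverges is precisely at that crux. You propose to resolve it via the coherent-orientation analysis behind Proposition~\ref{prop:orientIndex1}, arguing that the opposite signs of the canceling gradient flow cylinders manifest geometrically as opposite gluing directions. That heuristic is sound (it is the ``same'' phenomenon, conceptually), but turning it into a proof would require an extra bridge between the algebraic orientation sign and the geometric position of the glued family relative to $Y$, and you do not supply that bridge. The paper instead avoids orientations entirely in this lemma and argues via Siefring's intersection theory: from Lemma~\ref{L:mainLevelIntersection} one gets $u_+ * u_+ = u_- * u_- = u_+ * u_- = 0$, and by homotopy invariance the glued curves inherit $u * u' = 0$. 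Positivity of intersections then gives that the two open sets $U_\pm \subset \W$ foliated by the glued families are disjoint from each other and from the wall $Y$, while each contains $Y$ in its closure. Since $Y$ is an embedded hypersurface, two disjoint open sets each accumulating on $Y$ must occupy opposite local sides, so $U_+ \cup Y \cup U_-$ is an open neighborhood. This is shorter, does not require any SFT gluing analysis beyond existence/regularity, and makes no appeal to orientations. Your orientation-based version is not wrong in spirit, but as written it is the less economical path and the crucial step remains unproven; the intersection-theoretic version is the one that actually closes the argument.
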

\begin{proof}
The $2$-parameter family $(y,\tau) \mapsto \Psi(0,y,\tau) \in 
\quotMod{1}{\tau}$ arises for reasons similar to the proof of
Lemma~\ref{lemma:index2}: curves in $\mM\nice_1(\widehat{J}_\tau;H;\mathbf{m})$
satisfy the automatic transversality criterion of \cite{Wendl:automatic}
and are thus regular for every~$\tau$.  Indeed, the criterion is satisfied
because by Lemma~\ref{lemma:upperBound}, any 
$u \in \mM\nice_1(\widehat{J}_\tau;H;\mathbf{m})$ has
only simply covered asymptotic orbits and exactly one of them is hyperbolic.  
Moreover, $u * u = 0$, implying that for any fixed $\tau \in \vV$,
the main levels of the $1$-parameter family $y \mapsto \Psi(0,y,\tau)$
are all disjoint and thus foliate a smoothly embedded hypersurface in~$\W$.

We claim that gluing can be used to extend this foliation to a neighborhood 
of the hypersurface.  The crucial detail here is that each of the
equivalence classes $[u] := \Psi(0,y,\tau)$ is represented by exactly two
buildings $u_+$ and $u_-$ whose upper levels have non-identical images: indeed,
since all asymptotic orbits for the buildings representing elements of 
$\quotModPlus$ are simply covered and elliptic, the same is true for all
elements of $\quotModHomotop{}$, so that the upper levels of
$\Psi(0,y,\tau)$ must always be unions of trivial cylinders with a
gradient flow cylinder connecting the hyperbolic orbit to an elliptic orbit.
There are always exactly two choices of this gradient flow cylinder---they 
form a \emph{canceling pair} in the sense of Lemma~\ref{lemma:cancel}.
By the same argument as in Lemma~\ref{L:mainLevelIntersection}, both of
the buildings $u_\pm$ satisfy 
\begin{equation}
\label{eqn:uplusminus}
u_+ * u_+ = u_- * u_- = u_+ * u_- = 0,
\end{equation}
and observe that the
gradient flow cylinders in their upper levels are also automatically regular.
We can therefore glue both buildings to obtain a pair of $1$-parameter
families of smooth and nicely embedded index~$2$ curves, which we define
to be $\Psi(x,y,\tau)$ for $x > 0$ and $x < 0$ respectively.  Each of these
two families satisfies the same implicit function theorem that was used in
Lemma~\ref{lemma:index2}, hence they each foliate open subsets of~$\W$.
Moreover, the
homotopy invariance of the intersection pairing implies via 
\eqref{eqn:uplusminus} that if $u$ and $u'$ denote the main levels of 
$\Psi(x,y,\tau)$ and $\Psi(x',y',\tau)$ with $x$ and $x'$ both nonzero,
then $u * u' = 0$, hence the two open subsets foliated by the two families
are disjoint, and for similar reasons, both are disjoint from the main
levels of the curves $\Psi(0,y,\tau)$ but contain them in their closures.
This shows that the main levels of the
$2$-parameter family $(x,y) \mapsto \Psi(x,y,\tau)$ foliate an open 
subset of $\W$ for each $\tau$ sufficiently close to~$0$.
\end{proof}

The preceding pair of lemmas shows that $\quotModHomotop{2} \cup
\quotModHomotop{1}$ is an open subset of $\quotModHomotop{}$ and has the
topology of a $3$-dimensional manifold, and for each $\tau \in [0,1]$,
$\quotMod{2}{\tau} \cup \quotMod{1}{\tau} \subset \quotMod{}{\tau}$ 
is similarly open and is a $2$-dimensional manifold.  Denote the
closure of $\quotModHomotop{2}$ in 
$\quotModHomotop{} \setminus (\quotModPlus \times [0,1])$ by
$$
\quotModHomotop{\Nice} \subset \quotModHomotop{} \setminus
\left( \quotModPlus \times [0,1] \right),
$$
and define
$$
\quotMod{\Nice}{\tau} := \left\{ u \in \quotMod{}{\tau}\ \big|\ 
(u,\tau) \in \quotModHomotop{\Nice} \right\}
$$
for each $\tau \in [0,1]$.

\begin{lemma}
\label{lemma:lowerDimension}
The closure $\quotModHomotop{\Nice}$ is the union of the sets
$\quotModHomotop{i}$ for all $i \in \{-1,0,1,2\}$.  Moreover, the
$\quotModHomotop{i}$ are smooth manifolds of dimension
$i+1$, and for $i \in \{-1,0\}$ they decompose into the following subsets 
characterized by the main level $u_0$ of an equivalence class of buildings
$[u] \in \quotModHomotop{i}$:
\begin{itemize}
\item $[u] \in \quotModHomotop{\reg,i} := \quotModHomotop{i} \cap
\quotModHomotop{\reg}$ if and only if $u_0$ is a smooth nicely embedded
curve with $\ind(u_0) = i$ and all asymptotic orbits simply
covered, with $2 - i$ of them hyperbolic;
\item $[u] \in \quotModHomotop{\sing,i} := \quotModHomotop{i} \cap 
\quotModHomotop{\sing}$ if and only if $u_0$ is a nodal curve with two
nicely embedded connected components $v$ and $v'$, where $\ind(v) = 0$ with
all asymptotic orbits simply covered and elliptic, while
$\ind(v') = i$ with all asymptotic orbits simply covered and $-i$ of them
hyperbolic;
\item $[u] \in \quotModHomotop{\exot,i} := \quotModHomotop{i} \cap
\quotModHomotop{\exot}$ if and only if $u_0$ is a smooth nicely embedded
curve with $\ind(u_0) = i$ and one asymptotic orbit doubly covered, the
rest simply covered, and $-i$ of them hyperbolic.
\end{itemize}
\end{lemma}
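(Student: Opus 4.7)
The plan is to combine the compactness enumerations of Propositions~\ref{T:compactness} and~\ref{T:deformationJ} with standard Fredholm and genericity theory from \S\ref{sec:genericity} in order to identify the strata of $\quotModHomotop{\Nice}$ and establish their smooth manifold structure.

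First, I would identify the possible buildings in $\quotModHomotop{\Nice}$. Any $(u, \tau) \in \quotModHomotop{\Nice}$ arises as an SFT-limit of some $(u^\nu, \tau^\nu)$ with $u^\nu \in \mM\nice_2(\widehat{J}_{\tau^\nu}; H; \mathbf{m})$, so the total Fredholm index of $u$ equals~$2$. Applying Proposition~\ref{T:compactness} (for $\tau \notin I^\sing$) or Proposition~\ref{T:deformationJ} (for $\tau \in I^\sing$), and discarding the escape case in which the main level is empty (which places $u$ in $\quotModPlus \times [0,1]$), each equivalence class $[u]$ is determined by its nonempty main level~$u_0$. Lemma~\ref{lemma:upperBound} restricts $u_0 * u_0 \in \{-1, 0\}$, constrains the covering multiplicities of the asymptotic orbits (all at most $2$, with at most one doubly covered), and couples the counts of hyperbolic and elliptic orbits to $\ind(u_0)$. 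Matching these constraints against the main levels listed in Propositions~\ref{T:compactness} and~\ref{T:deformationJ}, every such $[u]$ lands in exactly one of the sets $\quotModHomotop{i}$ for $i \in \{-1, 0, 1, 2\}$, and for $i \in \{-1, 0\}$ in exactly one of the three sub-types $\quotModHomotop{\reg, i}$, $\quotModHomotop{\sing, i}$, $\quotModHomotop{\exot, i}$.

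Second, I would establish the dimension and smoothness of each $\quotModHomotop{i}$. The cases $i = 2$ and $i = 1$ are handled by Lemmas~\ref{lemma:index2} and~\ref{lemma:index1}. For $i = -1$, Lemma~\ref{lemma:genericity} forces the underlying index~$-1$ somewhere injective curves to exist only at the isolated $\tau \in I^\sing$ with at most one per $\tau$, so the associated regular, nodal, and exotic configurations form a finite $0$-dimensional set. For $i = 0$, each of the three sub-strata is independently a smooth $1$-manifold: for $\quotModHomotop{\reg, 0}$ the standard parametric transversality argument applies to $\mM\nice_0(\{\widehat{J}_\tau\}; H; \mathbf{m})$ restricted to the all-simply-covered stratum; for $\quotModHomotop{\exot, 0}$ the same argument applies to the somewhere injective moduli space of index $0$ curves with one doubly covered asymptotic orbit; and for $\quotModHomotop{\sing, 0}$, at each $\tau$ there are finitely many pairs of rigid index $0$ curves intersecting transversely (stable by positivity of intersections in the $4$-dimensional $W$), and these sweep out a $1$-parameter family as $\tau$ varies.

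Finally, the three sub-strata of $\quotModHomotop{0}$ must fit together into a single smooth $1$-manifold, and similarly for $\quotModHomotop{-1}$. I expect this to follow because the strata are mutually disjoint: a nodal configuration in $\quotModHomotop{\sing, 0}$ has main-level Fredholm index $2$ by Proposition~\ref{prop:indexNodes4}, hence cannot arise as a smooth limit of index~$0$ curves in $\quotModHomotop{\reg, 0}$ or $\quotModHomotop{\exot, 0}$, and a curve with doubly covered asymptotic in $\quotModHomotop{\exot, 0}$ cannot degenerate to one with only simply covered orbits without violating the asymptotic period bounds and the arithmetic-genus-zero constraint. The main obstacle I foresee is verifying that this disjointness is complete and that no non-manifold singularities appear inside $\quotModHomotop{0}$; this rests on the precise list of compactness scenarios from Propositions~\ref{T:compactness} and~\ref{T:deformationJ}, together with the nicely embedded condition ruling out any remaining candidate interface configurations. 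Once this is in place, the smoothness of $\quotModHomotop{0}$ follows immediately from the smoothness of each of its disjoint strata.
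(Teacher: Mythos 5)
Your overall approach---reading off the stratification from the compactness enumeration of Propositions~\ref{T:compactness} and~\ref{T:deformationJ}, then obtaining the manifold structure for $i \in \{-1,0\}$ from parametric transversality---is the same as the paper's, but you have silently dropped one ingredient that is essential for the transversality step to apply at all. When you invoke Lemma~\ref{lemma:genericity} for the index-$(-1)$ case and ``standard parametric transversality'' for the index-$0$ strata, you are implicitly assuming that the relevant somewhere injective curves pass through the region where the family $\{\widehat{J}_\tau\}$ is allowed to be perturbed, namely the interior of~$W$. This is not automatic: $\widehat{J}_\tau$ is frozen equal to $J_+$ on the end $\widehat{\nN}_-(\p E)$, where no genericity is available. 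The missing ingredient is Lemma~\ref{lemma:index-1}, which says that any curve in $\mM(\widehat{J}_\tau;H;\mathbf{m})$ confined to $\widehat{\nN}_-(\p E)$ is a leaf of $\fF_+$ and thus has index $1$ or~$2$; consequently every index-$0$ and index-$(-1)$ curve must intersect the interior of~$W$, which is precisely what is required both for Lemma~\ref{lemma:genericity} to apply and for the Sard--Smale machinery to deliver the dimension count $i+1$. Without this you cannot rule out a hypothetical non-regular low-index curve living entirely in the non-generic end.

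Your final paragraph also reveals a soft spot: you try to deduce the manifold structure of $\quotModHomotop{0}$ from disjointness of its three sub-strata, but disjointness alone does not imply that a union of $1$-manifolds is a $1$-manifold, since disjoint strata can accumulate on one another in ways that destroy local Euclidean structure. In fact no comparison between the sub-strata is needed: once one knows (via Lemma~\ref{lemma:index-1} and genericity) that the implicit function theorem applies at each point of $\quotModHomotop{0}$ and $\quotModHomotop{-1}$, the smooth manifold structure follows pointwise, with the disjointness of the three sub-types being merely the combinatorial observation that their defining conditions on the main level are mutually exclusive. This is exactly the shortcut the paper's brief proof takes.
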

\begin{proof}
This is essentially a repackaging of the main compactness results from
\S\ref{sec:compactnessProofs}, i.e.~Propositions~\ref{T:compactness} 
and~\ref{T:deformationJ}.  The statement about the dimension of
$\quotModHomotop{i}$ for $i \in \{-1,0\}$ follows directly from the implicit 
function theorem since $\{\widehat{J}_\tau\}$ is generic in the interior of $W$
and all of these curves must intersect that interior due to
Lemma~\ref{lemma:index-1}.
\end{proof}

\begin{lemma}
\label{lemma:openClosed}
We have
$$
\quotModHomotop{\Nice} = \quotModHomotop{} \setminus 
\left(\quotModPlus \times [0,1]\right),
$$
and for each $\tau \in [0,1]$,
any two buildings $u , u'$ representing equivalence classes in 
$\quotMod{}{\tau}$ satisfy $u * u' = 0$.
\end{lemma}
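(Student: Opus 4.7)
The plan is to prove the two claims in sequence, first establishing the identity $\quotModHomotop{\Nice} = \quotModHomotop{} \setminus (\quotModPlus \times [0,1])$, then deducing the intersection-number statement as a consequence.

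By Lemma~\ref{lemma:lowerDimension}, every element of $\quotModHomotop{} \setminus (\quotModPlus \times [0,1])$ lies in $\quotModHomotop{i}$ for some $i \in \{-1,0,1,2\}$, so the task reduces to showing each stratum $\quotModHomotop{i}$ is contained in the closure of $\quotModHomotop{2}$. Lemma~\ref{lemma:index1} handles $i=1$: the canceling-pair gluing produces two transverse families of index~$2$ curves whose closures contain $\quotModHomotop{1}$. For $i \in \{-1,0\}$, I would apply a gluing theorem in each of the three substrata of Lemma~\ref{lemma:lowerDimension}. In $\quotModHomotop{\reg,i}$, the main level is nicely embedded with $-i$ hyperbolic orbits and satisfies $c_N = -1$ or $0$; attaching a canceling pair of gradient flow cylinders at each hyperbolic end (each via Lemma~\ref{lemma:cancel}) yields a $(1-i)$-parameter gluing with index~$2$ limit curves. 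In $\quotModHomotop{\sing,i}$, the main level is a nicely embedded union $v \cup v'$ transverse at a single node; smoothing the node (the standard node-smoothing model is one-dimensional, and transversality of the intersection together with automatic regularity of both components by Proposition~\ref{prop:automatic} makes it applicable) combined with gradient-cylinder gluing at the remaining hyperbolic ends of $v'$ produces nearby index~$2$ curves. In $\quotModHomotop{\exot,i}$, one glues the branched double cover of the trivial or gradient flow cylinder to the doubly asymptotic end of the main level. In all cases automatic transversality (Proposition~\ref{prop:automatic}) applies to the upper-level covers because the relevant inequality $\ind > c_N$ is verified by Lemma~\ref{lemma:wS} together with \eqref{eqn:windpi}, so the gluings genuinely produce a family of smooth index~$2$ solutions.

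For the second claim, I would exploit homotopy invariance of the Siefring pairing $u * u'$ across the compactified moduli space, noting it is continuous and $\ZZ$-valued. In $\quotMod{2}{\tau}$, Lemma~\ref{lemma:index2} shows any two distinct elements have disjoint embedded main levels that foliate an open subset of $\W$; by Lemma~\ref{lemma:upperBound} each curve satisfies $u * u = 0$, and distinct curves trivially satisfy $u * u' = 0$ since they are disjoint smooth embedded curves with matching trivializations on their simply covered asymptotic orbits (cf.~Lemma~\ref{lemma:u*v}). Given two arbitrary buildings $u, u' \in \quotMod{}{\tau}$, by the first part of the lemma they lie in the closure of $\quotMod{2}{\tau}$, so connect them to index~$2$ representatives via continuous paths $u_s, u'_s$ in the compactified moduli space (through strata $\quotMod{i}{\tau}$ using the same gluing constructions above). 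Along each such path the pairing $u_s * u'_{s'}$ is continuous and integer-valued, hence constant, and equals $0$ at the endpoints. The only subtlety is when $u$ and $u'$ happen to be equivalent or share breaking orbits, but the same perturbation-then-glue argument, combined with the formula of Lemma~\ref{L:mainLevelIntersection} to sum contributions across levels, reduces the computation to the already established vanishing for index~$2$ curves.

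The main obstacle I anticipate is carrying out the gluing rigorously at buildings that combine a nodal main level with upper-level gradient or branched covers—namely the index $-1$ configurations listed in Proposition~\ref{T:deformationJ} (especially cases \ref{item:deformation1vertigo} and \ref{item:deformation1branched}). There one needs a multi-parameter gluing whose parameters come from smoothing the node, resolving the asymptotic breaking, and potentially deforming a branched cover, and one must verify that the resulting family fills out a neighborhood diffeomorphic to a product of half-spaces so that the limit point genuinely lies in the closure of $\quotModHomotop{2}$. This will follow once one checks that all the relevant linearized operators are surjective (which holds by automatic transversality applied stratum-by-stratum) and that the gluing parameters assemble compatibly—an assembly which is ultimately dictated by the index count already verified in Lemma~\ref{lemma:lowerDimension}.
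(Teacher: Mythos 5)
Your opening sentence misattributes a claim to Lemma~\ref{lemma:lowerDimension}, and this gap propagates through the whole argument. That lemma states that $\quotModHomotop{\Nice}$ (the closure of $\quotModHomotop{2}$) coincides with the union of the strata $\quotModHomotop{i}$; it does \emph{not} say that every element of $\quotModHomotop{}\setminus(\quotModPlus\times[0,1])$ lies in some stratum. The latter is precisely what Lemma~\ref{lemma:openClosed} is trying to prove: a priori $\quotModHomotop{}$---being the smallest open and closed subset of the parametrized compactified moduli space containing $\quotModPlus\times[0,1]$---could contain all manner of buildings whose main levels are not nicely embedded, not somewhere injective, or otherwise not of the special forms catalogued in Lemma~\ref{lemma:lowerDimension}. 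By reducing the problem to ``show each stratum $\quotModHomotop{i}$ lies in the closure of $\quotModHomotop{2}$,'' you have assumed exactly the point at issue, and the extensive gluing argument that follows is work expended on the wrong inclusion: $\bigcup_i\quotModHomotop{i}\subset\quotModHomotop{\Nice}$ is already part of the content of Lemma~\ref{lemma:lowerDimension}, while the missing direction---that nothing \emph{else} can appear in $\quotModHomotop{}$---is what must be addressed.

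The paper's actual argument is an open-and-closed argument in $\quotModHomotop{}\setminus(\quotModPlus\times[0,1])$, and it sidesteps gluing entirely. The set $\quotModHomotop{\Nice}$ is closed by definition. Openness near $\quotModHomotop{2}\cup\quotModHomotop{1}$ comes from Lemmas~\ref{lemma:index2} and~\ref{lemma:index1}; openness near the lower strata is deduced via intersection theory, not gluing: any building $u$ in $\quotModHomotop{i}$ for $i\in\{-1,0\}$ arises (up to the equivalence that swaps canceling gradient cylinders) as a limit of nicely embedded index-$2$ curves, so $u*u=0$, and by continuity of the Siefring pairing any smooth curve $v$ close to a representative of $[u]$ in the SFT topology also has $v*v=0$, hence is nicely embedded and therefore lands in $\mM\nice(\widehat{J}_\tau;H;\mathbf{m})$---which forces it into one of the strata rather than being an unwelcome intruder. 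The ambiguity introduced by swapping gradient cylinders for their partners is handled by noting, as in Lemma~\ref{L:mainLevelIntersection}, that this changes neither $u*u$ nor $u*v$. Once $\quotModHomotop{\Nice}$ is known to be both open and closed in $\quotModHomotop{}\setminus(\quotModPlus\times[0,1])$, it is a union of connected components; and since $\quotModHomotop{\Nice}$ contains the holomorphic pages that fill a deleted neighborhood of $\quotModPlus\times[0,1]$, the minimality in the definition of $\quotModHomotop{}$ forces equality. Finally, the intersection claim is then immediate from Proposition~\ref{prop:int0} and homotopy invariance of $u*u'$, with no need for the path-concatenation construction you propose.
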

\begin{proof}
By construction, $\quotModHomotop{\Nice}$ is closed in
$\quotModHomotop{} \setminus (\quotModPlus \times [0,1])$; 
we claim that it is also open.  We've already seen
that $\quotModHomotop{2} \cup \quotModHomotop{1}$ is open due to
Lemmas~\ref{lemma:index2} and~\ref{lemma:index1}, so it suffices to show
that any $([u],\tau) \in \quotModHomotop{i}$ for $i \in \{-1,0\}$ has a
neighborhood in $\quotModHomotop{}$ contained in~$\quotModHomotop{\Nice}$.
Let $u$ denote a holomorphic building representing such an element.
A neighborhood of $([u],\tau)$ will consist of all nearby elements of the
$(i+1)$-dimensional moduli space described in Lemma~\ref{lemma:lowerDimension},
plus any other equivalence classes represented by buildings with fewer
levels (e.g.~smooth curves) that are close to converging to~$u$ or one of
its equivalent buildings in the SFT-topology.
Lemma~\ref{lemma:lowerDimension} describes the possible main levels of $u$,
and the upper levels are allowed to consist of anything that produces 
arithmetic genus zero and the right collection of asymptotic orbits (all of them
simply covered) at the positive ends.  This allows for exactly the same
range of possibilities as seen in Propositions~\ref{T:compactness}
and~\ref{T:deformationJ}: all components in the upper levels are 
covers of either trivial
cylinders or gradient flow cylinders, each with covering multiplicity at 
most~$2$.  Aside from the ordering of the punctures, the only ambiguity 
involved in the upper levels is therefore the option to replace each
gradient flow cylinder with its partner in a canceling pair.  But by the
argument in Lemma~\ref{L:mainLevelIntersection}, this
alteration does not change the value of $u * u$ or $u * v$ for any
other building $v$ with $([v],\tau) \in \quotModHomotop{}$.  In particular,
since $u$ is equivalent to some building arising as a limit of a sequence of
nicely embedded index~$2$ curves, $u * u = 0$, implying that
any other smooth curve obtained by gluing $u$ will also be nicely embedded
and therefore an element of $\quotModHomotop{\Nice}$.  This proves that the
set is open as claimed, hence it is a union of connected components
of $\quotModHomotop{} \setminus (\quotModPlus \times [0,1])$.  
Since $\quotModHomotop{\Nice}$ also contains
the holomorphic pages in $\widehat{\nN}_+(\p E)$ that form a neighborhood
of $\quotModPlus \times [0,1]$ in $\quotModHomotop{}$, it now follows from
the definition of $\quotModHomotop{}$ that
$\quotModHomotop{\Nice} = \quotModHomotop{} \setminus (\quotModPlus
\times [0,1])$.  The claim about intersection numbers then follows from
Proposition~\ref{prop:int0} via the homotopy invariance of the 
pairing $u * u'$.
\end{proof}

\begin{lemma}
\label{lemma:foliation}
For each $\tau \in [0,1]$, every point in $\W$ is in the image of the main
level of a unique element of $\quotMod{\Nice}{\tau}$.
\end{lemma}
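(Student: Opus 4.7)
The plan is to define $S_\tau \subset \W$ as the set of points lying in the image of the main level of some element of $\quotMod{\Nice}{\tau}$, and prove $S_\tau = \W$ by showing $S_\tau$ is nonempty, open, and closed in the connected manifold $\W$. Uniqueness of the covering representative will follow from positivity of intersections for $\widehat{J}_\tau$-holomorphic curves combined with the vanishing of Siefring intersection numbers provided by Lemma~\ref{lemma:openClosed}.

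For nonemptiness, I would appeal to the extension of the holomorphic foliation $\fF_+$ to all of $\widehat{E}$ carried out in \S\ref{sec:holPages}: this provides an explicit foliation of $\widehat{\nN}_-(\p E) \subset \W$ by embedded index~$2$ $\widehat{J}_\tau$-holomorphic curves (note $\widehat{J}_\tau \equiv J_+$ on this region, independently of $\tau$), each of which is nicely embedded by Proposition~\ref{prop:int0}, and whose equivalence class therefore lies in $\quotMod{\Nice}{\tau}$ by definition of $\quotMod{}{}$. Hence $S_\tau \supset \widehat{\nN}_-(\p E)$.

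Openness of $S_\tau$ follows from a case-by-case analysis using the dimensional decomposition of $\quotModHomotop{\Nice}$ in Lemma~\ref{lemma:lowerDimension}. If $x \in S_\tau$ is covered by $[u] \in \quotModHomotop{i}$ with $i \in \{1,2\}$, Lemmas~\ref{lemma:index2} and~\ref{lemma:index1} directly furnish a $2$-parameter family of main levels with disjoint images that foliate a neighborhood of $x$. For $i \in \{-1,0\}$, $[u]$ is of type ``sing'' or ``exot''; the sing case is handled by standard transverse-node smoothing, which locally models as $\{z_1 z_2 = \epsilon\}$ and produces a $1$-parameter family of smooth nicely embedded index $i+2 \in \{1,2\}$ curves whose union with the singular fiber foliates a neighborhood of the image (including the node itself), while the exot case is handled by the asymptotic gluing of the doubly covered end with the branched-cover trivial cylinder in the upper level (cf.~cases~\ref{item:compactness2branched} of Proposition~\ref{T:compactness}). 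In all cases Lemma~\ref{lemma:openClosed} ensures that the glued smooth curves again represent elements of $\quotMod{\Nice}{\tau}$, and the $2$-parameter foliation of a neighborhood of $x$ is provided by combining the singular/exotic fiber with the surrounding families from Lemmas~\ref{lemma:index2} and~\ref{lemma:index1} applied at the index~$2$ curves obtained after gluing. The main technical point to be careful about is that at a nodal point, the smoothing $\{z_1 z_2 = \epsilon\}$ really does cover a full $4$-dimensional neighborhood; this is a standard local computation in positivity of intersections.

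For closedness, suppose $x_n \to x \in \W$ with $x_n$ covered by the main level of $[u_n] \in \quotMod{\Nice}{\tau}$. The uniform bound on total covering multiplicities of asymptotic orbits built into $\mM\nice(\widehat{J}_\tau;H;\mathbf{m})$ yields a uniform energy bound on the representatives $u_n$, so the SFT compactness theorem of \cite{SFTcompactness} produces a subsequence converging to a stable holomorphic building $u_\infty$. Since $\quotMod{}{\tau}$ is a closed subset of $\widehat{\mM}(\widehat{J}_\tau)$, we have $[u_\infty] \in \quotMod{}{\tau}$; since $x$ lies on the main level of $u_\infty$, that main level is nonempty and hence $[u_\infty] \notin \quotModPlus$, so Lemma~\ref{lemma:openClosed} gives $[u_\infty] \in \quotMod{\Nice}{\tau}$ and $x \in S_\tau$. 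Finally, uniqueness: if distinct classes $[u], [u'] \in \quotMod{\Nice}{\tau}$ both covered $x$, then some component $u_i$ of the main level of $u$ and some component $u_j'$ of the main level of $u'$ would share the point~$x$; positivity of intersections (applied componentwise, all contributions being nonnegative) forces $u_i$ and $u_j'$ to have identical images, for otherwise $u*u' \geq 1$, contradicting $u*u'=0$ from Lemma~\ref{lemma:openClosed}. Iterating over all components shows the main levels of $u$ and $u'$ have identical images (and identical parametrizations up to reparametrization, since the components are somewhere injective); together with the coincidence of asymptotic orbits, this forces $[u] = [u']$ under the equivalence relation defining $\widehat{\mM}(\widehat{J}_\tau)$.
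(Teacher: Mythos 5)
Your overall scaffolding — nonempty/open/closed plus uniqueness via $u*u'=0$ — is sound, and your closedness and uniqueness arguments match the spirit of the paper's. But your openness step has a genuine gap at the exotic fibers. A point $x$ on an exotic fiber lies on the image of the main level of a building whose upper level contains an \emph{index~$2$ branched double cover of a trivial cylinder}, glued to the main level along a \emph{doubly covered} elliptic orbit (case~\ref{item:compactness2branched} of Proposition~\ref{T:compactness}). You assert this ``is handled by asymptotic gluing,'' but gluing a main-level curve to a branched cover of a trivial cylinder along a multiply covered breaking orbit is precisely the situation where ordinary SFT gluing fails to produce a bijection onto a neighborhood, and one would need obstruction-bundle gluing in the sense of Hutchings--Taubes. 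Even granting the gluing, you would still need a degree or positivity argument to show that the resulting index~$2$ curves fill a full punctured neighborhood of the exotic fiber in~$\W$, not just one side. The paper deliberately avoids both difficulties: it never proves openness at the nodal or exotic loci. Instead, it lets $\Delta \subset \W \times [0,1]$ be the set of points covered by main levels of index~$-1$ or~$0$ classes. By Lemma~\ref{lemma:lowerDimension}, $\Delta$ is the smooth image of a manifold of dimension at most~$3$, hence codimension $\ge 2$ in the $5$-manifold $\W \times [0,1]$, so $(\W\times[0,1])\setminus\Delta$ is connected. Openness and closedness of the set $\Theta$ of points covered by index~$1$ or~$2$ classes in this complement then follow entirely from Lemmas~\ref{lemma:index2}, \ref{lemma:index1} and~\ref{lemma:lowerDimension} (all implicit-function-theorem territory), and $\Theta \cup \Delta = \W \times [0,1]$ follows by connectedness.

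The same excision trick also works for each fixed $\tau$ alone (the finitely many sing/exot fiber images form a codimension-$2$ subset of $\W$, so $\W \setminus \Delta_\tau$ is connected), which is how you could repair your argument without stepping into gluing at multiply covered orbits. Your uniqueness paragraph is correct and is essentially what the paper does via Lemma~\ref{L:mainLevelIntersection} and $u*u'=0$.
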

\begin{proof}
Let $\Delta \subset \W \times [0,1]$ denote the set of all points $(x,\tau)$
such that $x$ is in the image of the main level for some equivalence class
of buildings in $\quotMod{-1}{\tau} \cup \quotMod{0}{\tau}$.
By Lemma~\ref{lemma:lowerDimension}, $\Delta$ is the smooth image of a
manifold with components of dimension at most~$3$, i.e.~it is a
``subset of codimension at least~$2$'' in $\W \times [0,1]$.  If follows
that $(\W \times [0,1]) \setminus \Delta$ is connected.

Now define
$\Theta \subset (\W \times [0,1]) \setminus \Delta$ as the set of all
$(x,\tau) \not\in \Delta$ such that $x$ is in the image of the main level
for some equivalence class of buildings in
$\quotMod{1}{\tau} \cup \quotMod{2}{\tau}$.
Lemmas~\ref{lemma:index2} and~\ref{lemma:index1} imply that
$\Theta$ is an open subset of $(\W \times [0,1]) \setminus \Delta$,
and Lemma~\ref{lemma:lowerDimension} implies that it is also closed.
We conclude that $\W = \Theta \cup \Delta$, meaning every 
$(x,\tau) \in \W \times [0,1]$ has the property that $x$ is in the
image of the main level for some element of $\quotMod{\Nice}{\tau}$.
Uniqueness then follows from the fact that any two such elements $u$ and $u'$
satisfy $u * u' = 0$, as Lemma~\ref{L:mainLevelIntersection} implies that
any isolated intersection of the main levels would make $u * u'$ positive.
\end{proof}

We've now shown that for every $\tau \in [0,1]$, the main levels of the
buildings representing elements of $\quotMod{\Nice}{\tau}$ define a 
smoothly $\tau$-dependent foliation $\fF_\tau$ of~$\W$, which is 
singular on the set
$$
\W\crit_\tau \subset \W
$$
consisting of images of nodes for main levels of elements 
in $\quotMod{\sing}{\tau}$.  We thus obtain a continuous map
$$
\Pi_\tau : \W \to \quotMod{\Nice}{\tau}
$$
sending each point $x \in \W$ to the unique $[u] \in \quotMod{\Nice}{\tau}$
whose main level contains~$x$, and the resulting map
$$
\Pi : \W \times [0,1] \to \quotModHomotop{\Nice} : (x,\tau) \mapsto
(\Pi_\tau(x),\tau)
$$
is also continuous.  The remaining steps toward the proof of
Propositions~\ref{T:moduliSurface} and~\ref{prop:deform} are to define a
suitable smooth structure on $\quotModHomotop{\Nice}$ and to understand
the topological relationship between the sets
$\quotMod{}{}$ and $\quotModHomotop{}$ and their various subsets of
regular, singular and exotic curves.  To obtain a smooth structure and
orientation on $\quotModHomotop{\Nice}$, we can conveniently make use
of the foliations $\fF_\tau$ and thus avoid talking about smoothness of
gluing maps or coherent orientations.

\begin{lemma}
\label{lemma:quotSmooth}
The spaces $\quotModHomotop{\Nice}$ and $\quotMod{\Nice}{\tau}$ for each
$\tau \in [0,1]$ admit unique smooth structures such that the maps
$\Pi_\tau$ and $\Pi$ are smooth, except possibly at $\W\crit_\tau$ and
$\bigcup_{\tau \in [0,1]} W\crit_\tau \times \{\tau\}$ respectively.
Moreover, $\quotModHomotop{\Nice}$ is orientable, and there exists a
diffeomorphism
$$
\Psi : \quotMod{\Nice}{} \times [0,1] \to \quotModHomotop{\Nice}
$$
which satisfies $\Psi(\quotMod{\Nice}{} \times \{\tau\}) = 
\quotMod{\Nice}{\tau}$ for every $\tau \in [0,1]$ and
is the identity map on $\quotMod{\Nice}{} \times \{0\}$ and
the neighborhood of $\quotModPlus \times [0,1]$ defined by the holomorphic pages
in~$\widehat{\nN}_+(\p E)$.
\end{lemma}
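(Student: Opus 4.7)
The plan is to build the smooth structure on $\quotModHomotop{\Nice}$ by patching together the local charts already provided by Lemmas~\ref{lemma:index2}, \ref{lemma:index1}, and \ref{lemma:lowerDimension}. Near any $([u],\tau_0)$ with $[u] \in \quotMod{2}{\tau_0}$, Lemma~\ref{lemma:index2} directly supplies a smooth chart with smooth $\tau$-dependence, and the implicit function theorem underlying this chart shows that the projection $\W \to \uU_\tau$ sending $x$ to the unique leaf through it is smooth. Near $([u],\tau_0) \in \quotModHomotop{1}$, the homeomorphism $\Psi$ from Lemma~\ref{lemma:index1} gives Euclidean coordinates $(x,y,\tau)$, and the two resulting families of nicely embedded index~$2$ curves glue smoothly across $x=0$ because both one-sided families of leaves limit to the same $C^\infty$ foliation on the embedded hypersurface traced out by the $y=\text{const}$ main levels; what \emph{could} look singular is only the combinatorial choice of canceling gradient cylinder in the upper level, but that choice is invisible at the level of the main level in $W$, which is what defines the local smooth structure. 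Charts near the codimension-$2$ strata $\quotModHomotop{0}$ and $\quotModHomotop{-1}$ are obtained by combining the $(i+1)$-dimensional manifold structure on $\quotModHomotop{i}$ from Lemma~\ref{lemma:lowerDimension} with standard gluing of the nearby index~$2$ families indicated by Propositions~\ref{T:compactness} and~\ref{T:deformationJ}; uniqueness of the resulting smooth structure, and the smoothness of $\Pi$ away from the nodal points $\W\crit_\tau$, both follow from the observation that the projection is smoothly trivialized by the foliation $\fF_\tau$ off of~$\W\crit_\tau$.

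For orientability I would orient $\quotModHomotop{2}$ using the canonical orientation of a regular parametrized moduli space of somewhere injective index~$2$ curves in $\W$ equipped with coherent orientations from \cite{BourgeoisMohnke}, and then argue that this orientation extends across the $\quotModHomotop{1}$ seams. The key input is Lemma~\ref{lemma:cancel} together with the sign computation in Proposition~\ref{prop:orientIndex1}: each index~$1$ building in $\quotModHomotop{1}$ is the limit of index~$2$ curves glued from two sides using the two gradient cylinders in a canceling pair, which carry opposite coherent signs, so the two index~$2$ families approach the seam from opposite orientations and the local orientation of $\quotModHomotop{2}$ extends continuously through the $\quotModHomotop{1}$ stratum. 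The lower strata $\quotModHomotop{0}$ and $\quotModHomotop{-1}$ have codimension at least~$2$ in $\quotModHomotop{\Nice}$, so they place no further obstruction on orientability.

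To construct the diffeomorphism $\Psi$, I would show that the natural projection $\pi : \quotModHomotop{\Nice} \to [0,1]$ defined on equivalence classes by $([u],\tau) \mapsto \tau$ is a proper submersion. Properness follows from the compactness results of \S\ref{sec:compactnessProofs}, since any divergent sequence in $\quotModHomotop{\Nice}$ whose $\tau$-values stay in a compact subset of $[0,1]$ would have to limit either to a curve in $\quotMod{\Nice}{\tau}$ (hence converging) or to a holomorphic page on the cylindrical end (hence leaving $\quotModHomotop{\Nice}$ across $\quotModPlus \times [0,1]$). The submersion property follows from the smooth structure built in the first step, together with the genericity of the homotopy $\{\widehat{J}_\tau\}$ which ensures that at every point the tangent space has nonzero component in the $\tau$-direction. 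A choice of smooth vector field $X$ on $\quotModHomotop{\Nice}$ with $d\pi(X) \equiv \p_\tau$ that equals $\p_\tau$ on the canonical collar neighborhood of $\quotModPlus \times [0,1]$ (where the data is $\tau$-independent) then integrates, by the Ehresmann argument, to the desired diffeomorphism $\Psi$; the condition $\Psi|_{\tau=0} = \mathrm{Id}$ is automatic since one flows from time~$0$.

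The main obstacle is the orientability step: checking that the coherent orientation on the top stratum extends across every codimension-$1$ seam without sign ambiguity requires the precise compatibility between the two-sided gluing construction and the opposite-sign assignment coming from Proposition~\ref{prop:orientIndex1}. Once this is in place, smoothness and the Ehresmann-style construction of $\Psi$ are essentially formal consequences of the local models already established.
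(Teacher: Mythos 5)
Your proof takes a genuinely different route from the paper, and the paper explicitly declines to take your route.  Immediately before this lemma, the paper remarks that ``we can conveniently make use of the foliations $\fF_\tau$ and thus avoid talking about smoothness of gluing maps or coherent orientations.''  The paper's actual proof defines a single atlas by the following simple device: for any $([u],\tau)$, pick a non-nodal point $p$ on the main level of~$u$ and take a small embedded $2$-disk $\dD_p \subset \W$ positively transverse to $\fF_\tau$; the map sending nearby elements to their intersection with $\dD_p$ is then a chart, and the chart is automatically smooth (resp.\ orientation-compatible with any other such chart) because the foliation is smooth and carries a canonical co-orientation from the complex structure on the normal bundle to $J$-holomorphic leaves.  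This works uniformly in a neighborhood of \emph{every} point of $\quotModHomotop{\Nice}$, with no need to stratify by index and no gluing, and orientability comes for free rather than from coherent orientations.

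Your argument, by contrast, constructs charts stratum-by-stratum and invokes ``standard gluing'' to produce charts near the codimension-$\ge 2$ strata, and obtains orientability by tracking the coherent orientations of \cite{BourgeoisMohnke} through the seams via Proposition~\ref{prop:orientIndex1}.  Neither of these ingredients is wrong in principle, but both introduce real technical debt that the paper deliberately sidesteps: smoothness of gluing maps for punctured holomorphic curves is a notoriously delicate technical point that is not established anywhere in this paper, and the sign-compatibility of coherent orientations under the two-sided gluing at a $\quotModHomotop{1}$ seam is exactly the kind of bookkeeping the authors wished to avoid.  You could in principle close this by noting, as you begin to do, that the main levels trace out a smooth foliation and the transverse structure of that foliation already pins down the smooth structure and the orientation; but once you say that, the stratified gluing and coherent orientations are doing no work and you have arrived at the paper's argument.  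One smaller inaccuracy: the submersion property of $([u],\tau)\mapsto\tau$ does not come from genericity of the homotopy $\{\widehat{J}_\tau\}$; it comes from the fact that the index-$1$ and index-$2$ main-level curves satisfy automatic transversality (Proposition~\ref{prop:automatic}), so the foliation and its transverse disks persist for all $\tau'$ near~$\tau$, making the $\tau$-function regular regardless of generic choices.
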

\begin{proof}
Given $([u],\tau) \in \quotModHomotop{\Nice}$, pick a point $p \in \W$
that is in the image of a non-nodal point of the main level of~$[u]$.
Choose also a small embedded open $2$-disk $\dD_p \subset \W$ that has $p$ in 
its interior and is positively transverse to the foliation~$\fF_\tau$.
Then after shrinking $\dD_p$ if necessary, we can assume that the main
level curve for every element in some neighborhood 
$\uU \subset \quotMod{\Nice}{\tau}$ of $[u]$ passes
through a unique point of~$\dD_p$, thus defining a homeomorphism
$\uU \to \dD_p$.  (A crucial detail behind this assertion
is that all of the main levels of elements in $\quotModHomotop{\Nice}$ are
embedded---in particular none of them are multiply covered, otherwise the
map $\uU \to \dD_p$ would be multi-valued.)  We shall identify $\dD_p$
smoothly with the open unit disk in $\CC$ and regard $\uU \to \dD_p$ as a
chart.  Since the foliation is smooth and has a canonical co-orientation
determined by the almost complex structure, all transition maps relating
two charts of this type are smooth and orientation preserving.  In the
same manner, the smooth dependence of $\fF_\tau$ on $\tau$ allows us to
define local charts on $\quotModHomotop{\Nice}$ since $\dD_p$ can still
be assumed positively transverse to $\fF_{\tau'}$ for all $\tau'$
sufficiently close to~$\tau$.  This makes $\quotModHomotop{\Nice}$ a smooth
oriented manifold in which the function 
$$
\quotModHomotop{\Nice} \to [0,1] : ([u],\tau) \mapsto \tau
$$
is a smooth function with no critical points.  The existence of the
diffeomorphism $\Psi$ follows since the foliations 
$\fF_\tau$ match $\fF_+$ and are thus $\tau$-independent in the region
of $\widehat{\nN}_+(\p E)$ foliated by holomorphic pages.
\end{proof}

\begin{lemma}
\label{lemma:singLeaves}
Using the diffeomorphism $\Psi$ from Lemma~\ref{lemma:quotSmooth},
the sets $\quotModHomotop{\sing}$ and $\quotModHomotop{\exot}$ are each
disjoint unions of finite collections of subsets of the form
$$
\left\{ \Psi(f(\tau),\tau) \in \quotModHomotop{\Nice}\ \big|\ \tau \in [0,1] \right\},
$$
for continuous maps $f : [0,1] \to \quotMod{\Nice}{}$ that are smooth except
at finitely many points in~$(0,1)$.
\end{lemma}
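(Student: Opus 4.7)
The plan is to organize the analysis around the stratification from Lemma~\ref{lemma:lowerDimension}: $\quotModHomotop{\sing} = \quotModHomotop{\sing,0} \sqcup \quotModHomotop{\sing,-1}$ and $\quotModHomotop{\exot} = \quotModHomotop{\exot,0} \sqcup \quotModHomotop{\exot,-1}$, where the $i=0$ strata are smooth $1$-manifolds and the $i=-1$ strata are discrete.  Finiteness of the index-$(-1)$ strata is immediate: $I^\sing$ is finite by Proposition~\ref{T:deformationJ}, and for each $\tau \in I^\sing$ the set $\mM\nice_{-1}(\widehat{J}_\tau;H;\mathbf{m})$ is finite by Lemma~\ref{lemma:genericity} together with the usual SFT compactness.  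Finiteness of the number of connected components of the $1$-dimensional strata follows from compactness of $\overline{\mM}\nice(\{\widehat{J}_\tau\};H;\mathbf{m})$ combined with the bound on combinatorial type enforced by the multiplicity constraint~$\mathbf{m}$.

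The second step is to establish local graph structure over each $\tau_0 \in [0,1] \setminus I^\sing$.  For $([u_0],\tau_0) \in \quotModHomotop{\sing,0}$ the main level of $u_0$ is a nodal curve with two nicely embedded index-$0$ components $v_1, v_2$ satisfying $v_i * v_i = -1$ and $v_1 * v_2 = 1$, meeting transversely at a single node.  Genericity of $\widehat{J}_{\tau_0}$ on $W$ makes each $v_i$ Fredholm regular (Lemma~\ref{lemma:genericity}), so the implicit function theorem deforms each smoothly with~$\tau$; homotopy invariance of $v_1 * v_2 = 1$ combined with local positivity of intersections forces the unique transverse node to persist.  The same implicit function theorem argument applied to the Fredholm-regular somewhere-injective index-$0$ curves underlying $\quotModHomotop{\exot,0}$ (cf.\ Proposition~\ref{T:compactness}~\ref{item:compactness2branched}) gives local graph structure there as well, since the doubly covered asymptotic orbit cannot bifurcate without passing through an index-$(-1)$ configuration.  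Thus away from $I^\sing$, both $1$-dimensional strata project locally diffeomorphically to the parameter interval.

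The main obstacle is the local structure of the $1$-dimensional strata near their $0$-dimensional limit points in $\quotModHomotop{\sing,-1}$ and $\quotModHomotop{\exot,-1}$, all of which occur at $\tau_0 \in I^\sing$.  The essential tool is the standard Cauchy--Riemann gluing theorem.  For $([u_0],\tau_0) \in \quotModHomotop{\sing,-1}$, the main level pairs an index-$0$ component~$v$ with an index-$(-1)$ component~$v'$ meeting transversely at one node; Proposition~\ref{T:deformationJ}~\ref{item:deformation1nodal} classifies such configurations as the only mechanism through which $\quotModHomotop{\sing,0}$ can degenerate at~$\tau_0$.  A transverse gluing analysis shows that precisely two local branches of $\quotModHomotop{\sing,0}$ accumulate at~$([u_0],\tau_0)$, meeting inside the oriented $3$-manifold $\quotModHomotop{\Nice}$; in the $(\tau,\text{gluing parameter})$-coordinates provided by gluing, each branch lies entirely on one side of $\tau = \tau_0$, with the two sides exchanged by the two branches.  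Concatenating the branches across~$\tau_0$ therefore yields a continuous path crossing~$\tau_0$, piecewise-smooth with a non-smooth point only at $\tau_0$ itself.  The analogous argument using Proposition~\ref{T:deformationJ}~\ref{item:deformation1branched} handles $\quotModHomotop{\exot,-1}$, with the role of $v'$ played by the index-$(-1)$ curve carrying the doubly covered asymptotic orbit and the gluing performed with a branched double cover of a trivial cylinder.

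To globalize, I note that $\widehat{J}_0$ and $\widehat{J}_1$ are generic, so $\mM\nice_{-1}(\widehat{J}_{0})$ and $\mM\nice_{-1}(\widehat{J}_{1})$ are empty and the $1$-dimensional strata meet the fibers $\{0\}$ and $\{1\}$ transversely at finitely many smooth points of $\quotMod{\sing}{0},\quotMod{\sing}{1},\quotMod{\exot}{0},\quotMod{\exot}{1}$, providing legitimate path endpoints.  Combining the endpoint analysis, local graph structure over $[0,1] \setminus I^\sing$, and branch-concatenation at points of $I^\sing$ exhibits each connected component of the concatenated strata as the image of a continuous, piecewise-smooth map $f \colon [0,1] \to \quotMod{\Nice}{}$ whose only non-smooth points lie over a subset of $I^\sing$, completing the proof.
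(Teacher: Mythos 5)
Your overall architecture matches the paper's: you stratify by $I^\sing$, handle the open strata via automatic transversality and the implicit function theorem, and treat the index-$(-1)$ configurations at $\tau_0 \in I^\sing$ by gluing two boundary buildings that represent the same equivalence class and concatenating the resulting branches.  The gap is at the single most delicate step: you assert that "a transverse gluing analysis shows\ldots{} each branch lies entirely on one side of $\tau = \tau_0$, with the two sides exchanged by the two branches," but transverse gluing alone gives no information about which side each branch lies on.  A priori both glued families could emanate from $\tau_0$ in the \emph{same} $\tau$-direction, in which case concatenation would fail and the quotient would not be a $1$-manifold at that point.  The paper closes this by an explicit coherent-orientation argument: the two index-$0$ components of the nearby nodal curves have $c_N = -1$ and identical asymptotic orbits, so Proposition~\ref{prop:orientIndex0} forces both local components of $\quotModHomotop{\sing,0}$ to receive the \emph{same} sign; meanwhile the two buildings $u_{\tau_0}$ and $u_{\tau_0}'$ differ by replacing a gradient-flow cylinder with its canceling partner, which by Lemma~\ref{lemma:cancel} (via Proposition~\ref{prop:orientIndex1}) makes them \emph{oppositely} oriented boundary points.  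These two facts together force the two branches to approach $\tau_0$ from opposite sides.  Without this step the proof does not go through.

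A second, less central issue is the exotic case.  You collapse the boundary analysis there to a single mention of "gluing performed with a branched double cover of a trivial cylinder," but the paper distinguishes two mechanisms of degeneration (cases~\ref{item:deformation2thebirds} and~\ref{item:deformation2unbranched}, depending on whether the doubly covered orbit is elliptic or hyperbolic), and in the hyperbolic case the object to be replaced is an \emph{unbranched} double cover of a gradient-flow cylinder, whose cancelation is again settled by the orientation lemma.  The paper also notes (and briefly justifies) that cases~\ref{item:deformation2vertigo} and~\ref{item:deformation2ihatethisone} are equivalent to~\ref{item:deformation2unbranched} under $\sim$ and so require no further work, and that no obstruction-bundle gluing is required because the branched cover in the top level is extra data discarded by the equivalence relation rather than an ingredient of the gluing.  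Your proposal leaves the reader to infer all of this, and in particular does not address why gluing through a configuration containing a multiply covered trivial cylinder is unproblematic.
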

\begin{proof}
Recall from Proposition~\ref{T:deformationJ} the finite subset
$$
I^\sing \subset (0,1),
$$
which in the present context we can regard as the set of all parameter
values $\tau$ such that the foliation $\fF_\tau$ contains a leaf of
index~$-1$ (and only one such leaf).  When $\tau \not\in I^\sing$,
Lemma~\ref{lemma:lowerDimension} identifies $\quotMod{\sing}{\tau}$ and 
$\quotMod{\exot}{\tau}$ with $\quotMod{\sing,0}{\tau}$ and
$\quotMod{\exot,0}{\tau}$ respectively, which are each compact and are
made up of equivalence classes whose main levels contain nicely embedded
index~$0$ curves with simple elliptic asymptotic orbits.  These curves
satisfy the automatic transversality criterion of 
Proposition~\ref{prop:automatic},
hence there are finitely many of them for each $\tau$ and they can be
deformed smoothly under small perturbations in~$\tau$.  In particular,
transversality implies that the projection
\begin{equation}
\label{eqn:submersionParam}
\quotModHomotop{\sing,0} \cup \quotModHomotop{\exot,0} 
\to [0,1] : ([u],\tau) \mapsto \tau
\end{equation}
is a submersion.

We claim next that if $\tau_0 \in I^\sing$ and 
$$
\left\{ ([u_\tau],\tau) \in
\quotModHomotop{\sing} \ |\ \tau \in (\tau_0 - \epsilon,\tau_0) \right\}
$$
is a smooth $1$-parameter family, then this family admits a unique continuous
extension to $\tau \in (\tau_0 - \epsilon,\tau_0 + \epsilon)$ if
$\epsilon > 0$ is sufficiently small, and the extended family is also smooth
for $\tau \in (\tau_0,\tau_0 + \epsilon)$.  The compactness results of
\S\ref{sec:compactnessProofs} imply that as $\tau \to \tau_0$ from below,
$[u_\tau]$ converges to
an element $[u_{\tau_0}]$ of either $\quotMod{\sing,0}{\tau_0}$ or 
$\quotMod{\sing,-1}{\tau_0}$.
In the first case we can again use automatic transversality and apply the
implicit function theorem to continue the family.  In the second case,
we can assume that the smooth curves $u_\tau$ representing $[u_\tau]$
converge to a holomorphic building $u_{\tau_0}$ as in
case~\ref{item:deformation2nodalgradflow} of Prop.~\ref{T:deformationJ},
whose main level is a nodal
curve with one index~$0$ and one index~$-1$ component, and the upper levels
consist of a gradient flow cylinder connecting the unique hyperbolic orbit
of the index~$-1$ curve to an elliptic orbit.  All components in this
building are parametrically Fredholm regular, i.e.~they are points at which
the relevant parametric moduli space $\{ (v,\tau)\ |\ \tau \in [0,1],\ \text{$v$ is $J_\tau$-holomorphic} \}$
is cut out transversely by the nonlinear Cauchy-Riemann operator.  
It follows that $u_{\tau_0}$ can be glued in a unique way, proving that all pairs
$(u,\tau)$ close to $(u_{\tau_0},\tau_0)$ in the SFT-topology belong to the
given family $\{ (u_\tau,\tau)\ |\ \tau \in (\tau_0 - \epsilon,\tau_0) \}$.  
However, one can also replace the
gradient flow cylinder in the upper level of $u_{\tau_0}$
with its canceling partner from Lemma~\ref{lemma:cancel}, giving a
new building $u_{\tau_0}'$, which can be glued to obtain 
a new smooth $1$-parameter family of elements in $\quotModHomotop{\sing,0}$
with an end degenerating to~$u_{\tau_0}'$.  Since the index~$0$ curves forming
the new family have the same asymptotic orbits as the components of
$u_\tau$ and each have normal Chern number~$-1$, 
Proposition~\ref{prop:orientIndex0} implies that these two components of
$\quotModHomotop{\sing,0}$ receive the same orientation, for which the
submersion \eqref{eqn:submersionParam} either preserves or reverses 
orientation.  But these coherent orientations also assign opposite signs to
the two canceling gradient flow cylinders, and hence to the buildings
$u_{\tau_0}$ and $u_{\tau_0}'$ that form the boundaries of our two families
in $\quotModHomotop{\sing,0}$.  It follows that the family we obtained by
gluing consists of pairs $([u],\tau)$ with $\tau > \tau_0$, not
$\tau \le \tau_0$, thus continuing the family as claimed.  The uniqueness of
the continuation follows from the observation that
the two buildings $u_{\tau_0}$ and $u_{\tau_0}'$ are the \emph{only} possible
representatives of $[u_{\tau_0}] \in \quotMod{\sing}{\tau_0}$ up to ordering
of the punctures.

The proof of the same claim for $\quotModHomotop{\exot}$ is entirely analogous,
the main difference being that there are now two possible types of 
degenerations from $1$-parameter families in $\quotModHomotop{\exot,0}$ to 
buildings in $\quotModHomotop{\exot,-1}$, as the unique hyperbolic orbit for the
index~$-1$ curve may or may not be the same one that is doubly covered,
i.e.~this is the distinction between cases~\ref{item:deformation2thebirds} 
and \ref{item:deformation2unbranched} in Prop.~\ref{T:deformationJ}.
If it is not the same orbit, then the upper levels contain a gradient flow
cylinder, and the family is continued exactly as above by
gluing its canceling partner.  If the hyperbolic orbit is doubly covered, then 
we instead have
an unbranched double cover of a gradient flow cylinder in an upper level,
but as observed in Lemma~\ref{lemma:cancel}, this unbranched cover also
satisfies automatic transversality and is oriented opposite to the
unbranched double cover of its canceling partner.  Thus the same trick
works to continue the family, this time by replacing the unbranched cover
with its own canceling partner and then gluing the building.
Note that for this picture of $\quotModHomotop{\exot}$ to be complete, one
must also picture an index~$2$ branched double cover of a trivial cylinder
over an elliptic orbit in the top level of every building,
as appears in case~\ref{item:compactness2branched} of Proposition~\ref{T:compactness}
and cases~\ref{item:deformation2thebirds} and \ref{item:deformation2unbranched}
of Proposition~\ref{T:deformationJ}.  But this branched cover can be
treated as a constant object that plays no role in the deformation or
gluing arguments.

As a final remark, note that there are other types of buildings listed in
Proposition~\ref{T:deformationJ} that can represent elements of
$\quotMod{\exot,-1}{\tau_0}$ and may arise as degenerations of
index~$2$ curves, namely cases~\ref{item:deformation2vertigo} 
and~\ref{item:deformation2ihatethisone}.  However, since they do not
include any branched cover of a trivial cylinder in the top level,
these cannot arise as degenerations of curves in 
$\quotModHomotop{\exot,0}$, which are always represented by
case~\ref{item:compactness2branched} of Proposition~\ref{T:compactness}.  
Moreover, they are equivalent in
$\quotModHomotop{\exot,-1}$ to buildings from case~\ref{item:deformation2unbranched},
and can thus
be replaced by such buildings in order to obtain
two glued families\footnote{It is worth clarifying that no obstruction bundle
gluing (in the sense of \cite{HutchingsTaubes:gluing2}) is required here, as the
branched covers in our picture serve merely as a bit of extra data that is
not involved in the gluing construction.} in the way described above, so that they always uniquely 
fit into the same $1$-parameter families in $\quotModHomotop{\exot}$
moving $\tau$ both forward and backward.
\end{proof}

The lemmas proved so far establish the main topological properties of the
spaces $\quotMod{}{\tau}$ and $\quotModHomotop{}$ as described in
Propositions~\ref{T:moduliSurface} and~\ref{prop:deform}.  It remains only
to examine the restrictions of the maps $\Pi_\tau : \W \to \quotMod{\Nice}{\tau}$
to the holomorphic vertebrae $\dot{\Sigma}_i \subset \W$ for $i=1,\ldots,r$.
The resulting maps
$$
\dot{\Sigma}_i \stackrel{\Pi_\tau}{\longrightarrow} \quotMod{\Nice}{\tau}
$$
are local diffeomorphisms wherever the foliation $\fF_\tau$ is transverse
to~$\dot{\Sigma} := \dot{\Sigma}_1 \cup \ldots \cup \dot{\Sigma}_r$.
By Lemma~\ref{lemma:genVertebrae}, non-transverse intersections of main
level curves with $\dot{\Sigma}$ occur only for elements in a
$1$-dimensional submanifold of $\quotModHomotop{2}$ and a
discrete subset of $\quotModHomotop{1}$, and moreover, each individual
curve in these spaces has only one non-transverse intersection, with
local intersection index~$2$.  With this understood, the following local result
serves to characterize the maps 
$\dot{\Sigma} \stackrel{\Pi_\tau}{\longrightarrow} \quotMod{\Nice}{\tau}$
as generic branched covers.

\begin{lemma}
\label{lemma:branchPoints}
Suppose $J$ is a smooth almost complex structure on~$\CC^2$,
$$
u_\zeta : (\DD,i) \to (\CC^2,J) \quad\text{ for }\quad \zeta \in \DD
$$
is a smooth $2$-parameter family of $J$-holomorphic curves such that
$u_0(0) = 0$ and the map $\DD \times \DD \to \CC^2 : (z,\zeta) \mapsto u_\zeta(z)$
is an embedding, and $\Sigma \subset \CC^2$ is an embedded $J$-holomorphic
curve that has an isolated intersection of index $k \in \NN$ with $u_0$
at the origin.  Then the map
$$
\Sigma \to \DD : w \mapsto \zeta(w) \qquad\text{ such that }\qquad
w \in \im u_{\zeta(w)}
$$
has the local structure of a branched cover, with the origin as a branch
point of order~$k$.
\end{lemma}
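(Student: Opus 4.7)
The plan is to change coordinates so that the family $\{u_\zeta\}$ becomes a horizontal holomorphic foliation in $\CC^2$, parametrize $\Sigma$ locally, and then apply the Carleman similarity principle to the vertical component of the parametrization. Concretely, define $\Phi: \DD \times \DD \to \CC^2$ by $\Phi(z, \zeta) = u_\zeta(z)$; this is a smooth embedding by hypothesis, so on a neighborhood of $0 \in \CC^2$ I may use $(z, \zeta)$ as coordinates, in which the curves $u_\zeta$ become the horizontal disks $\DD \times \{\zeta\}$ and the map $w \mapsto \zeta(w)$ becomes the projection $\pi(z, \zeta) = \zeta$. The pullback $J' := \Phi^*J$ preserves each horizontal disk, so $J'$ has block-upper-triangular form
\begin{equation*}
J'(z,\zeta) = \begin{pmatrix} J_H & A(z,\zeta) \\ 0 & J_V(z,\zeta) \end{pmatrix}
\end{equation*}
in the splitting of $T\CC^2$ into horizontal and vertical $2$-planes, where $J_H$ is the standard complex structure on the horizontal plane and $J_V$ is a variable complex structure on the vertical factor.

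Next, I parametrize $\Sigma$ near $0$ by a $J'$-holomorphic immersion $\psi: (\DD, 0) \to (\CC^2, 0)$, $\psi(s) = (\alpha(s), \beta(s))$, so that $\pi|_\Sigma$ is represented by $s \mapsto \beta(s)$. Taking the vertical component of the Cauchy-Riemann equation $T\psi \circ i = J'(\psi) \circ T\psi$ and using the zero in the lower-left block of $J'$, one finds that
\begin{equation*}
\partial_{s_2} \beta = J_V(\psi(s)) \, \partial_{s_1} \beta,
\end{equation*}
i.e., $\beta$ is a pseudoholomorphic map from $\DD$ into the vertical $\CC$-factor equipped with the pulled-back almost complex structure $J_V \circ \psi$, and satisfies $\beta(0)=0$. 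By the Carleman similarity principle (see \cite{McDuffSalamon:Jhol}*{\S 2.3}), there exist a continuous $GL_\RR(\CC)$-valued function $\Phi_0$ with $\Phi_0(0) = \mathrm{Id}$ and a holomorphic function $h: \DD \to \CC$ with $h(0) = 0$ such that $\beta(s) = \Phi_0(s)\,h(s)$ near $0$. Writing $h(s) = c s^n + O(s^{n+1})$ with $c \ne 0$ and $n \ge 1$, the map $\beta$ is then topologically conjugate to $s \mapsto s^n$ via a local homeomorphism fixing $0$, so it is a branched cover of degree $n$ near $s=0$ with a single branch point of order $n$ at the origin.

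It remains to identify $n$ with the prescribed intersection index $k$. The local intersection index of $\Sigma$ with $u_0$ at $0$ equals, by definition, the algebraic count of transverse intersections of $\Sigma$ with $u_\zeta$ for generic small $\zeta \ne 0$; in our coordinates these intersection points are precisely the elements of $\beta^{-1}(\zeta)$, and the branched-cover structure of $\beta$ yields $\#\beta^{-1}(\zeta) = n$ for $\zeta$ near but unequal to $0$. Hence $n = k$, proving that $w \mapsto \zeta(w)$ is locally a branched cover with branch point of order $k$ at the origin. The main technical obstacle in executing this plan is deriving the correct pseudoholomorphic equation for the vertical component $\beta$ and verifying that the variable-coefficient similarity principle applies; the key point is that the zero lower-left block of $J'$ decouples the vertical component into a genuine pseudoholomorphic equation on $\CC$ with coefficients depending continuously on $\psi$, which is precisely the setting where the similarity principle gives a holomorphic model.
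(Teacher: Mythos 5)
Your proof is correct and takes essentially the same approach as the paper's: straighten the family $u_\zeta$ into horizontal disks, observe the upper-triangular block structure of $J$, reduce to the vertical component of the Cauchy--Riemann equation for $\Sigma$, and apply the similarity principle to detect the order-$k$ branch point. The only cosmetic differences are that the paper parametrizes $\Sigma$ as a graph over the horizontal factor (which works for $k\ge 2$, handling $k=1$ separately by transversality) and spells out the explicit $C^1$ coordinate change $w = z\big(a + |z|^k R(z)/z^k\big)^{1/k}$ rather than asserting topological conjugacy to $z\mapsto z^k$.
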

\begin{proof}
The statement follows immediately from transversality if $k=1$, so
let us assume $k \ge 2$.  After a change of coordinates, we can assume 
without loss of generality that 
$$
u_\zeta(z) = (z,\zeta) \qquad\text{ and }\qquad J(z,\zeta) = \begin{pmatrix} i & \alpha(z,\zeta) \\
0 & j(z,\zeta) \end{pmatrix}
$$
for $\End_\RR(\CC)$-valued functions $\alpha,j$ that satisfy $j^2 = -\1$
and $i\alpha + \alpha j = 0$.  In these coordinates, we can write 
$\Sigma$ near the origin as the image of an embedded $J$-holomorphic disk 
$v = (\varphi,f) : (\DD,i) \to (\CC^2,J)$ that satisfies
$\varphi(0) = f(0) = 0$ by assumption, and the condition $k \ge 2$ implies
a tangential intersection with~$u_0$, thus $df(0) = 0$, so that
$\varphi : \DD \to \CC$ can be assumed an embedding.
Since the family of curves $u_\zeta$
is parametrized by the second complex coordinate, our goal is now to
show that the function $f : \DD \to \CC$ has the structure of a $k$-to-$1$
branch point at~$0$.  This follows easily from the similarity principle:
writing $v_0(z) := (\varphi(z),0)$, the equation $\p_s v + J(v) \p_t v = 0$ implies
\begin{equation*}
\begin{split}
\p_s v + J(v_0) \p_t v &= \begin{pmatrix}
\p_s \varphi \\
\p_s f
\end{pmatrix} +
\begin{pmatrix}
i & \alpha(\varphi,0) \\
0 & j(\varphi,0)
\end{pmatrix}
\begin{pmatrix}
\p_t \varphi \\
\p_t f
\end{pmatrix} = -\left[ J(v) - J(v_0) \right] \p_t v \\
&= - \left( \int_0^1 D_2 J(\varphi , t f) \cdot f \ dt \right) \p_t v =: -\widetilde{A} f,
\end{split}
\end{equation*}
where the linear dependence of the integral in the second line on $f$ is used
to define a smooth function $\widetilde{A} : \DD \to \Hom_\RR(\CC,\CC^2)$.
Projecting all of this to the second factor in $\CC \times \CC$ then produces
a linear Cauchy-Riemann type equation $\p_s f + j(\varphi,0) \p_t f + A f = 0$,
and since the intersection of $v$ with $u_0$ is isolated, the similarity
principle now implies that $f$ has a nontrivial Taylor series whose first
nonzero term is holomorphic.  That term must be a multiple of $z^k$, in
light of the intersection index, thus $f$ is given by
$$
f(z) = a z^k + |z|^k R(z)
$$
for some nonzero coefficient $a \in \CC$ and a continuous remainder function satisfying
$R(0)=0$.  On a small enough neighborhood of~$0$ so that $|R(z)| < |a|$, 
this can also be written as $f(w) = w^k$ in a new $C^1$-smooth complex coordinate defined by
$w := z \left( a + \frac{|z|^k}{z^k} R(z) \right)^{1/k}$.
\end{proof}

For the next statement, let $\overline{\Sigma}_i$ denote the compact
topological surface obtained by adding circles at infinity to each of the
cylindrical ends of~$\dot{\Sigma}_i$.

\begin{lemma}
\label{lemma:branchedCover}
For each $i=1,\ldots,r$ and every $\tau \in [0,1]$, the map 
$\dot{\Sigma}_i \stackrel{\Pi_\tau}{\longrightarrow} \quotMod{\Nice}{\tau}$
extends to a continuous map
$$
\left(\overline{\Sigma}_i, \p\overline{\Sigma}_i\right) \to
\left(\quotMod{}{\tau} , \quotModPlus\right)
$$
of degree~$m_i$ whose restriction to the boundary is a covering map.  
Moreover, it is a generic branched cover of surfaces with cylindrical ends
in the sense of Definition~\ref{defn:genericBC},
the images in $\quotMod{\Nice}{\tau}$ of the
branch points all lie in $\quotMod{\reg}{\tau}$, and
the nodes of curves in $\quotMod{\sing}{\tau}$ never intersect~$\dot{\Sigma}_i$.
\end{lemma}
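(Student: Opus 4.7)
The plan is to prove Lemma~\ref{lemma:branchedCover} by (i) extending $\Pi_\tau|_{\dot{\Sigma}_i}$ to the circle compactification $\overline{\Sigma}_i$ using the asymptotic structure of the vertebra and the pages, (ii) computing the degree via Siefring intersection theory, and (iii) deducing the generic branched-cover structure and the exclusions from the parametric transversality results of \S\ref{sec:genericity}.

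For (i) and (ii), the key input is Proposition~\ref{prop:vertebrae}: each cylindrical end of $\dot{\Sigma}_i$ is a trivial positive half-cylinder over a simply covered Reeb orbit in $\widecheck{M}^+\spine$ associated to a boundary component $\gamma \subset \p\Sigma_i$.  Every holomorphic page in $\fF_+$ close enough to that vertebra end has exactly $m_\gamma$ positive punctures asymptotic to that same orbit---one for each boundary arc of the page lying in the paper component adjacent to $\gamma$.  As the moduli parameter of the page varies around the corresponding boundary circle of $\quotModPlus$, the $m_\gamma$ intersection points with $\dot{\Sigma}_i$ cyclically permute, giving a unique continuous extension of $\Pi_\tau|_{\dot{\Sigma}_i}$ to $\overline{\Sigma}_i$ whose restriction over each $\gamma$ is an $m_\gamma$-fold covering.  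The total boundary degree is $\sum_{\gamma \subset \p\Sigma_i} m_\gamma = m_i$ by the symmetric structure of $\boldsymbol{\pi}$, and the interior degree agrees: for $[u] \in \quotMod{\reg,2}{\tau}$, positivity of intersections identifies the algebraic count of preimages with the Siefring pairing $u * \dot{\Sigma}_i$, which is homotopy-invariant in both $\tau$ and $u$ and equals $m_i$ at $\tau = 0$, where the original pages of $\fF_+$ visibly meet $\dot{\Sigma}_i$ transversely in $m_i$ points.

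For (iii), I would apply Lemma~\ref{lemma:genVertebrae} parametrically in $\{\widehat{J}_\tau\}$.  With main-level index $i=2$, the expected parametric dimensions are $1$ for $\mathbf{k}=(2)$ (so tangencies of local index $2$ form an isolated set in $\dot{\Sigma}_i$ for each $\tau$, and Lemma~\ref{lemma:branchPoints} identifies each as a simple branch point), $-1$ for $\mathbf{k}=(3)$ (excluding branch points of higher order), and $-1$ for $\mathbf{k}=(2,2)$ (excluding two distinct tangencies on the same main level, hence forcing distinct branch points to have distinct images).  For main-level index $i \in \{-1,0\}$ with $\mathbf{k}=(2)$, the parametric dimension is negative, so no branch points lie on curves in $\quotMod{\sing}{\tau} \cup \quotMod{\exot}{\tau}$; in particular all branch points have image in $\quotMod{\reg}{\tau}$.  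Lemma~\ref{lemma:nodalBranching} then excludes nodes of $\quotMod{\sing}{\tau}$ from meeting $\dot{\Sigma}_i$, since the two components of a nodal main level both have index in $\{-1,0\}$ and generically intersect $\dot{\Sigma}_i$ transversely in disjoint finite sets.  The hardest step is (i): one must verify the cyclic-permutation claim near infinity, which reduces to comparing the leading asymptotic eigenfunctions of the vertebra and the nearby pages at their common Reeb orbit; this is tractable thanks to the explicit $S^1$-symmetric model for $J_+$ in $\widehat{\vV}$ from \S\ref{sec:Jandf}, where $T\fF_+ = \Span(\p_\theta,\p_t)$ and the vertebra is a horizontal slice at fixed $(t,\theta)$, so the asymptotic formula for the pages yields an explicit local description.
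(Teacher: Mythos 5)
Your outline for steps (ii) and (iii) is sound and tracks the paper's argument closely: the paper likewise invokes Lemma~\ref{lemma:branchPoints} for the local branch structure, Lemma~\ref{lemma:genVertebrae} for simplicity and distinctness of branch values and for ruling out tangencies with the index~$-1,0$ components, and Lemma~\ref{lemma:nodalBranching} for keeping the nodes away from $\dot{\Sigma}_i$; your parametric dimension counts are correct.  Your use of the Siefring pairing to pin the degree is a valid (slightly more elaborate) rephrasing of the paper's direct count of transverse intersections.

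The step you flag as hardest, however, rests on a false premise.  The vertebra $\dot{\Sigma}_i$ and the holomorphic pages do \emph{not} share asymptotic Reeb orbits, so there is no ``common Reeb orbit'' at which to compare leading asymptotic eigenfunctions.  Following the coordinate change in Lemma~\ref{lemma:theEnd}, a cylindrical end of $\dot{\Sigma}_i = \{t_0\} \times \widehat{\Sigma}_i \times \{\theta_0\}$ with $t_0 \in (-1,-1/2)$ sits at constant $\rho = -t_0 \in (1/2,1)$ in $[0,\infty)\times\widecheck{M}^+\corner$, hence is a trivial half-cylinder over a simply covered orbit of $R_+ = \tfrac{1}{m}\p_\phi$ lying in the paper side of $\widecheck{M}^+\corner$, not in $\widecheck{M}^+\spine$.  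The pages, by contrast, have all their positive punctures at the orbits $\gamma_z$ with $z \in \CritMorse(H)$, sitting over interior points of $\Sigma$; the page's annular piece in $\widecheck{M}^+\corner$ merely \emph{passes through} the region $\rho = -t_0$ on its way from the paper toward $\CritMorse(H)\times S^1$.  Thus the $m_\gamma$ points $\dot{\Sigma}_i \cap u$ near the vertebra end are honest transverse intersections occurring at finite $r$, not asymptotic coincidences.  The continuous extension of $\Pi_\tau|_{\dot{\Sigma}_i}$ and the $m_\gamma$-fold boundary covering therefore do not come from an asymptotic eigenfunction comparison, which is simply unavailable here; they come from the explicit $\theta$-invariant description of the foliation $\fF_+$ in the corner and the relation $\pi\paper(\phi,t,\theta)=m\phi$ of \eqref{eqn:localMult}, which makes the $m_\gamma$ intersection points on the constant-$\theta_0$ slice permute cyclically as the page parameter traverses a circle in $\quotModPlus$.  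The paper treats this as immediate from the transversality and $m_i$-fold count; your proposal instead aims the analytic machinery at a nonexistent structure and would stall there.
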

\begin{proof}
The continuous extension and its degree are already clear from the fact
that the holomorphic pages (which form the cylindrical ends of
$\quotMod{\Nice}{\tau}$) each have exactly $m_i$ intersections with
$\dot{\Sigma}_i$, all of them transverse.  That $\Pi_\tau|_{\dot{\Sigma}_i}$
is a branched cover with only simple branch points follows from
Lemma~\ref{lemma:branchPoints}, together with the preceding remarks on
genericity and intersections.  The branch points are the tangential intersections
of $\dot{\Sigma}_i$ with leaves of the foliation, and
Lemma~\ref{lemma:genVertebrae} implies that such a
point $\zeta$ is necessarily the only point of tangency on a given leaf,
hence all images of branch points are distinct. Finally,
Lemma~\ref{lemma:genVertebrae} implies that the index~$0$ and $-1$
main level components in $\quotMod{\sing}{\tau}$ and $\quotMod{\exot}{\tau}$
always intersect $\dot{\Sigma}_i$ transversely, hence these are never
critical values of the branched cover.  Lemma~\ref{lemma:nodalBranching}
implies in turn that the two components of each nodal curve in
$\quotMod{\sing}{\tau}$ never intersect $\dot{\Sigma}_i$ in the same places,
hence their intersections with $\dot{\Sigma}_i$ are disjoint from the node.
\end{proof}

Our final lemma in this section concerns the Lefschetz-amenable case.

\begin{lemma}
\label{lemma:noBranchPoints}
The branched cover in Lemma~\ref{lemma:branchedCover} has no branch
points if and only if $\quotMod{\exot}{\tau} = \emptyset$ for all $\tau \in [0,1]$.
\end{lemma}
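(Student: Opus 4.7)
The plan is to establish the equivalence via a parametric cobordism-type analysis, showing that interior branch points of $\Pi_\tau|_{\dot{\Sigma}_i}$ and exotic fibers occurring anywhere along the homotopy correspond to complementary ``ends'' of a single $1$-dimensional moduli space of tangential intersection configurations.

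First I would observe that, since Proposition~\ref{prop:deform} provides a compatible family of diffeomorphisms $\quotMod{}{\tau_0} \cong \quotMod{}{\tau_1}$, the branched cover $\Pi_\tau|_{\overline{\Sigma}_i} : \overline{\Sigma}_i \to \overline{\quotMod{}{\tau}}$ has $\tau$-independent topological type, so by Riemann--Hurwitz the total branching
\[
B_i := m_i \, \chi\!\bigl(\overline{\quotMod{}{\tau}}\bigr) - \chi\!\bigl(\overline{\Sigma}_i\bigr)
\]
is a topological invariant; since all branch points are simple at every $\tau$ by Lemma~\ref{lemma:branchedCover}, $B_i$ coincides with the number of branch points at every $\tau$, so the condition ``no branch points'' is $\tau$-independent and the statement is well-posed.

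Next I would analyse the parametric moduli space
\[
\mathcal{B}_i := \mM^*_2\bigl(\{\widehat{J}_\tau\}; \dot{\Sigma}_i, (2)\bigr)
\]
of tangential intersection configurations, which by Lemma~\ref{lemma:genVertebrae} is a smooth $1$-manifold (higher-order collisions are ruled out by dimension in the parametric setting). Its non-compactness as a subspace of $\dot{\Sigma}_i \times [0,1]$ can occur only at the endpoints $\tau \in \{0,1\}$, at degenerations of the underlying index-$2$ fiber to one of the broken configurations of Propositions~\ref{T:compactness}--\ref{T:deformationJ}, or at sequences $(\zeta_n, u_n, \tau_n)$ with $\zeta_n$ escaping up a cylindrical end of $\dot{\Sigma}_i$. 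Nodal limits are ruled out by Lemma~\ref{lemma:nodalBranching}, since each component of a nodal curve meets $\dot{\Sigma}_i$ transversely with disjoint intersections, and escape into $\quotModPlus$ is ruled out because the holomorphic pages populating a neighborhood of $\quotModPlus$ are transverse to $\dot{\Sigma}_i$ by the explicit construction of Section~\ref{sec:model}. The remaining mechanism is cylindrical-end escape: using the asymptotic formulas of \cite{HWZ:props1,Siefring:asymptotics}, one shows that as $\zeta_n$ escapes up the cylindrical end of $\dot{\Sigma}_i$ asymptotic to an orbit $\gamma_z$ in $\Sigma_i \times S^1$, the two ``sheets'' of $u_n$ forming the tangential intersection must correspond, in the SFT-limit, to two asymptotic ends of $u^*$ at $\gamma_z$ that coalesce into a doubly-covered end at $\gamma_z^2$---that is, $u^*$ is an exotic fiber in $\quotMod{\exot}{\tau^*}$. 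Conversely, a parametric gluing argument that resolves the doubly-covered orbit of an exotic fiber into a transverse pair of simply-covered orbits produces, for $\tau$ near $\tau^*$, a one-parameter family in $\mathcal{B}_i$ emanating from that exotic fiber.

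Combining these yields the equivalence: if $\quotMod{\exot}{\tau} = \emptyset$ for all $\tau$, then $\mathcal{B}_i$ has no cylindrical-end ends, hence is compact with boundary only at $\tau \in \{0,1\}$, and a signed count along the arcs of $\mathcal{B}_i$ using coherent orientations from Proposition~\ref{prop:orientIndex0} (together with the $\tau$-invariance of $B_i$) forces $B_i = 0$; conversely, if any branch point exists, following it through $\mathcal{B}_i$ must eventually terminate at a cylindrical-end end, producing an exotic fiber at some $\tau^* \in [0,1]$. The main obstacle is the detailed asymptotic analysis of the cylindrical-end escape, which uses Siefring's asymptotic intersection theory~\cite{Siefring:intersection,Siefring:asymptotics} to establish a \emph{sign-consistent} bijection between ends of $\mathcal{B}_i$ and exotic fibers with doubly-covered orbit in $\Sigma_i \times S^1$; this is the delicate point and parallels the compactness analysis of Section~\ref{sec:compactnessProofs}.
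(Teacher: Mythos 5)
Your observation that the branch count $B_i$ is $\tau$-invariant is correct, and the parametric moduli space $\mathcal{B}_i$ is indeed a smooth $1$-manifold by Lemma~\ref{lemma:genVertebrae}. But the cobordism argument you build around it does not close, in either direction, and the gap is not a matter of missing asymptotic detail -- it is structural.

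In the direction ``$\quotMod{\exot}{\tau} = \emptyset$ for all $\tau$ implies no branch points,'' your claim is that a signed count along the arcs of $\mathcal{B}_i$ forces $B_i=0$. This cannot work: branch points of a holomorphic branched cover are all \emph{positive} (they contribute with sign $+1$ to Riemann--Hurwitz), so there is no cancellation mechanism, and Proposition~\ref{prop:orientIndex0} is not relevant here since the signs it governs are those of coherent orientations on moduli of curves, not local degrees of tangential intersections with $\dot\Sigma_i$. If $\mathcal{B}_i$ is compact with boundary only at $\tau\in\{0,1\}$, the cobordism merely yields $B_{i,\tau=0}=B_{i,\tau=1}$, which is the $\tau$-invariance you already established independently; it gives no information about whether that common value is zero. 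In the converse direction, the claim that following an arc of $\mathcal{B}_i$ from a branch point at $\tau=0$ ``must eventually terminate at a cylindrical-end end'' is unjustified: the arc is equally allowed to terminate at a branch point over $\tau=1$, producing no exotic fiber at any intermediate~$\tau$. So the existence of a branch point at $\tau=0$ does not, by this argument, force $\quotMod{\exot}{\tau}\ne\emptyset$ for any~$\tau$.

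The underlying issue is that the lemma is, after your own $\tau$-invariance observation together with Lemma~\ref{lemma:singLeaves}, really a statement at a \emph{single} value of~$\tau$, and the $\tau$-parametrized cobordism is the wrong tool. The paper's proof fixes $\tau=0$ and instead exploits the $S^1$-family of holomorphic vertebrae $\dot\Sigma_i^\theta$ from Proposition~\ref{prop:vertebrae}, which sweep out a hypersurface $Y_i$. Riemann--Hurwitz (applied in the $\theta$-direction) shows the branch count is also $\theta$-independent, so the absence of branch points for one vertebra forces every leaf to be transverse to all of~$Y$; transversality then implies that the degree of the circle $u^{-1}(Y)\to S^1$ is locally constant on the moduli space, hence equal to $1$ as it is for the model pages in $\fF_+$, which rules out any end asymptotic to a doubly covered orbit. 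Conversely, if $\quotMod{\exot}{}=\emptyset$ the curves are automatically transverse to $\dot\Sigma^{(t,\theta)}$ for $t\gg 0$ (by asymptotic convergence to Reeb orbits), so $\Pi$ restricted to that surface is an honest covering map, and a second application of Riemann--Hurwitz transports the zero branch count back to $\dot\Sigma_i$. You would need to replace your cobordism step with something of this flavor -- a direct geometric constraint tying transversality to $Y$ to the multiplicity of asymptotic orbits -- to obtain a correct proof.
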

\begin{proof}
We show first that the absence of branch points rules out exotic fibers.
By Lemma~\ref{lemma:singLeaves}, it suffices to prove that 
$\quotMod{\exot}{}$ is empty whenever $\dot{\Sigma} \stackrel{\Pi_0}{\longrightarrow} \quotMod{\Nice}{}$
is an honest covering map.  Recall from Proposition~\ref{prop:vertebrae} that the
holomorphic vertebrae $\dot{\Sigma}_i$ are not isolated:
each can be shifted in the direction of the $\theta$-coordinate, producing a
smooth $S^1$-family of embedded $J$-holomorphic curves $\dot{\Sigma}_i^\theta \subset \W$
that foliate a smooth hypersurface
$$
Y_i := \bigcup_{\theta \in S^1} \dot{\Sigma}_i^\theta \subset \W.
$$
We cannot assume that the genericity conditions imposed in \S\ref{sec:genericity}
hold for intersections of leaves with \emph{every} curve in the family
$\dot{\Sigma}_i^\theta$, thus a leaf may have intersections of index greater
than $2$ with some of these curves, but the intersections are still isolated
and positive, thus Lemma~\ref{lemma:branchPoints} still applies and gives
each of the maps $\dot{\Sigma}_i^\theta \stackrel{\Pi_0}{\longrightarrow} \quotMod{\Nice}{}$
the structure of a (not necessarily generic) branched cover.  The rest of
the arguments in Lemma~\ref{lemma:branchedCover} also apply for every~$\theta$,
showing that the branch points of these covers are confined to a compact
subset, and they can be counted algebraically using the Riemann-Hurwitz
formula.  It follows that the condition of having no branch points is
independent of~$\theta$, hence this assumption implies that every
leaf of the foliation is transverse to the entire hypersurface
$Y := Y_1 \cup \ldots \cup Y_r$.

Recall that a neighborhood of infinity
in $\quotMod{\Nice}{}$ coincides with the foliation $\fF_+$ constructed
in \S\ref{sec:holPages}, and each leaf of the latter intersects the
region of $\widehat{E}$ above $Y$ in a disjoint union of cylindrical ends
whose boundary circles are in
bijective correspondence with the boundary components of the pages (see Figure~\ref{fig:holfol}), each of
them having degree $1$ under the projection $\widehat{\Sigma} \times S^1 \to S^1$.
In light of the transverse intersections with~$Y$, it follows that the same
is true for \emph{every} leaf of~$\fF_0$, implying that none can have an
end asymptotic to a doubly covered orbit (see Figure~\ref{fig:compactness2branched}),
which must occur if $\quotMod{\exot}{}$ were nonempty.

Conversely, if $\quotMod{\exot}{} = \emptyset$, then all main level curves
in $\quotMod{\Nice}{}$ have the same number of ends with the same asymptotic
orbits and multiplicities.  We claim that for any $\theta \in S^1$ and
$t > 0$ sufficiently large, all of them are transverse to the properly embedded surface
$\dot{\Sigma}^{(t,\theta)} := \{t\} \times \widehat{\Sigma} \times \{\theta\} \subset \widehat{\nN}(\p_h E)$.
Indeed, this is obvious for the holomorphic pages constructed in \S\ref{sec:holPages},
which form a neighborhood of infinity in the moduli space, and for everything else
the claim follows for $t \gg 0$ due to the asymptotic convergence of curves
to Reeb orbits, which are never tangent to $\dot{\Sigma}^{(t,\theta)}$.
The restriction of $\Pi$ to $\dot{\Sigma}^{(t,\theta)}$ is therefore a proper
covering map and thus satisfies the Riemann-Hurwitz formula, with $0$ for the
count of branch points.  Since $\dot{\Sigma}^{(t,\theta)}$ and
$\dot{\Sigma} = \dot{\Sigma}_1 \cup \ldots \cup \dot{\Sigma}_r$ are homeomorphic,
it now also follows from the Riemann-Hurwitz formula that
$\Pi|_{\dot{\Sigma}}$ cannot have branch points.
\end{proof}

The proof of Propositions~\ref{T:moduliSurface}
and~\ref{prop:deform} is now complete.

\subsection{The Lefschetz fibration on the filling}
\label{sec:Lefschetz}

We can now finish the proof of Theorems~\ref{thm:classification} 
and~\ref{thm:weak} by showing that if the spinal open book
$\boldsymbol{\pi}$ is {Lefschetz-amenable}, then the stable foliation from
Propositions~\ref{T:moduliSurface} and~\ref{prop:deform} gives rise to
a bordered Lefschetz fibration supporting the symplectic structure of~$W$.

Assume $\boldsymbol{\pi}$ is {Lefschetz-amenable}, so according to
Proposition~\ref{T:moduliSurface}, the set $\quotMod{\exot}{}$ is empty
and $\W$ is foliated (with finitely many singular points) by a mixture of
smoothly embedded $\widehat{J}$-holomorphic curves and finitely many nodal
curves that look like Lefschetz singular fibers.  Recall from 
\S\ref{sec:largeSubdomains} the bounded subdomains 
$\widehat{E}_R \subset \widehat{E}$ for $R > 0$, and let
$$
\W_R \subset \W
$$
denote the compact subdomain in $\W$ with $\p \W_R = \p \widehat{E}_R$;
its boundary (see Figure~\ref{fig:largeSubd}) is piecewise smooth and splits naturally into horizontal
and vertical faces
$$
\p \W_R = \p_v \W_R \cup \p_h \W_R.
$$
These subdomains with their symplectic and/or almost Stein data are 
deformation equivalent to $W$ by Lemma~\ref{lemma:bigContact}.
Now by taking $R > 0$ sufficiently large, we can assume near $\p \W_R$
that the foliation formed by the $\widehat{J}$-holomorphic curves in
$\quotMod{}{}$ is arbitrarily $C^\infty$-close to the $\RR$-invariant
foliation $\fF_+$; this follows from the fact that sequences of curves
in $\quotMod{}{}$ escaping to infinity necessarily converge to curves
in $\quotModPlus$.  In fact, these two foliations match precisely near
$\p_v \W_R$, since the curves in this region are contained fully in the
cylindrical end.  Near $\p_h \W_R$, we can now make a $C^\infty$-small 
modification ``by hand'' of the almost complex structure and the holomorphic 
curves so that the latter become precisely tangent to~$\fF_+$. 
After this modification, consider the restriction
$$
\Pi : \W_R \to \Sigma_0
$$
of the map in Proposition~\ref{T:moduliSurface}, where we define $\Sigma_0
\subset \quotMod{}{}$ as the image of this restricted map.  What we lose
by forgetting $\W \setminus \W_R$ is a collection of $1$-parameter families
of curves contained in $\widehat{\nN}_+(\p E)$; these form collar 
neighborhoods of the boundary in $\quotMod{}{}$, hence $\Sigma_0$ is a compact
surface with the same topological type as~$\quotMod{}{}$.
Since the nodal singularities can all be assumed to lie in the interior
of $\W_R$ for $R$ sufficiently large, 
$\Pi : \W_R \to \Sigma_0$ is now a bordered Lefschetz
fibration, and it is allowable if $(\W_R,\widehat{\omega})$ is minimal---which
is true if and only if $(W,\omega)$ is minimal---since the only closed
components allowed by the compactness results in \S\ref{sec:compactnessProofs}
are embedded spheres with self-intersection number~$-1$.
Lemma~\ref{lemma:bigFibrations} implies moreover that $\Pi : \W_R \to \Sigma_0$
supports the symplectic and/or Liouville
structure of $\W_R$, and in the almost Stein case, $(\W_R,\widehat{J},\widehat{f})$
is almost Stein deformation equivalent to the canonical structure for this
Lefschetz fibration by \cite{LisiVanhornWendl1}*{Theorem~C}.
With this, we've proved that the maps in Theorem~\ref{thm:classification}
sending equivalence classes of Lefschetz fibrations to equivalence classes
of fillings are surjectve.

To show that these maps are also injective, suppose we have two bordered
Lefschetz fibrations bounded by $\boldsymbol{\pi}$ that give rise to
deformation equivalent fillings.  These Lefschetz fibrations then admit
``double completions'' formed by gluing them into the model $\widehat{E}$
from \S\ref{sec:model} so that their vertical subbundles match~$V E$ near 
their boundaries, and we can choose tame almost complex structures on both
that match $J_+$ on the end and make all fibers holomorphic.  
We can therefore view them both as holomorphic foliations on the same
noncompact manifold $\W$, corresponding to two
distinct choices of almost complex structures $\widehat{J}_0$ and $\widehat{J}_1$
tamed by deformation-equivalent choices of symplectic data, all identical
on~$\widehat{\nN}_-(\p E)$.  Choosing a deformation of the symplectic
data and a corresponding deformation of tame almost complex structures,
Proposition~\ref{prop:deform} then connects the two foliations by a
smooth $1$-parameter family, producing an isotopy of bordered Lefschetz
fibrations which can be adjusted near $\p \W_R$ as in the previous
paragraph so that they support the family of symplectic structures.
The proof of Theorems~\ref{thm:classification} and~\ref{thm:weak} 
is now complete.

\subsection{Quasiflexible Stein structures}
\label{sec:quasi}

We now prove Theorem~\ref{thm:SteinDeformation}.

Assume $\Pi : W \to \Sigma_0$ is an allowable bordered Lefschetz fibration with fibers
of genus zero, and $(J_0,f_0)$ is an almost Stein structure on $W$ supported
by~$\Pi$.  Assume further that $(J_1,f_1)$ is a second almost Stein structure
on $W$ such that the symplectic structures $\omega_0 := -df_0 \circ J_0$
and $\omega_1 := -df_1 \circ J_1$ are homotopic through a smooth family of
symplectic structures $\{\omega_\tau\}_{\tau \in [0,1]}$ that are convex at
the boundary---recall that since $\p W$ has corners, the convexity condition
means that the associated Liouville vector fields are outwardly transverse
to both $\p_h W$ and~$\p_v W$.  The aim is to show that the Weinstein structures
induced by $(J_0,f_0)$ and $(J_1,f_1)$ are Weinstein homotopic.

The spinal open book $\boldsymbol{\pi} := \p\Pi$ on $M := \p W$ has spine
$M\spine = \p_h W$ and paper $M\paper = \p_v W$, with the fibration
$\pi\spine : M\spine \to \Sigma$ obtained by factoring
$\p_h E \stackrel{\Pi}{\longrightarrow} \Sigma_0$ through a suitable covering
map $\Sigma \to \Sigma_0$ to make its fibers connected, and
$\pi\paper : M\paper \to S^1$ defined from $\p_v E \stackrel{\Pi}{\longrightarrow} \p\Sigma_0$
by identifying each component of $\p\Sigma_0$ with~$S^1$.  We assume in the
following that this particular spinal open book is used for the construction
of the model $\widehat{E}$ in~\S\ref{sec:model}.  
Recall now from \cite{LisiVanhornWendl1}*{Theorem~1.24} that the space of
almost Stein structures supported by $\Pi : W \to \Sigma_0$ is contractible,
thus we are free after a deformation to assume that $(J_0,f_0)$ matches an
almost Stein structure constructed via the Thurston trick as in the proof of
that theorem.  Since the same application of the Thurston trick underlies
the almost Stein model constructed in \S\ref{sec:model}, one obtains the
following result:

\begin{lemma}
After a deformation of $(J_0,f_0)$ through supported almost Stein structures
on~$W$, the model $\widehat{E}$ in \S\ref{sec:model} can be constructed so that 
the bounded region $E \subset \widehat{E}$ with its almost Stein data
$(J_+,f_+)$ admits a diffeomorphism with a neighborhood
of $\p W$ in~$(W,J_0,f_0)$, identifying $\p_h W = \p_h E$, 
$\p_v W = \p_v E$, and the fibers of $\Pi$ in this neighborhood with
the fibers of $\nN(\p_h E) \stackrel{\Pi_h}{\longrightarrow} \Sigma$
and $\nN(\p_v E) \stackrel{\Pi_v}{\longrightarrow} (-1,0] \times S^1$.
\qed
\end{lemma}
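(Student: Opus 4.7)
The plan is to exploit the fact that both $(J_0,f_0)$ on $W$ and $(J_+,f_+)$ on $E \subset \widehat{E}$ arise (up to deformation through supported almost Stein structures) from the same Thurston-trick recipe, applied to the same base-plus-fibration data once we agree on how to identify the pieces. The construction will proceed in three steps: (i) identify the fibrations near the boundary, (ii) transport the fiberwise and base data from $E$ into a collar of $\p W$, and (iii) deform $(J_0,f_0)$ through supported almost Stein structures so as to make it agree with the Thurston construction on this collar.

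First I would observe that, by construction, the spinal open book $\boldsymbol{\pi}=\p\Pi$ is precisely the one used in \S\ref{sec:model}, so after choosing the covering $\Sigma\to\Sigma_0$ that makes $\pi\spine$ have connected fibers and identifying each component of $\p\Sigma_0$ with $S^1$ as in the definition of $\p\Pi$, the fibrations $\Pi_h\colon \nN(\p_h E)\to\Sigma$ and $\Pi_v\colon\nN(\p_v E)\to(-1,0]\times S^1$ on the collars of the horizontal and vertical boundary of $E$ are smoothly conjugate to the restriction of $\Pi$ to collars of $\p_h W=M\spine$ and $\p_v W=M\paper$ in $W$. Since $\Pi$ has no critical values near $\p\Sigma_0$ (allowability is not needed for this) and the coordinates $(s,\phi,t,\theta)$ in \S\ref{sec:collars} are chosen precisely to be compatible with the spinal open book, a fiberwise diffeomorphism $\Phi$ of collars of $\p W$ with $\nN(\p_h E)\cup\nN(\p_v E)$ intertwining the respective fibrations can be written down by hand, matching $\p_h W=\p_h E$ and $\p_v W=\p_v E$.

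Next, using $\Phi$, I would pull back to $W$ all the data that enter the construction of $(J_+,f_+)$ in \S\ref{sec:Liouville}, \S\ref{sec:Jandf} and Lemma \ref{lemma:almostStein}: the complex structure $j$ on $\Sigma$ with its standard form in $\widehat{\nN}(\p\Sigma)$, the $j$-convex function $\varphi$, the fiberwise Liouville form $\lambda$ with $\lambda=e^t\,d\theta$ in $\nN(\p M\paper)$, the fiberwise complex structure $J\fib$ satisfying $\lambda|_{V\widehat{E}}=-df\fib\circ J\fib|_{V\widehat{E}}$, and the constant $K>0$. Extending these consistently over the interior of $W$ (choosing an arbitrary $j$-convex function on $\Sigma_0$ extending $\varphi$, a fiberwise Stein structure on the fibers of $\Pi$ restricting to $J\fib$ on the boundary fibers, and the same $K$) and running the Thurston trick as in \cite{LisiVanhornWendl1}*{\S 2.4} yields an almost Stein structure $(J_0',f_0')$ on $W$ supported by $\Pi$, whose restriction to $\Phi(E)$ is, tautologically, $\Phi^*(J_+,f_+)$.

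Finally, I would invoke \cite{LisiVanhornWendl1}*{Theorem~1.24}, which states that the space of almost Stein structures supported by a fixed bordered Lefschetz fibration with genus-zero fibers is non-empty and contractible. In particular, $(J_0,f_0)$ and $(J_0',f_0')$ are connected by a path of supported almost Stein structures; deforming $(J_0,f_0)$ along this path gives the desired modification, and $\Phi$ then is the required diffeomorphism. The main thing to be careful about is the third step: the contractibility statement only provides a deformation of the almost Stein structure through the whole moduli of supported structures, whereas what is really wanted is that one can reach an almost Stein structure prescribed on a collar. This I would handle by a standard relative argument, observing that the proof of \cite{LisiVanhornWendl1}*{Theorem~1.24} produces the deformation by convex combinations of fiberwise Liouville forms and $j$-convex functions together with Gray/Moser-type interpolation, each step of which can be made relative to a collar on which $(J_0,f_0)$ and $(J_0',f_0')$ already agree; alternatively one extends $W$ by a collar and cuts back, using contractibility to match the structures there.
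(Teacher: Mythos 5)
Your proposal is correct and follows essentially the same route the paper takes: identify $\p\Pi$ with the spinal open book used to build $\widehat{E}$, note that both $(J_0,f_0)$ (after deformation) and $(J_+,f_+)$ on $E$ arise from the same Thurston-trick recipe, and invoke the contractibility of the space of supported almost Stein structures from \cite{LisiVanhornWendl1}*{Theorem~1.24}. The paper states this almost in passing (the lemma appears with a bare \verb|\qed|), and your proposal fills in the collar identification and the transport of the ingredients $(j,\varphi,\lambda,J\fib,K)$ more explicitly than the paper does, which is fine.

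The one issue is the worry you raise in your final paragraph, which is misplaced. You are concerned that contractibility ``only provides a deformation through the whole moduli of supported structures'' whereas you want to ``reach an almost Stein structure prescribed on a collar,'' and you propose a relative Gray/Moser argument to fix this. But no relative argument is needed: you have already constructed $(J_0',f_0')$ as a supported almost Stein structure on \emph{all} of $W$ (not merely on a collar), whose restriction to $\Phi(E)$ tautologically equals $\Phi^*(J_+,f_+)$. Path-connectedness of the contractible space of supported structures then directly provides a deformation from $(J_0,f_0)$ to $(J_0',f_0')$, and after following it, the structure literally equals $(J_0',f_0')$ everywhere and hence in particular on the collar. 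There is nothing ``prescribed only on a collar'' to match; the target of your deformation is a global structure. The relative-refinement concern would be relevant only if one wanted to deform while keeping the structure fixed somewhere else, which is not required by the statement.
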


Attaching $(W,J_0,f_0)$ to $(\widehat{E},J_+,f_+)$ via the lemma produces a
completed almost Stein domain $(\W,\widehat{J}_0,\widehat{f}_0)$ that is 
foliated by $\widehat{J}_0$-holomorphic curves matching the fibers of
$\Pi : W \to \Sigma_0$ in $W$ and the leaves of the foliation $\fF_+$
on $\W \setminus W$.  Note that $\widehat{J}_0$ in this construction cannot be
assumed generic. Nonetheless, the almost Stein condition is open, so
after a small perturbation of $\Pi$
away from $\p W$ and a corresponding perturbation of $(J_0,f_0)$ to
ensure that the perturbed fibers are still $J_0$-holomorphic, we can assume
without loss of generality that no curve in our $\widehat{J}_0$-holomorphic
foliation of $\W$ has more than one end asymptotic to a hyperbolic orbit,
and the finitely many curves that make up singular fibers have no ends
asymptotic to hyperbolic orbits.
Since these curves all have genus zero, it now follows that they all 
satisfy the criterion for automatic transversality from \cite{Wendl:automatic},
so they will survive a further perturbation of $\widehat{J}_0$ in the interior
of~$W$, which we now perform in order to assume the genericity conditions
of \S\ref{sec:genericity}.  Denote the resulting $\widehat{J}_0$-holomorphic
foliation of $\W$ by~$\fF_0$.

As in the proof of Theorem~\ref{thm:classification}, our original almost
Stein domain is Stein deformation equivalent to the enlarged compact
domain $(\W_R,\widehat{J}_0,\widehat{f}_0)$ in $\W$ for $R \gg 0$.
The symplectic deformation $\{\omega_\tau\}_{\tau \in [0,1]}$ can now also
be fit into this picture and gives rise to a smooth family
of symplectic structures $\widehat{\omega}_\tau$ on $\W$ that are independent
of $\tau$ on a neighborhood of infinity and match 
$-d\widehat{f}_\tau \circ \widehat{J}_\tau$ for $\tau \in \{0,1\}$,
where $(\widehat{J}_1,\widehat{f}_1)$ is a similar extension
of the almost Stein structure $(J_1,f_1)$ from $W$ to~$\W$, matching
$(J_+,f_+)$ near infinity.  By the constractibility of the space of
tame almost complex structures, we can choose a generic family
$\{\widehat{J}_\tau\}_{\tau \in [0,1]}$ of $\widehat{\omega}_\tau$-tame
almost complex structures that form a homotopy from $\widehat{J}_0$
to $\widehat{J}_1$ and match $J_+$ near infinity.  
Using Proposition~\ref{prop:deform}, the foliation $\fF_0$ now extends to a
smooth family of $\widehat{J}_\tau$-holomorphic foliations $\fF_\tau$,
which includes smooth deformations of finitely many nodal curves
(i.e.~the original singular fibers of~$\Pi$) from $\tau=0$ to $\tau=1$,
but does not include any exotic fibers since none were present in the
foliation~$\fF_0$.  Making the same modifications near infinity as in
the proof of Theorem~\ref{thm:classification}, the result is a smooth
family of allowable bordered Lefschetz fibrations 
$\Pi_\tau : \W_R \to \Sigma_0$ supporting the symplectic structures
$\widehat{\omega}_\tau$ for $R$ sufficiently large.
Theorem~C in \cite{LisiVanhornWendl1} now implies that the Weinstein
structure induced by $(\widehat{J}_1,\widehat{f}_1)$
lies in the canonical Weinstein homotopy class supported
by~$\Pi_1$, implying that it is also Weinstein homotopic to
the Weinstein structure induced by~$(\widehat{J}_0,\widehat{f}_0)$.
This completes the proof of the first statement in
Theorem~\ref{thm:SteinDeformation}.

If $\Sigma_0 = \DD^2$, then we can weaken the convexity hypothesis on
the family of symplectic structures $\{\omega_\tau\}_{\tau \in [0,1]}$ 
and assume instead that after smoothing the corners of~$\p W$,
each $(W,\omega_\tau)$ is a weak filling of $(M := \p W,\xi_\tau)$ for some
smooth family of contact structures $\xi_\tau$ on~$M$ matching
$\ker(-d f_\tau \circ J_\tau|_{TM})$ for $\tau \in \{0,1\}$.
The key observation here is that every component of the spine
$M\spine$ is a solid torus $\DD^2 \times S^1$, on which all closed $2$-forms
are exact, so the possible non-exactness of $\omega_\tau$ at $\p W$ can be
absorbed into the above construction by including in the symplectic data
near infinity a family of closed $2$-forms $\eta_\tau$ as in \S\ref{sec:Liouville},
which are assumed to vanish on~$\widehat{\nN}(\p_h E)$ and vanish
identically for $\tau \in \{0,1\}$.  Since $\eta_\tau$ changes the Reeb vector
field on the cylindrical end over the paper, it causes a change to
$\widehat{J}_\tau$ in this region, but the holomorphic pages here are
tangent to the fixed integrable distribution $\Xi_+$ and thus remain
holomorphic.  With this understood, the argument of the previous paragraph
now goes through with no further changes, and the proof
of Theorem~\ref{thm:SteinDeformation} is thus complete.

\begin{bibdiv}
\begin{biblist}
\bibselect{wendlc}
\end{biblist}
\end{bibdiv}

\end{document}